\documentclass[reqno,11pt]{amsart}
\usepackage{setspace,tikz,xcolor,mathrsfs,listings,multicol,amssymb}
\usepackage{rotating}
\usepackage[vcentermath]{youngtab}
\usepackage[margin=1in,includefoot,footskip=30pt]{geometry}
\usepackage{enumerate}
\usepackage{booktabs}
\usepackage[centertableaux]{ytableau}
\usepackage[all,cmtip]{xy}
\usetikzlibrary{arrows,matrix}
\tikzset{tab/.style={matrix of math nodes,column sep=-.35, row sep=-.35,text height=7pt,text width=7pt,align=center,inner sep=2,font=\footnotesize}}

\usepackage[colorlinks=true, pdfstartview=FitV, linkcolor=blue, citecolor=blue, urlcolor=blue]{hyperref}




\newcommand{\ssyt}{\operatorname{SSYT}}
\newcommand{\svt}{\operatorname{SVT}}
\newcommand{\rpp}{\operatorname{RPP}}
\newcommand{\et}{\operatorname{ET}}  
\newcommand{\set}{\operatorname{SET}}  

\newcommand{\absval}[1]{\left\lvert #1 \right\rvert}

\newcommand{\comp}{\dagger}

\newcommand{\bbb}{\mathsf{b}}

\newcommand{\G}{G}
\newcommand{\dG}{g}

\DeclareMathOperator{\wt}{wt} 
\DeclareMathOperator{\lf}{left} 
\DeclareMathOperator{\inv}{inv} 
\DeclareMathOperator{\RSK}{RSK} 

\newcommand{\mcE}{\mathcal{E}}

\newcommand{\mcT}{\mathcal{T}}

\newcommand{\fG}{\mathfrak{G}}
\newcommand{\fM}{\mathfrak{M}}
\newcommand{\fN}{\mathfrak{N}}

\newcommand{\ta}{\mathsf{a}}
\newcommand{\tb}{\mathsf{b}}
\newcommand{\tc}{\mathsf{c}}

\newcommand{\bt}{\mathbf{t}}
\newcommand{\ff}{\mathbf{f}}
\newcommand{\pp}{\mathbf{p}}
\newcommand{\uu}{\mathbf{u}}
\newcommand{\vv}{\mathbf{v}}
\newcommand{\xx}{\mathbf{x}}
\newcommand{\yy}{\mathbf{y}}
\newcommand{\zz}{\mathbf{z}}
\newcommand{\bG}{\mathbf{G}}

\newcommand{\ZZ}{\mathbb{Z}}

\newcommand{\CC}{\mathbb{C}}

\newcommand{\vertex}[4]{ 
\begin{tikzpicture}[baseline=0,scale=0.7,>=latex]
  \draw[->] (-1,0) node[anchor=east] {$#4$} -- (1,0) node [anchor=west] {$#2$};
  \draw[->] (0,-1) node[anchor=north] {$#3$} -- (0,1) node [anchor=south] {$#1$};
  \draw[color=darkred] (-0.2,0) arc (180:270:0.2);
  \draw (0,0) node[anchor=north east, color=darkred] {\small $z$};
\end{tikzpicture}
}
\newcommand{\vertexF}[4]{ 
\begin{tikzpicture}[baseline=0,scale=0.7,>=latex]
  \draw[->] (1,0) node[anchor=west] {$#2$} -- (-1,0) node [anchor=east] {$#4$};
  \draw[->] (0,-1) node[anchor=north] {$#3$} -- (0,1) node [anchor=south] {$#1$};
  \draw[color=darkred] (-0.2,0) arc (180:270:0.2);
  \draw (0,0) node[anchor=north east, color=darkred] {\small $z$};
\end{tikzpicture}
}
\newcommand{\rmatrix}[4]{ 
\begin{tikzpicture}[baseline=0,scale=0.7,>=latex]
  \draw[->] (-1,1) node[anchor=east] {$#1$} -- (1,-1) node [anchor=west] {$#3$};
  \draw[->] (-1,-1) node[anchor=east] {$#4$} -- (1,1) node [anchor=west] {$#2$};
  \draw[color=darkred] (135:0.2) arc (135:225:0.2);
  \draw (-0.1,0) node[anchor=east, color=darkred] {\small $z_i,z_j$};
\end{tikzpicture}
}

\newcommand{\rmatrixD}[4]{ 
\begin{tikzpicture}[baseline=0,scale=0.7,>=latex]
  \draw[->] (1,1) node[anchor=west] {$#4$} -- (-1,-1) node [anchor=east] {$#2$};
  \draw[->] (-1,1) node[anchor=east] {$#1$} -- (1,-1) node [anchor=west] {$#3$};
  \draw[color=darkred] (135:0.2) arc (135:225:0.2);
  \draw (-0.1,0) node[anchor=east, color=darkred] {\small $z_i,z_j$};
\end{tikzpicture}
}
\newcommand{\rmatrixF}[4]{ 
\begin{tikzpicture}[baseline=0,scale=0.7,>=latex]
  \draw[->] (1,1) node[anchor=west] {$#3$} -- (-1,-1) node [anchor=east] {$#1$};
  \draw[->] (1,-1) node[anchor=west] {$#2$} -- (-1,1) node [anchor=east] {$#4$};
  \draw[color=darkred] (135:0.2) arc (135:225:0.2);
  \draw (-0.1,0) node[anchor=east, color=darkred] {\small $z_i,z_j$};
\end{tikzpicture}
}

\definecolor{darkred}{rgb}{0.7,0,0} 
\newcommand{\defn}[1]{{\color{darkred}\emph{#1}}} 

\definecolor{UQgold}{RGB}{196, 158, 54} 
\definecolor{UQpurple}{RGB}{73, 7, 94} 
\definecolor{UMNgold}{RGB}{255,200,46} 
\definecolor{UMNmaroon}{RGB}{106,0,50} 

\usepackage{listings}
\lstdefinelanguage{Sage}[]{Python}
{morekeywords={False,sage,True},sensitive=true}
\lstset{
  frame=single,
  showtabs=False,
  showspaces=False,
  showstringspaces=False,
  commentstyle={\ttfamily\color{dgreencolor}},
  keywordstyle={\ttfamily\color{dbluecolor}\bfseries},
  stringstyle={\ttfamily\color{dgraycolor}\bfseries},
  language=Sage,
  basicstyle={\footnotesize\ttfamily},
  aboveskip=0.75em,
  belowskip=0.75em,
  xleftmargin=.15in,
}
\definecolor{dblackcolor}{rgb}{0.0,0.0,0.0}
\definecolor{dbluecolor}{rgb}{0.01,0.02,0.7}
\definecolor{dgreencolor}{rgb}{0.2,0.4,0.0}
\definecolor{dgraycolor}{rgb}{0.30,0.3,0.30}

\theoremstyle{plain}
\newtheorem{thm}{Theorem}[section]
\newtheorem{lemma}[thm]{Lemma}

\newtheorem{prop}[thm]{Proposition}
\newtheorem{cor}[thm]{Corollary}
\theoremstyle{definition}
\newtheorem{dfn}[thm]{Definition}
\newtheorem{ex}[thm]{Example}
\newtheorem{remark}[thm]{Remark}

\numberwithin{equation}{section}


\usepackage[colorinlistoftodos]{todonotes}

\setlength{\marginparwidth}{2cm}


\begin{document}
\title[Refined dual Grothendiecks]{Refined dual Grothendieck polynomials, integrability, and the Schur measure}

\author[K.~Motegi]{Kohei Motegi}
\address[K.~Motegi]{Faculty of Marine Technology, Tokyo University of Marine Science and Technology, Etchujima 2-1-6, Koto-Ku, Tokyo, 135-8533, Japan}
\email{kmoteg0@kaiyodai.ac.jp}
\urladdr{https://sites.google.com/site/motegikohei/home}

\author[T.~Scrimshaw]{Travis Scrimshaw}
\address[T.~Scrimshaw]{School of Mathematics and Physics, The University of Queensland, St.\ Lucia, QLD 4072, Australia}
\email{tcscrims@gmail.com}
\urladdr{https://people.smp.uq.edu.au/TravisScrimshaw/}

\keywords{dual Grothendieck polynomial, vertex model, integrable system}
\subjclass[2010]{05E05, 82B23, 14M15, 05A19, 60B20}

\thanks{T.S.~was partially supported by the Australian Research Council DP170102648.
K.M. was partially supported by grant-in-Aid for and Scientific Research (C) No. 18K03205 and No. 20K03793.
}

\begin{abstract}
We construct a vertex model whose partition function is a refined dual Grothendieck polynomial, where the states are interpreted as nonintersecting lattice paths.
Using this, we show refined dual Grothendieck polynomials are multi-Schur functions and give a number of identities, including a Littlewood and Cauchy(--Littlewood) identity.
We then refine Yeliussizov's connection between dual Grothendieck polynomials and the last passage percolation (LPP) stochastic process discussed by Johansson.
By refining algebraic techniques of Johansson, we show Jacobi--Trudi formulas for skew refined dual Grothendieck polynomials conjectured by Grinberg and recover a relation between LPP and the Schur process due to Baik and Rains.
Lastly, we extend our vertex model techniques to show some identities for refined Grothendieck polynomials, including a Jacobi--Trudi formula.
\end{abstract}

\maketitle

\section{Introduction}
\label{sec:introduction}

There has been a rich history discussing the interplay between geometry, algebra, combinatorics, integrable systems, and probability theory.
Here we focus on the (connective) K-theory ring of the Grassmannian, the set of $k$-dimensional planes in $\CC^n$, where we look at the basis of K-theory classes induced from the Schubert varieties.
Algebraically, we can represent these K-theory classes using (symmetric $\beta$-)Grothendieck polynomials from the work of Lascoux and Sch\"utzenberger~\cite{LS82,LS82II}, which were first described combinatorially by Fomin and Kirillov~\cite{FK94,FK96} with a Littlewood--Richardson rule given by Buch~\cite{Buch02}.
Grothendieck polynomials have been connected with integrable vertex models by numerous authors in various ways~\cite{BS20,BSW20,GK17,HKZJ18,KZJ17,MS13,MS14,WZJ16}, among additional literature discussing their algebraic, combinatorial, and geometric properties.

Symmetric Grothendieck polynomials are a basis for the ring of (an appropriate completion of) symmetric functions $\Lambda$, and we can construct the dual basis under the Hall inner product, which is defined by setting the Schur functions to be an orthonormal basis. Lam and Pylyavskyy showed that the dual Grothendieck polynomials were shown to be described combinatorially as the sum over reverse plane partitions and their Schur decomposition~\cite{LamPyl07} . The $\beta$ parameter from connective K-theory was refined to parameters $\bt = (t_1, \dotsc, t_{\ell-1})$ by Galashin, Grinberg, and Liu~\cite{GGL16} using the combinatorial data. These refined dual Grothendieck polynomials $\dG_{\lambda}(\xx; \bt)$, where $\lambda$ is a partition, are the main focus of this paper.

Yeliussizov in a series of papers has been studying the refined dual Grothendieck polynomials by refining the Lam--Pylyavskyy Schur decomposition~\cite{Yel17} and building a relation to the totally asymmetric simple exclusion process (TASEP) and the last-passage percolation (LPP) corner growth model~\cite{Yel19II,Yel20}.
Furthermore, a dual Jacobi--Trudi formula for skew shapes was conjectured by Darij Grinberg and recently independently proved by Kim~\cite{Kim20II} and by Amanov and Yeliussizov~\cite{AY20}.
Grinberg also conjectured a Jacobi--Trudi formula for skew shapes.
For $\bt = \beta$, additional identities have been proven, such as a Littlewood, Cauchy, and Jacobi--Trudi identities by Amanov and Yeliussizov using combinatorial techniques~\cite{AY20,Yel19II,Yel19III} and (dual) Jacobi--Trudi for skew shapes using free fermions by Iwao~\cite{Iwao19,Iwao20}.
Additionally, Iwao and Nagai have shown that dual Grothendieck polynomials for rectangular shapes are a scalar multiple of the $\tau_n$ function used to solve the discrete Toda equation when the Lax matrix degenerates to a single eigenvalue~\cite{IN18}.

In this paper, we study refined dual Grothendieck polynomials using integrable systems and the LPP model. To define our vertex model for $\dG_{\lambda}(\xx; \bt)$, we use the Schur decomposition and interpret monomials as a family of nonintersecting lattice paths (NILP). We then reinterpret the NILP as a state in an integrable five-vertex model. We then apply ideas and techniques from integrable systems to give
\begin{itemize}
\item a new proof of the Jacobi--Trudi formula for straight shapes;
\item refined versions of Yeliussizov's identities, including the Littlewood and Cauchy(--Littlewood) identities for dual Grothendieck polynomials;
\item a previously unnoticed symmetry; and
\item an analog of identities due to Feh\'er, N\'emethi, and Rim\'anyi~\cite{FNR12} for Schur functions and to Guo and Sun~\cite{GS19} for factorial Grothendieck polynomials.
\end{itemize}
Moreover, by using results of Chen, Li, and Louck~\cite{CLL02}, we show that $\dG_{\lambda}(\xx; \bt)$ is a multi-Schur function defined by Lascoux~\cite{Lascoux03}, yielding refined versions of results by Lascoux and Naruse~\cite{LN14}.
We remark that our lattice model is not acting on a rectangular grid, but on a ``jagged'' grid, which we later found this idea has only been previously used in~\cite{BBW20}.
Our lattice model is also distinct from the recent work of Gunna and Zinn-Justin~\cite{GZJ20}, which is partially bosonic as opposed to our completely fermionic model.

Next we explore the relation with the general version of LPP and show that the refined version of Yeliussizov's bijection between the random matrices for a particular joint probability $P(\bG(n) = \lambda)$ and reverse plane partitions~\cite[Thm.~1]{Yel20} composed with an iterated version of RSK is equal to the so-called inflation map of Lam--Pylyavskyy~\cite[Thm.~9.8]{LamPyl07}.
This immediately gives a new probabilistic proof of our Cauchy--Littlewood identity.
Consequently, the joint probability $P(\bG(n) = \lambda)$ is $\dG_{\lambda}(\xx; \bt)$ up to an overall factor, which then allows us to give a natural definition of $\dG_{\lambda/\mu}(\xx; \bt)$ using the transition probability $P(\bG(n) = \lambda | \bG(0) = \mu)$ up to an overall factor.
We then refine the algebraic proof of Johansson~\cite{Johansson10} at the case $\xx = \bt = q^{1/2}$ to prove the (dual) Jacobi--Trudi formulas for $\dG_{\lambda/\mu}(\xx; \bt)$, which was recently independently proved by Kim~\cite{Kim20} using plethystic techniques.
We further refine this to give new proof of some of our other identities and a new proof of the result of Baik and Rains~\cite{BR01} (also Johansson~\cite{Johansson00} at $\xx = \bt = q^{1/2}$) relating LPP to the Schur measure on partitions due to Okounkov~\cite{Okounkov00,Okounkov01}

Lastly, we utilize our integrability techniques on the refined version of Grothendieck polynomials $\G_{\lambda}(\xx; \bt)$ recently introduced by Chan and Pflueger~\cite{CP19}. We construct a lattice model on a jagged grid using two different $L$-matrices by using the a refined version of the Schur decomposition due to Lenart~\cite{Lenart00}, which we then relate with nonintersecting lattice paths. This allows us to give a Jacobi--Trudi formula for $\G_{\lambda}(\xx; \bt)$ and a Feh\'er--N\'emethi--Rim\'anyi identity, which does not specialize to the Guo--Sun identity for factorial Grothendieck polynomials~\cite{GS19}.
We remark that this further expands on the connection between Grothendieck polynomials and TASEP in~\cite{MS13}, which is further related with integrable vertex models~\cite{KMO15,KMO16,KMO16II}.


This paper is organized as follows.
In Section~\ref{sec:background}, we give some background on refined (dual) Grothendieck polynomials and the related combinatorics.
In Section~\ref{sec:refined_models}, we relate refined dual Grothendieck polynomials with an integrable five-vertex lattice model and nonintersecting lattice paths.
In Section~\ref{sec:probability}, we describe refined dual Grothendieck polynomials using the transition probability of the last-passage percolation stochastic process.
In Section~\ref{sec:refined_grothendiecks}, we apply our techniques from Section~\ref{sec:refined_models} to refined Grothendieck polynomials.

\subsection*{Acknowledgements}

The authors thank Jang Soo Kim for sharing his preprint~\cite{Kim20} and discussions about his results.
KM would like to thank Hiroshi Naruse for invaluable discussions on the paper~\cite{LN14}.
TS would like to thank Kazumitsu Sakai for invaluable discussions about his five-vertex model with the first author~\cite{MS13,MS14}. TS also would like to thank Valentin Buciumas for the numerous discussions on vertex models, Damir Yeliussizov for useful discussions on his results, and Darij Grinberg for useful discussions.
The authors would like to thank Darij Grinberg and Damir Yeliussizov for comments on an earlier draft of this paper.
This work benefited from computations using {\sc SageMath}~\cite{sage,combinat}.

\section{Combinatorics}
\label{sec:background}

In this section, we give the necessary background and combinatorics.
Let $S_n$ denote the permutation group on the letters $\{1, 2, 3, \dotsc, n\}$.
Let $\xx = (x_1, x_2, \dotsc, x_n)$ and $\yy = (y_1, y_2, \dotsc, y_n)$ be sequences of indeterminates.
Unless otherwise stated, all indeterminates will commute.
We will also allow $n = \infty$.
Let $\xx \sqcup \yy := (x_1, \dotsc, x_n, y_1, \dotsc, y_n)$ denote the concatenation of sequences.
For a sequence $\alpha = (\alpha_1, \alpha_2, \dotsc, \alpha_n)$, define $\xx^{\alpha} := x_1^{\alpha_1} x_2^{\alpha_2} \dotsm x_n^{\alpha_n}$.

\subsection{Partitions}

A \defn{partition} $\lambda$ is a weakly decreasing finite sequence of nonnegative integers. The \defn{length} $\ell(\lambda)$ is the index of the last positive entry of $\lambda$.
We equate partitions up to trailing zeros, so, \textit{e.g.}, $(3,2) = (3,2,0)$.
We will often write partitions as words when the largest entry is at most $9$, and we will sometimes drop the trailing $0$ entries.
Let $m_i(\lambda)$ denote the multiplicity of $i$ in $\lambda$.
We equate partitions with their Young diagram, which we draw using English convention and the $(i,j)$-th box is in the $i$-th row (from the top) and $j$-th column (from the left).
We will often consider partitions in a $n \times k$ box, which can be considered as a sequence of $n$ $1$'s and $k$ $0$'s with each vertical step being a $1$ and horizontal step being a $0$.
We call this the \defn{$01$-sequence} of $\lambda$.
The \defn{complement} partition of $\lambda$, denoted $\lambda^{\comp}$, is the other squares in the $n \times k$ box (after a rotation by $\pi$) and obtained by reversing the $01$-sequence.
The \defn{dual} partition of $\lambda$, denoted $\lambda^{\vee}$, is obtained by interchanging $0 \leftrightarrow 1$ in the $01$-sequence (this is also the complement shape but instead reflected over the line $y = x$).
The \defn{conjugate} partition of $\lambda$ is $\lambda' = \lambda^{\comp\vee}$; that is to say it is the partition formed by interchanging $0 \leftrightarrow 1$ and reversing the $01$-sequence (these operations clearly commute) or formed by flipping the partition along the main diagonal (\textit{i.e.}, the line $y = -x$).

\begin{ex}
\label{ex:partition_defs}
Consider the partition $\lambda = (5, 3, 3)$ inside of a $4 \times 6$ rectangle. Then we have
\[
\begin{array}{c@{\hspace{30pt}}c}
\lambda = \ydiagram{6,6,6,6} * [*(gray)]{5,3,3}\,,
&
\begin{array}{l}
\text{$01$-sequence: } 1000110010, \\
\lambda^{\vee} = 0111001101 = (4,3,3,1,1,1), \\
\lambda^{\comp} = 0100110001 = (6,3,3,1), \\
\lambda' = 1011001110 = (3,3,3,1,1),
\end{array}
\end{array}
\]
where $\lambda$ is the shaded portion.
\end{ex}

A partition $\mu$ is \defn{contained} within $\lambda$ if the boxes of $\mu$ are a subset of those of $\lambda$, which we denote $\mu \subseteq \lambda$.
Given partitions $\mu \subseteq \lambda$, the \defn{skew shape} $\lambda / \mu$ is the set of boxes in $\lambda$ that are not in $\mu$.
Note that the white boxes in Example~\ref{ex:partition_defs} are precisely those in the skew shape $6666 / 533$.

A \defn{Grassmannian permutation} is a permutation with precisely one descent in position $k$. These are in bijection with partitions inside of a $k \times (n-k)$ rectangle given by
\[
w \mapsto (w(k) - k, \dotsc, w(1) - 1).
\]
Let $w(\lambda)$ denote the Grassmannian permutation corresponding to the partition $\lambda$.

Let $\rho_{\ell} = (\ell-1, \ell-2, \dotsc, 1, 0)$ denote the staircase partition. For $\lambda = (\lambda_1, \dotsc, \lambda_\ell)$, define
\[
\lambda \pm \rho_{\ell} := (\lambda_1 \pm (\ell-1), \lambda_2 \pm (\ell-2), \dotsc, \lambda_{\ell-1} \pm 1, \lambda_{\ell}).
\]

Let $e_{\lambda}(\xx)$ and $h_{\lambda}(\xx)$ denote the elementary and homogeneous symmetric functions corresponding to $\lambda$. For more on symmetric functions, we refer the reader to~\cite{ECII}.

For the remainder of this section, we fix a partition $\lambda = (\lambda_1, \dotsc, \lambda_{\ell})$ of length $\ell$.

\subsection{The Lindstr\"om--Gessel--Viennot lemma}

We give the \defn{Lindstr\"om--Gessel--Viennot (LGV) Lemma}~\cite{GV85,Lindstrom73}, which allows us to express the sum over a family of nonintersecting lattice paths as a determinant.

Let $\Delta$ be an edge-weighted directed graph.
We will sometimes refer to an edge as an \defn{arc}, and for an arc $a$, denote the \defn{weight} by $\wt(a)$.
Consider two tuples of vertices $\uu = (u_1, u_2, \dotsc, u_k)$ and $\vv = (v_1, v_2, \dotsc, v_k)$ of $\Delta$.
Then a \defn{nonintersecting lattice path tuple (NILP)} from $\uu$ to $\vv$ is a tuple $(p_1, p_2, \dotsc, p_k)$ of (directed) paths in $\Delta$ such that
\begin{itemize}
\item each $p_i$ is a path from $u_i$ to $v_i$;
\item no two of the paths $p_1, p_2, \dotsc, p_k$ have a vertex in common.
\end{itemize}
Let $N(\uu, \vv)$ denote the set of all NILPs from $\uu$ to $\vv$.
Define the \defn{weight} of a path $p = (a_1, a_2, \dotsc, a_{\ell})$, and NILP $\pp = (p_1, p_2, \dotsc, p_k)$ to be
\[
\wt(p) = \prod_{i=1}^{\ell} \wt(a_i),
\qquad\qquad\qquad
\wt(\pp) = \prod_{i=1}^k \wt(p_i).
\]

\begin{lemma}[{LGV lemma~\cite{GV85,Lindstrom73}}]
We have
\[
\det \left[ \sum_{\pp \in N(u_i, v_j)} \wt(\pp) \right] = \sum_{\pp \in N(\uu, \vv)} \wt(\pp).
\]
\end{lemma}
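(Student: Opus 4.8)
The plan is to prove this standard Lindstr\"om--Gessel--Viennot lemma by a sign-reversing involution, following Lindstr\"om's original argument as streamlined by Gessel and Viennot. Throughout I assume, as is implicit in the statement, that $\Delta$ is acyclic and that the sources $\uu$ and sinks $\vv$ are arranged so that the identity is the only permutation admitting a nonintersecting family; under this hypothesis the right-hand side genuinely ranges over NILPs with $p_i$ running from $u_i$ to $v_i$.

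First I would unravel both sides into sums over path tuples. Writing $M_{ij} := \sum_{\pp \in N(u_i, v_j)} \wt(\pp)$ for the generating function of all single directed paths from $u_i$ to $v_j$, the Leibniz expansion gives
\[
\det[M_{ij}] = \sum_{\sigma \in S_k} \operatorname{sgn}(\sigma) \prod_{i=1}^k M_{i,\sigma(i)} = \sum_{\sigma \in S_k} \operatorname{sgn}(\sigma) \sum_{\pp \in \mathcal{P}(\sigma)} \wt(\pp),
\]
where $\mathcal{P}(\sigma)$ is the set of all (not necessarily nonintersecting) tuples $\pp = (p_1, \dotsc, p_k)$ of directed paths with $p_i$ running from $u_i$ to $v_{\sigma(i)}$. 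The factorization of $\prod_i M_{i,\sigma(i)}$ into a sum over such tuples is valid precisely because $\wt(\pp) = \prod_i \wt(p_i)$ is multiplicative. Thus the determinant is a signed sum over all pairs $(\sigma, \pp)$ with $\pp \in \mathcal{P}(\sigma)$.

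Next I would construct a weight-preserving, sign-reversing involution $\Phi$ on the subset of pairs for which $\pp$ has at least one shared vertex. Given such a pair, select the smallest index $i$ for which $p_i$ meets another path, let $v$ be the first vertex along $p_i$ lying on some other path, and let $j > i$ be the smallest index of a path through $v$; then swap the terminal segments of $p_i$ and $p_j$ after $v$. This replaces $\pp$ by a tuple $\pp'$ using the same multiset of edges, so $\wt(\pp') = \wt(\pp)$, while the endpoint matching changes by the transposition $(i\ j)$, so $\operatorname{sgn}$ flips. The crux is verifying that $\Phi$ is a genuine involution: since the swap leaves every path unchanged up to and including the vertex $v$ and does not alter the multiset of visited vertices, reapplying the selection rule recovers the same $i$, $v$, and $j$ and undoes the swap.

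All intersecting pairs therefore cancel in the signed sum, leaving only the fixed points of $\Phi$, namely the nonintersecting tuples. For a NILP the compatibility hypothesis forces $\sigma = \id$ with $\operatorname{sgn}(\sigma) = 1$, so the surviving terms are exactly $\sum_{\pp \in N(\uu, \vv)} \wt(\pp)$, which proves the claim. I expect the main obstacle to be the careful argument that the triple $(i, v, j)$ is preserved by the swap, which is what makes $\Phi$ an involution rather than merely a sign-reversing map; the acyclicity of $\Delta$ enters here to guarantee that ``the first vertex along $p_i$'' and the tail-swapping operation are well defined.
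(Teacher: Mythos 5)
Your proof is correct: it is precisely the classical sign-reversing tail-swapping involution of Lindstr\"om and Gessel--Viennot, which is the argument the paper implicitly relies on, since the paper cites \cite{GV85,Lindstrom73} for this lemma and gives no proof of its own. You also correctly identified and made explicit the two hypotheses the paper leaves implicit (acyclicity of $\Delta$, and the compatibility condition that only $\sigma = \id$ admits nonintersecting families), and your verification that the triple $(i,v,j)$ is preserved by the swap---using acyclicity to rule out a path revisiting a vertex---is exactly the crux that makes $\Phi$ an involution.
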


\subsection{Semistandard tableaux, Schur functions, and related combinatorics}
\label{sec:bg_tableaux}

A \defn{semistandard (Young) tableau} is a filling of the boxes of $\lambda$ with positive integers such that rows weakly increase from left-to-right and columns strictly increase from top-to-bottom.
Let $\ssyt^n(\lambda)$ denote the set of semistandard (Young) tableaux of shape $\lambda$ and every entry is at most $n$.
For a semistandard tableau $T$ with maximum entry $n$, denote $\xx^T := x_1^{m_1} x_2^{m_2} \dotsm x_n^{m_n}$, where $m_i$ is the number of $i$'s that occur in $T$.
We define a \defn{Schur function} as $s_{\lambda}(\xx) = \sum_{T \in \ssyt^n(\lambda)} \xx^T$.

Next we recall the description of semistandard tableaux using nonintersecting lattice paths (NILPs) coming from the classical proof of the Jacobi--Trudi identities for Schur functions~\cite[First proof of Thm.~7.16.1]{ECII} using the LGV lemma.
We first need to consider the (infinite) directed graph with vertex set $\ZZ^2$ and arcs
\begin{align*}
\text{\defn{north-steps}: } (i,j) & \to (i,j+1) & & \text{for all } (i,j) \in \ZZ^2, \qquad \text{and} \\
\text{\defn{east-steps}: } (i,j) & \to (i+1,j) & & \text{for all } (i,j) \in \ZZ^2.
\end{align*}
Define the weight of an arc $a$ as
\[
\wt(a) :=
\begin{cases}
x_j & \text{if $a$ is an east-step } (i,j) \to (i+1,j), \\
1 & \text{if $a$ is a north-step } (i,j) \to (i,j+1),
\end{cases}
\]
Therefore, all north-steps have weight $1$, while east-steps with $y$-coordinate $j$ have weight $x_j$.
There is a well-known weight-preserving bijection between between $\ssyt^n(\lambda)$, and $N(\uu, \vv)$ with $\uu = \bigl( (\ell,1) \dotsc, (1,1) \bigr)$ and $\vv = \bigl( (\ell + \lambda_1, n), \dotsc, (1 + \lambda_{\ell}, n) \bigr)$ given as follows.
For $T \in \ssyt^n(\lambda)$, construct the path $p_i$ for the $i$-th row $(r_1, \dotsc, r_{\lambda_i})$ by the $j$-th horizontal step being at height~ $r_j$.

Another combinatorial object that are in bijection with $\ssyt^n(\lambda)$ are \defn{Gelfand--Tsetlin (GT) patterns}, which are triangular arrays of integers $(a_{i,j} \mid 1 \leq i \leq n \text{ and } 1 \leq j \leq i)$ that satisfy the conditions $a_{i+1,j} \geq a_{i,j} \geq a_{i+1,j+1}$ and $a_{n,i} = \lambda_i$.
We draw GT patterns with the first row at the bottom, and so the top row is equal to $\lambda$.
The GT pattern that corresponds to a semistandard Young tableau $T$ is given by the $i$-th row equals to the shape containing all entries at most $i$ in $T$.

\begin{ex}
Under the bijections described above with $n = 4$, we have
\[
\ytableaushort{1124,233,4}
\quad \longleftrightarrow \quad
\begin{tikzpicture}[>=latex,baseline=1.65cm, scale=0.7]
\draw[very thin, black!20] (1-0.2, 1-0.2) grid (7.2, 4.2);
\draw[-, very thick, darkred] (1,1) -- (1,4) -- node[midway,above=-2pt] {\tiny $x_4$} (2,4);
\fill[darkred] (1,1) circle (0.15) node[below=3pt] {\small $u_3$};
\fill[darkred] (2,4) circle (0.15) node[above=3pt] {\small $v_3$};
\draw[-, very thick, dgreencolor] (2,1) -- (2,2) -- node[midway,above=-2pt] {\tiny $x_2$} (3,2) -- (3,3) -- node[midway,above=-2pt] {\tiny $x_3$} (4,3) -- node[midway,above=-2pt] {\tiny $x_3$} (5,3) -- (5,4);
\fill[dgreencolor] (2,1) circle (0.15) node[below=3pt] {\small $u_2$};
\fill[dgreencolor] (5,4) circle (0.15) node[above=3pt] {\small $v_2$};
\draw[-, very thick, dbluecolor] (3,1) -- node[midway,above=-2pt] {\tiny $x_1$} (4,1) -- node[midway,above=-2pt] {\tiny $x_1$} (5,1) -- (5,2) -- node[midway,above=-2pt] {\tiny $x_2$} (6,2) -- (6,4) -- node[midway,above=-2pt] {\tiny $x_4$} (7,4);
\fill[dbluecolor] (3,1) circle (0.15) node[below=3pt] {\small $u_1$};
\fill[dbluecolor] (7,4) circle (0.15) node[above=3pt] {\small $v_1$};
\end{tikzpicture}
\quad \longleftrightarrow \quad
\begin{array}{ccccccc}
4 && 3 && 1 && 0
\\ & 3 && 3 && 0
\\ && 3 && 1
\\ &&& 2
\end{array}.
\]
\end{ex}

A semistandard tableau of shape $\lambda$ is \defn{(row) flagged} by $\ff = (f_1, \dotsc, f_{\ell}) \in \ZZ^{\ell}$ if every entry in row $i$ is at most $f_i$. In terms of the bijection above, the end points are instead $\vv = \bigl( (\ell + \lambda_1, f_1), \dotsc, (1 + \lambda_{\ell}, f_{\ell}) \bigr)$.
A \defn{(row) flagged Schur function} $s_{\lambda}^{\ff}(\xx)$ is the sum over all semistandard tableaux (with maximum entry at most $n$) of shape $\lambda$ with a fixed flagging $\ff = (f_1, \dotsc, f_{\ell})$ and are related to Schubert polynomials~\cite{LS82II}. They have a \defn{Jacobi--Trudi formula} arising from the LGV lemma~\cite{GV85,Wachs85} as indicated above.

All of the above naturally extends to skew shapes, where the starting points for the paths are instead $\uu = \bigl( (\ell+\mu_1,0) \dotsc, (1+\mu_{\ell},0) \bigr)$.
For a flagged semistandard tableau $T$ such that the $i$-th row is strictly less than $i$ (thus is flagged by $i-1$), we follow~\cite{LamPyl07} and call $T$ an \defn{elegant tableau}.
Let $\et(\lambda/\mu)$ denote the set of elegant tableaux of shape $\lambda/\mu$, and note that necessarily $\lambda_1 = \mu_1$.

Note we can generalize of the Jacobi--Trudi form of a Schur function to obtain the multi-Schur functions of Lascoux~\cite{Lascoux03}. Indeed, consider any sequence $I = (i_1, \dotsc, i_{\ell}) \in \ZZ^{\ell}$ and define the notation $S_i(\xx - \bt)$, where $\bt = (t_1, \dotsc, t_m)$ is a (possibly infinite) sequence of indeterminates, by the formal power series identity
\[
\sum_{i \geq 0} S_i(\xx - \bt) u^i = \prod_{i=1}^m (1 - t_i u) \prod_{i=1}^n (1 - x_i u)^{-1}.
\]
Next, given sequences of distinct indeterminates $\xx^{(1)}, \dotsc, \xx^{(\ell)}$ and $\bt^{(1)}, \dotsc, \bt^{(\ell)}$, the \defn{multi-Schur function} is defined by
\[
s_I\Bigl(\xx^{(1)} - \bt^{(1)}, \dotsc, \xx^{(\ell)} - \bt^{(\ell)} \Bigr) = \det \Bigl[ S_{i_k+h-k}\Bigl(\xx^{(k)} - \bt^{(k)} \Bigr) \Bigr]_{1 \leq h, k \leq \ell}.
\]
For a partition $\lambda$, we write $I(\lambda)$ as the sequence given by reversing $\lambda$; \textit{i.e.}, $I(\lambda) = (\lambda_{\ell}, \dotsc, \lambda_1)$.

In the sequel, we require the finite Cauchy formula for Schur functions:
\begin{equation}
\label{eq:finite_Cauchy_formula}
\sum_{\lambda \subseteq m^\ell}
s_\lambda(x_1,\dotsc,x_n)
s_{\lambda^\dagger}(t_1^{-1},\dotsc,t_\ell^{-1})=s_{m^\ell}(x_1,\dotsc,x_n,t_1^{-1},\dotsc,t_\ell^{-1}).
\end{equation}

\subsection{Generalized tableaux}

A \defn{(semistandard) set-valued tableau} is a filling of the boxes of $\lambda$ with finite non-empty sets of positive integers such that for any boxes
\[
\ytableaushort{AB,C}\,, \qquad
\text{we have }
\max A \leq \min B \text{ and } \max A < \min C.
\]
Let $\svt^n(\lambda)$ denote the set of set-valued tableaux of shape $\lambda$ and max entry $n$.
Let $\bt = (t_1, \dotsc, t_{\ell-1})$ be indeterminates.
Following Chan and Pflueger~\cite{CP19}, the \defn{refined (symmetric) Grothendieck polynomial} is defined as
\begin{equation}
\label{eq:Grothendieck_defn}
\G_{\lambda}(\xx; \bt) := \sum_{T \in \svt^n(\lambda)} (-1)^{\absval{\mathsf{e}(T)}} \bt^{\mathsf{e}(T)} \xx^T,
\end{equation}
where $\mathsf{e}(T) = (\mathsf{e}_1, \mathsf{e}_2, \dotsc, \mathsf{e}_n)$ with $e_i$ being the number of extra entries in row $i$ and $\absval{\mathsf{e}(T)} = \sum_{i=1}^n \mathsf{e}_i$.

\begin{remark}
If we specialize $t_1 = t_2 = \cdots = t_{\ell-1} = -\beta$, we recover the notion of a (symmetric) Grothendieck polynomial given by Buch~\cite{Buch02}.
The fact we specialize to $-\beta$, as opposed to $\beta$, comes from the $(-1)^{\absval{\mathsf{e}(T)}}$ coefficient in the definition~\eqref{eq:Grothendieck_defn}.
\end{remark}

A \defn{reverse plane partition} is a filing of the boxes of $\lambda$ such that both columns and rows are weakly increasing. Let $\rpp^n(\lambda)$ denote the set of reverse plane partitions of shape $\lambda$ and max entry $n$.
Define $a(T) = (a_1, a_2, \dotsc, a_n)$ by $a_i$ equal to the number of \emph{columns} containing an $i$.
Define $b(T) = (b_1, b_2, \dotsc, b_n)$ by $b_r$ begin the number of boxes $\bbb$ in the $r$-th row of $T$ whose entry equals the entry in the box directly below $\bbb$.
Grinberg, Galashin, and Liu in~\cite{GGL16} defined the \defn{refined dual Grothendieck polynomial} as
\[
\dG_{\lambda}(\xx; \bt) := \sum_{T \in \rpp^n(\lambda)} \bt^{b(T)} \xx^{a(T)}.
\]
Note that for $a(T)$, the $a_i$ counts the number of boxes $\bbb$ with an $i$ such that the box (immediately) below it does not contain an $i$.

\begin{thm}[{\cite[Eq.~(72)]{Yel17}}]
\label{thm:dualG_schur_decomposition}
We have
\[
\dG_{\lambda}(\xx, \bt) = \sum_{\mu \subseteq \lambda} e_{\lambda}^{\mu}(\bt) s_{\mu}(\xx),
\qquad \text{ where }
e_{\lambda}^{\mu}(\bt) := \sum_{T \in \et(\lambda/\mu)} \bt^T.
\]
\end{thm}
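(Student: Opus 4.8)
The plan is to prove the decomposition bijectively, refining the map of Lam--Pylyavskyy by tracking row indices. First I would rewrite the weight of a reverse plane partition $T$ in a box-local way. Call a box $\bbb$ of $T$ a \emph{repeat} if the box immediately below it lies in $\lambda$ and carries the same entry as $\bbb$, and call it a \emph{non-repeat} otherwise. By the reinterpretations of $a(T)$ and $b(T)$ recorded just after the definition of $\dG_{\lambda}$, the exponent $a_i$ is the number of non-repeat boxes with entry $i$, while $b_r$ is the number of repeat boxes lying in row $r$. Consequently
\[
\bt^{b(T)} \xx^{a(T)} = \prod_{\bbb \text{ non-repeat}} x_{T(\bbb)} \prod_{\bbb \text{ repeat}} t_{\operatorname{row}(\bbb)},
\]
so the theorem is equivalent to producing a bijection
\[
\Phi \colon \rpp^n(\lambda) \;\longrightarrow\; \bigsqcup_{\mu \subseteq \lambda} \ssyt^n(\mu) \times \et(\lambda/\mu), \qquad T \longmapsto (S,E),
\]
for which $\xx^S = \prod_{\bbb \text{ non-repeat}} x_{T(\bbb)}$ and $\bt^E = \prod_{\bbb \text{ repeat}} t_{\operatorname{row}(\bbb)}$, the latter meaning that $\Phi$ sends each repeat box lying in row $r$ to an elegant cell carrying the entry $r$. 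Granting such a $\Phi$, summing over $T$ and factoring gives $\dG_{\lambda}(\xx;\bt) = \sum_{\mu \subseteq \lambda} \bigl( \sum_{E \in \et(\lambda/\mu)} \bt^E \bigr)\bigl( \sum_{S \in \ssyt^n(\mu)} \xx^S \bigr) = \sum_{\mu \subseteq \lambda} e_{\lambda}^{\mu}(\bt)\, s_{\mu}(\xx)$, which is the claim.

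To build $\Phi$ I would use an uncrowding-type procedure that peels off repeats one at a time. A vertically repeated entry is the only obstruction to $T$ being a semistandard tableau, so while $T$ still has a repeat box I select a canonical one $\bbb$ (say the rightmost repeat in the topmost row that contains one), delete the lower of the two equal boxes, and apply a (reverse) jeu-de-taquin slide to the remaining straight-shape portion so that it again occupies a Young diagram; this vacates an outer corner of the current straight shape, into which I place an elegant cell whose entry is $\operatorname{row}(\bbb)$. Iterating until no repeat remains produces a genuine semistandard tableau $S$ of some shape $\mu$ (the repeat-free core) together with the accumulated skew filling $E$ on $\lambda/\mu$. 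The key structural facts to check are that $\mu$ is a partition contained in $\lambda$ with $\mu_1 = \lambda_1$ (each column of $\lambda$ retains at least one cell in the core, so the first row is never shortened), that each recorded entry $r$ lands in a row strictly greater than $r$ so that $E$ is elegant, and that the non-repeat entries of $T$ are exactly the entries of $S$ while the repeats match the cells of $E$ with the stated weights.

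The main obstacle is precisely the well-definedness and invertibility of $\Phi$: unlike for ordinary tableaux, a repeated value in a column of an RPP may migrate to a different column when the Schur part is slid, so the operation is genuinely nonlocal, and one must verify that the canonical choice of $\bbb$ renders the sequence of slides independent of incidental choices, that it terminates, and that it preserves both column weak-monotonicity (on the RPP side) and the semistandard/elegant conditions (on the target side). I would handle this by exhibiting the inverse ``crowding'' map, which reinserts the cells of $E$ back into $S$ from the outside inward --- an elegant entry $r$ being forward-slid up to row $r$, where it duplicates an entry and reinstates a vertical repeat --- and then checking that crowding and uncrowding are mutually inverse through a local analysis of a single slide. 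Once $\Phi$ and its inverse are matched step by step, the weight bookkeeping of the first paragraph is immediate and the decomposition follows.
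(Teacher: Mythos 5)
Your first paragraph and the overall reduction are exactly what the paper does: it rewrites the weight via repeat/non-repeat boxes and reduces the theorem to a weight-preserving bijection $\rpp^n(\lambda) \to \bigsqcup_{\mu \subseteq \lambda} \ssyt^n(\mu) \times \et(\lambda/\mu)$ sending a repeat in row $r$ to an elegant entry $r$. The difference is that the paper obtains this bijection by citing the Lam--Pylyavskyy inflation map~\cite[Thm.~9.8]{LamPyl07} (with Galashin's crystal structure~\cite{Galashin17} as an alternative), whereas you propose to construct it from scratch by a jeu-de-taquin uncrowding. That construction, with the canonical choice you fix (rightmost repeat in the \emph{topmost} row), is genuinely wrong: a slide moves values upward, so processing a repeat in row $r$ can turn a not-yet-processed repeat in row $r+1$ into a repeat in row $r$, and your rule then records the wrong row. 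The weight bookkeeping is therefore not ``immediate''; with your order it is false, and in fact the map is not even injective.

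Concretely, take $\lambda = (2,2,1)$ and consider
\[
T_1 = \ytableaushort{11,11,2}\,, \qquad
T_2 = \ytableaushort{11,12,1}\,, \qquad
S = \ytableaushort{11,2}\,,
\]
where $T_1$ has weight $t_1^2 x_1^2 x_2$ and $T_2$ has weight $t_1 t_2 x_1^2 x_2$ (its repeats sit in rows $1$ and $2$). Run your procedure on $T_2$: the only repeat in the topmost row is $(1,1)$, so you delete $(2,1)$; the slide is forced to pull the $1$ at $(3,1)$ up into $(2,1)$ (moving the $2$ left would break the column condition), the hole exits at the corner $(3,1)$, and you record a $1$ there. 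But now $(1,1)$ is again a repeat --- the repeat that originally lived in row $2$ has migrated to row $1$ --- so the next step records a second $1$, at $(2,2)$. The output is the pair $(S, E)$ with $E$ having entry $1$ in both cells $(2,2)$ and $(3,1)$: its $\bt$-weight $t_1^2$ is wrong, and it coincides with the image of $T_1$ under your procedure, so $\Phi$ is not injective and no inverse ``crowding'' map can exist. Processing repeats from the \emph{bottom} row upward repairs this example (one records a $2$ at $(3,1)$ before touching row $1$), but identifying an order for which the procedure is well defined, weight preserving, and invertible is precisely the nontrivial content of the cited Lam--Pylyavskyy theorem; your proposal defers exactly those verifications, and the one concrete choice it does make breaks them.
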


We can see this by considering the bijection
\[
\phi \colon \rpp^n(\lambda) \mapsto \bigsqcup_{\mu \subseteq \lambda} \ssyt^n(\mu) \times \et(\lambda/\mu)
\]
from~\cite[Thm.~9.8]{LamPyl07}, which we call the \defn{inflation map}.
It is straightforward to see the inflation map $\phi$ takes a box labeled by $i$ in the elegant tableau to a column-duplicate entry in the $i$-th row of the reverse plane partition.\footnote{This was sketched out in~\cite[Sec.~10.1]{Yel17}.}
This can also be seen as a direct consequence of the crystal structure on reverse plane partitions given by Galashin~\cite{Galashin17} as the inflation map is defined using RSK (we refer the reader to~\cite[Ch.~7.11]{ECII} for the definition of the RSK bijection), which is a crystal isomorphism.
We refer the reader to~\cite{BS17} for more information on crystals.

The refined Grothendieck polynomials and the refined dual Grothendieck polynomials form dual bases under the Hall inner product, which is defined by $\langle s_{\lambda}, s_{\mu} \rangle = \delta_{\lambda\mu}$.
The following proof is due to Darij Grinberg (see also~\cite[Rem.~3.9]{CP19}).

\begin{prop}[Grinberg]
We have
\[
\langle \G_{\lambda}(\xx; \bt), \dG_{\mu}(\xx; \bt) \rangle = \delta_{\lambda\mu}.
\]
\end{prop}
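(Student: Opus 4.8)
The plan is to expand both factors in the Schur basis and exploit that the Hall inner product is defined by the orthonormality of the $s_\nu$. Writing $\G_\lambda(\xx;\bt) = \sum_\nu c_\lambda^\nu(\bt)\, s_\nu(\xx)$ for the (as yet unused) Schur expansion of the refined Grothendieck polynomial and using Theorem~\ref{thm:dualG_schur_decomposition} for $\dG_\mu$, bilinearity gives
\[
\inner{\G_\lambda(\xx;\bt)}{\dG_\mu(\xx;\bt)} = \sum_\nu c_\lambda^\nu(\bt)\, e_\mu^\nu(\bt).
\]
Since every set-valued tableau restricts (via the minimal entry in each box) to a semistandard tableau of the same shape, the unique term with no extra entries contributes $s_\lambda$ with coefficient $1$, so $(c_\lambda^\nu)$ is unitriangular with $c_\lambda^\nu = 0$ unless $\nu \supseteq \lambda$; dually $e_\mu^\nu = 0$ unless $\nu \subseteq \mu$, with $e_\mu^\mu = 1$. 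Hence the sum is supported on $\lambda \subseteq \nu \subseteq \mu$, which already forces the inner product to vanish unless $\lambda \subseteq \mu$ and contributes exactly $1$ on the diagonal $\lambda = \mu$.

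It remains to kill the off-diagonal terms when $\lambda \subsetneq \mu$. First I would record the explicit Schur expansion of $\G_\lambda$, a refined form of Lenart's formula, in which $c_\lambda^\nu(\bt) = (-1)^{|\nu/\lambda|}\,\widehat{e}_\lambda^\nu(\bt)$ with $\widehat{e}_\lambda^\nu(\bt)$ a $\bt$-weighted count of elegant tableaux on the skew shape $\nu/\lambda$ (up to the transpose convention that trades the ``extra entries in a row'' bookkeeping of $\svt$ for the ``column duplicate'' bookkeeping of $\rpp$). With this in hand, the proposition reduces to the purely combinatorial inversion identity
\[
\sum_{\lambda \subseteq \nu \subseteq \mu} (-1)^{|\nu/\lambda|}\, \widehat{e}_\lambda^\nu(\bt)\, e_\mu^\nu(\bt) = \delta_{\lambda\mu},
\]
that is, the statement that the two unitriangular transition matrices for $\G$ and $\dG$ are inverse transposes of one another.

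The hard part will be this inversion identity, and I would prove it by a sign-reversing, $\bt$-weight-preserving involution on the set of pairs $(S,T)$, where $S$ is an elegant tableau on $\nu/\lambda$ carrying the sign $(-1)^{|\nu/\lambda|}$ and $T$ is an elegant tableau on $\mu/\nu$, as $\nu$ ranges over the interval $[\lambda,\mu]$. The involution should select a distinguished boundary box and move it across the cut between $S$ and $T$, changing $|\nu/\lambda|$ by one and hence flipping the sign while preserving the total $\bt$-weight, so that everything cancels except the lone fixed point $\nu = \lambda = \mu$, yielding $\delta_{\lambda\mu}$; this is the refined lift of the classical Lenart–Lam--Pylyavskyy cancellation. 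The genuine obstacles are (i) pinning down the correct signs and $\bt$-exponents in the refined Lenart expansion so the two elegant-tableau weights are compatible, and (ii) tracking the conjugation relating the row-based $\svt$ statistics to the column-based $\rpp$ statistics so that the involution is genuinely weight-preserving. Alternatively, one may bypass the explicit involution altogether by inverting the unitriangular matrix $(c_\lambda^\nu)$ directly and recognizing, through Theorem~\ref{thm:dualG_schur_decomposition}, that its inverse carries precisely the elegant-tableau entries $e_\mu^\nu$.
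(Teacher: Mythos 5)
Your reduction is set up correctly: expanding both sides in the Schur basis, the triangularity $c_\lambda^\nu = 0$ unless $\nu \supseteq \lambda$ (with $c_\lambda^\lambda = 1$) and $e_\mu^\nu = 0$ unless $\nu \subseteq \mu$ (with $e_\mu^\mu = 1$) confines the pairing to $\lambda \subseteq \nu \subseteq \mu$, which disposes of the cases $\lambda \not\subseteq \mu$ and $\lambda = \mu$. The genuine gap is everything after that. The inversion identity
\[
\sum_{\lambda \subseteq \nu \subseteq \mu} (-1)^{|\nu/\lambda|}\, \widehat{e}_\lambda^\nu(\bt)\, e_\mu^\nu(\bt) = \delta_{\lambda\mu}
\]
is the entire content of the proposition, and your proposal does not prove it: the sign-reversing involution is described only by what it ``should'' do, and the two obstacles you flag yourself are precisely where such a construction can fail. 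In particular, the two matrices are not weighted counts of the same objects: in the refined Lenart expansion (Proposition~\ref{prop:refined_G_schur}) the Grothendieck coefficients count \emph{increasing} elegant tableaux (rows strictly increasing as well as columns), with an entry of value $v$ in row $i$ weighted by $t_{i-v}$, whereas in Theorem~\ref{thm:dualG_schur_decomposition} one counts ordinary elegant tableaux with an entry of value $v$ weighted by $t_v$. An involution must convert between these two weightings while flipping the sign, and nothing in your sketch addresses how. Your fallback --- invert the unitriangular matrix $(c_\lambda^\nu)$ and ``recognize, through Theorem~\ref{thm:dualG_schur_decomposition}, that its inverse carries the entries $e_\mu^\nu$'' --- is circular: that theorem gives the Schur expansion of $\dG_\mu$ and says nothing about the inverse of $(c_\lambda^\nu)$; the recognition you invoke \emph{is} the inversion identity.

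It is instructive to contrast this with the paper's route, which avoids the hard combinatorial step entirely. Instead of the Schur expansion of $\G_\lambda$, the paper uses the expansion in the opposite direction,
\[
s_{\lambda}(\xx) = \sum_{\mu \supseteq \lambda} e_{\lambda}^{\mu}(\bt)\, \G_{\mu}(\xx; \bt),
\]
the natural $\bt$-refinement of~\cite[Thm.~4.1]{CP19} and~\cite[Thm.~2.7]{Lenart00}, whose coefficients are the \emph{same} elegant-tableau polynomials appearing in Theorem~\ref{thm:dualG_schur_decomposition}. Once $s$ is written in terms of the $\G$'s and $\dG$ in terms of the $s$'s with one common transition matrix, the statement $\inner{\G_\lambda}{\dG_\mu} = \delta_{\lambda\mu}$ follows in one line from Schur orthogonality: the matrix inversion you are trying to establish by hand is already packaged in the cited identity. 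To make your argument complete, you must either prove that cited expansion (say, by the $\bt$-weighted path or uncrowding arguments of Lenart and Chan--Pflueger) or genuinely construct the involution with the weight conversion above; as written, the proposal replaces the one nontrivial step with a plan.
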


\begin{proof}
This follows from the natural generalization of~\cite[Thm.~4.1]{CP19} (starting instead from~\cite[Thm.~3.4]{CP19}) or~\cite[Thm.~2.7]{Lenart00} (using $\bt$ weighted paths) to
\[
s_{\lambda}(\xx) = \sum_{\mu \supseteq \lambda} e_{\lambda}^{\mu}(\bt) \G_{\mu}(\xx; \bt),
\]
Theorem~\ref{thm:dualG_schur_decomposition}, and the orthogonality of the Schur functions.
\end{proof}

\section{Integrability}
\label{sec:refined_models}

In this section, we describe refined dual Grothendieck polynomials $\dG_{\lambda}(\xx; \bt)$ using integrable lattice models that arise from nonintersecting lattice paths.
For this section, we fix a partition $\lambda = (\lambda_1, \dotsc, \lambda_{\ell})$ and a finite sequence of indeterminates $\bt = (t_1, t_2, \dotsc, t_{\ell-1})$; note that there are $\ell-1$ indeterminates.

Let us give a brief overview of our approach.
To construct the nonintersecting lattice paths, we begin by claiming that the dual Grothendieck polynomial $\dG_{\lambda}(\xx; \bt)$ is equal to the specialization of the flagged Schur function $s_{\lambda}^{\ff}(\xx, \bt)$ with $2n-1$ variables and flagging $\ff = (n, n+1, \dotsc, n + \ell - 1)$ as the refined version of Lascoux and Naruse~\cite[Eq.~(3)]{LN14}.
We then reinterpret $s_{\lambda}^{\ff}(\xx, \bt)$ as the multi-Schur function $s_{\lambda}(\xx, \xx + (t_1), \xx + (t_1, t_2), \dotsc, \xx + \bt)$.
Then we obtain an NILP interpretation from this using the results of Chen, Li, and Louck~\cite[Thm.~3.2]{CLL02},\footnote{We will reflect these constructions over the line $y = x$ to match our construction for the vertex models.} but in this interpretation, the boundary points are along a triangular region of length $\ell$. Yet because there are $\ell$ endpoints, the paths in this region can be uniquely extended so they all have the same $y$-coordinate. Furthermore, they do not alter the weight.
Yet our proof is actually noticing that these NILPs are a combinatorial translation of Theorem~\ref{thm:dualG_schur_decomposition} using the LGV lemma (see also~\cite{Wachs85}).
Therefore, we obtain the multi-Schur function description $\dG_{\lambda}(\xx; \bt) = s_{\lambda}^{\ff}(\xx, \bt)$ as a corollary.

By translating the lattice paths into vertex weights, we obtain our model. Note that there is a choice of which side of the vertical steps we count for the weight of the vertex model; this choice will be important later on.
We split the model into two parts: a \defn{rectangular part} corresponding to a Schur function with the spectral parameters $\xx$ and a \defn{jagged part} that gives the corresponding elegant tableau with the spectral parameters $\bt$.

\begin{ex}
\label{ex:NILP_tableaux}
We consider an NILP for the flagged Schur function in~\cite{LN14} for $\lambda = 4322$ and $n = 5$ given below. We compute the corresponding semistandard tableau of shape $\mu = 41$ and elegant tableau in the shaded portion:
\[
\begin{tikzpicture}[>=latex,baseline=2.7cm,scale=0.7]
\draw[very thin, black!20] (-0.2, 1-0.2) grid (7.2, 8.2);
\draw[densely dotted] (-0.2,5.4) -- (7.2,5.4);
\foreach \x in {1,2,3} {
  \draw[-, thick, dashed] (\x-1,1) -- (\x-1,5-\x);
  \fill (3-\x,1) circle (0.1) node[below=1pt] {$\widetilde{u}_{\x}$};
}
\draw[-, very thick, darkred] (3,1) -- node[midway,above=-2pt] {\tiny $x_1$} (4,1) -- node[midway,above=-2pt] {\tiny $x_1$} (5,1) -- (5,3) -- node[midway,above=-2pt] {\tiny $x_3$} (6,3) -- (6,4) -- node[midway,above=-2pt] {\tiny $x_4$} (7,4) -- (7,5);
\fill[darkred] (3,1) circle (0.15) node[below=1pt] {\small \qquad ${\color{black} \widetilde{u}_1 = }\; u_1$};
\fill[darkred] (7,5) circle (0.15) node[right=2pt] {\small $v_1$};
\draw[-, very thick, dgreencolor] (2,2) -- (2,3) -- node[midway,above=-2pt] {\tiny $x_3$} (3,3) -- (3,6) -- node[midway,above=-2pt] {\tiny $t_1$} (4,6) -- node[midway,above=-2pt] {\tiny $t_1$} (5,6);
\fill[dgreencolor] (2,2) circle (0.15) node[below left] {\small $u_2$};
\fill[dgreencolor] (5,6) circle (0.15) node[above right] {\small $v_2$};
\draw[-, very thick, dbluecolor] (1,3) -- (1,6) -- node[midway,above=-2pt] {\tiny $t_1$} (2,6) -- (2,7) -- node[midway,above=-2pt] {\tiny $t_2$} (3,7);
\fill[dbluecolor] (1,3) circle (0.15) node[below left] {\small $u_3$};
\fill[dbluecolor] (3,7) circle (0.15) node[above right] {\small $v_3$};
\draw[-, very thick, UQpurple] (0,4) -- (0,8) -- node[midway,above=-2pt] {\tiny $t_3$} (1,8) -- node[midway,above=-2pt] {\tiny $t_3$} (2,8);
\fill[UQpurple] (0,4) circle (0.15) node[below left] {\small $u_4$};
\fill[UQpurple] (2,8) circle (0.15) node[above right] {\small $v_4$};
\end{tikzpicture}
\quad \longmapsto \quad
\ytableaushort{1134,311,12,33}  *[*(white)]{4,1} *[*(darkred!40)]{4,3,2,2}\,.
\]
We have indicated the extension of the lattice model in the fixed region with the dashed black lines. The portion of the NILP below of the dotted line is the rectangular part and the portion above is the jagged part.
\end{ex}

In the remainder of the section, we will use the NILP and lattice model interpretations to prove a number of additional identities.

\subsection{Vertex models and the Yang--Baxter equation}

A (vertex) model will be a subset of the rectangular grid such that each crossing will be a \defn{vertex} with half edges on the boundary and each edge between vertices or half edge will be labeled. A single vertex with a given set of labels on its half edges will be given a function, called its \defn{Boltzmann weight}, which depends on a single parameter $z$. We will also require a vertex that has been tilted by $\pi/4$ whose Boltzmann weight depends on two parameters $z_i, z_j$. The single vertices
\[
\begin{array}{c@{\hspace{20pt}}c}
\\\toprule
L & R
\\\midrule
\vertex{v'}{a'}{v}{a}
&
\rmatrix{a}{\check{a}'}{a'}{\check{a}}
\\\midrule
\wt(z)
&
\wt(z_i, z_j)
\\\bottomrule
\end{array}
\]
will be graphical representations of the maps
\[
\begin{array}{c@{\hspace{40pt}}c}
L \colon A_z \otimes V \to V \otimes A_z, & R \colon A_{z_i} \otimes \check{A}_{z_j} \to \check{A}_{z_j} \otimes A_{z_i},
\\ a \otimes v \mapsto \wt(z) v' \otimes a', & a \otimes \check{a} \mapsto \wt(z_i, z_j) \check{a}' \otimes a'.
\end{array}
\]
These maps $L$ and $R$ are called the \defn{$L$-matrix} and \defn{$R$-matrix}, respectively.
The space $V$ is called the \defn{quantum space}, and the space $A_z$ (and $\check{A}_z$) is called the \defn{auxiliary space} with spectral parameter $z$.
In this paper, we both take the auxiliary and quantum spaces to be $\CC^2$, and we denote the basis vectors $e_0, e_1$ as $0, 1$ in the graphical descriptions of the $R$- and $L$-matrices and partition functions.
For more on vertex models, see, \textit{e.g.},~\cite[Ch.~19]{BBF11b}.

In our vertex models, often we will fix boundary conditions (\textit{i.e.}, the half-edge labels).
For a model $\fM$ a \defn{state} will be an assignment of labels to the edges such that every vertex has a non-zero Boltzmann weight, where we consider vertices that do not appear in the $L$-matrix to have a Boltzmann weight of $0$. Suppose all of the Boltzmann weights of $\fM$ have a parameter using an indeterminate of $\xx$, then the \defn{weight} of a state $S \in \fM$ will be the product of all of the Boltzmann weights of the vertices in the state. The \defn{partition function} is the sum of the weights of all possible states of $\fM$.

\begin{dfn}[Yang--Baxter equation]
The \defn{Yang--Baxter equation}, written in the $RLL$-equation form, states that
the partition functions of the following two models are equal for any fixed boundary values $\{a, b, c, d, e, f\}$:
\begin{equation}
\label{eq:RLL_relation}
\begin{tikzpicture}[>=latex,baseline=0,scale=0.8]
\draw[->] (-1,-1) node[anchor=east] {$f$} -- (1,1) -- (3,1) node[anchor=west] {$c$};
\draw[->] (-1,1) node[anchor=east] {$a$} -- (1,-1) -- (3,-1) node[anchor=west] {$d$};
\draw[->] (2,-2) node[anchor=north] {$e$} -- (2,2) node[anchor=south] {$b$};
\draw[color=darkred] (2-0.2,-1) arc (180:270:0.2);
\draw (2,-1) node[anchor=north east,color=darkred] {\small $z_j$};
\draw[color=darkred] (2-0.2,1) arc (180:270:0.2);
\draw (2,1) node[anchor=north east,color=darkred] {\small $z_i$};
\draw[color=darkred] (135:0.2) arc (135:225:0.2);
\draw (-0.1,0) node[anchor=east,color=darkred] {\small $z_i,z_j$};
\end{tikzpicture}
\hspace{5em}
\begin{tikzpicture}[>=latex,baseline=0,scale=0.8]
\draw[->] (-1,-1) node[anchor=east] {$f$} -- (1,-1) -- (3,1) node[anchor=west] {$c$};
\draw[->] (-1,1) node[anchor=east] {$a$} -- (1,1) -- (3,-1) node[anchor=west] {$d$};
\draw[->] (0,-2) node[anchor=north] {$e$} -- (0,2) node[anchor=south] {$b$};
\draw[color=darkred] (-0.2,-1) arc (180:270:0.2);
\draw (0,-1) node[anchor=north east,color=darkred] {\small $z_i$};
\draw[color=darkred] (-0.2,1) arc (180:270:0.2);
\draw (0,1) node[anchor=north east,color=darkred] {\small $z_j$};
\draw[color=darkred] (2,0) + (135:0.2) arc (135:225:0.2);
\draw (2-0.1,0) node[anchor=east,color=darkred] {\small $z_i,z_j$};
\end{tikzpicture}
\end{equation}
The name $RLL$-equation comes from the fact it is an equality between two partition functions, each of which is constructed from an $R$-matrix and two $L$-matrices.
Any model built from an $L$-matrix such that there exists an $R$-matrix satisfying the $RLL$-equation will be called \defn{integrable}.
\end{dfn}

For a vertex model, we define operators $A(z), B(z), C(z), D(z)$, where $z$ is a spectral parameter, acting on the tensor product of
quantum spaces as follows:
\begin{align*}
A(z) & = \begin{tikzpicture}[>=latex,scale=0.8,baseline=-3]
\draw[->] (-1,0) node [anchor=east] {$0$} -- (3,0);
\draw[dotted] (3,0) -- (5,0);
\draw[->] (4,0) -- (6,0) node [anchor=west] {$0$};
\foreach \x in {0,2,5} {
  \draw[->] (\x,-1) -- (\x,1);
  \draw[color=darkred] (\x-0.2,0) arc (180:270:0.2);
  \draw (\x,0) node[anchor=north east, color=darkred] {\small $z$};
}
\end{tikzpicture},
&
B(z) & = \begin{tikzpicture}[>=latex,scale=0.8,baseline=-3]
\draw[->] (-1,0) node [anchor=east] {$1$} -- (3,0);
\draw[dotted] (3,0) -- (5,0);
\draw[->] (4,0) -- (6,0) node [anchor=west] {$0$};
\foreach \x in {0,2,5} {
  \draw[->] (\x,-1) -- (\x,1);
  \draw[color=darkred] (\x-0.2,0) arc (180:270:0.2);
  \draw (\x,0) node[anchor=north east, color=darkred] {\small $z$};
}
\end{tikzpicture},
\\
C(z) & = \begin{tikzpicture}[>=latex,scale=0.8,baseline=-3]
\draw[->] (-1,0) node [anchor=east] {$0$} -- (3,0);
\draw[dotted] (3,0) -- (4,0);
\draw[->] (4,0) -- (6,0) node [anchor=west] {$1$};
\foreach \x in {0,2,5} {
  \draw[->] (\x,-1) -- (\x,1);
  \draw[color=darkred] (\x-0.2,0) arc (180:270:0.2);
  \draw (\x,0) node[anchor=north east, color=darkred] {\small $z$};
}
\end{tikzpicture},
&
D(z) & = \begin{tikzpicture}[>=latex,scale=0.8,baseline=-3]
\draw[->] (-1,0) node [anchor=east] {$1$} -- (3,0);
\draw[dotted] (3,0) -- (5,0);
\draw[->] (4,0) -- (6,0) node [anchor=west] {$1$};
\foreach \x in {0,2,5} {
  \draw[->] (\x,-1) -- (\x,1);
  \draw[color=darkred] (\x-0.2,0) arc (180:270:0.2);
  \draw (\x,0) node[anchor=north east, color=darkred] {\small $z$};
}
\end{tikzpicture},
\end{align*}
where the unlabeled edges between vertices mean that
their labels are not fixed to either 0 or 1.
These operators arise from the monodromy matrix, whose trace along the auxiliary space results in the transfer matrix.

Next, we recall from~\cite{MS13,MS14,GK17,WZJ16} an integrable five-vertex model such that the resulting partition function is a Grothendieck polynomial (with an appropriate gauge transformation).
For this model, the Boltzmann weights for the $L$-matrix in Table~\ref{table:L_matrix_Grothendieck} and the corresponding $R$-matrix that satisfies the Yang--Baxter equation is given in Table~\ref{table:R_matrix_Grothendieck}.

\begin{table}
\[
\begin{array}{ccccc}
\toprule
\ta_1 & \ta_2 & \tb_2 & \tc_1 & \tc_2
\\\midrule
\vertex{0}{0}{0}{0}
&
\vertex{1}{1}{1}{1}
&
\vertex{0}{1}{0}{1}
&
\vertex{1}{0}{0}{1}
&
\vertex{0}{1}{1}{0}
\\\midrule
1 & 1 & z& 1 & 1 + \beta z
\\\bottomrule
\end{array}
\]
\caption{The $L$-matrix for the modified fermionic five-vertex model.}
\label{table:L_matrix_Grothendieck}
\end{table}

\begin{table}
\[
\begin{array}{ccccc}
\toprule
\rmatrix{0}{0}{0}{0}
&
\rmatrix{1}{1}{1}{1}
&
\rmatrix{1}{0}{1}{0}
&
\rmatrix{1}{1}{0}{0}
&
\rmatrix{0}{0}{1}{1}
\\\midrule
1 + \beta z_i &1 + \beta z_i & z_j - z_i & 1 + \beta z_i  & 1 + \beta z_j
\\\bottomrule
\end{array}
\]
\caption{The $R$-matrix for the modified fermionic five-vertex model.}
\label{table:R_matrix_Grothendieck}
\end{table}

\begin{remark}
We can give the canonical Grothendieck polynomials from~\cite{Yel17} using the substitutions $\beta \mapsto \alpha + \beta$ and $z \mapsto \frac{z}{1 - \alpha z}$ by~\cite[Prop.~3.4]{Yel17}. As a consequence~\cite[Prop.~3.4]{MS14} is equivalent to~\cite[Prop.~8.8]{Yel17}.
\end{remark}

Using this $L$-matrix and $R$-matrix, we define a model $\fM_{\lambda}$ on an $n \times m$ grid such that the top boundary is given by the $01$-sequence of $\lambda$, the right and bottom boundary is all $0$, and the left boundary is all $1$. Thus, we have the following.

\begin{prop}[{\cite{MS13,MS14}}]
\label{prop:integrability}
The model $\fM_{\lambda}$ is integrable with the partition function
\[
Z(\fM_{\lambda}; \xx) = \G_{\lambda}(\xx; \beta).
\]
\end{prop}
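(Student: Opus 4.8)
The plan is to prove the two assertions separately, since they rest on different ideas: integrability is a local algebraic identity, whereas the partition function formula is a global combinatorial one. For \textbf{integrability}, the task is to exhibit an $R$-matrix solving the $RLL$-relation~\eqref{eq:RLL_relation} with the $L$-matrix of Table~\ref{table:L_matrix_Grothendieck}; I would take the candidate in Table~\ref{table:R_matrix_Grothendieck} and verify~\eqref{eq:RLL_relation} directly for every choice of boundary labels $\{a,b,c,d,e,f\}$. Both matrices conserve the number of $1$-labels at each vertex, so only boundary tuples whose incoming and outgoing $1$-counts agree can be nonzero; this prunes the $2^6$ a priori cases to a short list. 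For each surviving case I would sum the products of Boltzmann weights over the admissible internal edge assignments on each side of~\eqref{eq:RLL_relation} and compare. Since every weight is at most linear in $z_i$ and in $z_j$, matching the two sides is a finite polynomial identity, yielding integrability (equivalently, one cites the verification in~\cite{MS13,MS14}).

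For the \textbf{partition function}, I would first read off the combinatorial content of a state. Because the $L$-matrix is a five-vertex model conserving $1$-labels, in each horizontal line the auxiliary edge enters as $1$ on the left boundary and leaves as $0$ on the right, so each of the $n$ rows realizes the operator $B(x_i)$ and
\[
Z(\fM_{\lambda}; \xx) = \langle \lambda |\, B(x_n) \cdots B(x_1) \,| \varnothing \rangle,
\]
where $| \varnothing \rangle$ is the all-$0$ bottom boundary and $\langle \lambda |$ is the top boundary given by the $01$-sequence of $\lambda$. The $1$-labels then form a family of lattice paths, and the boundary data constrain them so that, read row by row, they encode a set-valued tableau $T \in \svt^n(\lambda)$. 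The core step is to make this a weight-preserving bijection: the vertices $\ta_1, \ta_2$ (weight $1$) move a path straight through, the vertex $\tb_2$ (weight $z = x_i$ in row $i$) records a box with entry $i$, and the turning vertices $\tc_1$ and $\tc_2$ mark where a path changes direction. The delicate bookkeeping is that $\tc_2 = 1 + \beta z$ bundles two options---a single entry or an extra (set-valued) entry---so that summing over states produces exactly the factor $\beta^{\absval{\mathsf{e}(T)}} \xx^T$ for each $T$. This is precisely the summand of $\G_{\lambda}(\xx; \beta)$ from~\eqref{eq:Grothendieck_defn} under the specialization $t_1 = \cdots = t_{\ell-1} = -\beta$, and summing over all states gives $Z(\fM_{\lambda}; \xx) = \G_{\lambda}(\xx; \beta)$.

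The \textbf{main obstacle} is this second part: establishing the weight-preserving bijection with $\svt^n(\lambda)$ and, in particular, checking that the $1 + \beta z$ contributions of the $\tc_2$ vertices are distributed so that the total power of $\beta$ equals $\absval{\mathsf{e}(T)}$. Tracking the ``fermionic'' turning conventions so that no box is over- or under-counted is where the care lies; the integrability established in the first part serves as a useful consistency check, since the $B$-operator commutation relations coming from~\eqref{eq:RLL_relation} force $Z(\fM_{\lambda}; \xx)$ to be symmetric in $\xx$, in agreement with the symmetry of $\G_{\lambda}(\xx; \beta)$.
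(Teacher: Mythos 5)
The paper never actually proves this proposition: it is imported wholesale from~\cite{MS13,MS14}, so any self-contained argument is necessarily a different route from "the paper's proof." Your first part is unproblematic --- conservation of $1$-labels prunes the boundary cases and the $RLL$-equation~\eqref{eq:RLL_relation} reduces to a finite polynomial identity; this is exactly how the paper treats its own $L$/$R$-pair in Proposition~\ref{prop:refined_dual_integrable}. Your operator identity $Z(\fM_{\lambda};\xx) = \langle \lambda | B(x_n) \cdots B(x_1)| \varnothing \rangle$ is also correct given the stated boundary conditions.

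The gap is in the second part, and it is more than bookkeeping. You read the states as if they were the free lattice paths of the Schur/NILP picture of Section~\ref{sec:bg_tableaux}, with $\tb_2$ marking boxes and $\tc_2$ optionally marking extra entries. But the defining feature of the $L$-matrix in Table~\ref{table:L_matrix_Grothendieck} is that the vertical pass-through vertex (a $1$ on the top and bottom edges, $0$ on the left and right --- the vertex called $\tb_1$ in Table~\ref{table:L_matrix_nilp}) is \emph{absent}. Hence a path that has turned upward cannot simply cross the rows above it: at each higher row it must either pass through an $\ta_2$ vertex, which is possible only at a column where that row is occupied horizontally by another path (weight $1$), or execute a jog $\tc_2 \tb_2 \cdots \tb_2 \tc_1$, shifting right by at least one column and picking up the factor $1+\beta z$. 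The actual content of the proposition is therefore: (i) for each $T' \in \ssyt^n(\lambda)$ this forced cross-or-jog structure produces exactly one state of $\fM_{\lambda}$, so that the states biject with \emph{ordinary} semistandard tableaux (cf.\ Remark~\ref{rem:trivial_coloring}), not with set-valued ones; and (ii) the $\tc_2$ vertices of that state occur precisely at the pairs (tableau row $r$, value $v$) where $v$ may legally be inserted as an extra entry of $T'$ under the set-valued semistandard conditions of~\eqref{eq:Grothendieck_defn} --- a crossing at row $v$ corresponds exactly to a forbidden insertion (e.g., because the box below is too small), a jog to an allowed one --- so that expanding $\prod (1+\beta x_v)$ over all states yields $\sum_{T \in \svt^n(\lambda)} \beta^{\absval{\mathsf{e}(T)}} \xx^T$. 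Neither (i) nor (ii) follows from the dictionary you give; in fact even the $\beta = 0$ specialization already requires (i), since the constrained state space of this five-vertex model is not literally the NILP ensemble you invoke. Naming this step "delicate bookkeeping" flags where the work is but does not supply the idea that makes it go through, namely the dichotomy forced by the missing $\tb_1$ vertex.
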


In this model, we can show the operators $A(z)$ and $A(z')$, and more generally the transfer matrices, commute by repeatedly applying the $RLL$-equation.
This is called the \defn{standard train argument} and shows the two partition functions
\begin{equation}
\label{eq:standard_train}
\begin{tikzpicture}[>=latex,baseline=0,scale=0.8]
\draw[->] (-1,-1) node[anchor=east] {$f$} -- (1,1) -- (3,1);
\draw[dotted] (3,1) -- (4,1);
\draw[->] (4,1) -- (6,1) node[anchor=west] {$c$};
\draw[->] (-1,1) node[anchor=east] {$a$} -- (1,-1) -- (3,-1);
\draw[dotted] (3,-1) -- (4,-1);
\draw[->] (4,-1) -- (6,-1) node[anchor=west] {$d$};
\draw[->] (2,-2) node[anchor=north] {$e_1$} -- (2,2) node[anchor=south] {$b_1$};
\draw[->] (5,-2) node[anchor=north] {$e_n$} -- (5,2) node[anchor=south] {$b_n$};
\foreach \x in {2,5}
{
  \draw[color=darkred] (\x-0.2,-1) arc (180:270:0.2);
  \draw (\x,-1) node[anchor=north east,color=darkred] {\small $z_j$};
  \draw[color=darkred] (\x-0.2,1) arc (180:270:0.2);
  \draw (\x,1) node[anchor=north east,color=darkred] {\small $z_i$};
}
\draw[color=darkred] (135:0.2) arc (135:225:0.2);
\draw (-0.1,0) node[anchor=east,color=darkred] {\small $z_i,z_j$};
\end{tikzpicture}
\hspace{5em}
\begin{tikzpicture}[>=latex,baseline=0,scale=0.8]
\draw[->] (-4,-1) node[anchor=east] {$f$} -- (-2,-1);
\draw[dotted] (-2,-1) -- (-1,-1);
\draw[->] (-1,-1) -- (1,-1) -- (3,1) node[anchor=west] {$c$};
\draw[->] (-4,1) node[anchor=east] {$a$} -- (-2,1);
\draw[dotted] (-2,1) -- (-1,1);
\draw[->] (-1,1) -- (1,1) -- (3,-1) node[anchor=west] {$d$};
\draw[->] (-3,-2) node[anchor=north] {$e_1$} -- (-3,2) node[anchor=south] {$b_1$};
\draw[->] (0,-2) node[anchor=north] {$e_n$} -- (0,2) node[anchor=south] {$b_n$};
\foreach \x in {0,-3}
{
  \draw[color=darkred] (\x-0.2,-1) arc (180:270:0.2);
  \draw (\x,-1) node[anchor=north east,color=darkred] {\small $z_i$};
  \draw[color=darkred] (\x-0.2,1) arc (180:270:0.2);
  \draw (\x,1) node[anchor=north east,color=darkred] {\small $z_j$};
}
\draw[color=darkred] (2,0) + (135:0.2) arc (135:225:0.2);
\draw (2-0.1,0) node[anchor=east,color=darkred] {\small $z_i,z_j$};
\end{tikzpicture}
\end{equation}
are equal for any fixed boundary values.

\begin{remark}
\label{rem:trivial_coloring}
We note that we can transform this into a colored model as follows.
We can considering paths on the (directed) edges labeled by $1$, and we color these edges such that the two input colors equals the two output colors as a set.
Thus there are two possibilities for $\ta_2$, but we only consider the vertex where the colors touch and do not cross.
We call this coloring the \defn{trivial coloring}.
The states of this model are naturally in bijection with the NILPs of a Schur function, with the $i$-th (colored) paths corresponding to each other.
\end{remark}

\subsection{Model from nonintersecting lattice paths}
\label{sec:nilp_model}

We begin by defining the $L$-matrix for our model by using Table~\ref{table:L_matrix_nilp}, and the corresponding $R$-matrix is given in Table~\ref{table:R_matrix_nilp}. By a direct finite computation, we have that these satisfy the Yang--Baxter equation.

\begin{table}
\[
\begin{array}{ccccc}
\toprule
\ta_1 & \tb_1 & \tb_2 & \tc_1 & \tc_2
\\\midrule
\vertex{0}{0}{0}{0}
&
\vertex{1}{0}{1}{0}
&
\vertex{0}{1}{0}{1}
&
\vertex{1}{0}{0}{1}
&
\vertex{0}{1}{1}{0}
\\\midrule
1 & 1 & z & z & 1
\\\bottomrule
\end{array}
\]
\caption{The $L$-matrix for the NILP vertex model.}
\label{table:L_matrix_nilp}
\end{table}

\begin{table}
\[
\begin{array}{ccccc}
\toprule
\rmatrix{0}{0}{0}{0}
&
\rmatrix{1}{0}{1}{0}
&
\rmatrix{0}{0}{1}{1}
&
\rmatrix{1}{1}{0}{0}
&
\rmatrix{1}{1}{1}{1}
\\\midrule
z_j & z_j - z_i & z_i & z_j & z_i
\\\bottomrule
\end{array}
\]
\caption{The $R$-matrix for the NILP vertex model.}
\label{table:R_matrix_nilp}
\end{table}

\begin{prop}
\label{prop:refined_dual_integrable}
The $L$-matrix in Table~\ref{table:L_matrix_nilp} and the $R$-matrix in Table~\ref{table:R_matrix_nilp} satisfy the $RLL$-equation.
\end{prop}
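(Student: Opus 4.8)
The plan is to recast the $RLL$-equation as an operator identity and then verify it by a short finite computation organized around a conservation law. Observe first that every vertex appearing with nonzero weight in the $L$-matrix (Table~\ref{table:L_matrix_nilp}) and in the $R$-matrix (Table~\ref{table:R_matrix_nilp}) conserves the number of edges labeled $1$: in each case there are as many $1$'s among the two incoming legs as among the two outgoing legs. Consequently both sides of \eqref{eq:RLL_relation} are matrix elements of linear operators on $A_{z_i} \otimes \check{A}_{z_j} \otimes V \cong \CC^2 \otimes \CC^2 \otimes \CC^2$, and the equation becomes the operator identity $R_{12}\, L_{13}\, L_{23} = L_{23}\, L_{13}\, R_{12}$ (in the standard index notation), where $R_{12}$ acts on the two auxiliary factors while $L_{13}(z_i)$ and $L_{23}(z_j)$ each act on one auxiliary factor together with the common quantum factor $V$. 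Since both $R$ and $L$ preserve the total number of $1$'s, this identity respects the $\ZZ_{\geq 0}$-grading of $\CC^2 \otimes \CC^2 \otimes \CC^2$ by particle number.

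First I would split the check into the four graded sectors, of dimensions $1, 3, 3, 1$. In the two extreme sectors, spanned by the all-$0$ state and the all-$1$ state, each side has a unique contributing configuration, and equality is the scalar statement that the two orderings of the spectral parameters yield the same product of weights; both are immediate from the tables. The content lies in the middle sectors, spanned by $\{100, 010, 001\}$ and by $\{110, 101, 011\}$, where the identity reduces to the equality of two $3 \times 3$ matrices with entries polynomial in $z_i, z_j$. Note that the convenient particle--hole involution $0 \leftrightarrow 1$ is \emph{not} available here, since the five-vertex $L$-matrix contains the $\ta_1$ vertex but not its complement, so I would check each of the two middle blocks directly (the computations being entirely parallel).

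In the middle sectors the genuinely nontrivial identities arise from the configurations in which a $1$ can be routed in more than one way: the turning vertices $\tc_1$ and $\tc_2$ of the $L$-matrix let a label pass between an auxiliary line and the quantum line, while the off-diagonal $R$-weight $z_j - z_i$ exchanges the two auxiliary labels. In each such case the equality of the two partition functions is a single polynomial identity in $z_i, z_j$, checked by direct substitution from Tables~\ref{table:L_matrix_nilp} and~\ref{table:R_matrix_nilp}; these are exactly the relations that force the value $z_j - z_i$ of the crossing weight. The main obstacle is not the algebra but fixing conventions consistently: the $R$-vertex in \eqref{eq:RLL_relation} appears rotated relative to the macro used in Table~\ref{table:R_matrix_nilp}, and there is the noted choice of which side of each vertical step is counted toward the weight, so one must carefully pin down the input and output legs of every vertex on both sides of \eqref{eq:RLL_relation} before comparing. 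Once this is done, the remaining verification is the routine finite computation described above.
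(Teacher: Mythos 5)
Your proposal is correct and takes essentially the same route as the paper, which establishes Proposition~\ref{prop:refined_dual_integrable} precisely by the direct finite computation you describe (the particle-number conservation you use to organize the check into sectors of dimensions $1,3,3,1$ is implicit in that verification). One small inaccuracy: in the all-$1$'s sector both sides of~\eqref{eq:RLL_relation} vanish identically---as you yourself observe, the $L$-matrix of Table~\ref{table:L_matrix_nilp} has no all-$1$ vertex---so neither side has a ``unique contributing configuration''; since $0=0$ this does not affect the argument.
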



We construct a model $\fN_{\lambda}$ (which is integrable by Proposition~\ref{prop:refined_dual_integrable}) with the following non-standard boundary conditions. Consider $n$ rows of $\lambda_1 + \ell$ vertices, then stack on top a row of length $\lambda_2 + (\ell-1)$ aligned to the left, and so on for every part of $\lambda$. The left and right boundary conditions will all be $0$. The bottom boundary will consist of $\ell$ $1$'s as the leftmost columns and the remaining entries being $0$. The top boundary will have a $1$ at column $\lambda_i + \ell + 1 - i$ for all $1 \leq i \leq \ell$ and the remaining entries $0$.
For $1 \leq i \leq n$, the $i$-th row from the bottom will have a spectral parameter of $x_i$ and the $(n + i)$-th row from the bottom will have a spectral parameter of $t_i$.
For brevity, we say the spectral parameters of the model are $\xx \sqcup \bt$.

\begin{ex}
\label{ex:NILP_5V_model}
For the NILP from Example~\ref{ex:NILP_tableaux}, the corresponding vertex model is given by
\[
\begin{tikzpicture}[>=latex,baseline=2.7cm,scale=0.7]
\foreach \y in {1,2,3,4,5}
  \draw node[anchor=east] at (-0.5,\y)  {\small $x_{\y}$};
\foreach \y in {1,2,3}
  \draw node[anchor=east] at (-0.5,\y+5)  {\small $t_{\y}$};
\draw[very thin, black!20] (-0.5, 1-0.5) grid (7.5, 8.5);
\draw[densely dotted] (-0.2,5.4) -- (7.1,5.4);
\fill[white] (2.5,7.5) rectangle (7.6,8.6);
\fill[white] (3.5,6.5) rectangle (7.6,7.6);
\fill[white] (5.5,5.5) rectangle (7.6,6.6);
\draw[-, very thick, darkred] (3,0.5) -- (3,1) -- (5,1) -- (5,3) -- (6,3) -- (6,4) -- (7,4) -- (7,5.5);
\fill[darkred] (3,1) circle (0.15) node[below left] {\small $\widetilde{u}_1$};
\fill[darkred] (7,5) circle (0.15) node[above right] {\small $v_1$};
\draw[-, very thick, dgreencolor] (2,0.5) -- (2,3) -- (3,3) -- (3,6) -- (4,6) -- (5,6) -- (5,6.5);
\fill[dgreencolor] (2,1) circle (0.15) node[below left] {\small $\widetilde{u}_2$};
\fill[dgreencolor] (5,6) circle (0.15) node[above right] {\small $v_2$};
\draw[-, very thick, dbluecolor] (1,0.5) -- (1,6) -- (2,6) -- (2,7) -- (3,7) -- (3,7.5);
\fill[dbluecolor] (1,1) circle (0.15) node[below left] {\small $\widetilde{u}_3$};
\fill[dbluecolor] (3,7) circle (0.15) node[above right] {\small $v_3$};
\draw[-, very thick, UQpurple] (0,0.5) -- (0,8) -- (1,8) -- (2,8) -- (2,8.5);
\fill[UQpurple] (0,1) circle (0.15) node[below left] {\small $\widetilde{u}_4$};
\fill[UQpurple] (2,8) circle (0.15) node[above right] {\small $v_4$};
\end{tikzpicture}
\]
where each vertex is an $L$-matrix with the (half) edges with a color are $1$ (and those without are a $0$). Thus, the Boltzmann weight for this state is $t_1^3 t_2 t_3^2 x_1^2 x_3^2 x_4$.
\end{ex}

\begin{thm}
\label{thm:refined_dual_model}
The partition function of the model $\fN_{\lambda}$ is a refined dual Grothendieck polynomial:
\[
Z(\fN_{\lambda}; \xx \sqcup \bt) = \dG_{\lambda}(\xx; \bt).
\]
\end{thm}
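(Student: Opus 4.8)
The plan is to establish a weight-preserving bijection between the states of $\fN_{\lambda}$ and nonintersecting lattice paths on the jagged grid, and then to recognize these NILPs as the combinatorial content of the Schur decomposition in Theorem~\ref{thm:dualG_schur_decomposition}. First I would set up the state--path correspondence exactly as illustrated in Examples~\ref{ex:NILP_tableaux} and~\ref{ex:NILP_5V_model}: in any state, color the edges carrying the label $1$, so that each such colored edge is traversed by a path. Reading the $L$-matrix of Table~\ref{table:L_matrix_nilp}, the vertex $\ta_1$ carries no path, $\tb_1$ and $\tb_2$ are straight vertical and horizontal segments, and $\tc_1,\tc_2$ are the two turns; the only conservative vertex absent from the table is the full crossing, whose Boltzmann weight is therefore $0$. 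Consequently every state decomposes uniquely into vertex-disjoint directed paths, which is precisely the nonintersecting condition. The prescribed boundary data pin these paths down: the bottom boundary supplies $\ell$ sources in the leftmost columns and the top boundary supplies the $\ell$ sinks at columns $\lambda_i + \ell + 1 - i$, so there are exactly $\ell$ paths, the $i$-th running from the $i$-th source to the $i$-th sink.

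Next I would match weights. Inspecting Table~\ref{table:L_matrix_nilp}, a vertex contributes the row spectral parameter $z$ exactly when its left half-edge is labeled $1$, i.e.\ for a horizontal step $\tb_2$ or a left-to-top turn $\tc_1$, and contributes $1$ otherwise. Tracing a single horizontal run in a row shows that the number of left-labeled vertices equals the number of horizontal edges traversed, so the weight of a state is the product over all east-steps of the spectral parameter of the row in which the step occurs, with every north-step weighing $1$; with the spectral parameters $\xx \sqcup \bt$ this is exactly the path weight of the NILP realizing the flagged Schur function $s_{\lambda}^{\ff}(\xx, \bt)$. I would then split each path along the horizontal line separating the $n$ rows with parameters $\xx$ (the rectangular part) from the $\ell-1$ rows with parameters $\bt$ (the jagged part), as marked by the dotted line in the examples. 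The columns in which the paths cross this line record a partition $\mu \subseteq \lambda$. Below the line the configuration is an ordinary Schur NILP of shape $\mu$ in the variables $\xx$, contributing $s_{\mu}(\xx)$; above the line, the jagged shape forces precisely the flagging $\ff = (n, n+1, \dotsc, n+\ell-1)$, and after shifting indices by $n$ the corresponding flagged tableau is one whose $i$-th row has entries strictly less than $i$, that is, an elegant tableau of shape $\lambda/\mu$ contributing $\bt^{T}$. The uniquely determined vertical extensions of the paths in the triangular fixed region consist only of north-steps, so they are weight-neutral.

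Summing over all states then gives
\[
Z(\fN_{\lambda}; \xx \sqcup \bt) = \sum_{\mu \subseteq \lambda} s_{\mu}(\xx) \sum_{T \in \et(\lambda/\mu)} \bt^{T} = \sum_{\mu \subseteq \lambda} e_{\lambda}^{\mu}(\bt)\, s_{\mu}(\xx) = \dG_{\lambda}(\xx; \bt),
\]
where the last equality is Theorem~\ref{thm:dualG_schur_decomposition}. I expect the main obstacle to be the bookkeeping at the jagged boundary: one must verify carefully that the top-boundary positions $\lambda_i + \ell + 1 - i$ together with the staggered sources produce exactly the flag $\ff$, that the extension of the paths in the fixed region is genuinely forced and weight-neutral, and above all that the map from the jagged part to $\et(\lambda/\mu)$ is a \emph{bijection} onto all elegant tableaux rather than merely an injection. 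Once this correspondence is pinned down, applying the LGV lemma to the very same NILPs yields the multi-Schur (equivalently, flagged-Schur) determinant $\dG_{\lambda}(\xx; \bt) = s_{\lambda}^{\ff}(\xx, \bt)$ as an immediate corollary.
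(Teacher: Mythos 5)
Your proposal is correct and follows essentially the same route as the paper's proof: split the model at the dotted line into the rectangular part (giving $s_{\mu}(\xx)$) and the jagged part (giving $e_{\lambda}^{\mu}(\bt)$ via elegant tableaux), using the state--NILP--tableau correspondence of Section~\ref{sec:bg_tableaux}, and conclude by Theorem~\ref{thm:dualG_schur_decomposition}. The extra verifications you flag (absence of the crossing vertex, the west-edge weighting, and the forced weight-neutral extensions in the triangular region) are exactly the details the paper leaves implicit, so your write-up is a more explicit version of the same argument.
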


\begin{proof}
We note that the rectangular part is precisely the vertex model whose partition function is a Schur function $s_{\mu}$. Similarly, the partition function of the jagged part is precisely $e_{\lambda}^{\mu}(\bt)$.
In particular, each state in the model corresponds to a semistandard Young tableau of shape $\mu$ and an elegant tableau of shape $\lambda/\mu$ as each state is a NILP and using the bijection described in Section~\ref{sec:bg_tableaux}.
Hence, the claim follows from Theorem~\ref{thm:dualG_schur_decomposition}.
\end{proof}

As mentioned in the beginning of the section, from~\cite{LN14}, a flagged Schur function coincides with a certain multi-Schur function, whose natural NILP interpretation coincides with that from~\cite[Thm.~3.2]{CLL02}.
Therefore, we obtain that refined dual Grothendieck polynomials are particular multi-Schur functions.
We could also prove this by noting that refined dual Grothendieck polynomials as flagged Schur functions using the LGV lemma and the argument in~\cite{Wachs85}.

\begin{cor}
\label{cor:refined_multiSchur}
We have
\[
\dG_{\lambda}(\xx; \bt) = s_{\lambda}(\xx, \xx + (t_1), \xx + (t_1, t_2), \dotsc, \xx + \bt).
\]
\end{cor}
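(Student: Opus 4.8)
The plan is to read the corollary directly off the nonintersecting lattice path (NILP) description underlying Theorem~\ref{thm:refined_dual_model}, via the LGV lemma. By that theorem every state of $\fN_{\lambda}$ is an NILP from $\uu$ to $\vv$ whose weight records the monomial $\bt^{b(T)} \xx^{a(T)}$, so that $\dG_{\lambda}(\xx; \bt) = \sum_{\pp \in N(\uu, \vv)} \wt(\pp)$. Here, after the harmless extension of the paths to a common baseline described before Example~\ref{ex:NILP_tableaux}, the starting points $\uu = \bigl( \widetilde{u}_1, \dotsc, \widetilde{u}_{\ell} \bigr)$ lie on the bottom row and the endpoints $\vv = (v_1, \dotsc, v_{\ell})$ lie on the jagged top boundary, with the horizontal position of $v_k$ governed by the part $\lambda_k$ and its height governed by the flagging built into the jagged shape.

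First I would apply the LGV lemma to rewrite the partition function as the determinant
\[
\dG_{\lambda}(\xx; \bt) = \det\Bigl[ \textstyle\sum_{p \in N(\widetilde{u}_i, v_j)} \wt(p) \Bigr]_{1 \le i, j \le \ell}.
\]
The key computation is to evaluate each single-path generating function. A monotone path from $\widetilde{u}_i$ to $v_j$ uses exactly $\lambda_j + i - j$ east-steps, and the only heights available to it run from the baseline up to the height of $v_j$; by the jagged boundary these heights carry precisely the spectral parameters $x_1, \dotsc, x_n$ together with the first $j-1$ jagged parameters $t_1, \dotsc, t_{j-1}$. Summing over all such paths produces a complete homogeneous symmetric function, so the $(i,j)$ entry equals $h_{\lambda_j + i - j}\bigl( \xx \sqcup (t_1, \dotsc, t_{j-1}) \bigr)$, with the convention $h_m = 0$ for $m < 0$.

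Finally, I would identify this determinant with the multi-Schur function. Since $S_m$ of a purely positive alphabet is the complete homogeneous symmetric function, the entry $h_{\lambda_j + i - j}\bigl( \xx \sqcup (t_1, \dotsc, t_{j-1}) \bigr) = S_{\lambda_j + i - j}\bigl( \xx + (t_1, \dotsc, t_{j-1}) \bigr)$ is exactly the $(i,j)$ entry of the determinant defining $s_{\lambda}\bigl( \xx, \xx + (t_1), \dotsc, \xx + \bt \bigr)$, whose $j$-th alphabet is $\xx + (t_1, \dotsc, t_{j-1})$. I expect the main obstacle to be the bookkeeping in this last identification: one must confirm that the flag attached to each column---that the $j$-th path sees exactly $t_1, \dotsc, t_{j-1}$ and no further jagged parameters---is genuinely forced by the jagged boundary conditions, and then align the determinant indices with the multi-Schur convention (in particular the reversal encoded in $I(\lambda)$). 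An equivalent route, bypassing the explicit model, is to first establish $\dG_{\lambda}(\xx; \bt) = s_{\lambda}^{\ff}(\xx, \bt)$ as the refined Lascoux--Naruse flagged Schur function with $\ff = (n, n+1, \dotsc, n+\ell-1)$ using Theorem~\ref{thm:dualG_schur_decomposition} and the LGV argument of~\cite{Wachs85}, and then invoke the flagged-Schur-to-multi-Schur identification of~\cite{LN14,CLL02}.
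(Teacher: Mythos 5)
Your proposal is correct, and it is essentially the route that the paper itself only sketches as an alternative. The paper's official proof of Corollary~\ref{cor:refined_multiSchur} is a chain of identifications by citation: the states of $\fN_{\lambda}$ are the NILPs of the flagged Schur function $s_{\lambda}^{\ff}(\xx,\bt)$ with $\ff = (n, n+1, \dotsc, n+\ell-1)$, this flagged Schur function coincides with the multi-Schur function by~\cite{LN14} and~\cite[Thm.~3.2]{CLL02}, and Theorem~\ref{thm:refined_dual_model} then converts the partition function into $\dG_{\lambda}(\xx;\bt)$; the paper adds that one ``could also prove this'' by the LGV lemma and the argument of~\cite{Wachs85}, which is exactly what you do. Your version makes the key identification explicit rather than cited: you compute the single-path generating functions $h_{\lambda_j+i-j}(\xx, t_1, \dotsc, t_{j-1})$ and observe that these are literally the entries $S_{\lambda_k+h-k}\bigl(\xx + (t_1,\dotsc,t_{k-1})\bigr)$ of the defining determinant of the multi-Schur function (with $(h,k)=(i,j)$ and index sequence $i_k = \lambda_k$, so no transpose or reversal is actually needed). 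The two bookkeeping concerns you flag do resolve cleanly: the jagged boundary genuinely forces the path ending at $v_j$ to see only $x_1,\dotsc,x_n,t_1,\dotsc,t_{j-1}$, because the column of $v_j$, namely $\lambda_j+\ell+1-j$, does not exist in the rows carrying $t_j, t_{j+1},\dotsc$ (those rows have length $\lambda_{i+1}+\ell-i < \lambda_j+\ell+1-j$ for $i \geq j$), and the convention in the corollary pairs $\lambda_k$ in order with the alphabet $\xx+(t_1,\dotsc,t_{k-1})$, exactly as in the Jacobi--Trudi determinant the paper derives immediately afterwards. What your route buys is self-containment (no reliance on~\cite{LN14,CLL02}) and the fact that it proves the Jacobi--Trudi corollary simultaneously as a byproduct; what the paper's route buys is brevity and an explicit placement of the result as a refinement of Lascoux--Naruse.
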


\subsection{Identities}

Using our vertex model, we will now (re)prove some identities.
All of these results follow from the partition function of our model being a
refined dual Grothendieck polynomial (Theorem~\ref{thm:refined_dual_model}) or the corresponding NILP interpretation.

Our first identity is a Jacobi--Trudi formula for refined dual Grothendieck polynomials, which is a dual version of~\cite[Eq.~(73)]{Yel17}, from the LGV lemma with numbering the paths from the right (see also~\cite{Wachs85}).
This was shown for $\bt = \beta$ in~\cite[Cor.~10.3]{Yel17} and~\cite[Prop.~4.4]{Iwao19}, which was also implicit from the results of~\cite{CLL02,LN14}.

\begin{cor}
We have
\[
\dG_{\lambda}(\xx; \bt) = \det \bigl[
h_{\lambda_i+j-i}(\xx, t_1, \dotsc, t_{i-1})
\bigr]_{i,j=1}^n.
\]
\end{cor}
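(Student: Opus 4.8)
The plan is to deduce the formula from the flagged Schur description already in hand, via the LGV lemma. By Theorem~\ref{thm:refined_dual_model} together with the NILP bijection of Section~\ref{sec:bg_tableaux} (equivalently, by Corollary~\ref{cor:refined_multiSchur}), the partition function $Z(\fN_{\lambda}; \xx \sqcup \bt) = \dG_{\lambda}(\xx;\bt)$ is a sum over NILPs whose $i$-th path is flagged by $f_i = n + i - 1$; that is, the east-steps of the $i$-th path occur at heights in $\{1, \dotsc, n+i-1\}$, weighted by $x_1, \dotsc, x_n, t_1, \dotsc, t_{i-1}$ (reading the variable at height $n+k$ as $t_k$). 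First I would apply the LGV lemma to this NILP, rewriting the sum over NILPs as the determinant $\det \bigl[ \sum_{p \in N(\widetilde{u}_i, v_j)} \wt(p) \bigr]$ of single-path generating functions. This is exactly the strategy behind the flagged Jacobi--Trudi formula of~\cite{Wachs85}, which one could alternatively invoke directly once the flagged Schur identity is established.

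The core step is the evaluation of each single-path generating function. Numbering the paths from the right, so that the flag $f_i = n+i-1$ is carried by the $i$-th starting point $\widetilde{u}_i$, a single directed path from $\widetilde{u}_i$ to $v_j$ is a monotone north/east lattice path making exactly $\lambda_i + j - i$ east-steps, all at heights bounded by $f_i$. Since the heights of successive east-steps are weakly increasing and range over $\{1, \dotsc, f_i\}$, summing $\wt(p)$ over all such paths contributes exactly one monomial per multiset of $\lambda_i + j - i$ heights, so that $\sum_{p \in N(\widetilde{u}_i, v_j)} \wt(p) = h_{\lambda_i + j - i}(\xx, t_1, \dotsc, t_{i-1})$; when $\lambda_i + j - i < 0$ there are no such paths and the entry vanishes, matching $h_{<0} = 0$. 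Assembling these entries into the LGV determinant yields the stated formula.

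The hard part will be the bookkeeping of the flag. With the naive left-to-right numbering, the LGV matrix attaches the flag to the column index $j$, producing the entry $h_{\lambda_j + i - j}(\xx, t_1, \dotsc, t_{j-1})$ instead; the two determinants agree, since transposing leaves the determinant unchanged, but to obtain the stated form verbatim I would number the paths from the right as in~\cite{Wachs85}, so that the flag is attached to the row $i$. One must then check that it is genuinely the starting point $\widetilde{u}_i$ of the extended NILP (rather than the endpoint $v_j$) that imposes the height bound, which is precisely what the right-to-left numbering accomplishes. A secondary point to confirm is that the single-path height bound is exactly $f_i = n + i - 1$ and that east-steps at height $n+k$ consistently carry weight $t_k$, so that the complete homogeneous symmetric polynomial is taken in precisely the variable set $(\xx, t_1, \dotsc, t_{i-1})$.
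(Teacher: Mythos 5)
Your proof is correct and takes essentially the same route as the paper, whose own (very terse) argument is exactly this: apply the LGV lemma to the NILP interpretation underlying Theorem~\ref{thm:refined_dual_model}, with the paths numbered from the right as in~\cite{Wachs85}, and read off each entry as a complete homogeneous symmetric polynomial in the flagged variable set. One wording correction: in this model the height bound on a path from $\widetilde{u}_i$ to $v_j$ is imposed by the \emph{endpoint} $v_j$ (all starting points sit on the bottom boundary at the same height), so the raw LGV matrix is $\bigl[ h_{\lambda_j+i-j}(\xx, t_1, \dotsc, t_{j-1}) \bigr]_{i,j}$ with both the partition data and the flag attached to the column index; but, as you yourself note, this is just the transpose of the stated matrix, so the conclusion is unaffected.
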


Using Corollary~\ref{cor:refined_multiSchur}, we can also obtain a dual Jacobi--Trudi formula by refining the computation for~\cite[Eq.~(5)]{LN14}.

\begin{cor}
We have
\[
\dG_{\lambda}(\xx; \bt) = \det \bigl[ e_{\lambda'_i+j-i}(\xx, t_1, \dotsc, t_{\lambda'_i-1})
\bigr]_{i,j=1}^n.
\]
\end{cor}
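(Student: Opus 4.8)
The plan is to start from the multi-Schur (equivalently, flagged Schur) presentation supplied by Corollary~\ref{cor:refined_multiSchur}, namely
\[
\dG_{\lambda}(\xx; \bt) = s_{\lambda}(X_1, X_2, \dotsc, X_{\ell}), \qquad X_i := (\xx, t_1, \dotsc, t_{i-1}),
\]
where row $i$ of the shape is flagged by the alphabet $X_i$, and then to pass from the $h$-form (the preceding Jacobi--Trudi corollary, which records the $h$-side $\det[h_{\lambda_i+j-i}(X_i)]$) to the $e$-form by a flagged Jacobi--Trudi duality, refining the determinantal computation of~\cite[Eq.~(5)]{LN14}. The task is thus to transform this determinant into the claimed $e$-determinant indexed by the conjugate shape $\lambda'$.

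First I would record the exchange relation driving the whole computation. Because the alphabets are nested, $X_1 \subseteq X_2 \subseteq \dotsb \subseteq X_{\ell}$, the generating-function identity $\prod_{x \in X_b}(1-xu)^{-1} \prod_{x \in X_a}(1-xu) = \prod_{x \in X_b \setminus X_a}(1-xu)^{-1}$ for $a \le b$ gives
\[
\sum_{k \ge 0} (-1)^k e_k(X_a)\, h_{m-k}(X_b) = h_m(X_b \setminus X_a),
\]
which collapses to $\delta_{m,0}$ when $a = b$. This is the flagged analogue of the classical mutual-inverse relation between the complete and elementary symmetric functions, and it is precisely what allows the complementary-minor (Jacobi) manipulation used in the single-alphabet proof of $\det[h_{\lambda_i-i+j}] = \det[e_{\lambda'_i-i+j}]$ to go through in the presence of a flag.

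Next I would track the flags through the conjugation. In the passage from $\lambda$ to $\lambda'$ the minor-complementation argument replaces a row of the original shape by a column: column $i$ of $\lambda$ has length $\lambda'_i$ and is met exactly by the rows $1, \dotsc, \lambda'_i$, whose alphabets $X_1 \subseteq \dotsb \subseteq X_{\lambda'_i}$ have maximal element $X_{\lambda'_i} = (\xx, t_1, \dotsc, t_{\lambda'_i - 1})$. The nested exchange relation forces this maximal alphabet to be the one surviving in the $e$-entry indexed by column $i$, producing the summand $e_{\lambda'_i + j - i}(\xx, t_1, \dotsc, t_{\lambda'_i - 1})$ and hence the stated determinant. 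Extending the index range to $i, j = 1, \dotsc, n$ is harmless: for $i > \lambda_1$ one has $\lambda'_i = 0$ and alphabet $\xx$, so those rows are upper-unitriangular with vanishing lower-left block, and the determinant collapses onto the genuine $\lambda_1 \times \lambda_1$ dual Jacobi--Trudi minor.

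The main obstacle is the bookkeeping of the flag under complementation. With a single alphabet the identity $\det[h] = \det[e]$ is immediate from $H(u)E(-u) = 1$, but here each row of the $h$-determinant carries a different alphabet $X_i$, so the naive $\omega$-duality is unavailable and one must check that the off-diagonal corrections $h_m(X_b \setminus X_a)$ generated by the exchange relation do not contaminate the final determinant. The nestedness of the $X_i$ is exactly what controls this, since the corrections assemble into a triangular change of basis of determinant $1$; verifying that the emergent column flag is $X_{\lambda'_i}$ (equivalently, that exactly $\lambda'_i - 1$ of the $t$'s survive in column $i$) is the combinatorial heart of the argument. An alternative, fully combinatorial route would reprove the identity directly from the LGV lemma by reflecting the nonintersecting lattice paths of Section~\ref{sec:nilp_model} across the diagonal, so that columns, strict increase, and the $t$-flag are exchanged for rows, weak increase, and elementary weights; I would fall back on this if the flag tracking in the algebraic proof becomes unwieldy.
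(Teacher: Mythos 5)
You are following the paper's own route: the paper obtains this corollary in one line from Corollary~\ref{cor:refined_multiSchur} by refining the determinantal computation behind \cite[Eq.~(5)]{LN14}, which is exactly the flagged $h$-to-$e$ duality you set out to perform, and both your identification of the column flag as $(\xx, t_1, \dotsc, t_{\lambda'_i-1})$ --- the alphabet of the \emph{last} row meeting column $i$ --- and your reduction of the $n\times n$ determinant to its $\lambda_1\times\lambda_1$ block are correct. The gap is in the algebraic core of your primary route. In the single-alphabet case, $\det\bigl[h_{\lambda_i-i+j}\bigr]=\det\bigl[e_{\lambda'_i-i+j}\bigr]$ is not a change of basis on the Jacobi--Trudi matrix (the two determinants are not even the same size); it is Jacobi's complementary-minor theorem applied to the mutually \emph{inverse} unitriangular matrices $(h_{j-i})$ and $((-1)^{j-i}e_{j-i})$. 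With row-dependent alphabets, inverseness is precisely what fails: your exchange relation says that pairing $e$'s in $X_a$ against $h$'s in $X_b$ yields $h_m(X_b\setminus X_a)$ rather than $\delta_{m,0}$, so the product of the flagged $h$- and $e$-matrices is merely upper unitriangular. That this correction matrix $U$ has determinant $1$ does not rescue the argument: Jacobi's theorem needs genuine inverses, and writing one matrix as $U$ times the inverse of the other and expanding by Cauchy--Binet pollutes every minor with sums over minors of $U$ --- exactly the contamination you hoped nestedness would exclude. Indeed, if one assigns nested alphabets $Y_i$ to the rows of the big $h$-matrix and $Z_j$ to the columns of the big $e$-matrix, the off-diagonal entries of the product are $h_{j-i}(Y_i\setminus Z_j)$, and forcing these to vanish for all $i<j$ forces all the alphabets to coincide. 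Making the algebraic route rigorous requires different machinery, such as Lascoux's multi-Schur transformation lemmas, which is what underlies the computation in \cite{LN14}.

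The good news is that your fallback is not merely a fallback: it is a complete proof, and the cleanest one available in this paper's framework. Because the flags increase by exactly one per row, the row-flag condition on a semistandard tableau is equivalent to the single condition that the \emph{bottom} entry of column $i$ lies in $(\xx, t_1, \dotsc, t_{\lambda'_i-1})$: column strictness propagates the bound upward, since the entry $r$ rows above the bottom of column $i$ is at most the $(n+\lambda'_i-1-r)$-th letter, which is precisely the flag of its row. Hence reflecting the NILPs of Section~\ref{sec:nilp_model} across the diagonal, so that each path records a column of $\lambda$ as a strictly increasing sequence confined to its own alphabet (with endpoint placed at height $n+\lambda'_i-1$ to encode the flag), and applying the LGV lemma gives $\det\bigl[e_{\lambda'_i+j-i}(\xx,t_1,\dotsc,t_{\lambda'_i-1})\bigr]$ directly. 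This is Wachs's argument \cite{Wachs85}, parallel to how the paper derives the $h$-version, and I would promote it to your main proof.
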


Next we give a new analog of the Cauchy identity for refined dual Grothendieck polynomials.

\begin{cor}[Cauchy identity]
\label{cor:refined_Cauchy}
We have
\[
s_{m^{\ell}}(\xx, \bt, \yy) = \sum_{\lambda \subseteq m^{\ell}} \dG_{\lambda}(\xx; \bt) \dG_{\lambda^{\comp}}(\yy; \bt^{\comp}),
\]
where $\bt^{\comp} = (t_{\ell-1}, \dotsc, t_1)$.
\end{cor}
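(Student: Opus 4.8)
The plan is to prove the identity by gluing the two NILP vertex models of Theorem~\ref{thm:refined_dual_model} into the standard five-vertex model for $s_{m^\ell}$ on the combined alphabet $\xx\sqcup\bt\sqcup\yy$. By Theorem~\ref{thm:refined_dual_model} the left factor $\dG_\lambda(\xx;\bt)=Z(\fN_\lambda;\xx\sqcup\bt)$ is a partition function whose states are NILPs on a jagged grid, with $\xx$-rows at heights $1,\dots,n$ and $\bt$-rows at heights $n+1,\dots,n+\ell-1$; the path for the $i$-th row of $\lambda$ ends on the staircase top boundary at height $n+i-1$ and is thereby flagged to use only $\xx$ and $t_1,\dots,t_{i-1}$ (equivalently, $\dG_\lambda$ is the flagged/multi-Schur function of Corollary~\ref{cor:refined_multiSchur} with flag $\ff=(n,n+1,\dots,n+\ell-1)$). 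The right factor is described identically for $\lambda^{\comp}$ with spectral parameters $\yy\sqcup\bt^{\comp}$.

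First I would rotate the model $\fN_{\lambda^{\comp}}$ by $\pi$. Because $\lambda^{\comp}$ is the complement of $\lambda$ in the box $m^\ell$ (so $\lambda^{\comp}_i=m-\lambda_{\ell+1-i}$), the rotated staircase bottom boundary of $\fN_{\lambda^{\comp}}$ is exactly complementary to the staircase top boundary of $\fN_\lambda$: the two jagged profiles interlock, and the path for row $i$ of $\lambda$ joins the rotation of the path for row $\ell+1-i$ of $\lambda^{\comp}$. The rotation reverses the $\bt$-alphabet, which is precisely why $\bt^{\comp}=(t_{\ell-1},\dots,t_1)$ must appear on the $\yy$-side. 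After gluing, each of the $\ell$ joined paths is a single monotone path on a straight grid whose rows are $\xx$, then $\bt$, then $\yy$; and the flag of its lower piece ($\xx,t_1,\dots,t_{i-1}$) is \emph{complementary} to the flag of its upper piece ($t_i,\dots,t_{\ell-1},\yy$), so the joined path carries no flagging restriction on $\bt$. Hence the glued configurations are exactly the states of the rectangular five-vertex model (Remark~\ref{rem:trivial_coloring}) whose partition function is $s_{m^\ell}(\xx,\bt,\yy)$.

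To turn this into the stated identity I would read it backwards: cut a state of the $s_{m^\ell}$-model along the fixed staircase at heights $(n,n+1,\dots,n+\ell-1)$. The $01$-data along the cut determines a unique $\lambda\subseteq m^\ell$; the lower piece is a state of $\fN_\lambda$ contributing $\dG_\lambda(\xx;\bt)$, and the upper piece is the $\pi$-rotation of a state of $\fN_{\lambda^{\comp}}$ contributing $\dG_{\lambda^{\comp}}(\yy;\bt^{\comp})$. The crucial point is that each $t_s$-step of a glued path is assigned to the lower piece exactly when it lies on a path of row index $>s$ and to the upper piece otherwise, so the occurrences of $t_s$ partition cleanly between the two sides with no overlap and no omission; this is what makes the weight factor as $\dG_\lambda(\xx;\bt)\,\dG_{\lambda^{\comp}}(\yy;\bt^{\comp})$. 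Summing over all interface shapes $\lambda\subseteq m^\ell$ then yields the identity. The unrefined, single horizontal cut of this splitting is recorded by the finite Cauchy formula~\eqref{eq:finite_Cauchy_formula}, which I would invoke (or reprove through the NILP) to certify the rectangular backbone of the staircase decomposition.

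The main obstacle is verifying that the two staircase boundaries genuinely interlock into one straight grid and that the flaggings are \emph{exactly} complementary, i.e.\ that for each joined path the $t$-levels allowed on the lower and upper pieces partition $\{t_1,\dots,t_{\ell-1}\}$. This is where the box-complement relation and the staircase flag $\ff$ have to be reconciled; once this is checked, the bijectivity of the staircase cut and the weight factorization (including the appearance of $\bt^{\comp}$ from the $\pi$-rotation) are routine. A purely algebraic alternative would start from Corollary~\ref{cor:refined_multiSchur}, write both factors as flagged Schur functions, and apply~\eqref{eq:finite_Cauchy_formula} row by row, but the flagged bookkeeping there is the same difficulty in different dress.
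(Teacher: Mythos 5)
Your proposal is correct and is essentially the paper's own argument: the paper likewise builds the rectangular five-vertex model for $s_{m^{\ell}}(\xx,\bt,\yy)$, uses the $\pi$-rotated $L$-matrix (which is what produces $\bt^{\comp}$) for the upper/right piece, and cuts each state through the $\bt$-rows at the state-dependent positions you describe (the cut in the $t_s$-row sitting at the $(s+1)$-th path), identifying the two pieces as $\dG_{\lambda}(\xx;\bt)$ and $\dG_{\lambda^{\comp}}(\yy;\bt^{\comp})$ via Theorem~\ref{thm:refined_dual_model}. The complementary-flagging check you flag as the main obstacle is exactly the content of the paper's observation that the weight of a horizontal run can be carried at either its starting or ending corner, so the $t_s$-steps split cleanly between the two sides.
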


\begin{proof}
Note that for a NILP, we can choose either the starting corner or the ending corner to carry the additional $z$ weight. So we can also use the following $L$-matrix for the model:
\[
\begin{array}{ccccc}
\toprule
\vertex{0}{0}{0}{0}
&
\vertex{1}{0}{1}{0}
&
\vertex{0}{1}{0}{1}
&
\vertex{0}{1}{1}{0}
&
\vertex{1}{0}{0}{1}
\\\midrule
1 & 1 & z & z & 1
\\\bottomrule
\end{array}
\]
which is the rotation by $\pi$ of the vertices from Table~\ref{table:L_matrix_nilp}.
Now we form the model for $s_{m^{\ell}}(\xx, \bt, \yy)$ by taking a $(2n+\ell-1) \times (m + \ell)$ rectangle with boundary conditions of $1$ in the lowest (resp.\ highest) $\ell$ columns on the bottom (resp.\ top) and $0$ everywhere else. The spectral parameters will be $\xx \sqcup \bt \sqcup \yy$.
We form the sum by taking cuts through the $\bt$ variables, where the cut for the $t_i$ variable comes from the $\tb_1$ or $\tc_1$ vertex in the $(i+1)$-th path.
The partition function of the model to the left of the cut will be $\dG_{\lambda}(\xx; \bt)$ by Theorem~\ref{thm:refined_dual_model}. The portion to the right of the cut will be $\dG_{\lambda^{\comp}}(\yy; \bt^{\comp})$ by Theorem~\ref{thm:refined_dual_model} using the previous fact about an equivalent model.
\end{proof}

\begin{ex}
Let $n = 2$. We give an example of a state from $s_{6^4}(\xx, \bt, \yy)$ with the dotted line indicating the cut for the decomposition from Corollary~\ref{cor:refined_Cauchy} and their corresponding pair of semistandard tableaux:
\[
\begin{tikzpicture}[>=latex,baseline=2.7cm,scale=0.7]
\foreach \y in {1,2} {
  \draw node[anchor=east] at (-0.5,\y)  {\small $x_{\y}$};
  \draw node[anchor=east] at (-0.5,8-\y)  {\small $y_{\y}$};
}
\foreach \y in {1,2,3}
  \draw node[anchor=east] at (-0.5,\y+2)  {\small $t_{\y}$};
\draw[very thin, black!20] (-0.5, 1-0.5) grid (9.5, 7.5);
\draw[densely dotted] (-0.3,5.5) -- (1.5,5.5) -- (1.5,4.5) -- (2.5,4.5) -- (2.5,3.5) -- (6.5,3.5) -- (6.5,2.5) -- (9.3,2.5);
\draw[-, very thick, darkred] (3,0.5) -- (3,1) -- (7,1) -- (7,2) -- (8,2) -- (8,4) -- (9,4) -- (9,7.5);
\fill[darkred] (3,1) circle (0.15) node[below left] {\small $u_1$};
\fill[darkred] (9,7) circle (0.15) node[above right] {\small $v_1$};
\draw[-, very thick, dgreencolor] (2,0.5) -- (2,2) -- (3,2) -- (3,3) -- (6,3) -- (6,5) -- (8,5) -- (8,7.5);
\fill[dgreencolor] (2,1) circle (0.15) node[below left] {\small $u_2$};
\fill[dgreencolor] (8,7) circle (0.15) node[above right] {\small $v_2$};
\draw[-, very thick, dbluecolor] (1,0.5) -- (1,3) -- (2,3) -- (2,5) -- (4,5) -- (4,6) -- (7,6) -- (7,7.5);
\fill[dbluecolor] (1,1) circle (0.15) node[below left] {\small $u_3$};
\fill[dbluecolor] (7,7) circle (0.15) node[above right] {\small $v_3$};
\draw[-, very thick, UQpurple] (0,0.5) -- (0,5) -- (1,5) -- (1,7) -- (6,7) -- (6,7.5);
\fill[UQpurple] (0,1) circle (0.15) node[below left] {\small $u_4$};
\fill[UQpurple] (6,7) circle (0.15) node[above right] {\small $v_4$};
\end{tikzpicture}
\quad \longmapsto \quad
\ytableaushort{{x_1}{x_1}{x_1}{x_1}{x_2},{x_2}{t_1}{t_1}{t_1},{t_1},{t_3}}  *[*(white)]{5,1} *[*(darkred!40)]{5,4,1,1},
\quad
\ytableaushort{{y_1}{y_1}{y_1}{y_1}{y_1},{y_2}{y_2}{y_2}{t_3}{t_3},{t_3}{t_3},{t_2}}  *[*(white)]{5,3} *[*(darkred!40)]{5,5,2,1}\,,
\]
where the second tableau is semistandard with respect to the alphabet $y_1 < y_2 < t_3 < t_2 < t_1$.
Notice that when we rotate the second tableau by $\pi$ and join it to the first tableau, we obtain a semistandard tableau of (rectangular) shape $6^4$ for the alphabet $x_1 < x_2 < t_1 < t_2 < t_3 < y_2 < y_1$.
\end{ex}

We also have a generalized Littlewood identity, where the $\bt = 1$ specialized version was first given in~\cite[Cor.~3.5]{Yel19II}.

\begin{cor}[Littlewood identity]
\label{cor:littlewood_identity}
Let $m \geq \ell$.
We have
\[
s_{m^{\ell}}(\xx, \bt, t_{\ell}) = \sum_{\lambda \subseteq m^{\ell}} \prod_{i=1}^{\ell} t_i^{m-\lambda_i} \dG_{\lambda}(\xx; \bt).
\]
\end{cor}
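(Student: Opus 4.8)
The plan is to obtain this identity as a one-variable specialization of the Cauchy identity just established in Corollary~\ref{cor:refined_Cauchy}. In
\[
s_{m^{\ell}}(\xx, \bt, \yy) = \sum_{\lambda \subseteq m^{\ell}} \dG_{\lambda}(\xx; \bt)\, \dG_{\lambda^{\comp}}(\yy; \bt^{\comp}),
\]
I would set $\yy = (t_{\ell}, 0, \dotsc, 0)$, keeping a single nonzero $\yy$-variable equal to $t_{\ell}$. Both sides are symmetric polynomials in $\yy$, and a symmetric polynomial is unchanged by deleting trailing zero variables, so the left-hand side collapses to $s_{m^{\ell}}(\xx, \bt, t_{\ell})$ and each factor $\dG_{\lambda^{\comp}}(\yy; \bt^{\comp})$ collapses to the single-variable evaluation $\dG_{\lambda^{\comp}}(t_{\ell}; \bt^{\comp})$.

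The crux is to identify $\dG_{\lambda^{\comp}}(t_{\ell}; \bt^{\comp})$ with the monomial $\prod_{i=1}^{\ell} t_i^{m-\lambda_i}$. For this I would use the reverse plane partition definition directly: with only one allowed entry, $\rpp^{1}(\lambda^{\comp})$ consists of the single tableau $T_0$ whose boxes all equal $1$, so $\dG_{\lambda^{\comp}}(t_{\ell}; \bt^{\comp})$ is a single monomial. For $T_0$ each column contributes exactly one box with nothing equal directly below it, giving $a(T_0) = \bigl((\lambda^{\comp})_1\bigr)$, while $b_r(T_0)$ counts the boxes of row $r$ lying directly above another box, namely $(\lambda^{\comp})_{r+1}$; hence
\[
\dG_{\lambda^{\comp}}(t_{\ell}; \bt^{\comp}) = t_{\ell}^{(\lambda^{\comp})_1} \prod_{r=1}^{\ell-1} (\bt^{\comp})_r^{(\lambda^{\comp})_{r+1}}.
\]
Equivalently, one may invoke Theorem~\ref{thm:dualG_schur_decomposition} and note that in one variable only the single-row term $\mu = \bigl((\lambda^{\comp})_1\bigr)$ survives, with $\lambda^{\comp}/\mu$ carrying a unique elegant tableau that places $i-1$ throughout each row $i$.

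What remains is the index bookkeeping, which I expect to be the only delicate point. Using $(\lambda^{\comp})_j = m - \lambda_{\ell + 1 - j}$ and $(\bt^{\comp})_r = t_{\ell - r}$ and reindexing by $k = \ell - r$, the displayed monomial becomes $t_{\ell}^{\,m - \lambda_{\ell}} \prod_{k=1}^{\ell-1} t_k^{\,m - \lambda_k} = \prod_{i=1}^{\ell} t_i^{\,m - \lambda_i}$, precisely the coefficient in the statement. Substituting back into the specialized Cauchy identity then gives $s_{m^{\ell}}(\xx, \bt, t_{\ell}) = \sum_{\lambda \subseteq m^{\ell}} \prod_{i=1}^{\ell} t_i^{m-\lambda_i}\, \dG_{\lambda}(\xx; \bt)$. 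The hypothesis $m \geq \ell$ is not actually needed for this route; the genuine care lies only in matching the complement and reversal conventions in $\lambda^{\comp}$ and $\bt^{\comp}$, which is straightforward to verify on small shapes.
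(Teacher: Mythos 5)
Your proof is correct, but it takes a genuinely different (and legitimate, non-circular) route from the paper's. The paper proves Corollary~\ref{cor:littlewood_identity} by a fresh lattice-model argument: it builds an $(n+\ell)\times(m+\ell)$ rectangular grid with spectral parameters $\xx \sqcup \bt \sqcup (t_{\ell})$, whose partition function is $s_{m^{\ell}}(\xx,\bt,t_{\ell})$, and then cuts each state exactly as in the proof of Corollary~\ref{cor:refined_Cauchy}; the region beyond the cut is a frozen configuration of weight $t_1^{m-\lambda_1} \dotsm t_{\ell}^{m-\lambda_{\ell}}$. You instead treat the Littlewood identity as a formal specialization of the already-proven Cauchy identity, setting $\yy = (t_{\ell},0,\dotsc,0)$ and evaluating the surviving one-variable factor $\dG_{\lambda^{\comp}}(t_{\ell};\bt^{\comp})$ combinatorially, via the unique all-ones reverse plane partition or, equivalently, via Theorem~\ref{thm:dualG_schur_decomposition} and the unique elegant tableau filling row $i$ with $i-1$. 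The mathematical content is parallel — your single-monomial evaluation is precisely the algebraic shadow of the paper's frozen region — but your version buys a cleaner logical structure (no new model, Littlewood becomes a corollary of Cauchy, in the same spirit as the paper's later remark that Corollaries~\ref{cor:refined_coincidence_lemma} and~\ref{cor:branching_rule} also imply it), and it makes transparent that the hypothesis $m \geq \ell$ is not needed, since it plays no role in Corollary~\ref{cor:refined_Cauchy}. Your index bookkeeping $(\lambda^{\comp})_j = m - \lambda_{\ell+1-j}$ and $(\bt^{\comp})_r = t_{\ell-r}$ is correct. The one point you should make explicit in a write-up is the stability step: $\dG_{\lambda^{\comp}}(y_1,0,\dotsc,0;\bt^{\comp}) = \dG_{\lambda^{\comp}}(y_1;\bt^{\comp})$ holds because any reverse plane partition containing the largest letter contributes a positive power of the corresponding variable (so those terms vanish at zero), or alternatively because the Schur expansion of Theorem~\ref{thm:dualG_schur_decomposition} is stable under deleting variables; this is what licenses specializing an identity stated for $n$ variables $\yy$ down to a single one.
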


\begin{proof}
We form a $(n+\ell) \times (m+\ell)$ rectangular grid with boundary conditions of the left $\ell$ columns on the bottom and right $\ell$ columns on the top being $1$, with $0$ everywhere else.
Let the spectral parameters be $\xx \sqcup \bt \sqcup (t_{\ell})$.
The resulting partition function is the Schur function $s_{m^{\ell}}(\xx,\bt, t_{\ell})$.
By taking the model corresponding to $\dG_{\lambda}(\xx; \bt)$, we note that the remaining portion of the rectangle model becomes fixed and has weight $t_1^{m-\lambda_1} t_2^{m-\lambda_2} \dotsm t_{\ell}^{m-\lambda_{\ell}}$.
We note that the shape $\lambda$ is determined by the first vertical step of the paths as in the proof of Corollary~\ref{cor:refined_Cauchy}.
\end{proof}

\begin{ex}
\label{ex:schur_to_dual_grothendiecks}
Consider $n = 3$.  Let $m = 6$ and $\ell = 4$. A state from $s_{8^4}(\xx, \bt, t_{\ell})$ is
\[
\begin{tikzpicture}[>=latex,baseline=2.7cm,scale=0.7]
\foreach \y in {1,2,3} {
  \draw node[anchor=east] at (-0.5,\y)  {\small $x_{\y}$};
}
\foreach \y in {1,2,3,4}
  \draw node[anchor=east] at (-0.5,\y+3)  {\small $t_{\y}$};
\draw[very thin, black!20] (-0.5, 1-0.5) grid (11.5, 7.5);
\draw[densely dotted] (-0.3,6.5) -- (3.5,6.5) -- (3.5,5.5) -- (8.5,5.5) -- (8.5,4.5) -- (9.5,4.5) -- (9.5,3.5) -- (11.3,3.5);
\draw[-, very thick, darkred] (3,0.5) -- (3,1) -- (8,1) -- (8,2) -- (9,2) -- (11,2) -- (11,7.5);
\fill[darkred] (3,1) circle (0.15) node[below left] {\small $u_1$};
\fill[darkred] (11,7) circle (0.15) node[above right] {\small $v_1$};
\draw[-, very thick, dgreencolor] (2,0.5) -- (2,2) -- (4,2) -- (4,3) -- (7,3) -- (7,4) -- (9,4) -- (9,5) -- (10,5) -- (10,7.5);
\fill[dgreencolor] (2,1) circle (0.15) node[below left] {\small $u_2$};
\fill[dgreencolor] (10,7) circle (0.15) node[above right] {\small $v_2$};
\draw[-, very thick, dbluecolor] (1,0.5) -- (1,3) -- (3,3) -- (3,4) -- (5,4) -- (5,5) -- (8,5) -- (8,6) -- (9,6) -- (9,7.5);
\fill[dbluecolor] (1,1) circle (0.15) node[below left] {\small $u_3$};
\fill[dbluecolor] (9,7) circle (0.15) node[above right] {\small $v_3$};
\draw[-, very thick, UQpurple] (0,0.5) -- (0,4) -- (1,4) -- (1,5) -- (3,5)  -- (3,7) -- (8,7) -- (8,7.5);
\fill[UQpurple] (0,1) circle (0.15) node[below left] {\small $u_4$};
\fill[UQpurple] (8,7) circle (0.15) node[above right] {\small $v_4$};
\end{tikzpicture}
\]
where the cut is given by the dotted line. Thus we have $\lambda = (8, 7, 7, 3)$.
\end{ex}

In the previous proof, if we remove the topmost row (but keeping the same boundary conditions), then not only does the upper-right region become fixed, there is only one allowable partition of $\lambda$ being the rectangle allowed by the pigeonhole principle.
This is exactly analogous to how we are able to fix the lower-left portion (and why the model is equivalent to the NILPs).
Therefore, we obtain the following, where the $t_1 = \cdots = t_n = 1$ specialized version was first given in~\cite[Lemma~3.4]{Yel19II} (the proof is essentially equivalent by the LGV lemma).

\begin{cor}
\label{cor:refined_coincidence_lemma}
We have
\[
s_{m^{\ell}}(\xx, \bt) = \dG_{m^{\ell}}(\xx; \bt).
\]
\end{cor}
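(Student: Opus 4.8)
The plan is to deduce the identity from the multi-Schur description already established in Corollary~\ref{cor:refined_multiSchur}, reducing everything to a determinant manipulation. Writing $\xx^{(j)} = (\xx, t_1, \dotsc, t_{j-1})$ for the nested alphabets and $\xx^{(\ell)} = (\xx, \bt) = (\xx, t_1, \dotsc, t_{\ell-1})$ for the full one, Corollary~\ref{cor:refined_multiSchur} together with the determinantal definition of the multi-Schur function gives
\[
\dG_{m^{\ell}}(\xx; \bt) = \det\bigl[ h_{m+i-j}(\xx^{(j)}) \bigr]_{i,j=1}^{\ell},
\]
since $I(m^{\ell}) = (m, \dotsc, m)$. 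On the other hand, replacing every per-column alphabet by the full alphabet $\xx^{(\ell)}$ yields the ordinary Jacobi--Trudi determinant for the rectangle, $s_{m^{\ell}}(\xx, \bt) = \det[ h_{m+i-j}(\xx^{(\ell)}) ]_{i,j=1}^{\ell}$ (the determinant is unchanged by transposition, so this index convention is immaterial). It therefore suffices to show that these two determinants agree.

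The key step is to collapse each per-column alphabet $\xx^{(j)}$ to the full alphabet $\xx^{(\ell)}$ by determinant-preserving column operations. Removing the variables $t_j, \dotsc, t_{\ell-1}$ from $\xx^{(\ell)}$ multiplies the generating function $\sum_i h_i(\xx^{(\ell)}) u^i$ by $\prod_{r=j}^{\ell-1}(1 - t_r u)$, which yields
\[
h_i(\xx^{(j)}) = \sum_{p \geq 0} (-1)^p e_p(t_j, \dotsc, t_{\ell-1})\, h_{i-p}(\xx^{(\ell)}).
\]
Because $\{t_j, \dotsc, t_{\ell-1}\}$ has only $\ell-j$ elements, the coefficient $e_p(t_j, \dotsc, t_{\ell-1})$ vanishes for $p > \ell - j$, so the sum truncates. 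Applying this to the $j$-th column and using that the shape is the rectangle (so that $h_{(m+i-j)-p}(\xx^{(\ell)})$ in row $i$ is exactly the row-$i$ entry of column $j+p$ of the ordinary determinant), the $j$-th multi-Schur column equals the $j$-th ordinary column plus an explicit combination of columns $j+1, \dotsc, \ell$ strictly to its right. These are determinant-preserving column operations, so the two determinants coincide and the claim follows for all $m$ and $\ell$.

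The main obstacle is precisely keeping the column operations inside the matrix: a priori the relation produces terms $h_{(m+i-j)-p}(\xx^{(\ell)})$ for arbitrarily large $p$, which would correspond to nonexistent columns $j+p > \ell$, and it is the truncation $e_p = 0$ for $p > \ell - j$ together with the \emph{rectangularity} of the shape that confines all terms to genuine columns of the matrix; for a nonrectangular shape this alignment fails, which is why the statement is special to rectangles. Two shortcuts are worth recording. First, when $m \geq \ell$ one may simply set $t_{\ell} = 0$ in the Littlewood identity of Corollary~\ref{cor:littlewood_identity}: the left side becomes $s_{m^{\ell}}(\xx, \bt, 0) = s_{m^{\ell}}(\xx, \bt)$, while on the right the factor $t_{\ell}^{m - \lambda_{\ell}}$ annihilates every term except $\lambda = m^{\ell}$, for which $\prod_i t_i^{m-m} = 1$, leaving exactly $\dG_{m^{\ell}}(\xx; \bt)$. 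Second, one can argue directly on the lattice model as in Corollary~\ref{cor:littlewood_identity} but with the $t_{\ell}$ row deleted: the $\ell$ nonintersecting paths must now reach the top-right $\ell$ exit columns through only $\ell-1$ rows of $\bt$-variables, which by a pigeonhole argument forces the cut to read off $\lambda = m^{\ell}$ with the forced upper-right staircase contributing weight $1$, so the model reduces to $\fN_{m^{\ell}}$ and Theorem~\ref{thm:refined_dual_model} applies.
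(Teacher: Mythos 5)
Your main argument is correct and takes a genuinely different route from the paper. The paper proves this corollary from the lattice model: it takes the model used for the Littlewood identity (Corollary~\ref{cor:littlewood_identity}), deletes the topmost ($t_{\ell}$) row, and observes that the pigeonhole principle forces the unique surviving shape to be $\lambda = m^{\ell}$ with the upper-right region frozen --- this is exactly your second shortcut, so that part of your write-up reproduces the paper's proof. Your primary route instead starts from the multi-Schur description of Corollary~\ref{cor:refined_multiSchur} and reduces the claim to the determinant identity $\det\bigl[h_{m+i-j}(\xx^{(j)})\bigr] = \det\bigl[h_{m+i-j}(\xx^{(\ell)})\bigr]$, which you prove by expanding $h_i(\xx^{(j)}) = \sum_{p=0}^{\ell-j}(-1)^p e_p(t_j,\dotsc,t_{\ell-1})\, h_{i-p}(\xx^{(\ell)})$ and noting that the resulting transition matrix, with entries $(-1)^{k-j}e_{k-j}(t_j,\dotsc,t_{\ell-1})$ for $k \geq j$, is unitriangular; your observation that rectangularity is precisely what keeps the shifted subscripts aligned with genuine columns of the matrix is the correct diagnosis of why the statement is special to rectangles. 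As for what each approach buys: yours is purely algebraic, imposes no constraint relating $m$ and $\ell$, and isolates the role of the rectangular shape, whereas the paper's argument is shorter given the vertex-model machinery already in place and, at $\bt = 1$, recovers Yeliussizov's LGV-lemma proof. Your first shortcut (setting $t_{\ell} = 0$ in the Littlewood identity) is also valid and non-circular, since the paper proves Corollary~\ref{cor:littlewood_identity} independently of this corollary, but as you note it only covers $m \geq \ell$. One caveat on the main route: Corollary~\ref{cor:refined_multiSchur} is itself deduced in the paper from the NILP interpretation, so your determinant argument sits downstream of the same machinery; there is no circularity, however, since that corollary is established before the statement in question.
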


Note that Corollary~\ref{cor:refined_coincidence_lemma} implies that $\dG_{\lambda}(\xx; \bt)$ is symmetric in $\bt$ when $\lambda$ is a rectangle (which can be seen by the standard train argument). Generalizing these fixed regions for when $\lambda$ is a more general shape, we obtain the following.

\begin{cor}
\label{cor:refined_dual_symmetries}
Suppose $\lambda_i = \lambda_{i+1}$, then $\dG_{\lambda}(\xx; \bt)$ is symmetric in $t_{i-1}$ and $t_i$, where we can take $t_0 = x_j$ for any $1 \leq j < \ell$.
\end{cor}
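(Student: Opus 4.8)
The plan is to sidestep the jagged lattice model and argue directly from the Jacobi--Trudi determinant established above,
\[
\dG_{\lambda}(\xx; \bt) = \det \bigl[ h_{\lambda_i + j - i}(\xx, t_1, \dotsc, t_{i-1}) \bigr]_{i,j=1}^{N},
\]
realizing the asserted symmetry as a single determinant-preserving row operation; here the size $N \ge \ell$ may be taken arbitrarily large without changing the value, so that the two rows of interest occur. Write $M = [M_{ij}]$ for this matrix, so $M_{ij} = h_{\lambda_i + j - i}(X_i)$ with $X_i = (\xx, t_1, \dotsc, t_{i-1})$, and fix $i$ with $\lambda_i = \lambda_{i+1}$. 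The first step is to identify which rows are moved by the transposition $t_{i-1} \leftrightarrow t_i$: for $i' \le i-1$ the alphabet $X_{i'}$ contains neither variable, while for $i' \ge i+1$ it contains both, so by symmetry of $h_m$ in its alphabet those rows are untouched. Only row $i$ changes, with $M_{ij} = h_{\lambda_i + j - i}(Y, t_{i-1})$ replaced by $M'_{ij} = h_{\lambda_i + j - i}(Y, t_i)$, where $Y = (\xx, t_1, \dotsc, t_{i-2})$; the task is to show $\det M' = \det M$.

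The crux is the generating-function identity
\[
h_m(Y, s) - h_m(Y, s') = (s - s')\, h_{m-1}(Y, s, s'),
\]
obtained by subtracting $\sum_m h_m(Y, s) u^m = (1 - su)^{-1} \prod_{y \in Y} (1 - yu)^{-1}$ from its $s'$-counterpart and extracting the factor $(s - s')u$. Taking $s = t_{i-1}$, $s' = t_i$, and $m = \lambda_i + j - i$ yields
\[
M_{ij} - M'_{ij} = (t_{i-1} - t_i)\, h_{\lambda_i + j - i - 1}(Y, t_{i-1}, t_i).
\]
At this point the hypothesis $\lambda_i = \lambda_{i+1}$ does all the work: since $X_{i+1} = (Y, t_{i-1}, t_i)$ and $\lambda_{i+1} + j - (i+1) = \lambda_i + j - i - 1$, the trailing factor is exactly $M_{i+1,j}$. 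Thus $M'$ arises from $M$ by $\mathrm{Row}_i \mapsto \mathrm{Row}_i - (t_{i-1} - t_i)\,\mathrm{Row}_{i+1}$, an operation that preserves the determinant, and so $\dG_{\lambda}(\xx; \bt)$ is invariant under $t_{i-1} \leftrightarrow t_i$.

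For the boundary convention $t_0 = x_j$ I would run the identical computation at $i = 1$ with $Y$ equal to $\xx$ with $x_j$ deleted, in the roles $t_{i-1} = x_j$ and $t_i = t_1$: row $1$ has alphabet $\xx = (Y, x_j)$ while every lower row contains both $x_j$ and $t_1$, so again only row $1$ moves and $M_{1j} - M'_{1j} = (x_j - t_1)\, h_{\lambda_1 + j - 2}(Y, x_j, t_1) = (x_j - t_1) M_{2j}$ exactly when $\lambda_1 = \lambda_2$; since $\dG_{\lambda}(\xx;\bt)$ is already symmetric in $\xx$ (each $s_\mu(\xx)$ being so), the particular choice of $j$ is immaterial. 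I expect the only delicate point to be the index bookkeeping that identifies the correcting row as row $i+1$, which is precisely where $\lambda_i = \lambda_{i+1}$ is used and without which the row operation fails. Conceptually this is the determinantal shadow of the standard train argument applied to the two adjacent rows carrying $t_{i-1}$ and $t_i$ (resp.\ $x_n$ and $t_1$): the equal-parts condition squares off the otherwise jagged right boundary so that an $R$-matrix can be pushed across and the extra frozen corner of the longer row contributes trivially.
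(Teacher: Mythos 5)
Your proof is correct, and it takes a genuinely different route from the paper. The paper argues through the vertex model: the rectangle case (Corollary~\ref{cor:refined_coincidence_lemma}) gives full symmetry in $\bt$ via the standard train argument, and the general statement is obtained by extending the jagged model with frozen vertices so that, when $\lambda_i = \lambda_{i+1}$, the two rows carrying the spectral parameters $t_{i-1}$ and $t_i$ have compatible lengths and an $R$-matrix can be threaded across them (this is exactly what the example following the corollary illustrates, and what your closing remark correctly identifies as the ``train argument'' shadow of your computation). You instead work entirely from the Jacobi--Trudi determinant $\dG_{\lambda}(\xx;\bt) = \det\bigl[h_{\lambda_i+j-i}(\xx,t_1,\dotsc,t_{i-1})\bigr]$, which the paper establishes before this corollary, so there is no circularity: the swap $t_{i-1}\leftrightarrow t_i$ fixes every row except row $i$, the identity $h_m(Y,s)-h_m(Y,s') = (s-s')\,h_{m-1}(Y,s,s')$ expresses the change in row $i$ as a multiple of a row of shifted degree, and the hypothesis $\lambda_i=\lambda_{i+1}$ is precisely what makes that shifted row coincide with row $i+1$, so a determinant-preserving row operation finishes the argument; the $t_0=x_j$ case is handled identically. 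What each approach buys: the paper's argument stays inside the integrability framework and makes the symmetry visible as an $R$-matrix exchange (uniform with the other identities in that section), while yours is elementary, self-contained once the Jacobi--Trudi formula is in hand, and makes completely transparent why the equal-parts hypothesis is necessary (without it the correction term is not a row of the matrix). Two cosmetic points: your remark about enlarging the matrix to size $N\ge\ell$ is unnecessary (since $t_i$ must be one of the $\ell-1$ parameters, $i+1\le\ell$ always, so both rows already occur) and would in any case require introducing dummy parameters $t_{\ell},\dotsc,t_{N-1}$; and in the boundary case you use $j$ simultaneously as the column index and as the index of $x_j$, which should be disambiguated.
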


\begin{ex}
Consider the vertex model with $\lambda = 4422$ and $n = 5$. Then one such state in this extended model and the corresponding reverse plane partition is
\[
\begin{tikzpicture}[>=latex,baseline=2.7cm,scale=0.7]
\foreach \y in {1,2,3,4,5}
  \draw node[anchor=east] at (-0.5,\y)  {\small $x_{\y}$};
\foreach \y in {1,2,3}
  \draw node[anchor=east] at (-0.5,\y+5)  {\small $t_{\y}$};
\draw[very thin, black!20] (-0.5, 1-0.5) grid (7.5, 8.5);
\draw[densely dotted] (-0.2,5.4) -- (7.2,5.4);
\fill[white] (3.5,6.5) rectangle (7.6, 8.6);
\draw[-, very thick, darkred] (3,0.5) -- (3,1) -- (5,1) -- (5,3) -- (6,3) -- (6,5) -- (7,5) -- (7,6.5);
\fill[darkred] (3,1) circle (0.15) node[below left] {\small $\widetilde{u}_1$};
\fill[darkred] (7,5) circle (0.15) node[right=2pt] {\small $v_1$};
\draw[-, very thick, dgreencolor] (2,0.5) -- (2,2) -- (3,2) -- (3,4) -- (4,4) -- (4,6) -- (6,6) -- (6,6.5);
\fill[dgreencolor] (2,1) circle (0.15) node[below left] {\small $\widetilde{u}_2$};
\fill[dgreencolor] (6,6) circle (0.15) node[above right] {\small $v_2$};
\draw[-, very thick, dbluecolor] (1,0.5) -- (1,4) -- (2,4) -- (2,6) -- (3,6) -- (3,8.5);
\fill[dbluecolor] (1,1) circle (0.15) node[below left] {\small $\widetilde{u}_3$};
\fill[dbluecolor] (3,7) circle (0.15) node[above right] {\small $v_3$};
\draw[-, very thick, UQpurple] (0,0.5) -- (0,7) -- (1,7) -- (1,8) -- (2,8) -- (2,8.5);
\fill[UQpurple] (0,1) circle (0.15) node[below left] {\small $\widetilde{u}_4$};
\fill[UQpurple] (2,8) circle (0.15) node[above right] {\small $v_4$};
\end{tikzpicture}
\quad \longmapsto \quad
\ytableaushort{1135,2311,41,23}  *[*(white)]{4,2,1} *[*(darkred!40)]{4,4,2,2}\,.
\]
Note that we can apply the $R$-matrix to any pair of (adjacent) rows except the second and the third.
We compute
\begin{align*}
\dG_{\lambda}(\xx; \bt) & = s_{4422}(\xx, t_1) + (t_2 + t_3) s_{4421}(\xx, t_1) + (t_2^2 + t_2 t_3 + t_3^2) s_{442}(\xx, t_1)
\\ & \hspace{20pt} + t_2 t_3 s_{4411}(\xx, t_1) + (t_2^2 t_3 + t_2 t_3^2) s_{441}(\xx, t_1) + t_2^2 t_3^2 s_{44}(\xx, t_1),
\end{align*}
which is clearly symmetric in $t_2$ and $t_3$.
\end{ex}

We also have a refined version of the branching rule of~\cite[Thm.~8.6]{Yel17} at $\alpha = 0$.

\begin{cor}[Branching rule]
\label{cor:branching_rule}
We have
\[
\dG_{\lambda}(\xx, \gamma; \bt) = \sum_{\mu \subseteq \lambda} \gamma^{\lambda_1 - \mu_1} t_1^{\lambda_2 - \mu_2} t_2^{\lambda_3 - \mu_3} \dotsm t_{\ell-1}^{\lambda_{\ell} - \mu_{\ell}} \dG_{\mu}(\xx; \gamma, \bt).
\]
\end{cor}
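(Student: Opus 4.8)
The plan is to reduce everything to the Schur expansion of Theorem~\ref{thm:dualG_schur_decomposition} together with the classical branching rule for Schur functions. First I would apply Theorem~\ref{thm:dualG_schur_decomposition} to the left-hand side to get
\[
\dG_{\lambda}(\xx, \gamma; \bt) = \sum_{\nu \subseteq \lambda} e_{\lambda}^{\nu}(\bt)\, s_{\nu}(\xx, \gamma),
\]
and then insert the one-variable branching $s_{\nu}(\xx, \gamma) = \sum_{\rho} \gamma^{\absval{\nu/\rho}} s_{\rho}(\xx)$, the sum being over $\rho \subseteq \nu$ with $\nu/\rho$ a horizontal strip (this is the usual ``add a variable'' rule, read off from the boxes filled with the largest letter). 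On the right-hand side I would likewise expand each $\dG_{\mu}(\xx; \gamma, \bt) = \sum_{\rho \subseteq \mu} e_{\mu}^{\rho}(\gamma, \bt)\, s_{\rho}(\xx)$ by Theorem~\ref{thm:dualG_schur_decomposition}, where in $e_{\mu}^{\rho}(\gamma, \bt)$ a box of an elegant tableau with entry $j$ is weighted by $s_j$, with $s_1 = \gamma$ and $s_j = t_{j-1}$ for $j \geq 2$. Since the $s_{\rho}(\xx)$ are linearly independent, it suffices to match the coefficient of each $s_{\rho}(\xx)$.

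After this reduction the statement becomes the purely combinatorial identity, for each fixed $\rho \subseteq \lambda$,
\[
\sum_{\substack{\rho \subseteq \nu \subseteq \lambda \\ \nu/\rho \text{ horiz. strip}}} \gamma^{\absval{\nu/\rho}}\, e_{\lambda}^{\nu}(\bt) = \sum_{\rho \subseteq \mu \subseteq \lambda} \gamma^{\lambda_1 - \mu_1} \prod_{i=2}^{\ell} t_{i-1}^{\lambda_i - \mu_i}\, e_{\mu}^{\rho}(\gamma, \bt).
\]
The key idea is that both sides enumerate the \emph{same} set of fillings in two different ways: semistandard tableaux $U$ of the skew shape $\lambda/\rho$ whose entries in row $i$ lie in $\{1, 2, \dots, i\}$, weighted by $\prod_{b} s_{U(b)}$. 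For the left-hand side I would let $\nu/\rho$ be the set of boxes of $U$ carrying the minimal value $1$ (which automatically form a horizontal strip adjacent to $\rho$, contributing $\gamma^{\absval{\nu/\rho}}$) and subtract $1$ from the remaining entries to obtain an elegant tableau on $\lambda/\nu$ of weight $e_{\lambda}^{\nu}(\bt)$. For the right-hand side I would instead peel off, in each row $i$, the rightmost boxes carrying the maximal value $i$: these form a shape $\lambda/\mu$ and account for the monomial prefactor $\gamma^{\lambda_1-\mu_1}\prod_{i\geq 2} t_{i-1}^{\lambda_i-\mu_i}$, while the complementary boxes on $\mu/\rho$ have row-$i$ entries at most $i-1$ and hence form an elegant tableau of weight $e_{\mu}^{\rho}(\gamma, \bt)$. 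Both decompositions preserve $\prod_b s_{U(b)}$, so the identity follows.

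The main work is in checking that these two decompositions are genuinely well defined, namely that the intermediate shapes $\nu$ and $\mu$ are honest partitions and that reassembling the pieces yields a semistandard tableau. Both reduce to the same local column check: strict increase down columns forces the value-$1$ boxes in consecutive rows to respect the interlacing $\rho_i \geq \nu_{i+1}$ (giving the horizontal-strip condition, and in particular $\nu_{i+1} \leq \nu_i$), and forces the value-$i$ boxes of row $i$ to lie strictly to the right of any box of $\lambda/\mu$ in row $i+1$ (giving $\mu_{i+1} \leq \mu_i$). Everything else is the shift bijection $U \mapsto U-1$ between tableaux flagged by $(0,1,\dots,\ell-1)$ and those flagged by $(1,2,\dots,\ell)$, under which an entry $j$ of $U$ (weight $s_j$) becomes an entry $j-1$ weighted by $\gamma$ when $j=1$ and by $t_{j-1}$ when $j \geq 2$, matching the elegant-tableau weighting. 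I note that this is precisely the content of the vertex model of Theorem~\ref{thm:refined_dual_model}: cutting the model for $\dG_{\lambda}(\xx, \gamma; \bt)$ along the horizontal line at height $n$ extracts the factor $s_{\rho}(\xx)$ from the bottom $n$ rows, and the tableau $U$ records the combined data of the $\gamma$-row together with the jagged $\bt$-part, which the branching merely regroups.
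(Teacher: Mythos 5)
Your proof is correct, but it takes a genuinely different route from the paper. The paper's argument is a one-line geometric one on the vertex model of Theorem~\ref{thm:refined_dual_model}: treating $\gamma$ as the first $\bt$-variable, one superimposes the model for $\dG_{\mu}(\xx; \gamma, \bt)$ onto that of $\dG_{\lambda}(\xx, \gamma; \bt)$, reads off $\mu$ from the last vertical step of each path, and observes that all vertices between the two boundary configurations are frozen horizontal steps whose Boltzmann weights multiply to exactly $\gamma^{\lambda_1 - \mu_1} t_1^{\lambda_2 - \mu_2} \dotsm t_{\ell-1}^{\lambda_{\ell} - \mu_{\ell}}$; no coefficient extraction is needed, and the $\xx$-rows are carried along untouched. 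You instead project everything onto Schur coefficients: expand both sides by Theorem~\ref{thm:dualG_schur_decomposition}, apply the classical one-variable branching rule for Schur functions, and prove the resulting identity among elegant-tableau generating functions by double-counting row-flagged fillings of $\lambda/\rho$ --- peeling off the minimal entries (the $1$'s, which column-strictness forces into a horizontal strip with $\nu_{i+1} \leq \rho_i$) on one side, and the maximal entries (the value-$i$ boxes of row $i$, which column-strictness forces to define a partition $\mu$) on the other. Both of your column checks are sound, and the weight bookkeeping ($s_1 = \gamma$, $s_j = t_{j-1}$) matches the conventions of $e_{\mu}^{\rho}(\gamma,\bt)$, including the degenerate constraints $\nu_1 = \lambda_1$ and $\mu_1 = \rho_1$ forced by the elegance condition. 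What each approach buys: the paper's proof is essentially immediate once the lattice model exists and makes the frozen region visible at a glance, but it presupposes Theorem~\ref{thm:refined_dual_model}; yours is model-free, needing only the Schur decomposition and elementary tableau combinatorics, at the cost of being longer and requiring the explicit bijection. As you note yourself, the two proofs encode the same underlying decomposition --- your filling $U$ is precisely the record of the $\gamma$-row together with the jagged part of a lattice state --- so the difference is one of technique and packaging rather than of the combinatorial core.
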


\begin{proof}
By setting $x_{n+1} = \gamma$, we can consider that as part of the $\bt$ variables.
Thus, the result can be seen by superimposing the model for $\dG_{\mu}(\xx; \gamma, \bt)$ onto that of $\dG_{\lambda}(\xx, \gamma; \bt)$ and noting that the vertices between the two boundaries are fixed and have weight $\gamma^{\lambda_1 - \mu_1} t_1^{\lambda_2 - \mu_2} \dotsm t_{\ell-1}^{\lambda_{\ell} - \mu_{\ell}}$.
In particular, note that since $\ell(\mu) \leq \ell(\lambda)$, we do not use the variable $t_{\ell-1}$ in $\dG_{\mu}(\xx; \gamma, \bt)$.
\end{proof}

\begin{ex}
Consider $\dG_{4322}(\xx, \gamma; \bt)$, where $\xx = (x_1, x_2, x_3, x_4)$, and the state
\[
\begin{tikzpicture}[>=latex,baseline=2.7cm,scale=0.7]
\foreach \y in {1,2,3,4}
  \draw node[anchor=east] at (-0.5,\y)  {\small $x_{\y}$};
\draw node[anchor=east] at (-0.5,5)  {\small $\gamma$};
\foreach \y in {1,2,3}
  \draw node[anchor=east] at (-0.5,\y+5)  {\small $t_{\y}$};
\draw[very thin, black!20] (-0.5, 1-0.5) grid (7.5, 8.5);
\fill[white] (2.5,7.5) rectangle (7.6,8.6);
\fill[white] (3.5,6.5) rectangle (7.6,7.6);
\fill[white] (5.5,5.5) rectangle (7.6,6.6);
\draw[-, very thick, darkred] (3,0.5) -- (3,1) -- (5,1) -- (5,5) -- (7,5) -- (7,5.5);
\fill[darkred] (3,1) circle (0.15) node[below left] {\small $\widetilde{u}_1$};
\fill[darkred] (7,5) circle (0.15) node[above right] {\small $v_1$};
\draw[-, very thick, dgreencolor] (2,0.5) -- (2,2) -- (3,2) -- (3,4) -- (4,4) -- (4,6) -- (5,6) -- (5,6.5);
\fill[dgreencolor] (2,1) circle (0.15) node[below left] {\small $\widetilde{u}_2$};
\fill[dgreencolor] (5,6) circle (0.15) node[above right] {\small $v_2$};
\draw[-, very thick, dbluecolor] (1,0.5) -- (1,5) -- (2,5) -- (2,6) -- (3,6) -- (3,7.5);
\fill[dbluecolor] (1,1) circle (0.15) node[below left] {\small $\widetilde{u}_3$};
\fill[dbluecolor] (3,7) circle (0.15) node[above right] {\small $v_3$};
\draw[-, very thick, UQpurple] (0,0.5) -- (0,7) -- (1,7) -- (1,8) -- (2,8) -- (2,8.5);
\fill[UQpurple] (0,1) circle (0.15) node[below left] {\small $\widetilde{u}_4$};
\fill[UQpurple] (2,8) circle (0.15) node[above right] {\small $v_4$};
\draw[darkred] (5,4) circle (0.15);
\draw[dgreencolor] (4,5) circle (0.15);
\draw[dbluecolor] (3,6) circle (0.15);
\draw[UQpurple] (1,7) circle (0.15);
\end{tikzpicture}
\quad \longmapsto \quad
\ytableaushort{11{\gamma}{\gamma},241,{\gamma}1,23}  *[*(white)]{4,2,1} *[*(darkred!40)]{4,3,2,2}\,.
\]
To perform the branching in Corollary~\ref{cor:branching_rule} for this state, we compute that $\mu = 2221$ by considering the last vertical step in each path.
We have indicated the endpoints for $\dG_{2221}(\xx; \gamma, \bt)$ as the open circles.
Therefore, the coefficient in the branching rule is $\gamma^2 t_1 t_3$.
\end{ex}

We remark that Corollary~\ref{cor:refined_coincidence_lemma} and Corollary~\ref{cor:branching_rule} imply Corollary~\ref{cor:littlewood_identity}.

We can also generalize Corollary~\ref{cor:refined_coincidence_lemma} by considering a more general shape $\lambda$.

\begin{cor}
\label{cor:generalized_coincidence}
Let $\nu$ be a partition and $\ell = \ell(\nu)$.
Let $\widetilde{\bt} = (t_1, \dotsc, t_m)$ for some $m \geq \ell - 1$.
Then we have
\[
s_{\nu}(\xx, \widetilde{\bt}) = \sum_{\lambda \subseteq \nu} p_{\nu}^{\lambda}(\widetilde{\bt}) \dG_{\lambda}(\xx; \bt),
\]
where $p_{\nu}^{\lambda}(\widetilde{\bt}) = \det \bigl[ h_{\nu_i-\lambda_j-i+j}(t_m, \dotsc, t_j) \bigr]_{i,j=1}^{\ell}$ and for $m = \ell - 1$, we consider $h_k(t_m, \dotsc, t_{\ell}) = \delta_{k0}$.
Furthermore, we have
\[
p_{\nu}^{\lambda}(\widetilde{\bt}) = \sum_{T} \widetilde{\bt}^{T},
\]
where the sum is over all semistandard skew tableaux $T$ of shape $\nu / \lambda$ with max entry $m$ and \emph{lower} flagging $f = (0,1,\dotsc,\ell-1)$, that is the smallest entry in row $i$ is strictly greater than $i$.
\end{cor}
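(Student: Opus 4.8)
The plan is to prove this exactly as we proved Corollaries~\ref{cor:refined_Cauchy},~\ref{cor:littlewood_identity}, and~\ref{cor:refined_coincidence_lemma}: realize $s_\nu(\xx, \widetilde{\bt})$ as the partition function of a vertex model (equivalently, a family of NILPs) and then introduce a single \emph{jagged cut} that factors each state into a lower piece computing $\dG_\lambda(\xx;\bt)$ and an upper piece computing the coefficient. First I would set up the standard Schur model on a rectangular grid whose bottom $n$ rows carry spectral parameters $x_1, \dotsc, x_n$, whose next $m$ rows carry $t_1, \dotsc, t_m$, and whose top boundary is the $01$-sequence of $\nu$; by the LGV lemma its states are the NILPs from the fixed lower corner to the endpoints determined by $\nu$, where each horizontal step at height $r$ is weighted by $x_r$ if $r \le n$ and by $t_{r-n}$ if $r > n$, so the partition function is $s_\nu(\xx, \widetilde{\bt})$.

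Next I would insert the jagged cut along the staircase of heights $(n, n+1, \dotsc, n+\ell-1)$, precisely as in the proof of Corollary~\ref{cor:littlewood_identity}: for each state, the first vertical steps of the $\ell$ paths after leaving the $\xx$-region record an intermediate partition $\lambda \subseteq \nu$. The portion of the NILP below this staircase is then exactly a state of the model $\fN_\lambda$, whose $i$-th path is flagged by $n+i-1$; by Theorem~\ref{thm:refined_dual_model} (equivalently by the flagged description $\dG_\lambda(\xx;\bt) = s_\lambda^{\ff}(\xx,\bt)$ underlying Corollary~\ref{cor:refined_multiSchur}), summing these lower portions over a fixed $\lambda$ yields $\dG_\lambda(\xx;\bt)$. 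Grouping all states of the full model according to the recorded $\lambda$ therefore produces
\[
s_\nu(\xx, \widetilde{\bt}) = \sum_{\lambda \subseteq \nu} p_\nu^\lambda(\widetilde{\bt})\, \dG_\lambda(\xx;\bt),
\]
where $p_\nu^\lambda(\widetilde{\bt})$ is by definition the partition function of the upper piece.

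It then remains to identify this upper piece. Since the $i$-th path is flagged by $n+i-1$ below the cut, its continuation above the cut only meets the spectral parameters $t_i, \dotsc, t_m$, so the upper NILP is precisely a flagged skew NILP of shape $\nu/\lambda$ in the alphabet of $\widetilde{\bt}$ with the lower flagging $f = (0,1,\dotsc,\ell-1)$. Reading this NILP directly as a tableau gives the combinatorial formula $p_\nu^\lambda(\widetilde{\bt}) = \sum_T \widetilde{\bt}^{T}$, while applying the LGV lemma to the same NILP (numbering the paths as in the Jacobi--Trudi derivations above) converts it into the determinant $\det[h_{\nu_i-\lambda_j-i+j}(t_m, \dotsc, t_j)]_{i,j=1}^{\ell}$; here the variable set $\{t_j, \dotsc, t_m\}$ attached to column $j$ is exactly the heights available to the path anchored at $\lambda_j$. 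Both formulas thus drop out of the same upper piece.

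I expect the main obstacle to be the bookkeeping of the flags rather than any deep input. One must verify that the jagged cut is well defined and consistent for every state, so that the passage from a state to the pair (lower $\fN_\lambda$ state, upper flagged skew NILP) is a genuine weight-preserving bijection with no hidden interaction forced by the nonintersecting condition at the staircase. One must also reconcile the row-indexed lower flag on $\nu/\lambda$ (row $i$ drawing from $t_i, \dotsc, t_m$) with the column-indexed variable sets $\{t_j, \dotsc, t_m\}$ of the determinant, which amounts to tracking the transpose built into the LGV source/sink labeling and being careful about the unavoidable off-by-one. Finally, the degenerate column $j = \ell$ when $m = \ell-1$ must be checked separately: there the available height set is empty, which is exactly what forces the stated convention $h_k(t_m, \dotsc, t_\ell) = \delta_{k0}$.
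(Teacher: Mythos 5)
Your proposal is correct and follows exactly the paper's own argument: the paper proves Corollary~\ref{cor:generalized_coincidence} by the same cut construction used for Corollary~\ref{cor:refined_Cauchy} (a staircase cut separating a lower $\fN_{\lambda}$-state from an upper flagged skew NILP) together with the LGV lemma, which is precisely what you carry out. Your additional bookkeeping — matching the row-indexed flags to the column variable sets $\{t_j,\dotsc,t_m\}$ and handling the degenerate case $m=\ell-1$ — fills in details the paper leaves implicit but introduces no new method.
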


\begin{proof}
The results follow from utilizing the same cut idea utilized in the proof of Corollary~\ref{cor:refined_Cauchy} and from the LGV lemma.
\end{proof}

Combinatorially, what we are doing is taking a semistandard tableau $T$ of shape $\nu$, taking the subtableau $T_{\lambda}$ of shape $\lambda$, and saying $T_{\lambda}$ contributes to $\dG_{\lambda}(\xx; \bt)$ if and only if $T_{\lambda}$ contains only the letters contributing to $\xx$ and $\bt$.
The remaining part of $T \setminus T_{\lambda}$ contributes to $p_{\nu}^{\lambda}(\widetilde{\bt})$.

For our last identity of this section, we need to introduce some additional notation for the Yang--Baxter algebra and list some properties that we require.
For the remainder of this section, let us denote the $i$-th quantum space from the left in the lattice model pictures as $V_i$, and denote the dual space as $V_i^*$.
We denote the basis vectors of $V_i$ and $V_i^*$ as $(e_0)_i,(e_1)_i$ and $(e_0)_i^*,(e_1)_i^*$, respectively.
We denote the $X$-operator acting on $V_1 \otimes V_2 \otimes \cdots \otimes V_m$ sometimes as $X_m(z)$ to emphasize the number of tensor factors.
Let $\binom{[n]}{k}$ be the set of $k$-element subsets of $\{1, \dotsc, n\}$, and for $S \in \binom{[n]}{k}$, let $\overline{S} := \{1,2,\dotsc,n \} \setminus S$.

We define $\delta_{i,\lambda}$ for a positive integer $i$ and a partition $\lambda$ to be 1 if there exists an integer $j$ such that $i=\lambda_j+\ell(\lambda)-j+1$,
and 0 otherwise.
Define $(e_{\lambda})_m^*$ as
\[
(e_{\lambda})_m^* := \bigotimes_{i=1}^m \bigl( (1-\delta_{i,\lambda}) (e_0)_i^*+\delta_{i,\lambda}(e_1)_i^* \bigr),
\]
where the tensor product is done with the natural order. Similarly, define
\[
(e)_{\ell} :=
\bigotimes_{i=1}^{\ell} (e_1)_i
\otimes
\bigotimes_{i=\ell+1}^{\ell+\lambda_1} (e_0)_i,
\qquad\qquad
(e)_0 := \bigotimes_{i=1}^{\ell+\lambda_1} (e_0)_i.
\]

From the $RLL$ relation (Proposition~\ref{prop:refined_dual_integrable}) with the standard train argument (Equation~\eqref{eq:standard_train}), we get the following commutation relations between the $A$- and $B$-operators.

\begin{cor}
\label{cor:AB_ops_commute}
We have
\begin{subequations}
\begin{align}
A(z_i)B(z_j)&=\frac{z_j}{z_j-z_i}B(z_j)A(z_i)-
\frac{z_j}{z_j-z_i}B(z_i)A(z_j), \label{commrelone} \\
A(z_j)B(z_i)&=\frac{z_i}{z_j}A(z_i)B(z_j), \label{commreltwo} \\
B(z_j)B(z_i)&=\frac{z_i}{z_j}B(z_i)B(z_j), \label{commrelthree} \\
A(z_j)A(z_i)&=A(z_i)A(z_j). \label{commrelfour}
\end{align}
\end{subequations}
\end{cor}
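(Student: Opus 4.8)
The plan is to read the standard train argument~\eqref{eq:standard_train} as the operator form of the Yang--Baxter algebra and then extract the four identities as particular matrix elements. Assemble the $L$-matrices of Table~\ref{table:L_matrix_nilp} along a single horizontal auxiliary line into the monodromy matrix $T(z)$; its auxiliary matrix entries $T_{\epsilon\eta}(z)$, with $\epsilon$ the outgoing and $\eta$ the incoming auxiliary label, are exactly the operators
\[
A(z) = T_{00}(z), \quad B(z) = T_{01}(z), \quad C(z) = T_{10}(z), \quad D(z) = T_{11}(z)
\]
acting on the quantum spaces. By Proposition~\ref{prop:refined_dual_integrable} together with the train argument, for any fixed external auxiliary labels $a,f$ (incoming on the left) and $c,d$ (outgoing on the right) one obtains the operator identity
\[
\sum_{s,r} R(a,f;s,r)\, T_{cs}(z_i)\, T_{dr}(z_j) = \sum_{s',r'} R(s',r';c,d)\, T_{s'a}(z_j)\, T_{r'f}(z_i),
\]
where $R(p,q;s,r)$ is the Boltzmann weight of the $R$-vertex of Table~\ref{table:R_matrix_nilp} with incoming labels $p,q$ and outgoing labels $s,r$, and the operator products are ordered so that the lower row (acting first on the quantum spaces) sits on the right.

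First I would specialize to $c=d=0$, since $A$ and $B$ are precisely the entries with outgoing auxiliary label $0$. On the right-hand side the only surviving weight is $R(0,0;0,0)=z_j$, which forces $s'=r'=0$ and collapses that side to $z_j\, T_{0a}(z_j)\,T_{0f}(z_i)$; on the left-hand side only the five configurations of Table~\ref{table:R_matrix_nilp} contribute. The four relations then fall out of the four choices $(a,f)\in\{(1,1),(1,0),(0,1),(0,0)\}$. The diagonal choices $(1,1)$ and $(0,0)$ use $R(1,1;1,1)=z_i$ and $R(0,0;0,0)=z_j$ and give~\eqref{commrelthree} and~\eqref{commrelfour} immediately; the choice $(0,1)$ uses $R(0,1;0,1)=z_i$ and gives~\eqref{commreltwo}. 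The remaining choice $(1,0)$ is the interesting one: here the left-hand side picks up the two weights $R(1,0;0,1)=z_j-z_i$ and $R(1,0;1,0)=z_j$, producing $(z_j-z_i)A(z_i)B(z_j) + z_j B(z_i)A(z_j) = z_j B(z_j)A(z_i)$, and solving for $A(z_i)B(z_j)$ yields the coefficient $\tfrac{z_j}{z_j-z_i}$ and the two-term relation~\eqref{commrelone}.

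The only genuinely delicate point is bookkeeping of conventions: fixing once and for all which auxiliary edge is ``in'' versus ``out'', the precise crossing dictionary for the $R$-vertex between the two pictures of~\eqref{eq:standard_train}, and the order in which the two rows compose on the quantum space (bottom first). Once these are pinned down, each identity is a one-line substitution of the weights from Table~\ref{table:R_matrix_nilp}; the asymmetry of the five-vertex $R$-matrix --- input $(1,0)$ branches into two admissible outputs while $(0,1)$ does not --- is exactly what distinguishes the two-term relation~\eqref{commrelone} from the one-term relations~\eqref{commreltwo}--\eqref{commrelfour}. I would finish by checking the resulting operator orderings against~\eqref{commrelone}--\eqref{commrelfour} to confirm that the spectral parameters land on the correct factors.
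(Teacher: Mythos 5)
Your proposal is correct and is essentially the paper's own argument: the paper obtains the corollary directly from the $RLL$ relation (Proposition~\ref{prop:refined_dual_integrable}) via the standard train argument~\eqref{eq:standard_train}, which is exactly the matrix-element extraction you carry out, with $A(z)=T_{00}(z)$, $B(z)=T_{01}(z)$ and external labels $c=d=0$. Your case analysis of the five-vertex $R$-weights from Table~\ref{table:R_matrix_nilp} — in particular that the input $(1,0)$ branches into the two outputs weighted $z_j-z_i$ and $z_j$, yielding the two-term relation~\eqref{commrelone}, while the other inputs give the one-term relations~\eqref{commreltwo}--\eqref{commrelfour} — reproduces the intended proof with the correct conventions and coefficients.
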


Define  $\widetilde{A}(z_i) := z_i A(z_i)$.
Then~\eqref{commreltwo} and~\eqref{commrelfour} can be rewritten as
\begin{subequations}
\label{eq:commrelsprime}
\begin{align}
\widetilde{A}(z_j)B(z_i)&=\widetilde{A}(z_i)B(z_j), \label{commreltwoprime} \\
\widetilde{A}(z_j)\widetilde{A}(z_i)&=\widetilde{A}(z_i)\widetilde{A}(z_j). \label{commrelfourprime}
\end{align}
\end{subequations}

Using~\eqref{commrelthree} and~\eqref{eq:commrelsprime},
we can easily show that the following product of $A$- and $B$-operators
with some overall factor
\begin{align}
\displaystyle \prod_{i=1}^{\ell-1} z_i^{i-\ell}
\widetilde{A}(z_{\ell+m}) \cdots \widetilde{A}(z_{\ell+2}) \widetilde{A}(z_{\ell+1})
B(z_\ell) \cdots B(z_2) B(z_1),
\end{align}
is symmetric with respect to $z_1,z_2,\dotsc,z_{\ell+m}$.
Using the original $A$-operators, this means the following.

\begin{lemma} \label{symmetryproductsofAB}
The matrix-valued function
\[
\prod_{j={\ell+1}}^{\ell+m}
z_j \prod_{i=1}^{\ell-1} z_i^{i-\ell}
A(z_{\ell+m}) \cdots A(z_{\ell+2}) A(z_{\ell+1})
B(z_\ell) \cdots B(z_2) B(z_1)
\]
is symmetric with respect to $z_1,z_2,\dotsc,z_{\ell+m}$.
\end{lemma}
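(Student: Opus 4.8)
The plan is to establish the stronger (already sketched) claim that the operator product
\[
F := \prod_{i=1}^{\ell-1} z_i^{i-\ell}\, \widetilde{A}(z_{\ell+m}) \cdots \widetilde{A}(z_{\ell+1})\, B(z_\ell) \cdots B(z_1)
\]
is symmetric in $z_1, \dotsc, z_{\ell+m}$, and then read off the lemma by substituting $\widetilde{A}(z_j) = z_j A(z_j)$: this extracts the factor $\prod_{j=\ell+1}^{\ell+m} z_j$ and converts $F$ into precisely the displayed matrix-valued function. Since $S_{\ell+m}$ is generated by the adjacent transpositions $s_a = (a, a+1)$, it suffices to verify that relabeling $z_a \leftrightarrow z_{a+1}$ fixes $F$ for every $1 \le a \le \ell+m-1$. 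For bookkeeping I absorb the trivial factor $z_\ell^0 = 1$ into the prefactor and regard it as $\prod_{i=1}^{\ell} z_i^{i-\ell}$, which touches only the $B$-variables.

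I would then split into three cases according to where $a$ sits relative to the boundary $\ell$. When both indices lie among the $\widetilde{A}$-variables (that is $\ell+1 \le a$), the prefactor is untouched and the adjacent segment $\widetilde{A}(z_{a+1})\widetilde{A}(z_a)$ becomes $\widetilde{A}(z_a)\widetilde{A}(z_{a+1})$, which is equal by \eqref{commrelfourprime}. At the boundary $a = \ell$, the adjacent segment $\widetilde{A}(z_{\ell+1})B(z_\ell)$ becomes $\widetilde{A}(z_\ell)B(z_{\ell+1})$, equal by \eqref{commreltwoprime}, while the prefactor is unchanged because neither $z_\ell$ nor $z_{\ell+1}$ carries a nontrivial exponent. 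Both cases therefore leave $F$ invariant without any compensating factor.

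The only case that genuinely uses the prefactor is when both indices are $B$-variables ($a+1 \le \ell$). Here relabeling turns the adjacent segment $B(z_{a+1})B(z_a)$ into $B(z_a)B(z_{a+1})$, and by \eqref{commrelthree} this equals $\tfrac{z_{a+1}}{z_a}\,B(z_{a+1})B(z_a)$, so the $B$-product acquires a factor $z_{a+1}/z_a$. Simultaneously the prefactor contribution $z_a^{a-\ell} z_{a+1}^{a+1-\ell}$ becomes $z_{a+1}^{a-\ell} z_a^{a+1-\ell}$, a factor of $z_a/z_{a+1}$. These two factors cancel exactly, so $F$ is invariant in this case as well. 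Having checked invariance under every adjacent transposition, $F$ is symmetric, and rewriting through $\widetilde{A}(z_i) = z_i A(z_i)$ gives the lemma.

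I expect no real obstacle: the argument is a finite, self-contained computation built entirely from Corollary~\ref{cor:AB_ops_commute}. The one point requiring care is the bookkeeping in the $B$-variable case—one must confirm that the designed exponents $i-\ell$ are exactly tuned to cancel the multiplicative factor produced by \eqref{commrelthree}, and that the vanishing of the $i=\ell$ exponent is precisely what makes the boundary transposition compatible with the mixed relation \eqref{commreltwoprime}.
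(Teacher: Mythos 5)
Your proposal is correct and follows exactly the route the paper takes: the paper asserts (without detail) that the product $\prod_{i=1}^{\ell-1} z_i^{i-\ell}\,\widetilde{A}(z_{\ell+m})\cdots\widetilde{A}(z_{\ell+1})B(z_\ell)\cdots B(z_1)$ is symmetric as a consequence of~\eqref{commrelthree}, \eqref{commreltwoprime}, and~\eqref{commrelfourprime}, and then rewrites $\widetilde{A}(z_j)=z_jA(z_j)$ to obtain the lemma. Your case-by-case check of adjacent transpositions, with the exponents $z_i^{i-\ell}$ cancelling the factor $z_{a+1}/z_a$ from~\eqref{commrelthree} in the $B$--$B$ case, is precisely the omitted "easily shown" verification.
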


We show an analog of identities for Schur and Grothendieck polynomials by Feh\'er--N\'emethi--Rim\'anyi \cite{FNR12} and Guo--Sun \cite{GS19}.
We remark that a proof of the Guo--Sun identity based on quantum integrability was given in~\cite{Motegi20}, and we apply a similar argument to derive our identity.

\begin{thm}
\label{thm:fnrgstype}
Let $\lambda$ be a partition $\lambda=(\lambda_1,\lambda_2,\dotsc,\lambda_{\ell}>0,0^{n-\ell})$
such that $\lambda_1 = \lambda_2 = \cdots = \lambda_k$.
We define $x_{n+j}:=t_j$ for $j = 1, \dotsc, k-1$.
The following identity holds:
\begin{align*}
\dG_\lambda(\xx; \bt)
& = \sum_{S=\{j_1 < j_2 < \cdots < j_\ell \}
\in \binom{[n+k-1]}{\ell}}
\prod_{i \in \overline{S}}
\prod_{j \in S} \frac{x_j}{x_j-x_i} \\
& \hspace{20pt} \times
g_{\lambda}(x_{j_1},\dotsc,x_{j_{\ell-k+1}};x_{j_{\ell-k+2}},\dotsc,x_{j_\ell},t_k,\dotsc,t_{\ell-1}).
\end{align*}
\end{thm}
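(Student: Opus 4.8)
The plan is to adapt the quantum-integrability proof of the Guo--Sun identity from~\cite{Motegi20}, realizing $\dG_\lambda(\xx;\bt)$ as a single matrix element of the Yang--Baxter algebra and then expanding it through the commutation relations of Corollary~\ref{cor:AB_ops_commute}. By Theorem~\ref{thm:refined_dual_model}, $\dG_\lambda(\xx;\bt)$ is the partition function of $\fN_\lambda$, which in the operator language reads as $\langle (e_\lambda)_m^* | A(t_{\ell-1})\cdots A(t_1)\, A(x_n)\cdots A(x_1) | (e)_\ell \rangle$, the $x$-rows and $t$-rows appearing as the successive transfer operators and the top boundary being recorded by $(e_\lambda)_m^*$. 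The hypothesis $\lambda_1=\cdots=\lambda_k$ is what makes the reduction possible: the top $k$ rows form a rectangular block, so by Corollary~\ref{cor:refined_coincidence_lemma} and the $\bt$-symmetry of Corollary~\ref{cor:refined_dual_symmetries} the parameters $t_1,\dots,t_{k-1}$ may be placed on the same footing as $x_1,\dots,x_n$. This is exactly the content of the substitution $x_{n+j}:=t_j$, and it singles out the $n+k-1$ variables $x_1,\dots,x_{n+k-1}$ that will be localized, while $t_k,\dots,t_{\ell-1}$ stay inert.

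Next I would invoke Lemma~\ref{symmetryproductsofAB}. After creating the $\ell$ paths by $B$-operators (rather than by the frozen initial boundary $(e)_\ell$) and collecting the prefactor, the product $A\cdots A\,B\cdots B$ is symmetric in the $n+k-1$ variables $x_1,\dots,x_{n+k-1}$; the remaining operators $A(t_{\ell-1})\cdots A(t_k)$ commute freely with this block by~\eqref{commrelfour} and are left untouched. Thus $\dG_\lambda(\xx;\bt)$ is, up to the explicit prefactor, a matrix element of an operator that is symmetric in $x_1,\dots,x_{n+k-1}$ and carries the fixed tail $t_k,\dots,t_{\ell-1}$.

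The heart of the argument is the expansion. I would push each $A(x_i)$ (for $1\le i\le n+k-1$) to the right through the string of $\ell$ $B$-operators using~\eqref{commrelone}, and use that $A(z)|(e)_0\rangle=|(e)_0\rangle$, so that every $A$ reaching the vacuum acts as the identity. The repeated application of~\eqref{commrelone} distributes the $n+k-1$ variables into the $\ell$ surviving $B$-arguments and the $n+k-1-\ell$ arguments that are absorbed diagonally; indexing the survivors by an $\ell$-subset $S=\{j_1<\cdots<j_\ell\}\in\binom{[n+k-1]}{\ell}$, the diagonal action of the absorbed $A(x_i)$, $i\in\overline{S}$, on the Bethe state with roots $\{x_j:j\in S\}$ contributes the factor $\prod_{j\in S}\frac{x_j}{x_j-x_i}$, so that the total weight of the $S$-term is $\prod_{i\in\overline{S}}\prod_{j\in S}\frac{x_j}{x_j-x_i}$.

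Finally I would identify each surviving matrix element $\langle (e_\lambda)_m^*| B(x_{j_\ell})\cdots B(x_{j_1})\,A(t_{\ell-1})\cdots A(t_k)|(e)_0\rangle$ with a refined dual Grothendieck polynomial by reversing the realization of the first step and applying Theorem~\ref{thm:refined_dual_model}: the $\ell$ surviving $B$-rows together with the $\ell-k$ rows $t_k,\dots,t_{\ell-1}$ reconstruct $\fN_\lambda$ on a smaller grid, the earliest $\ell-k+1$ creation parameters $x_{j_1},\dots,x_{j_{\ell-k+1}}$ playing the role of the $\xx$-rows and the remaining $x_{j_{\ell-k+2}},\dots,x_{j_\ell}$ together with $t_k,\dots,t_{\ell-1}$ playing the role of the $\bt$-rows, giving the claimed $\dG_\lambda(x_{j_1},\dots,x_{j_{\ell-k+1}};x_{j_{\ell-k+2}},\dots,x_{j_\ell},t_k,\dots,t_{\ell-1})$. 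The main obstacle is the coefficient bookkeeping in the previous step: one must check that summing all the off-diagonal (scattering) contributions of~\eqref{commrelone} over the orders in which the $A$-operators are commuted through collapses, together with the prefactor of Lemma~\ref{symmetryproductsofAB}, to exactly the clean localization weight $\prod_{i\in\overline{S}}\prod_{j\in S}\frac{x_j}{x_j-x_i}$ with no leftover factors, and that the split of $S$ into its first $\ell-k+1$ and last $k-1$ elements matches the $\xx$/$\bt$ division of the reduced polynomial.
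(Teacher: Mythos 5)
Your proposal follows essentially the same route as the paper's own proof: the Yang--Baxter-algebra representation of $Z(\fN_\lambda)$, the use of the rectangular block $\lambda_1=\cdots=\lambda_k$ to extend the first $k-1$ jagged $t$-rows to full $A$-operators, the passage to the $B$-operator (left-boundary) picture with monomial prefactors, the multiple commutation relation summed over $S\in\binom{[n+k-1]}{\ell}$ with weight $\prod_{i\in\overline{S}}\prod_{j\in S}\frac{x_j}{x_j-x_i}$, triviality of the $A$-operators on the vacuum, and the re-identification of each surviving matrix element as the reduced polynomial. The one step you flag as the ``main obstacle'' is resolved precisely by Lemma~\ref{symmetryproductsofAB}, which you already invoke: by that symmetry one may relabel so the $S$-variables occupy the $B$-slots before commuting, and then only the first (non-scattering) term of~\eqref{commrelone} can produce the term $B(x_{j_\ell})\cdots B(x_{j_1})\prod_{i\in\overline{S}}A(x_i)$, so no sum over scattering histories ever needs to be performed.
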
 

\begin{proof}
Using the Yang--Baxter algebra, the partition functions $Z(\fN; \xx \sqcup \bt)$ can be written as
\begin{equation}
Z(\fN; \xx \sqcup \bt)
= (e_\lambda)_{\ell+\lambda_1}^*
A_{\lambda_\ell+1}(t_{\ell-1}) \cdots A_{\lambda_3+\ell-2}(t_2)
A_{\lambda_2+\ell-1}(t_1) A_{\lambda_1+\ell}(x_n) \cdots
A_{\lambda_1+\ell}(x_1) (e)_{\ell}.
\label{partitionfunctionabrepresentationone}
\end{equation}
We note from the nonintersecting lattice path interpretation that $Z(\fN_{\lambda}; \xx \sqcup \bt)$ is essentially the same as the partition function with the starting points of the paths are on the left side instead of the bottom.
We denote this lattice model as $\widetilde{\fN}_{\lambda}$.
Comparing the product of the Boltzmann weights in the southwest corner of the two partition functions
$Z(\fN_{\lambda}; \xx \sqcup \bt)$ and $Z(\widetilde{\fN}_{\lambda}; \xx \sqcup \bt)$, we obtain
\begin{align}
Z(\widetilde{\fN}_{\lambda}; \xx \sqcup \bt) =\displaystyle \prod_{i=1}^{\ell} x_i^{\ell+1-i} Z(\fN_{\lambda}; \xx \sqcup \bt)
= \displaystyle \prod_{i=1}^{\ell} x_i^{\ell+1-i} g_\lambda(\xx;\bt). \label{fnrgsone}
\end{align}

In terms of the Yang--Baxter algebra, what we consider is
\begin{align*}
Z(\widetilde{\fN}_{\lambda}; \xx \sqcup \bt)
& = (e_\lambda)_{\ell+\lambda_1}^*
A_{\lambda_\ell+1}(t_{\ell-1}) \cdots A_{\lambda_3+\ell-2}(t_2)
A_{\lambda_2+\ell-1}(t_1) \\
& \hspace{20pt} \times A(x_n) \cdots A(x_{\ell+1})B(x_\ell) \cdots B(x_1)
(e)_0,
\end{align*}
where we denote $A_{\lambda_1+\ell}(x)$ and $B_{\lambda_1+\ell}(x)$ as $A(x)$ and $B(x)$ for simplicity.

Now consider the case when $\lambda_1=\lambda_2=\cdots=\lambda_k$.
In this case, we note by using $e_1^* \otimes e_1^* L e_1 \otimes e_1=0$
that we can replace the $A$-operators $A_{\lambda_{i+1}+\ell-i}(t_i)$, $i=1,\dotsc,k-1$ in $\widetilde{Z}(\fN; \xx \sqcup \bt)$ by $A(t_i)$.
In terms of NILPs, this means that we can extend the ending points of the last $k-1$ paths as we did for Corollary~\ref{cor:refined_dual_symmetries}.
So we have the following relation
\begin{equation}
\begin{aligned}
Z(\widetilde{\fN}_{\lambda}; \xx \sqcup \bt)
& = (e_\lambda)_{\ell+\lambda_1}^*
A_{\lambda_\ell+1}(t_{\ell-1}) \cdots 
A_{\lambda_{k+1}+\ell-k}(t_k) \\
& \hspace{20pt} \times A(t_{k-1}) \cdots A(t_1)
A(x_n) \cdots A(x_{\ell+1})B(x_\ell) \cdots B(x_1)
(e)_0.
\label{partitionfunctionabrepresentationthree}
\end{aligned}
\end{equation}
Let us examine the operators
$
A(t_{k-1}) \cdots A(t_1)
A(x_n) \cdots A(x_{\ell+1})B(x_\ell) \cdots B(x_1)$.
We claim the following multiple commutation relation
\begin{align}
A(t_{k-1}) \cdots A(t_1) & 
A(x_n) \cdots A(x_{\ell+1})B(x_\ell) \cdots B(x_1) \nonumber \\
& = \sum_{S=\{j_1 < j_2 < \cdots < j_\ell \}
\in \binom{[n+k-1]}{\ell}}
\frac{\displaystyle \prod_{i \in \overline{S}} x_i}{\displaystyle \prod_{i=\ell+1}^{n+k-1} x_i}
\frac{\displaystyle \prod_{i=1}^{\ell-1} x_i^{\ell-i}}
{\displaystyle \prod_{i=1}^{\ell-1} x_{j_i}^{\ell-i}}
\prod_{i \in \overline{S}}
\prod_{j \in S} \frac{x_j}{x_j-x_i} \nonumber \\
& \hspace{20pt} \times B(x_{j_\ell}) \cdots B(x_{j_2}) B(x_{j_1})
\prod_{i \in \overline{S}} A(x_i),
\label{multiplecommutation}
\end{align}
where $x_{n+j}:=t_j$ for all $j = 1, \dotsc, k-1$.
An argument to derive multiple commutation relations for five-vertex type models can be found in~\cite{SU05} for example, which we apply to derive~\eqref{multiplecommutation} as follows.
We first use Lemma~\ref{symmetryproductsofAB} to rewrite the left hand side of~\eqref{multiplecommutation} as
\begin{align}
\frac{\displaystyle \prod_{i \in \overline{S}} x_i \prod_{i=1}^{\ell-1} x_i^{\ell-i}}
{\displaystyle \prod_{i=\ell+1}^{n+k-1} x_i \prod_{i=1}^{\ell-1} x_{j_i}^{\ell-i}}
\prod_{i \in \overline{S}} A(x_i)
B(x_{j_\ell}) \cdots B(x_{j_2}) B(x_{j_1}).
\label{beforecommutingAB}
\end{align}
Applying the commutation relation~\eqref{commrelone} to~\eqref{beforecommutingAB} repeatedly, we obtain the coefficient for
\[
B(x_{j_\ell}) \cdots B(x_{j_2}) B(x_{j_1}) \prod_{i \in \overline{S}} A(x_i)
\]
by noting that we always have to choose the first term in the right hand side of~\eqref{commrelone} when commuting $A$- and $B$-operators.

Inserting~\eqref{multiplecommutation} into~\eqref{partitionfunctionabrepresentationthree} and using the action of the $A$-operators
\[
\prod_{i \in \overline{S}} A(x_i) (e)_0 = (e)_0,
\]
we get
\begin{align}
Z(\widetilde{\fN}_{\lambda}; \xx \sqcup \bt)
=&
\sum_{S=\{j_1 < j_2 < \cdots < j_\ell \}
\in \binom{[n+k-1]}{\ell}}
\frac{\displaystyle \prod_{i \in \overline{S}} x_i}{\displaystyle \prod_{i=\ell+1}^{n+k-1} x_i}
\frac{\displaystyle \prod_{i=1}^{\ell-1} x_i^{\ell-i}}
{\displaystyle \prod_{i=1}^{\ell-1} x_{j_i}^{\ell-i}}
\prod_{i \in \overline{S}}
\prod_{j \in S} \frac{x_j}{x_j-x_i} \nonumber \\
\times&(e_\lambda)_{\ell+\lambda_1}^*
A_{\lambda_\ell+1}(t_{\ell-1}) \cdots 
A_{\lambda_{k+1}+\ell-k}(t_k) B(x_{j_\ell}) \cdots B(x_{j_2}) B(x_{j_1})
(e)_0. \label{beforeinsertingexpression}
\end{align}
We next use the NILP interpretation and change the starting points of the lattice paths to the bottom side and moving down the ending points of the last $k-1$ paths to get
\begin{align}
(e_\lambda)_{\ell+\lambda_1}^* &
A_{\lambda_\ell+1}(t_{\ell-1}) \cdots 
A_{\lambda_{k+1}+\ell-k}(t_k) B(x_{j_\ell}) \cdots B(x_{j_2}) B(x_{j_1})
(e)_0 \nonumber \\
& =
\prod_{i=1}^{\ell} x_{j_i}^{\ell+1-i}
(e_\lambda)_{\ell+\lambda_1}^*
A_{\lambda_\ell+1}(t_{\ell-1}) \cdots 
A_{\lambda_{k+1}+\ell-k}(t_k) A(x_{j_\ell}) \cdots A(x_{j_2}) A(x_{j_1})
(e)_{\ell}
\nonumber \\
& =
\prod_{i=1}^{\ell} x_{j_i}^{\ell+1-i}
(e_\lambda)_{\ell+\lambda_1}^*
A_{\lambda_\ell+1}(t_{\ell-1}) \cdots 
A_{\lambda_{k+1}+\ell-k}(t_k) 
A_{\lambda_k+\ell-k+1}(x_{j_\ell}) \cdots
A_{\lambda_2+\ell-1}(x_{j_{\ell-k+2}})
\nonumber \\
& \hspace{20pt} \times A(x_{j_{\ell-k+1}}) \cdots A(x_{j_1}) (e)_{\ell} \nonumber \\
& = \prod_{i=1}^{\ell} x_{j_i}^{\ell+1-i}
\dG_{\lambda}(x_{j_1},\dotsc,x_{j_{\ell-k+1}};x_{j_{\ell-k+2}},\dotsc,x_{j_\ell},t_k,\dotsc,t_{\ell-1}). \label{refinedexpression}
\end{align}
Inserting~\eqref{refinedexpression} into~\eqref{beforeinsertingexpression} and using
\begin{align}
\frac{\displaystyle \prod_{i \in \overline{S}} x_i \prod_{i=1}^{\ell} x_{j_i}^{\ell+1-i}}{\displaystyle \prod_{i=\ell+1}^{n+k-1} x_i \prod_{i=1}^{\ell-1} x_{j_i}^{\ell-i}}
= \prod_{i=1}^\ell x_i,
\end{align}
we get
\begin{align}
Z(\widetilde{\fN}_{\lambda}; \xx \sqcup \bt)
& =\displaystyle \prod_{i=1}^{\ell} x_i^{\ell+1-i}
\sum_{S=\{j_1 < j_2 < \cdots < j_\ell \}
\in \binom{[n+k-1]}{\ell}}
\prod_{i \in \overline{S}}
\prod_{j \in S} \frac{x_j}{x_j-x_i} \nonumber \\
& \hspace{20pt} \times \dG_{\lambda}(x_{j_1},\dotsc,x_{j_{\ell-k+1}};x_{j_{\ell-k+2}},\dotsc,x_{j_\ell},t_k,\dotsc,t_{\ell-1}). \label{fnrgstwo}
\end{align}
Comparing~\eqref{fnrgsone} and~\eqref{fnrgstwo}, we obtain our result.
\end{proof}

\begin{ex}
Let $n = 3$ and $\ell = 3$. Then from Theorem~\ref{thm:fnrgstype}, we have
\begin{align*}
\dG_{221}(x_1,x_2,x_3;t_1,t_2)&=
\frac{x_1 x_2 x_3}{(x_1-t_1)(x_2-t_1)(x_3-t_1)} g_{221}(x_1,x_2;x_3,t_2)
\\
& \hspace{20pt} +\frac{x_1 x_2 t_1}{(x_1-x_3)(x_2-x_3)(t_1-x_3)} g_{221}(x_1,x_2;t_1,t_2)
\\
& \hspace{20pt} +\frac{x_1 x_3 t_1}{(x_1-x_2)(x_3-x_2)(t_1-x_2)} g_{221}(x_1,x_3;t_1,t_2)
\\
& \hspace{20pt} +\frac{x_2 x_3 t_1}{(x_2-x_1)(x_3-x_1)(t_1-x_1)} g_{221}(x_2,x_3;t_1,t_2)
\end{align*}
for the refined dual Grothendieck polynomials
\begin{align*}
\dG_{221}(x_1,x_2,x_3;t_1,t_2) & = x_1^2 x_2^2 x_3+x_1^2 x_2 x_3^2+x_1 x_2^2 x_3^2
\\
& \hspace{20pt} +(t_1+t_2)(x_1^2 x_2^2+x_1^2 x_3^2+x_2^2 x_3^2+x_1^2 x_2 x_3+x_1 x_2^2 x_3
+x_1 x_2 x_3^2) \\
& \hspace{20pt}  + t_1(x_1^2 x_2 x_3+x_1 x_2^2 x_3+x_1 x_2 x_3^2)
\\
& \hspace{20pt}  + t_1(t_1+t_2)(x_1^2 x_2+x_1^2 x_3+x_2^2 x_3+x_1 x_2^2+x_1 x_3^2+x_2 x_3^2+2x_1 x_2 x_3) \\
& \hspace{20pt}  + t_1^2 t_2 (x_1^2+x_2^2+x_3^2+x_1 x_2+x_1 x_3+x_2 x_3),
\\
\dG_{221}(x_i,x_j;t_1,t_2)
& = (t_1+t_2)x_i^2 x_j^2+t_1(t_1+t_2)(x_i x_j^2+x_i^2 x_j) + t_1^2 t_2 (x_i^2+x_i x_j+x_j^2).
\end{align*}
\end{ex}

\subsection{Alternative Fermionic Vertex Model}

We can apply the same combinatorics used in the proof of Theorem~\ref{thm:refined_dual_model} to devise an equivalent statement using the $L$-matrix from Table~\ref{table:L_matrix_Grothendieck} at $\beta = 0$. Recall that there is a natural bijection between states of the model and Gelfand--Tsetlin patterns, which are equivalent to semistandard tableaux. Moreover, this model is also well-behaved for looking at skew Schur functions; see~\cite[Sec.~3]{MS14} for a generalization of this classical approach. The flagging condition on a semistandard skew tableau means that a particular particle does not move a certain distance beyond its initial position under a sequence of $A$-operators. Since our flagging is $(0, 1, 2, \dotsc, \ell)$, we can instead represent the elegant tableau portion as a sequence of $C$ operators, but we have to take into account the extra horizontal steps into the weight. We also are no longer able to control the final shape, but this is determined by the final horizontal steps for the corresponding $C$ operator. (Equivalently, the shape $\lambda$ is determined by the positions of the final vertical step.) This is exactly the same as the jagged part in the model above. Hence, we obtain the following partition function of the model given by $\langle 0^{m+n} | C^{\ell} A^{n-\ell} B^{\ell} | 0^{m+n} \rangle$ (note that $n \geq \ell$ necessarily).

\begin{prop}
The partition function of this model is
\[
Z(n, m+n) = \sum_{\lambda_1 \leq m} \prod_{i=1}^{\ell} t_i^{k-\lambda_i} \dG_{\lambda}(\xx; \bt).
\]
\end{prop}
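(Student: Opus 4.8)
The plan is to evaluate $\langle 0^{m+n} | C^{\ell} A^{n-\ell} B^{\ell} | 0^{m+n} \rangle$ as a sum over nonintersecting lattice paths, exactly as in the proof of Theorem~\ref{thm:refined_dual_model}, and then to carry out the Schur decomposition of Theorem~\ref{thm:dualG_schur_decomposition} across a horizontal cut. First I would recall that at $\beta = 0$ the $L$-matrix of Table~\ref{table:L_matrix_Grothendieck} is the five-vertex model whose states biject with Gelfand--Tsetlin patterns, hence with semistandard tableaux, and that a row with spectral parameter $z$ contributes a factor of $z$ for each horizontal unit step taken by a path in that row (the $\tb_2$ vertex). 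I would assign the spectral parameters so that the $B^{\ell}$ and $A^{n-\ell}$ blocks together carry $x_1, \dotsc, x_n$ while the $C^{\ell}$ block carries $t_1, \dotsc, t_{\ell}$. Conservation of the number of $1$'s then forces the picture in which the $B$-operators inject the $\ell$ paths from the left, the $A$-operators propagate them through the rectangular region, and the $C$-operators draw them out through the right boundary (each $C$-operator picks a path up off a vertical edge via a $\tc_2$ vertex and transits it rightward to the boundary).

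Next I would insert a resolution of the identity at the cut between the $A^{n-\ell}$ and $C^{\ell}$ blocks, recording the positions of the $\ell$ paths there as a partition $\mu$ with at most $\ell$ parts. The bottom matrix element $\langle \mu | A^{n-\ell} B^{\ell} | 0^{m+n} \rangle$ is then the standard vertex-model realization of a Schur function and equals $s_{\mu}(\xx)$ by the bijection recalled in Section~\ref{sec:bg_tableaux}. For the top element $\langle 0^{m+n} | C^{\ell} | \mu \rangle$, the elegant-tableau flagging (row $i$ bounded by $i-1$, as in Section~\ref{sec:bg_tableaux}) means, as explained in the paragraph preceding the statement, that the rightward transit of the $i$-th $C$-operator splits into steps lying in the flagged range, which assemble into an elegant tableau of shape $\lambda/\mu$ weighted by $e_{\lambda}^{\mu}(\bt)$ (this uses only $t_1, \dotsc, t_{\ell-1}$, since row $\ell$ admits no elegant entry), and the remaining forced steps that carry each path to column $m+n$, which contribute the monomial $\prod_{i=1}^{\ell} t_i^{m-\lambda_i}$. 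The final shape $\lambda \supseteq \mu$ is read off from the positions of the final vertical steps and satisfies $\lambda_1 = \mu_1 \le m$. This is the same cut analysis as in the proofs of Corollary~\ref{cor:refined_Cauchy} and Corollary~\ref{cor:littlewood_identity}.

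Combining the two blocks, summing over the intermediate shape $\mu$, and applying Theorem~\ref{thm:dualG_schur_decomposition} in the form $\sum_{\mu \subseteq \lambda} e_{\lambda}^{\mu}(\bt) s_{\mu}(\xx) = \dG_{\lambda}(\xx; \bt)$ produces $\prod_{i=1}^{\ell} t_i^{m-\lambda_i} \dG_{\lambda}(\xx; \bt)$ for each fixed $\lambda$. Since every partition $\lambda$ with at most $\ell$ parts and $\lambda_1 \le m$ arises exactly once as the path endpoints vary, summing over $\lambda$ yields the asserted identity; in particular the exponent is $m - \lambda_i$, matching the Littlewood identity of Corollary~\ref{cor:littlewood_identity}.

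The step I expect to be the main obstacle is the weight bookkeeping for the $C^{\ell}$ block: one must verify that the rightward transit of each path separates cleanly into an elegant-tableau part and a forced part equal to exactly $t_i^{m-\lambda_i}$, with no over- or under-counting of horizontal steps when several paths exit through overlapping columns. Concretely, one matches the total power of each $t_i$ coming from the $\tb_2$ vertices against the elegant contribution plus $m - \lambda_i$. Since this is precisely the fixed-region computation already carried out for Corollary~\ref{cor:littlewood_identity}, I would reuse that argument rather than recompute it from scratch.
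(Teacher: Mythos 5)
Your proposal is correct and takes essentially the same approach as the paper: the paper's proof is precisely the observation (in the paragraph preceding the statement) that the $C^{\ell}$ block reproduces the jagged, elegant-tableau part of Theorem~\ref{thm:refined_dual_model} together with the forced transit steps contributing $\prod_{i=1}^{\ell} t_i^{m-\lambda_i}$, while the $A^{n-\ell}B^{\ell}$ block realizes $s_{\mu}(\xx)$, and the two pieces are combined via Theorem~\ref{thm:dualG_schur_decomposition} exactly as in your cut argument. You also correctly read the exponent $k-\lambda_i$ in the statement as a typo for $m-\lambda_i$, consistent with Corollary~\ref{cor:littlewood_identity}.
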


\begin{ex}
\label{ex:5V_model_to_dualG}
Consider $\lambda = 6433$ with $\ell = 4$ and $m = 8$. Then the $01$-sequence of $\lambda$ is $000110100100$.
Let $n = 5$.
The following is a state in $Z(5, 13)$ and the corresponding element from $\dG_{\lambda}(\xx; \bt)$:
\[
\begin{tikzpicture}[>=latex,baseline=2.7cm,scale=0.7]
\draw[very thin, black!20] (0.5, 0.5) grid (13.5, 9.5);
\foreach \y in {1,2,3,4,5}
  \draw node[anchor=east] at (0.5,\y)  {\small $x_{\y}$};
\foreach \y in {1,2,3,4}
  \draw node[anchor=east] at (0.5,\y+5)  {\small $t_{\y}$};
\draw[densely dotted] (0.7,5.5) -- (13.3,5.5);
\draw[-, very thick, UQpurple] (0.5,4) -- (1,4) -- (1,5) -- (2,5) -- (2,6) -- (3,6) -- (3,7) -- (5,7) -- (5,8) -- (8,8) -- (8,9) -- (13.5,9);
\draw[-, very thick, dbluecolor] (0.5,3) -- (2,3) -- (2,4) -- (3,4) -- (3,5) -- (5,5) -- (5,6) -- (7,6) -- (7,7) -- (8,7) -- (8,8) -- (13.5,8);
\draw[-, very thick, dgreencolor] (0.5,2) -- (2,2) -- (2,3) -- (5,3) -- (5,4) -- (6,4) -- (6,5) -- (7,5) -- (7,6) -- (9,6) -- (9,7) -- (13.5,7);
\draw[-, very thick, darkred] (0.5,1) -- (6,1) -- (6,2) -- (7,2) -- (7,3) -- (9,3) -- (9,4) -- (10,4) -- (10,5) -- (11,5) -- (11,6) -- (13.5,6);
\fill[UQpurple] (8,8) circle (0.15);
\fill[dbluecolor] (8,7) circle (0.15);
\fill[dgreencolor] (9,6) circle (0.15);
\fill[darkred] (11,5) circle (0.15);
\end{tikzpicture}
\quad \longmapsto \quad
\ytableaushort{111113,2331,351,233}  *[*(white)]{6,3,2} *[*(darkred!40)]{4,4,3,3}\,.
\]
\end{ex}

Next, we can write the partition function $Z(n, m+n)$ as the classical Cauchy identity (see, \textit{e.g.},~\cite[Cor.~3.6, Cor.~5.4]{MS13} at $\beta = 0$) and then perform the substitution
\begin{equation}
\label{eq:schur_complement}
s_{\lambda}(\bt) = \prod_{i=1}^{\ell} t_i^m s_{\lambda^{\dagger}}(\bt^{-1})
\end{equation}
since we are taking $C$ operators instead of $B$ operators reflected vertically.
Note that~\eqref{eq:schur_complement} can be checked using
$\lambda_j^{\dagger}=m-\lambda_{\ell+1-j}$, $j=1,\dotsc,\ell$
and the Weyl character formula
for the Schur functions
\begin{align}
\displaystyle s_\lambda(\bt)
=\frac{\det \big[t_i^{\lambda_j+\ell-j} \big]_{i,j=1}^\ell}
{\det \big[ t_i^{\ell-j} \big]_{i,j=1}^\ell}.
\end{align}
Hence, we obtain a refined version of a Cauchy--Littlewood identity with the partitions $\lambda \subseteq m^{\ell}$ inside an $\ell \times m$ rectangle.

\begin{cor}[Cauchy--Littlewood identity]
\label{cor:dual_Grothendiecks_in_box}
We have
\begin{align*}
& \sum_{\lambda \subseteq m^{\ell}} \prod_{i=1}^{\ell} t_i^{m-\lambda_i} \dG_{\lambda}(\xx; \bt)
\\ & \hspace{40pt} =
\prod_{i=1}^{\ell} t_i^m \prod_{1 \leq i < j \leq n} \dfrac{1}{(x_i - x_j)(t_i^{-1} - t_j^{-1})} \det \left[ \dfrac{(x_i t_j^{-1})^{m+n} - 1}{x_i t_j^{-1} - 1} \right]_{i,j=1}^n \Biggr\rvert_{t_{\ell+1} = \cdots = t_n = \infty}.
\end{align*}
\end{cor}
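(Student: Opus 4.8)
The plan is to combine the preceding Proposition with a free-fermion evaluation of the partition function $Z(n,m+n)$ and the classical finite Cauchy determinant. By the preceding Proposition, the left-hand side is exactly $Z(n,m+n) = \langle 0^{m+n} | C^{\ell} A^{n-\ell} B^{\ell} | 0^{m+n} \rangle$, so it suffices to evaluate this partition function in the stated determinant form.

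First I would evaluate $Z(n,m+n)$ using the free-fermionic structure at $\beta = 0$. Inserting a complete set of states indexed by partitions between the $A^{n-\ell}$ and $C^{\ell}$ blocks, the bottom block $A^{n-\ell} B^{\ell}$ (with spectral parameters $\xx$) produces $s_{\lambda}(\xx)$, while the top block $C^{\ell}$ (with spectral parameters $t_1, \dotsc, t_{\ell}$) produces $s_{\lambda^{\comp}}(t_1, \dotsc, t_{\ell})$, where the complement $\lambda^{\comp}$ (rather than $\lambda$) appears precisely because a $C$-operator is the vertical reflection of a $B$-operator. This is the content of the classical Cauchy identity for this model (\cite[Cor.~3.6, Cor.~5.4]{MS13} at $\beta = 0$), giving $Z(n,m+n) = \sum_{\lambda \subseteq m^{\ell}} s_{\lambda}(\xx)\, s_{\lambda^{\comp}}(t_1, \dotsc, t_{\ell})$. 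Applying \eqref{eq:schur_complement} in the form $s_{\lambda^{\comp}}(\bt) = \prod_{i=1}^{\ell} t_i^{m}\, s_{\lambda}(\bt^{-1})$ (using $(\lambda^{\comp})^{\comp} = \lambda$) then turns this into $\prod_{i=1}^{\ell} t_i^{m} \sum_{\lambda \subseteq m^{\ell}} s_{\lambda}(\xx)\, s_{\lambda}(t_1^{-1}, \dotsc, t_{\ell}^{-1})$.

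It remains to recognize this finite Cauchy sum as the stated determinant. For this I would use the $n$-variable finite Cauchy identity
\[
\sum_{\lambda \subseteq m^n} s_{\lambda}(x_1, \dotsc, x_n)\, s_{\lambda}(y_1, \dotsc, y_n)
= \prod_{1 \le i < j \le n} \frac{1}{(x_i - x_j)(y_i - y_j)}
\det\left[ \frac{(x_i y_j)^{m+n} - 1}{x_i y_j - 1} \right]_{i,j=1}^{n},
\]
which follows from Cauchy--Binet applied to the geometric-series expansion $\frac{(x_i y_j)^{m+n}-1}{x_i y_j - 1} = \sum_{p=0}^{m+n-1} x_i^{p} y_j^{p}$ together with the bialternant formula for $s_{\lambda}$; here the index set of an $n$-subset $\{p_1 < \cdots < p_n\} \subseteq \{0, \dotsc, m+n-1\}$ corresponds to the partition $\lambda \subseteq m^n$ with $\lambda_k = p_{n+1-k} - (n-k)$, and the Vandermonde signs arising from the $x$- and $y$-factorizations cancel. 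Setting $y_j = t_j^{-1}$ and specializing $t_{\ell+1} = \cdots = t_n = \infty$, i.e.\ $y_{\ell+1} = \cdots = y_n = 0$, annihilates every term with $\ell(\lambda) > \ell$ and reduces $s_{\lambda}(y_1, \dotsc, y_{\ell}, 0, \dotsc, 0)$ to $s_{\lambda}(t_1^{-1}, \dotsc, t_{\ell}^{-1})$, so the sum restricts to $\lambda \subseteq m^{\ell}$ and matches the expression from the previous paragraph.

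The main obstacle I anticipate is the bookkeeping around the specialization $t_{\ell+1} = \cdots = t_n = \infty$: the prefactor $\prod_{i<j}(t_i^{-1} - t_j^{-1})^{-1}$ is singular there, so the limit cannot be taken entrywise on the determinant side. The clean route is to establish the $n$-variable Cauchy determinant identity first for generic finite $t_1, \dotsc, t_n$, where both sides are equal rational functions, and only afterwards pass to the limit $t_{\ell+1}, \dotsc, t_n \to \infty$; the sum side has an evident finite limit, so equality of the limits yields the formula as written. The other point requiring care is confirming that the $C^{\ell}$ block contributes $s_{\lambda^{\comp}}(t_1, \dotsc, t_{\ell})$ rather than $s_{\lambda}(t_1, \dotsc, t_{\ell})$, which is exactly the vertical reflection that motivates the substitution \eqref{eq:schur_complement}.
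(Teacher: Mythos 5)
Your proposal is correct and follows essentially the same route as the paper's proof: the paper likewise identifies the left-hand side with $Z(n,m+n)$ via the preceding proposition, writes $Z(n,m+n)$ via the classical Cauchy identity for the $\beta = 0$ five-vertex model (citing \cite[Cor.~3.6, Cor.~5.4]{MS13} rather than re-deriving the determinant form via Cauchy--Binet as you do), and then performs the substitution \eqref{eq:schur_complement} to account for the $C$-operators being vertically reflected $B$-operators. Your extra care with the singular specialization $t_{\ell+1} = \cdots = t_n = \infty$ (proving the identity for generic finite parameters and then passing to the limit) is a detail the paper leaves implicit.
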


By multiplying both sides by $\prod_{i=1}^{\ell} t_i^{-m}$ and then taking the limit of $m \to \infty$, we recover the Cauchy--Littlewood identity~\cite[Thm~5.2(iv)]{Yel19III} (after substituting $t_i \mapsto t_i^{-1}$ and using~\cite[Rem.~5.6]{Yel19III}; see also~\cite[Lemma~3.6]{Yel19II} for the $\bt = 1$ version).

\begin{cor}[{\cite[Thm~5.2(iv)]{Yel19III}}]
\label{cor:dual_Grothendiecks_bounded}
We have
\[
\sum_{\ell(\lambda) \leq \ell} \prod_{i=1}^{\ell} t_i^{-\lambda_i} \dG_{\lambda}(\xx; \bt) = \prod_{i=1}^n \prod_{j=1}^{\ell}  \dfrac{1}{1 - t_j^{-1}x_i} = \prod_{i=1}^n \prod_{j=1}^{\ell} \dfrac{t_j}{t_j - x_i}.
\]
\end{cor}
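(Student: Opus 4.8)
The plan is to derive this identity as the limit $m \to \infty$ of Corollary~\ref{cor:dual_Grothendiecks_in_box}, precisely as indicated in the sentence preceding the statement. First I would multiply both sides of Corollary~\ref{cor:dual_Grothendiecks_in_box} by $\prod_{i=1}^{\ell} t_i^{-m}$. On the left-hand side this turns the summand $\prod_{i=1}^{\ell} t_i^{m-\lambda_i} \dG_{\lambda}(\xx; \bt)$ into $\prod_{i=1}^{\ell} t_i^{-\lambda_i} \dG_{\lambda}(\xx; \bt)$, and as $m$ grows the containment condition $\lambda \subseteq m^{\ell}$ relaxes to $\ell(\lambda) \leq \ell$, giving the left-hand side of the claim. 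On the right-hand side the new factor $\prod_{i=1}^{\ell} t_i^{-m}$ exactly cancels the $\prod_{i=1}^{\ell} t_i^{m}$ already present.

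It remains to evaluate the limit of the remaining expression on the right. In the regime $|x_i| < |t_j|$ (equivalently, as a formal power series in the variables $x_i t_j^{-1}$), the power $(x_i t_j^{-1})^{m+n}$ tends to $0$, so each matrix entry $\frac{(x_i t_j^{-1})^{m+n}-1}{x_i t_j^{-1}-1}$ tends to $\frac{1}{1-x_i t_j^{-1}}$. The determinant thus converges to the Cauchy determinant $\det\bigl[(1-x_i t_j^{-1})^{-1}\bigr]_{i,j=1}^{n}$, and the standard Cauchy determinant evaluation gives
\[
\det\Bigl[\frac{1}{1-x_i t_j^{-1}}\Bigr]_{i,j=1}^{n} = \frac{\prod_{1\le i<j\le n}(x_i-x_j)(t_i^{-1}-t_j^{-1})}{\prod_{i,j=1}^{n}(1-x_i t_j^{-1})}.
\]
The Vandermonde-type numerator cancels precisely against the prefactor $\prod_{1\le i<j\le n}\bigl[(x_i-x_j)(t_i^{-1}-t_j^{-1})\bigr]^{-1}$, leaving $\prod_{i,j=1}^{n}(1-x_i t_j^{-1})^{-1}$. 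Applying the specialization $t_{\ell+1}=\cdots=t_n=\infty$ sets each such $t_j^{-1}=0$, so the factors with $j>\ell$ equal $1$ and drop out, producing $\prod_{i=1}^{n}\prod_{j=1}^{\ell}(1-x_i t_j^{-1})^{-1}$; rewriting $(1-t_j^{-1}x_i)^{-1}=t_j/(t_j-x_i)$ yields both displayed forms.

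The only point requiring care is the interchange of the limit with the infinite sum on the left, and here the combinatorics of Theorem~\ref{thm:dualG_schur_decomposition} makes the argument clean rather than analytic. Expanding in powers of $\xx$, the coefficient of a fixed monomial $\xx^d$ picks out only the Schur terms $s_{\mu}(\xx)$ with $|\mu| = |d|$ in $\dG_{\lambda}(\xx;\bt) = \sum_{\mu\subseteq\lambda} e_\lambda^\mu(\bt) s_\mu(\xx)$; since $\et(\lambda/\mu)\ne\emptyset$ forces $\lambda_1=\mu_1$, each fixed $\mu$ admits only finitely many $\lambda\supseteq\mu$ with $\ell(\lambda)\le\ell$ (all lie in the $\ell\times\mu_1$ box). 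Hence each coefficient on the left is a finite sum that stabilizes once $m \ge |d|$, matching the corresponding coefficient of the right-hand side term by term, which is all that the limit requires. I expect this stabilization bookkeeping to be the main, though modest, obstacle; the determinant manipulation is routine.
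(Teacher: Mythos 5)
Your proposal is correct and takes essentially the same approach as the paper: the paper derives Corollary~\ref{cor:dual_Grothendiecks_bounded} precisely by multiplying Corollary~\ref{cor:dual_Grothendiecks_in_box} by $\prod_{i=1}^{\ell} t_i^{-m}$ and letting $m \to \infty$. The details you supply --- the entrywise limit of the determinant, the Cauchy determinant evaluation cancelling the Vandermonde prefactor, and the coefficientwise stabilization justifying the interchange of limit and sum --- are exactly the steps the paper leaves implicit, and they are carried out correctly.
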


We also remark that we can move between states of this model and those using the $L$-matrix from Table~\ref{table:L_matrix_nilp} sliding the $j$-th path from the left $j$ steps to up, staggering the vertical steps, slightly stretching the horizontal steps.

\begin{ex}
If we apply the above equivalence to the state from Example~\ref{ex:NILP_tableaux} (see also Example~\ref{ex:NILP_5V_model}), we obtain
\[
\begin{tikzpicture}[>=latex,baseline=2.7cm,scale=0.7]
\draw[very thin, black!20] (0.5, 0.5) grid (9.5, 8.5);
\fill[white] (7.5,6.5) rectangle (9.5,8.5);
\fill[white] (8.5,5.5) rectangle (9.5,7.5);
\foreach \y in {1,2,3,4,5} {
  \draw node[anchor=east] at (0.5,\y)  {\small $x_{\y}$};
}
\foreach \y in {1,2,3} {
  \draw node[anchor=east] at (0.5,\y+5)  {\small $t_{\y}$};
}
\draw[densely dotted] (0.7,5.5) -- (9.3,5.5);
\draw[-, very thick, UQpurple] (0.5,4) -- (1,4) -- (1,5) -- (2,5) -- (2,6) -- (3,6) -- (3,7) -- (4,7) -- (4,8) -- (7,8) -- (7,8.5);
\draw[-, very thick, dbluecolor] (0.5,3) -- (1,3) -- (1,4) -- (2,4) -- (2,5) -- (3,5) -- (3,6) -- (5,6) -- (5,7) -- (7,7) -- (7,8) -- (7.5,8);
\draw[-, very thick, dgreencolor] (0.5,2) -- (1,2) -- (1,3) -- (3,3) -- (3,4) -- (4,4) -- (4,5) -- (5,5) -- (5,6) -- (8,6) -- (8,6.5);
\draw[-, very thick, darkred] (0.5,1) -- (3,1) -- (3,2) -- (4,2) -- (4,3) -- (5,3) -- (5,4) -- (7,4) -- (7,5) -- (9,5) -- (9,5.5);
\fill[UQpurple] (7,8) circle (0.15);
\fill[dbluecolor] (7,7) circle (0.15);
\fill[dgreencolor] (8,6) circle (0.15);
\fill[darkred] (9,5) circle (0.15);
\end{tikzpicture}
\]
Note that instead of using the operator $A B^4$ for the semistandard tableau portion, we could have equivalently used $B^5$. In this case, the paths for the additional particle would be trivial (\textit{i.e.}, they do not contribute to the weight).
\end{ex}

We note that Corollary~\ref{cor:dual_Grothendiecks_in_box} is the same as Corollary~\ref{cor:littlewood_identity}
from Equation~\eqref{eq:schur_complement} but expressed in two different models under the identification given above.

One other advantage to using this model is that we can use the generic $\beta$ Boltzmann weights for the $L$-matrix from Table~\ref{table:L_matrix_Grothendieck}. This gives us a new symmetric function that expands positively in the Grothendieck polynomials where the coefficients are the number of set-valued elegant tableaux. Furthermore, this expansion is finite even as $n \to \infty$ since the Schur expansion of $\dG_{\lambda}(\xx; \bt)$ is finite and there are only finitely many set-valued elegant tableaux of a fixed shape $\lambda/\mu$, which we denote by $\set(\lambda/\mu)$.

\begin{thm}
We have
\[
Z(\lambda; \xx; \bt; \beta) = \sum_{\mu \subseteq \lambda} \sum_{T \in \set(\lambda/\mu)} \bt^T \G_{\mu}(\xx; \beta).
\]
\end{thm}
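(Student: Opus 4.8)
The plan is to run the $\beta = 0$ argument behind Theorem~\ref{thm:refined_dual_model} and the preceding alternative fermionic model, but now keeping the deformation parameter $\beta$ generic, so that the underlying $L$-matrix is the one of Table~\ref{table:L_matrix_Grothendieck} rather than Table~\ref{table:L_matrix_nilp}. As there, the first step is to slice the jagged grid along the horizontal cut separating the $n$ rows carrying the spectral parameters $\xx$ (the rectangular part) from the rows carrying $\bt$ (the jagged part). Inserting a complete set of states along this cut, the monodromy structure factorizes the partition function as
\[
Z(\lambda; \xx; \bt; \beta) = \sum_{\mu \subseteq \lambda} Z_{\mathrm{rect}}(\mu; \xx; \beta) \, Z_{\mathrm{jag}}(\lambda/\mu; \bt; \beta),
\]
the sum being over the shapes $\mu$ recorded by the particle positions on the cut (equivalently, by the final vertical steps of the lower paths), each having exactly $\ell$ parts and satisfying $\mu \subseteq \lambda$.

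The second step is to identify the rectangular block. Below the cut, the configuration is, row by row, the modified fermionic five-vertex model with spectral parameters $\xx$ and the generic weight $1 + \beta z$ on the vertex $\tc_2$, whose top boundary is the $01$-sequence of $\mu$. This is precisely the model of Proposition~\ref{prop:integrability} (in the bottom-loading $B$-operator presentation used for the alternative model), so that $Z_{\mathrm{rect}}(\mu; \xx; \beta) = \G_{\mu}(\xx; \beta)$.

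The third and decisive step is to analyze the jagged block. With spectral parameters $\bt$, the flagging $(0,1,\dotsc,\ell-1)$ confines the paths to the same triangular region that produced ordinary elegant tableaux when $\beta = 0$; this identification is exactly the one used in the proof of Theorem~\ref{thm:refined_dual_model}. Turning on $\beta$, the branching vertex $\tc_2$ of weight $1 + \beta t_j$ lets a path accrue extra horizontal steps, and under the deformed bijection of Proposition~\ref{prop:integrability} these become the repeated entries of a set-valued filling. I would verify that the admissible states---those respecting both the nonintersecting constraint and the elegant flagging---are in weight-preserving bijection with $\set(\lambda/\mu)$, the product of their Boltzmann weights reproducing $\bt^{T}$; hence $Z_{\mathrm{jag}}(\lambda/\mu; \bt; \beta) = \sum_{T \in \set(\lambda/\mu)} \bt^{T}$. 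Substituting the two evaluations into the factorization and summing over $\mu$ gives the stated expansion.

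The main obstacle is this last weight computation: one must check that the $\beta$-branchings in the jagged region generate exactly the set-valued elegant fillings and carry the weight $\bt^{T}$ recorded by $\set(\lambda/\mu)$, keeping careful track of how many extra entries a single box may absorb and confirming that the elegant flagging survives the deformation. Once this is in hand, the factorization at the cut and the identification of the rectangular block with the Grothendieck model are immediate from the $\beta = 0$ arguments already established.
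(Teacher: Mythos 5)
Your proposal follows essentially the same route as the paper's own (largely implicit) argument: cut the lattice between the $\xx$-rows and the $\bt$-rows, identify the rectangular block with $\G_{\mu}(\xx;\beta)$ via Proposition~\ref{prop:integrability}, and identify the jagged block with the generating function of set-valued elegant tableaux arising from the $1+\beta z$ weight on the $\tc_2$ vertex. The verification you flag as the remaining obstacle is exactly the point the paper also leaves implicit, treating it as immediate from the known correspondence between states of the five-vertex Grothendieck model and set-valued fillings combined with the flagging argument already used for Theorem~\ref{thm:refined_dual_model}.
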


As a corollary, we see that the partition function is Schur positive (up to a sign by degree) by~\cite[Thm.~4.6]{HS20}.

\section{Probability theory}
\label{sec:probability}

In this section, we will discuss how our refined dual Grothendieck polynomials relates with probability theory through the last passage percolation (LPP) stochastic model, the totally asymmetric simple exclusion process (TASEP), and the Schur measure.
We first show the relation between the LPP joint probability and the refined dual Grothendieck polynomials by using the RSK correspondence, refining the result by Yeliussizov~\cite{Yel20}.
We then take this probabilistic interpretation to be our definition of a refined dual Grothendieck polynomial, where the skew version is given by the LPP transition probability. Using this, we refine the algebraic approach given in~\cite{Johansson10} to give a (dual) Jacobi--Trudi formula for refined dual Grothendieck polynomials. Finally, we describe the Cauchy identity for Schur functions with refined dual Grothendieck polynomials to give a new proof of a result of Baik and Rains~\cite{BR01} and relating to the Schur measure~\cite{Okounkov00,Okounkov01}.
For this section, we use the indeterminates $\bt = (t_1, t_2, \dotsc, t_\ell)$.

\subsection{Last Passage Percolation}

We begin by defining the last-passage percolation (LPP) stochastic model and then recall the bijection from~\cite[Thm.~1]{Yel20} (see also~\cite[Thm~8.1]{Yel19II} for the specialization $\xx = q$) from reverse\footnote{The bijection given in~\cite{Yel20} was originally described for usual plane partitions, but we give it by swapping columns $i \leftrightarrow n + 1 -  i$ (\textit{i.e.}, reflecting the matrix across a vertical line), which corresponds to going between plane partitions and reverse plane partitions by swapping entries $i \leftrightarrow n + 1 -  i$.} plane partitions to the random matrices in the LPP model.
We note that the LPP model is a corner growth model for partitions.

\begin{remark}
In this section, we index our matrices following the natural indexing on $\ZZ^2$, so the bottom-left corner is the $(1,1)$ entry.
\end{remark}

Consider a random matrix $(w_{ij})_{i,j \geq 1}$ with independent entries $w_{ij}$ with the geometric distribution
\[
P(w_{ij} = k) = (1 - t_i x_j) (t_i x_j)^k,
\]
where $t_i, x_j \in (0,1)$ and $k \in \ZZ_{\geq 0}$. The \defn{last passage time} for this random matrix is
\[
G(m,n) = \max_{\Pi} \sum_{(i,j) \in \Pi} w_{ij},
\]
where the maximum is taken over all paths from $(1,1)$ to $(m,n)$ with unit steps to the right or up, \textit{i.e.}, for $(i_a,j_a)$, we either have $(i_{a+1},j_{a+1}) = (i_a, j_a+1), (i_a+1, j_a)$.

Let us define $\bG(i)$ as $\bG(i) := (G(\ell,i), \dotsc, G(1,i))$.
We denote the transition probability $P(\bG(i)=\lambda | \bG(j)=\mu)$, for $\lambda = (\lambda_1, \dotsc, \lambda_{\ell})$ and $\mu = (\mu_1, \dotsc, \mu_{\ell})$, of the LPP process. This was introduced by Johansson~\cite{Johansson10} with the specializations $x_i = q$ and $t_i = 1$.
We extend this in the natural way to allow $\bG(0) = \mu$ by instead using $G_{\mu}(k, i) = G(k, i) + \mu_{\ell+1-k}$.
Thus we have
\[
P(\bG(n) = \lambda) = P(\bG(n)=\lambda|\bG(0)=\emptyset).
\]

\begin{thm}[{\cite[Thm.~8.1]{Yel19II}}]
\label{thm:last_passage}
We have
\[
P(\bG(n)=\lambda) \biggr\rvert_{\bt=1} = \prod_{j=1}^n (1 - x_j)^{\ell} \dG_{\lambda}(\xx; 1).
\]
\end{thm}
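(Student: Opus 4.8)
The plan is to reduce the probabilistic statement to the combinatorial definition of $\dG_\lambda(\xx;1)$ and then invoke the matrix-to-RPP bijection. First I would expand the joint probability as a sum over the underlying random matrices. Since the entries are independent with $P(w_{ij}=k)=(1-t_ix_j)(t_ix_j)^k$, setting $\bt=1$ (that is, every $t_i=1$) gives, for a fixed matrix $W=(w_{ij})$ on rows $1,\dotsc,\ell$ and columns $1,\dotsc,n$,
\[
P\bigl((w_{ij})=W\bigr)\bigr\rvert_{\bt=1}
= \prod_{i=1}^{\ell}\prod_{j=1}^{n}(1-x_j)\,x_j^{w_{ij}}
= \prod_{j=1}^{n}(1-x_j)^{\ell}\,\prod_{j=1}^{n}x_j^{c_j(W)},
\]
where $c_j(W)=\sum_{i=1}^{\ell}w_{ij}$ is the $j$-th column sum. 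Summing over all matrices with $\bG(n)(W)=\lambda$ and factoring out the common prefactor, the theorem becomes equivalent to the identity
\[
\sum_{W\,:\,\bG(n)(W)=\lambda}\prod_{j=1}^{n}x_j^{c_j(W)}
= \dG_{\lambda}(\xx;1)
= \sum_{T\in\rpp^n(\lambda)}\xx^{a(T)},
\]
the last equality being the definition of $\dG_\lambda(\xx;\bt)$ specialized at $\bt=1$.

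Next I would prove this identity by a weight-preserving bijection between the two index sets. The natural tool is Yeliussizov's bijection~\cite[Thm.~1]{Yel20} between nonnegative integer matrices and reverse plane partitions, under which the last-passage profile $\bG(n)(W)=(G(\ell,n),\dotsc,G(1,n))$ becomes the shape of the associated RPP and the column sum $c_j(W)$ becomes $a_j(T)$, the number of columns of $T$ containing the entry $j$. Granting this, the left-hand sum runs over exactly $\rpp^n(\lambda)$ with each matrix contributing $\prod_j x_j^{a_j(T)}=\xx^{a(T)}$, so it equals $\dG_\lambda(\xx;1)$ and we are done. As a cross-check on the weight bookkeeping, I would note the factorization of this bijection through an iterated $\RSK$ map sending $W$ to a semistandard tableau of some shape $\mu\subseteq\lambda$ together with an elegant tableau of shape $\lambda/\mu$, followed by the inverse of the inflation map $\phi$ of~\cite[Thm.~9.8]{LamPyl07}: the $\xx$-content is carried entirely by the insertion step (column sums of $W$ become the content of the insertion tableau, hence produce $s_\mu(\xx)$), the inflation map adds only column-duplicate entries and therefore leaves $a(T)$ unchanged, and summing over $\mu$ recovers $\sum_{\mu\subseteq\lambda}e_\lambda^\mu(1)\,s_\mu(\xx)=\dG_\lambda(\xx;1)$ by Theorem~\ref{thm:dualG_schur_decomposition}.

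The main obstacle is precisely the verification that the combinatorial bijection matches the probabilistic data on \emph{both} counts: that the LPP profile coincides with the RPP shape, and that the column-sum weight transports to the $a$-statistic. The shape identification rests on Greene's theorem, which expresses each last-passage time $G(k,n)$ through the lengths of the shapes produced by $\RSK$ on the top $k$ rows, together with the interlacing that forces the profile to be weakly decreasing and hence a genuine partition; this is exactly the content of~\cite[Thm.~1]{Yel20}, which I would invoke rather than reprove. The weight verification then splits into the standard fact that $\RSK$ sends column sums to insertion-tableau content and the observation (already recorded after Theorem~\ref{thm:dualG_schur_decomposition}) that $\phi$ sends a box labeled $i$ in the elegant tableau to a column-duplicate entry in row $i$ of the RPP, which contributes to $b(T)$ but not to $a(T)$. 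As a sanity check I would confirm the $\ell=1$ case by hand, where the matrix is a single row $(w_{11},\dotsc,w_{1n})$, the profile is $G(1,n)=\sum_j w_{1j}$, the RPP is the single-row filling $1^{w_{11}}\dotsm n^{w_{1n}}$, and indeed $c_j=a_j=w_{1j}$; having matched shape and weight in general, the displayed identity and the theorem follow.
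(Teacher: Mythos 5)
Your proposal is correct and is essentially the paper's own argument: the paper likewise proves this statement by invoking Yeliussizov's bijection $\Phi$ between reverse plane partitions and the random matrices (under which the LPP profile is the shape and the matrix weight matches the RPP weight, so that at $\bt = 1$ the column sums carry the $\xx^{a(T)}$ monomial), with the prefactor $\prod_{j=1}^n (1-x_j)^{\ell}$ arising from conditioning on the independent geometric entries. Your RSK/inflation cross-check also mirrors the paper's subsequent derivation of the refined $\bt$-version via the recording tableau and its bijection with elegant tableaux, so nothing in your outline diverges from or falls short of the paper's route.
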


In~\cite[Thm.~1]{Yel20}, Yeliussizov gave a direct combinatorial proof of Theorem~\ref{thm:last_passage} by constructing a bijection $\Phi$ from~\cite{Yel19II} between reverse plane partitions and the matrices that records the number of times a box is not equal to the box below it.
The remainder of the proof is given by conditioning the probabilities on each entry in the resulting matrix, which results in the leading factor.

\begin{ex}
Consider $\lambda = 210$, $\ell = 3$, and $n = 2$. Then under the bijection $\Phi$ given above, we have
\[
\begin{array}{c*{4}{@{\hspace{35pt}}c}}
\\\toprule
\ytableaushort{11,1} & \ytableaushort{11,2} & \ytableaushort{12,1} & \ytableaushort{12,2} & \ytableaushort{22,2}
\\\midrule
\begin{bmatrix} 1 & 0 \\ 1 & 0 \\ 0 & 0 \end{bmatrix}
&
\begin{bmatrix} 2 & 0 \\ 0 & 1 \\ 0 & 0  \end{bmatrix}
&
\begin{bmatrix}  0 & 1 \\ 1 & 0 \\ 0 & 0  \end{bmatrix}
&
\begin{bmatrix} 1 & 1 \\ 0 & 1 \\ 0 & 0  \end{bmatrix}
&
\begin{bmatrix} 0 & 1 \\ 0 & 1 \\ 0 & 0  \end{bmatrix}
\\\midrule
t_1 x_1^2 & x_1^2 x_2 & t_1 x_1 x_2 & x_1 x_2^2 & t_1 x_2^2
\\\bottomrule
\end{array}
\]
where we have written the weights of the RPP at the bottom. Note that in $P(\bG(n) = \lambda)$, we have a factor of $\prod_{i=1}^3 \prod_{j=1}^2 (1 - t_i x_j)$ from the fact we take the conditional probability on a $3 \times 2$ matrix, with one factor for each entry.
\end{ex}

\begin{ex}
\label{ex:RPP_Matrix_path}
The maximal path that defines $G(\ell+1-i, n) = \lambda_i$ in $\Phi(T)$ does not always come from the $i$-th row of $T$. Indeed, consider
\[
T = \ytableaushort{114,134,33} \longmapsto (b_{ij})_{i,j} = \begin{bmatrix}
1 & 0 & 0 & 0
\\ 1 & 0 & \mathbf{0} & \mathbf{1}
\\ \mathbf{0} & \mathbf{0} & \mathbf{2} & 0
\end{bmatrix},
\]
and the path defining $G(2,4) = 3$ (indicated by the bolded letters) does not pass through the entry $b_{2,1}$ but instead $b_{3,3}$.
We also note that reflection or rotating matrix results in a different shape~$\lambda$.
\end{ex}

Note that assuming Theorem~\ref{thm:last_passage}, then~\cite[Cor.~8.4]{Yel19II} is equivalent to Corollary~\ref{cor:littlewood_identity} with $\bt = 1$ by considering conditioning on the shape $\lambda$ with bounded $\lambda_1$.

Next, we consider the Robinson--Schensted--Knuth (RSK) bijection (see, \textit{e.g.},~\cite{Fulton,ECII}) between $\ZZ_{\geq 0}$-matrices and pairs of semistandard tableaux. It is known that such a matrix $M$ is equivalent to a two-line array (also known as generalized permutations) with $M_{ij}$ denoting the number of bi-letters $\begin{bmatrix} i \\ j \end{bmatrix}$, which are sorted by lex order.
Let $M \mapsto (P, Q)$ under RSK, where $P$ (resp.~$Q$) is called the insertion (resp.\ recording) tableaux. Let $\lambda$ be the shape of $P$ (and $Q$).
A well-known important property of the RSK bijection is the length of a longest increasing subsequence corresponds to $\lambda_1$ (see, \textit{e.g.},~\cite{Johansson00,ECII}).

\begin{ex}
We have
\[
\ytableaushort{113,134,33}
\overset{\Phi}{\longmapsto}
\begin{bmatrix}
1 & 0 & 1 & 0 \\
1 & 0 & 0 & 1 \\
0 & 0 & 2 & 0 \\
\end{bmatrix}
\longleftrightarrow
\begin{bmatrix}
1 & 1 & 2 & 2 & 3 & 3 \\
3 & 3 & 1 & 4 & 1 & 3
\end{bmatrix}
\overset{\RSK}{\longmapsto}
\left(
\ytableaushort{113,334}\,,\,
\ytableaushort{112,233}
\right).
\]
\end{ex}

Recall that the length of the longest increasing subsequence of the bottom row of a two-line array when we restrict to $\ell \times n$ matrices corresponds to $\lambda_1 = G(\ell,n)$ under RSK. Hence, we can compute $G(\ell-k, n)$ by setting the top $k$ rows (using the convention the lower-left corner is the $(1,1)$ entry) of the matrix $M$ to $0$, which we denote this matrix by $M^{(k)}$. Let $M^{(k)} \mapsto (P^{(k)}, Q^{(k)})$ under RSK, which have shape $\lambda^{(k)}$. Hence, we have $G(\ell-k, n) = \lambda^{(k)}_1$. If we look at the recording tableau, we have that $Q^{(k)}$ is equal to removing all entries at least $n - k$ from $Q = Q^{(0)}$. Thus, for any matrix $M$ such that $\bigl(G(\ell-k,n)\bigr)_{k=0}^{\ell-1} = \lambda$, we require that the left side of the GT pattern representation of $Q$ must be $\lambda$. Therefore, we obtain
\begin{equation}
\label{eq:prob_schur_expansion}
P(\bG(n)=\lambda) = \prod_{i=1}^{\ell} \prod_{j=1}^n (1 - t_i x_j) \bt^{\lambda} \sum_{\mu \subseteq \lambda} \sum_{\substack{Q \in \ssyt^{\ell}(\mu) \\ \lf(Q) = \lambda}} \bt^{-Q} s_{\mu}(\xx),
\end{equation}
where $\lf(Q)$ is the leftmost entries of the GT pattern corresponding to $Q$.

\begin{lemma}
\label{lemma:GT_ET_bijection}
There exists a weight-preserving bijection
\[
\psi \colon \{Q \in \ssyt^{\ell}(\mu) \mid \lf(Q) = \lambda \} \to \et(\lambda/\mu).
\]
\end{lemma}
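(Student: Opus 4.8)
The plan is to encode both sides as chains of interlacing partitions (Gelfand--Tsetlin patterns) and to realize $\psi$ as a shifted reading of one pattern as the other. Recall that a tableau $Q \in \ssyt^{\ell}(\mu)$ is the same datum as the chain $\emptyset = \mu^{(0)} \subseteq \mu^{(1)} \subseteq \cdots \subseteq \mu^{(\ell)} = \mu$, where $\mu^{(i)}$ is the shape formed by the entries of $Q$ that are at most $i$, and the GT conditions are the interlacing inequalities $\mu^{(i)}_r \geq \mu^{(i-1)}_r \geq \mu^{(i)}_{r+1}$. First I would observe that the hypothesis $\lf(Q) = \lambda$ is exactly the statement that the leftmost column of this pattern is prescribed, namely $\mu^{(i)}_1 = \lambda_{\ell+1-i}$ for all $i$. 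Dually, an elegant tableau $E \in \et(\lambda/\mu)$ is the same as a chain $\mu = \nu^{(0)} \subseteq \nu^{(1)} \subseteq \cdots \subseteq \nu^{(\ell-1)} = \lambda$ in which the boxes labelled $v$ form the horizontal strip $\nu^{(v)}/\nu^{(v-1)}$, and the flag condition (entries in row $r$ are $<r$) is equivalent to $\nu^{(v)}_r = \lambda_r$ whenever $r \leq v+1$.

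With these reformulations in hand, I would define $\psi$ by the explicit shift
\[
\nu^{(v)}_r := \begin{cases} \lambda_r & \text{if } r \leq v, \\ \mu^{(\ell-v)}_{r-v} & \text{if } r > v, \end{cases}
\]
where the two cases agree at $r = v+1$ because $\mu^{(\ell-v)}_1 = \lambda_{v+1}$. The inverse simply reads $\mu^{(i)}_s = \nu^{(\ell-i)}_{s+\ell-i}$. The content bookkeeping is then immediate: summing the shift gives $|\nu^{(v)}| = \sum_{r \leq v}\lambda_r + |\mu^{(\ell-v)}|$, so the number of $v$'s in $E = \psi(Q)$ equals $\lambda_v - \bigl(|\mu^{(\ell-v+1)}| - |\mu^{(\ell-v)}|\bigr) = \lambda_v - \#\{(\ell+1-v)\text{'s in } Q\}$. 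This is precisely the exponent identity that makes $\bt^{\lambda}\bt^{-Q}$ equal $\bt^{\psi(Q)}$, once one accounts for the order-reversal of the $t$-variables already built into $\bG(i) = (G(\ell,i),\dotsc,G(1,i))$; hence $\psi$ is weight-preserving.

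It then remains to check that $\psi(Q)$ is a genuine elegant tableau and that $\psi$ is a bijection, and here every condition reduces to a single GT interlacing inequality. For $r > v$ the inequality $\mu^{(\ell-v+1)}_{r-v+1} \leq \mu^{(\ell-v)}_{r-v}$ gives $\nu^{(v-1)}_r \leq \nu^{(v)}_r$ (so the $\nu^{(v)}$ are nested), while $\mu^{(\ell-v)}_{r-v} \leq \mu^{(\ell-v+1)}_{r-v}$ gives $\nu^{(v)}_r \leq \nu^{(v-1)}_{r-1}$ (so $\nu^{(v)}/\nu^{(v-1)}$ is a horizontal strip); the flag condition $\nu^{(v)}_r = \lambda_r$ for $r \leq v+1$ holds by construction, and the boundary cases $r \leq v$ follow from $\lambda$ being a partition. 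Running these equivalences backwards shows that the inverse formula produces a valid GT pattern with the prescribed left column $\mu^{(i)}_1 = \lambda_{\ell+1-i}$, so the two assignments are mutually inverse.

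The main obstacle is not any single deep step but getting all of the index shifts to line up simultaneously: the fact that the \emph{same} interlacing inequalities of the chain $(\mu^{(i)})$ encode, after the shift $r \mapsto r-v$ and the reflection $i \mapsto \ell - i$, exactly the semistandardness-plus-flag conditions on $E$, together with the reversal of the $\bt$-variables needed for weight preservation. Once this dictionary between the two chains is set up correctly, the verification is the routine inequality chase sketched above.
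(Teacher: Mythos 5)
Your proposal is correct and is essentially the paper's own proof made explicit: the paper's argument is precisely to reflect the GT pattern of $Q$ vertically and embed it inside a larger skew GT pattern with left diagonal $\lambda$ and right diagonal $0$, which is exactly your shift formula $\nu^{(v)}_r = \lambda_r$ for $r \leq v$ and $\nu^{(v)}_r = \mu^{(\ell-v)}_{r-v}$ for $r > v$. Your content bookkeeping, including the relation $m_v(\psi(Q)) = \lambda_v - m_{\ell+1-v}(Q)$ with its built-in reversal of the $t$-variables, agrees with the worked example following the lemma in the paper (where the number of $1$'s in the elegant tableau is governed by the number of $\ell$'s in $Q$), so the verification is sound.
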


\begin{proof}
To see that the corresponding GT patterns correspond to the elegant tableaux of shape $\lambda / \mu$, we simply reflect the GT pattern vertically and consider it inside of a larger skew GT pattern, which we then biject with semistandard tableaux. Pictorially, we have:
\[
\begin{tikzpicture}[scale=0.65]
\draw[-] (0,0) -- (2,-2) -- (6,-2) -- (8,0) -- cycle;
\draw[-] (2,-2) -- (4,0) -- (6, -2);
\draw[color=gray] (2,-1) node {$\lambda$};
\draw (4,-1) node {$\overline{Q}$};
\draw[color=gray] (6,-1) node {$0$};
\end{tikzpicture}
\]
with the bottom row of $\overline{Q}$ being $\mu$.
\end{proof}

\begin{ex}
For $\ell = 4$, under the correspondence mentioned above, we have
\[
\begin{array}{ccccccc}
\lambda_1 & & \mu_2 && \mu_3 && \mu_4 \\
& \lambda_2 && x && y \\
&& \lambda_3 && z \\
&&& \lambda_4
\end{array}
\longrightarrow
\begin{array}{ccccccccccccc}
{\color{gray}\lambda_1} && {\color{gray}\lambda_2} && {\color{gray}\lambda_3} && \lambda_4 && {\color{gray}0} && {\color{gray}0} && {\color{gray}0} \\
& {\color{gray}\lambda_1} && {\color{gray}\lambda_2} && \lambda_3 && z && {\color{gray}0} && {\color{gray}0}\\ 
&& {\color{gray}\lambda_1} && \lambda_2 && x && y && {\color{gray}0} \\
&&& \lambda_1 && \mu_2 && \mu_3 && \mu_4 &
\end{array}
\]
So for the elegant tableaux, the number of $1$'s on the second row is $\lambda_2 - \mu_2$, on the third row is $x - \mu_3$, and on the fourth row is $y - \mu_4$; the number of $2$'s on third row is $\lambda_3 - x$ and the fourth row is $z - y$; and the number of $3$'s on the third row is $\lambda_4 - z$.
\end{ex}

From Equation~\eqref{eq:prob_schur_expansion}, Lemma~\ref{lemma:GT_ET_bijection}, and Theorem~\ref{thm:dualG_schur_decomposition}, we have the refined version of Theorem~\ref{thm:last_passage}.

\begin{thm}
\label{thm:refined_last_passage}
We have
\[
P(\bG(n)=\lambda) = \prod_{i=1}^{\ell} \prod_{j=1}^n (1 - t_i x_j) \bt^{\lambda} \dG_{\lambda}(\xx; \bt^{-1}).
\]
\end{thm}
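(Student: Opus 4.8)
The statement is the refined analogue of Theorem~\ref{thm:last_passage}, and the plan is to assemble it directly from the three results singled out immediately before it: the Schur expansion of the joint probability recorded in Equation~\eqref{eq:prob_schur_expansion}, the bijection of Lemma~\ref{lemma:GT_ET_bijection}, and the Schur decomposition of Theorem~\ref{thm:dualG_schur_decomposition}.

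First I would take Equation~\eqref{eq:prob_schur_expansion} as the starting point,
\[
P(\bG(n)=\lambda) = \prod_{i=1}^{\ell}\prod_{j=1}^n (1-t_ix_j)\, \bt^{\lambda} \sum_{\mu \subseteq \lambda} \Biggl( \sum_{\substack{Q \in \ssyt^{\ell}(\mu)\\ \lf(Q)=\lambda}} \bt^{-Q} \Biggr) s_{\mu}(\xx),
\]
which is where the genuine work upstream lives: applying the RSK correspondence to the geometric random matrix produces a pair $(P,Q)$ of shape $\mu$, the column content of the insertion tableau assembles into $s_{\mu}(\xx)$, the row content of the recording tableau $Q$ contributes the $\bt$-weight, and the event $\bG(n)=\lambda$ is translated into the boundary condition $\lf(Q)=\lambda$ through the longest increasing subsequence reading of the shifted passage times $G(\ell-k,n)$.

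Next I would fix $\mu\subseteq\lambda$ and apply the weight-preserving bijection $\psi$ of Lemma~\ref{lemma:GT_ET_bijection} to the parenthesized inner sum. Since $\psi$ sends each recording tableau $Q$ with $\lf(Q)=\lambda$ to an elegant tableau in $\et(\lambda/\mu)$ and matches their $\bt$-weights, it rewrites $\sum_{Q}\bt^{-Q}$ as $\sum_{T\in\et(\lambda/\mu)}\bt^{-T}=e_{\lambda}^{\mu}(\bt^{-1})$ in the notation of Theorem~\ref{thm:dualG_schur_decomposition}. Substituting this back and summing over $\mu$, Theorem~\ref{thm:dualG_schur_decomposition} collapses $\sum_{\mu\subseteq\lambda}e_{\lambda}^{\mu}(\bt^{-1})s_{\mu}(\xx)$ into $\dG_{\lambda}(\xx;\bt^{-1})$, and restoring the prefactors $\prod_{i,j}(1-t_ix_j)$ and $\bt^{\lambda}$ yields the claimed identity.

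The step I expect to be delicate is confirming that $\psi$ is weight-preserving in exactly the sense required, namely that reflecting the Gelfand--Tsetlin pattern of $Q$ and embedding it inside the larger skew pattern (as in the proof of Lemma~\ref{lemma:GT_ET_bijection}) transports the monomial $\bt^{-Q}$ precisely to the elegant-tableau monomial $\bt^{-T}$. This is where the order reversal built into the definition $\bG(i)=(G(\ell,i),\dotsc,G(1,i))$ has to be reconciled against the labeling of the values $1,2,\dotsc,\ell-1$ in $\et(\lambda/\mu)$, and I would verify it against the worked example following the lemma before treating the remaining manipulations as purely formal.
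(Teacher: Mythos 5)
Your proposal is correct and follows exactly the paper's own route: the paper derives Theorem~\ref{thm:refined_last_passage} precisely by combining Equation~\eqref{eq:prob_schur_expansion}, the weight-preserving bijection of Lemma~\ref{lemma:GT_ET_bijection} (which turns the inner sum over recording tableaux with $\lf(Q)=\lambda$ into $e_{\lambda}^{\mu}(\bt^{-1})$), and the Schur decomposition of Theorem~\ref{thm:dualG_schur_decomposition}. Your flagged concern about the weight matching in $\psi$ is exactly the content of Lemma~\ref{lemma:GT_ET_bijection} as the paper proves it, so nothing further is needed.
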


We could have also shown Theorem~\ref{thm:refined_last_passage} by showing the bijection $\Phi$ preserves the weight under the refinement.
Thus by also using Equation~\eqref{eq:prob_schur_expansion} and Lemma~\ref{lemma:GT_ET_bijection}, we obtain a new bijective proof of Theorem~\ref{thm:dualG_schur_decomposition} as we impose no conditions on the insertion tableau under RSK.
Note that $\bt^{\lambda} \bt^{-Q}$ gives the number of entries at height $i$ that have an $i$ directly below it in the RPP by the definition of $\Phi$.
Furthermore, the bijection $\Phi$ can be thought of as describing the positions where the RSK insertion of the inflation map acts.

Theorem~\ref{thm:refined_last_passage} also gives a new probabilistic proof of Corollary~\ref{cor:dual_Grothendiecks_bounded} (with substituting $t_i \mapsto t_i^{-1}$) by taking a sum over all $\lambda$ with $\ell(\lambda) \leq \ell$, which means we have considered all possible states. So we have
\begin{align*}
1 & = \sum_{\ell(\lambda) \leq \ell} P(\bG(n)=\lambda)
 = \prod_{i=1}^{\ell} \prod_{j=1}^n (1 - t_i x_j) \sum_{\ell(\lambda) \leq \ell} \bt^{\lambda} \dG_{\lambda}(\xx; \bt^{-1}),
\end{align*}
and then divide by the leading factor to obtain Corollary~\ref{cor:dual_Grothendiecks_bounded}.

We can take the idea of fixing the left part of a GT pattern a bit further obtain a similar-but-distinct lattice model for dual Grothendieck polynomials to those in Section~\ref{sec:refined_models}. Indeed, recall that GT patterns are in bijection with states of a trivially colored lattice model and NILPs. The condition that the left side of the GT pattern is $\lambda$ means we have fixed the bottom path, which we can then remove and transform it into a jagged model with specific boundary conditions.
If we then reflect this model over the horizontal axis, we can stack it on top of the corresponding lattice model for the insertion tableaux.

\begin{ex}
Consider the elegant tableaux from Example~\ref{ex:NILP_tableaux}, and its corresponding GT pattern and semistandard tableau are
\[
\ytableaushort{{\cdot}{\cdot}{\cdot}{\cdot},{\cdot}11,12,33}  *[*(white)]{4,1} *[*(darkred!40)]{4,3,2,2}
\quad \longmapsto \quad
\begin{array}{ccccccc}
4 && 1 && 0 && 0 \\
& 3 && 1 && 0 \\
&& 2 && 0 \\
&&& 2
\end{array}
\quad \longleftrightarrow \quad
\ytableaushort{1134,3}\,.
\]
Next, we take the corresponding NILP model and 5-vertex models:
\[
\begin{tikzpicture}[>=latex,baseline=1.7cm,scale=0.7]
\draw[very thin, black!20] (-0.5, 1-0.5) grid (7.5, 4.5);
\draw[-, very thick, darkred] (3,0.5) -- (3,1) -- (5,1) -- (5,3) -- (6,3) -- (6,4) -- (7,4) -- (7,4.5);
\draw[-, very thick, dgreencolor] (2,0.5) -- (2,3) -- (3,3) -- (3,4.5);
\draw[-, very thick, dbluecolor] (1,0.5) -- (1,4.5);
\draw[-, very thick, UQpurple] (0,0.5) -- (0,4.5);
\end{tikzpicture}
\qquad\qquad
\begin{tikzpicture}[>=latex,baseline=1.7cm,scale=0.7]
\draw[very thin, black!20] (-0.5, 1-0.5) grid (7.5, 4.5);
\draw[-, very thick, darkred] (-0.5,1) -- (2,1) -- (2,2) -- (3,2) -- (3,3) -- (5,3) -- (5,4) -- (7,4) -- (7,4.5);
\draw[-, very thick, dgreencolor] (-0.5,2) -- (0,2) -- (0,3) -- (2,3) -- (2,4) -- (3,4) -- (3,4.5);
\draw[-, very thick, dbluecolor] (-0.5,3) -- (0,3) -- (0,4) -- (1,4) -- (1,4.5);
\draw[-, very thick, UQpurple] (-0.5,4) -- (0,4) -- (0,4.5);
\end{tikzpicture}\,.
\]
Then the corresponding jagged portions of the models are given by
\[
\begin{tikzpicture}[>=latex,baseline=-1.7cm,xscale=0.7,yscale=-0.7]
\draw[very thin, black!20] (-0.5, 1-0.5) grid (5.5, 4.5);
\draw[-, very thick, dgreencolor] (2,0.5) -- (2,3) -- (3,3) -- (3,4.5);
\draw[-, very thick, dbluecolor] (1,0.5) -- (1,4.5);
\draw[-, very thick, UQpurple] (0,0.5) -- (0,4.5);
\fill[white] (2.5,0.3) rectangle (7.6,1.5);
\fill[white] (4.5,0.3) rectangle (7.6,3.5);
\end{tikzpicture}
\qquad\qquad
\begin{tikzpicture}[>=latex,baseline=-1.7cm,xscale=0.7,yscale=-0.7]
\draw[very thin, black!20] (-0.5, 1-0.5) grid (5.5, 4.5);
\draw[-, very thick, dgreencolor] (-0.5,2) -- (0,2) -- (0,3) -- (2,3) -- (2,4) -- (3,4) -- (3,4.5);
\draw[-, very thick, dbluecolor] (-0.5,3) -- (0,3) -- (0,4) -- (1,4) -- (1,4.5);
\draw[-, very thick, UQpurple] (-0.5,4) -- (0,4) -- (0,4.5);
\fill[white] (-0.6,0.3) rectangle (7.6,1.5);
\fill[white] (2.5,0.3) rectangle (7.6,2.5);
\fill[white] (3.5,0.3) rectangle (7.6,3.5);
\end{tikzpicture}\,.
\]
Note that we are still allowed to touch the rightmost line in the trivially colored model, so we do not remove those vertices. Furthermore, we could have removed the top row from the jagged NILP model since those paths are all fixed.
\end{ex}

Lastly, we describe the insertion tableau under the composition of RSK and $\Phi$.

\begin{prop}
The insertion tableau under RSK of the result of $\Phi$ is given by just removing any $i$ such that there is an $i$ directly below it and using jeu-de-taquin to rectify the remaining boxes to a straight shape $\mu$.
\end{prop}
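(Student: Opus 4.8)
The plan is to identify the biword produced by $\Phi$ with the (row) reading word of an explicit semistandard filling, and then to invoke the classical fact that jeu-de-taquin rectification computes the $\RSK$ insertion tableau of a reading word. Write $D(T)$ for the diagram obtained from $T$ by deleting every box whose entry equals the entry of the box directly below it, keeping the entries of $T$ on the remaining boxes. In each column we retain exactly the bottommost box of each maximal run of equal entries, so $D(T)$ is column-strict, and its rows stay weakly increasing since they are restrictions of rows of $T$; thus $D(T)$ is a semistandard filling. Note, however, that $D(T)$ is in general \emph{not} a skew shape: deleting an interior column-duplicate can leave an empty cell with filled cells both above and below it, as already happens in Example~\ref{ex:RPP_Matrix_path}.

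First I would recall the description of $\Phi$ used throughout this section: a box of $T$ contributes a bi-letter to $\Phi(T)$ precisely when it is not a column-duplicate, the bottom entry being the value of the box and the top entry being its row, read from the bottom of $T$ upward (consistent with the convention that the $(1,1)$ entry is the bottom-left corner and with the relation $\sum_i b_{ij} = a_j$ that is visible in the displayed matrices). Sorting this biword for $\RSK$ therefore lists the retained boxes row by row, from the bottom row of $T$ to the top, and left to right within each row; since the rows of $T$ weakly increase, this is exactly the left-to-right, bottom-to-top row reading word $w = w\bigl(D(T)\bigr)$. By definition of $\RSK$, the insertion tableau of $\RSK(\Phi(T))$ is the insertion tableau $P(w)$ obtained by row-inserting $w$.

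It then remains to match $P(w)$ with the jeu-de-taquin rectification of $D(T)$. For a genuine skew semistandard tableau this is Sch\"utzenberger's theorem that rectification equals insertion of the reading word and is independent of the order of slides~\cite[Ch.~2--3]{Fulton} (see also~\cite[Ch.~7]{ECII}). The main obstacle is precisely that $D(T)$ is only a punctured diagram, so the theorem does not apply verbatim. I would dispatch this by observing that a single elementary jeu-de-taquin slide into an empty cell is a local three-box move and hence induces an elementary Knuth transformation on the reading word; therefore every slide applied to the filled diagram $D(T)$ preserves the plactic class of $w$. Since $P(w)$ is the unique straight-shape semistandard tableau in the plactic class of $w$, any sequence of slides rectifying $D(T)$ to a straight shape must terminate at $P(w)$. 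Hence the rectification of $D(T)$ is well defined, its output shape is the common shape $\mu$ of the $\RSK$ pair, and it equals the insertion tableau of $\RSK(\Phi(T))$, as claimed. Together with Lemma~\ref{lemma:GT_ET_bijection}, which handles the recording tableau, this completes the identification of $\RSK \circ \Phi$ with the inflation map of~\cite{LamPyl07}.
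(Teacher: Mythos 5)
Your reduction of the proposition to a statement about reading words is correct and is essentially the paper's own (much terser) argument: you verify that the bottom line of the lex-sorted biword of $\Phi(T)$ is the left-to-right, bottom-to-top row reading word $w$ of the punctured diagram $D(T)$, so the insertion tableau of $\RSK(\Phi(T))$ is $P(w)$; the paper then simply cites the equivalence of jeu de taquin with RSK insertion (and offers the crystal structure of~\cite{Galashin17} as an alternative). You go further than the paper in flagging the genuine subtlety that $D(T)$ is in general not a skew shape, so the classical theorem does not apply verbatim. The gap is in how you dispatch this. The claim that an elementary jeu-de-taquin slide ``is a local three-box move and hence induces an elementary Knuth transformation on the reading word'' is false: a vertical move takes a letter $b$ from row $i+1$ to row $i$ and therefore transports $b$ in $w$ past the entire tail of its own row and the initial segment of the row above it --- local in the diagram, but not local in the word. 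Even for honest skew shapes, the fact that slides preserve the plactic class is exactly the nontrivial content of Sch\"utzenberger's theorem, whose proof in~\cite{Fulton} is a genuinely global argument using the skew structure; it is not a move-by-move triviality.

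More importantly, the statement you actually invoke --- that every slide on a punctured semistandard filling preserves the plactic class --- is simply false, so no argument that ignores the special structure of $D(T)$ can be completed. Consider the $2\times 2$ square with $(1,1)$ filled by $1$, $(2,2)$ filled by $2$, and holes at $(1,2)$ and $(2,1)$: rows and columns of filled boxes are (vacuously) semistandard, and $w=21$. Sliding the $2$ up into the hole $(1,2)$ is a legal elementary move and terminates immediately at the straight-shape tableau $\ytableaushort{12}$, whereas sliding the $2$ left into $(2,1)$ terminates at $\ytableaushort{1,2} = P(21)$. So on punctured fillings jeu de taquin neither preserves the Knuth class nor is confluent. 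This configuration cannot arise as a $D(T)$, because in $D(T)$ every hole has, below it in its column, a filled box of equal value (holes are exactly the column duplicates); but your proof nowhere uses this property, so as written it would ``prove'' the claim for the counterexample as well. To close the gap you must exploit that structure: for instance, re-inflate the holes so as to reduce each slide to one on an honest skew tableau, relate the deletion of column duplicates to the inverse of the Lam--Pylyavskyy inflation map~\cite{LamPyl07}, which is defined via RSK, or argue via the crystal structure on reverse plane partitions~\cite{Galashin17}, which is the alternative the paper itself points to.
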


\begin{proof}
This is a consequence of the definition of $\Phi$ and that jeu-de-taquin is equivalent to performing RSK insertion (see, \textit{e.g.},~\cite{ECII}).
This could also be seen using the crystal structure on reverse plane partitions given in~\cite{Galashin17}.
\end{proof}

\subsection{Transition probability and skew refined dual Grothendieck polynomials}
\label{sec:transition_JT_formula}

For the remainder of this section, we will take Theorem~\ref{thm:refined_last_passage} as the \emph{definition} of the refined dual Grothendieck polynomials.
This leads to a natural probabilistic definition of the skew refined dual Grothendieck polynomials using transition probabilities for the LPP.
In this subsection, we first show that this is equivalent to the combinatorial definition from~\cite{GGL16}.
Then we derive a Jacobi--Trudi determinant form for the skew refined dual Grothendieck polynomials by refining the difference operator technique by Johannson \cite{Johansson10}.

We first introduce notations.
We define the discrete Heaviside step function $H(\nu)$ for $\nu \in \ZZ$ as
\[
H(\nu) = \begin{cases}
0 & \text{if } \nu <0, \\
1 & \text{if } \nu \ge 0.
\end{cases}
\]
For two functions $f, g \colon \ZZ \to \CC$, we define the convolution product $f*g$ as
\[
f*g(\nu) = \sum_{\xi=-\infty}^\infty f(\nu-\xi)g(\xi).
\]

Recall that we have shown the following relation between the joint probability and the refined dual Grothendieck polynomials in Theorem~\ref{thm:refined_last_passage}:
\[
P(\bG(n)=\lambda)
=\prod_{i=1}^\ell \prod_{j=1}^n (1-t_i x_j) \bt^{\lambda} g_\lambda(\xx;{\bf t^{-1}}).
\]
As previously stated, we now take this to be our \emph{definition} of the refined dual Grothendieck polynomials; that is
\[
g_\lambda(\xx;{\bf t^{-1}}) := \bt^{-\lambda} \prod_{i=1}^\ell \prod_{j=1}^n (1-t_i x_j)^{-1} P(\bG(n)=\lambda).
\]
From this, a natural definition of skew refined dual Grothendieck polynomials for a single variable $x_n$ is using the transition (or conditional) probability by
\begin{equation}
g_{\lambda/\mu}(x_n;\bt^{-1})
:=\prod_{i=1}^\ell (1-t_i x_n)^{-1} \bt^{\mu-\lambda}
P(\bG(n)=\lambda|\bG(n-1)=\mu).
\label{eq:refinedskewdualbylpp}
\end{equation}
We then introduce the skew refined dual Grothendieck polynomials as
\begin{equation}
g_{\lambda/\mu}(\xx;\bt^{-1})
:=\prod_{i=1}^\ell \prod_{j=1}^n (1-t_i x_j)^{-1}
\bt^{\mu-\lambda}
P({\bf G}(n)=\lambda|{\bf G}(0)=\mu).
\label{eq:refineddefbylpp}
\end{equation}

\begin{prop}
\label{prop:refined_branching}
The following branching formulas hold:
\begin{subequations}
\begin{align}
g_\lambda(\xx;\bt^{-1}) & = \sum_{\mu} g_{\lambda/\mu}(x_n;\bt^{-1}) g_\mu(x_1,\dotsc,x_{n-1};\bt^{-1}),
\label{eq:branching_single_var}
\\
g_{\lambda/\nu}(\xx \sqcup \yy;\bt) & =\sum_\mu g_{\lambda/\mu}(\xx;\bt)g_{\mu/\nu}(\yy;\bt),
\\
g_{\lambda/\mu}(\xx;\bt) & = \sum_{\emptyset = \lambda^{(0)} \subseteq \lambda^{(1)} \subseteq \cdots \subseteq \lambda^{(n)} = \lambda}
\prod_{i=1}^n g_{\lambda^{(i)}/\lambda^{(i-1)}}(x_i;\bt).
\label{refineddefmultskew}
\end{align}
\end{subequations}
\end{prop}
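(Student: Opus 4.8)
The plan is to derive all three branching formulas from the single fact that the last-passage-time process $\bG(0), \bG(1), \bG(2), \dotsc$ is a Markov chain, so that its transition probabilities obey the Chapman--Kolmogorov (semigroup) identity; the formulas are then just this identity rewritten through the normalizations of Theorem~\ref{thm:refined_last_passage} and~\eqref{eq:refineddefbylpp}. First I would record why the process is Markov: the entries $w_{ij}$ are independent, and the recursion $G(m,j) = w_{mj} + \max\bigl(G(m-1,j), G(m,j-1)\bigr)$ shows that $\bG(j)$ is a deterministic function of $\bG(j-1)$ and of the single new column $(w_{1j}, \dotsc, w_{\ell j})$, whose law depends only on the parameters $t_i x_j$. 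Hence conditioning on an intermediate column index splits the path count exactly, and the containments $\lambda^{(i-1)} \subseteq \lambda^{(i)}$ are automatically enforced because the transition probabilities vanish off such configurations: since $w_{mj} \geq 0$, one has $G(m,j) \geq G(m,j-1)$, so each component of $\bG(\cdot)$ is nondecreasing in $j$ and each $\bG(j)$ is a genuine partition.

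For~\eqref{eq:branching_single_var}, I would substitute the definitions directly. Writing out $g_{\lambda/\mu}(x_n;\bt^{-1})$ from~\eqref{eq:refinedskewdualbylpp} and $g_\mu(x_1,\dotsc,x_{n-1};\bt^{-1})$ from Theorem~\ref{thm:refined_last_passage}, the $\bt$-powers combine as $\bt^{\mu-\lambda}\bt^{-\mu} = \bt^{-\lambda}$ and the geometric prefactors combine as $\prod_{i=1}^\ell (1-t_i x_n)^{-1}\prod_{i=1}^\ell\prod_{j=1}^{n-1}(1-t_i x_j)^{-1} = \prod_{i=1}^\ell\prod_{j=1}^{n}(1-t_i x_j)^{-1}$. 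What remains on the right-hand side is $\sum_\mu P(\bG(n)=\lambda\mid\bG(n-1)=\mu)\,P(\bG(n-1)=\mu)$, which by the Markov property (law of total probability at time $n-1$, with $\bG(0)=\emptyset$) equals $P(\bG(n)=\lambda)$; comparing with Theorem~\ref{thm:refined_last_passage} gives exactly $g_\lambda(\xx;\bt^{-1})$.

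The second formula is the same computation at a general intermediate time. Conditioning the transition for $\bG(0)=\nu \to \bG(n+n')=\lambda$ on the state $\bG(n)=\mu$ reached after the columns carrying the first block of variables gives $P(\bG(n+n')=\lambda\mid\bG(0)=\nu) = \sum_\mu P(\bG(n+n')=\lambda\mid\bG(n)=\mu)\,P(\bG(n)=\mu\mid\bG(0)=\nu)$. Reading off the two factors through~\eqref{eq:refineddefbylpp}, one block of columns contributes the skew function in one variable set and the other block the skew function in the other, and the prefactors again telescope to the full product of $(1-t_i x_j)^{-1}$ over all $n+n'$ columns, with $\bt^{\nu-\mu}\bt^{\mu-\lambda}=\bt^{\nu-\lambda}$; matching the two blocks to $\xx$ and $\yy$ according to the concatenation convention yields the stated identity. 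Finally,~\eqref{refineddefmultskew} is obtained by iterating this: conditioning on the entire sequence of intermediate states $\bG(1), \dotsc, \bG(n-1)$ decomposes the transition probability as a sum over chains $\lambda^{(0)} \subseteq \cdots \subseteq \lambda^{(n)}$ with the prescribed endpoints of $\prod_{i=1}^n P(\bG(i)=\lambda^{(i)}\mid\bG(i-1)=\lambda^{(i-1)})$, and each single-step factor is a one-variable skew polynomial by~\eqref{eq:refinedskewdualbylpp}; this is cleanest to present as an induction on $n$ using~\eqref{eq:branching_single_var}.

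I expect the only real obstacle to be a careful justification of the Markov property itself, namely that the $\sigma$-algebra generated by columns $1, \dotsc, j$ determines $\bG(j)$ and that the increment to $\bG(j+1)$ is conditionally independent of the past given $\bG(j)$; once this is in hand, every branching formula is a bookkeeping exercise matching $\bt$-powers and the $(1-t_i x_j)$ factors against the normalizations. A secondary point requiring care is confirming that the support constraints, that each $\bG(j)$ is a partition and that consecutive states are nested, are respected automatically, so that the sums over $\mu$ and over chains range over exactly the index sets appearing in the statement.
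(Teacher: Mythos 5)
Your proposal is correct and follows essentially the same route as the paper: both derive~\eqref{eq:branching_single_var} by applying the probabilistic definitions (Theorem~\ref{thm:refined_last_passage} and~\eqref{eq:refinedskewdualbylpp}) to the law of total probability, and obtain the other two formulas by iterating this via the Chapman--Kolmogorov property of the LPP transition probabilities. The paper's proof is terser, leaving implicit the Markov property and the cancellation of the $\bt$-powers and $(1-t_i x_j)$ prefactors that you spell out, but the underlying argument is identical.
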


\begin{proof}
Equation~\eqref{eq:branching_single_var} follows from applying~\eqref{eq:refinedskewdualbylpp} to the basic property
\[
P(\bG(n)=\lambda) = \sum_{\mu} P(\bG(n)=\lambda|\bG(n-1)=\mu)P(\bG(n-1)=\mu)
\]
of conditional probabilities.
The remaining equations follow from~\eqref{eq:branching_single_var} and properties of transition probabilities.
\end{proof}

Next, one can compute the transition probability explicitly as
\[
P(\bG(n)=\lambda|\bG(n-1)=\mu)
=\prod_{j=1}^\ell (1-t_j x_n)(t_j x_n)^{\lambda_j-\max(\mu_j,\lambda_{j+1})}
H(\lambda_j-\max(\mu_j,\lambda_{j+1})),
\]
from which we find the factorized form
\begin{equation}
g_{\lambda/\mu}(x_n;\bt)
=\prod_{j=1}^{\ell-1}t_j^{\max(\mu_j,\lambda_{j+1})-\mu_j}
\prod_{j=1}^{\ell} x_n^{\lambda_j-\max(\mu_j,\lambda_{j+1})}
H(\lambda_j-\max(\mu_j,\lambda_{j+1})). \label{eq:factorized_form_one_var}
\end{equation}

We can see that~\eqref{eq:factorized_form_one_var} is precisely the generating function for the number of reverse plane partitions of shape $\lambda/\mu$ with a single entry $n$ by reading the reverse plane partition row-by-row.
As a consequence, it is a simple recursive combinatorial argument to show this is equivalent to the definition in~\cite{GGL16} by removing the boxes in a reverse plane partition containing an $n$.

\begin{cor}
\label{cor:really_equivalent}
We have
\[
\dG_{\lambda/\mu}(\xx; \bt) = \sum_{T \in \rpp^n(\lambda/\mu)} \bt^{b(T)} \xx^{a(T)}.
\]
\end{cor}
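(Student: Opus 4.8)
The plan is to deduce the identity from the single-variable case and then bootstrap to arbitrarily many variables using the branching formula \eqref{refineddefmultskew}, matching the resulting chains of partitions with the content filtration of a reverse plane partition. The heart of the argument is the single-variable computation already flagged in the sentence preceding the statement; the multivariable step is the routine content/flag correspondence for RPPs together with an additivity check on the two statistics.

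First I would analyze the single-variable polynomial $\dG_{\lambda/\mu}(x_n;\bt)$ in the factored form \eqref{eq:factorized_form_one_var}. Since $\mu \subseteq \lambda$ and both are partitions, $\lambda_j \geq \max(\mu_j,\lambda_{j+1})$ for every $j$, so each factor $H(\lambda_j-\max(\mu_j,\lambda_{j+1}))$ equals $1$ and \eqref{eq:factorized_form_one_var} is a genuine monomial. There is exactly one $T \in \rpp^n(\lambda/\mu)$ all of whose entries equal $n$, and I must check $\bt^{b(T)} x_n^{a(T)}$ reproduces \eqref{eq:factorized_form_one_var}. A box in row $j$, column $c$ lies over another box of $\lambda/\mu$ exactly when $c \in (\mu_j,\lambda_{j+1}]$, so the number of row-$j$ boxes with a box directly beneath is $\max(0,\lambda_{j+1}-\mu_j)=\max(\mu_j,\lambda_{j+1})-\mu_j$; in the all-$n$ filling these are precisely the boxes equal to the box below, giving $b_j(T)=\max(\mu_j,\lambda_{j+1})-\mu_j$. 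The remaining row-$j$ boxes, $(\lambda_j-\mu_j)-\max(0,\lambda_{j+1}-\mu_j)=\lambda_j-\max(\mu_j,\lambda_{j+1})$ of them, have no $n$ beneath them and hence are counted by $a_n$; summing over $j$ gives the exponent of $x_n$. These match the exponents in \eqref{eq:factorized_form_one_var}, which is the row-by-row reading alluded to before the statement.

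Next I would expand $\dG_{\lambda/\mu}(\xx;\bt)$ via \eqref{refineddefmultskew} as a sum over flags $\mu = \lambda^{(0)} \subseteq \lambda^{(1)} \subseteq \cdots \subseteq \lambda^{(n)} = \lambda$ of $\prod_{i=1}^n \dG_{\lambda^{(i)}/\lambda^{(i-1)}}(x_i;\bt)$, so by the single-variable case each factor is the weight of the all-$i$ filling $T_i$ of $\lambda^{(i)}/\lambda^{(i-1)}$. The content bijection sends such a flag to the $T \in \rpp^n(\lambda/\mu)$ in which the boxes with entry at most $i$ occupy $\lambda^{(i)}/\mu$; conversely the entry-$i$ boxes of $T$ form the skew shape $\lambda^{(i)}/\lambda^{(i-1)}$, and weak increase along rows and columns is equivalent to the flag being a chain, so this is a bijection onto $\rpp^n(\lambda/\mu)$. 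It then remains to verify $\bt^{b(T)}\xx^{a(T)} = \prod_{i=1}^n \bt^{b(T_i)} x_i^{a(T_i)}$, i.e. additivity of both statistics across the filtration: since columns weakly increase, a box with entry $i$ has an $i$ directly below iff that lower box also has entry $i$, so $a_i(T)$ depends only on $T_i$; and a box in row $j$ equals the box below iff both lie in the same $\lambda^{(i)}/\lambda^{(i-1)}$, so $b_j(T)=\sum_i b_j(T_i)$. Combining the two displays yields the claim.

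The only substantive content is the single-variable computation and this additivity check; everything else is the standard correspondence. I expect the main (and only minor) obstacle to be the bookkeeping of the left-edge boxes of each row, where $\lambda_{j+1} \leq \mu_j$ and no box sits below — exactly the case that the $\max$ in $\max(\mu_j,\lambda_{j+1})$ is designed to record. An equivalent route, matching the remark preceding the statement, is induction on $n$: stripping the boxes containing $n$ from $T \in \rpp^n(\lambda/\mu)$ leaves some $T' \in \rpp^{n-1}(\nu/\mu)$ with $\nu=\lambda^{(n-1)}$, and the single-variable identity together with the skew branching formula of Proposition~\ref{prop:refined_branching} supplies the inductive step.
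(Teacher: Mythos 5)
Your proposal is correct and takes essentially the same route as the paper: the paper likewise reads the factorized one-variable formula \eqref{eq:factorized_form_one_var} as the weight of the unique all-$n$ reverse plane partition of shape $\lambda/\mu$ (row by row, exactly your bookkeeping with $\max(\mu_j,\lambda_{j+1})$), and then obtains the general case by the recursive argument of stripping the boxes containing $n$ via Proposition~\ref{prop:refined_branching}; your chain expansion through \eqref{refineddefmultskew} is just that recursion unrolled, and you even note the inductive variant. (A minor point you handled correctly: as printed, \eqref{refineddefmultskew} starts the chain at $\lambda^{(0)}=\emptyset$ where it should read $\lambda^{(0)}=\mu$.)
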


Next we convert the factorized form~\eqref{eq:factorized_form_one_var} to a determinant form.
To do this, we introduce the weighted difference operators
$\Delta_t$ and $\Delta_t^{-1}$ acting on a function
$f \colon \ZZ \to \CC$ as
\[
\Delta_t f(\nu) = f(\nu+1)-t f(\nu),
\qquad\qquad
\Delta_t^{-1} f(\nu)=\sum_{\mu=-\infty}^{\nu-1} t^{\nu-1-\mu} f(\mu).
\]
Note that $(\Delta_t^{-1} \Delta_t) f(\nu) = (\Delta_t \Delta_t^{-1}) f(\nu) = f(\nu)$ and that $\Delta_{t'} \Delta_t f = \Delta_t \Delta_{t'} f$.
We define the multiple difference operators $\Delta_{\bt}^{j-i}$ as
\begin{equation}
\Delta_{\bt}^{j-i} := \begin{cases}
\Delta_{t_i} \cdots \Delta_{t_{j-1}} & \text{if } j \geq i, \\
\Delta_{t_j}^{-1} \cdots \Delta_{t_{i-1}}^{-1} & \text{if } j < i.
\end{cases}
\label{eq:defrefinedmultipledifference}
\end{equation}
Note that $\Delta_{\bt}^{j-i}$ does depend on the values $i$ and $j$ and not simply the difference.
Let us also define $v(\nu) := x^\nu H(\nu)$.

\begin{lemma} \label{lemma:onevariablelemma}
The following identity holds:
\begin{align}
\det \bigl[ \Delta_\bt^{j-i} v(\lambda_i-\mu_j) \bigr]_{i,j=1}^{\ell}
=\prod_{j=1}^\ell t_j^{-\mu_j+\max(\mu_j,\lambda_{j+1})}
v(\lambda_j-\max(\mu_j,\lambda_{j+1})).
\label{determinantformonevariable}
\end{align}
\end{lemma}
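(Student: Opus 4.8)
The plan is to pass to generating functions, which turns the weighted difference operators into multiplication operators, and then to evaluate the determinant by induction on $\ell$ (equivalently, by a non-intersecting-paths argument).

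I would first record how the operators act on formal series. Writing $\hat{f}(z)=\sum_{\nu}f(\nu)z^{\nu}$, the definitions give $\widehat{\Delta_{t}f}(z)=\tfrac{1-tz}{z}\hat{f}(z)$ and $\widehat{\Delta_{t}^{-1}f}(z)=\tfrac{z}{1-tz}\hat{f}(z)$, while $\hat{v}(z)=(1-xz)^{-1}$. Since the $\Delta_{t_k}$ commute, setting $P_{m}(z):=\prod_{k=1}^{m}(1-t_{k}z)$ these combine into the single formula
\[
\widehat{\Delta_{\bt}^{\,j-i}v}(z)=\frac{P_{j-1}(z)}{P_{i-1}(z)}\,\frac{z^{\,i-j}}{1-xz},
\]
valid for all $i,j$, whence
\[
\Delta_{\bt}^{\,j-i}v(\lambda_{i}-\mu_{j})=\bigl[z^{\lambda_{i}-\mu_{j}+i-j}\bigr]\,\frac{P_{j-1}(z)}{P_{i-1}(z)(1-xz)}.
\]
Two facts fall out that will drive everything: the vanishing property $\Delta_{\bt}^{\,j-i}v(\nu)=0$ for $\nu<i-j$ (the series above starts in degree $z^{\,i-j}$), and the semigroup law $\Delta_{\bt}^{\,k-i}=\Delta_{\bt}^{\,k-j}\Delta_{\bt}^{\,j-i}$, so the matrix entries form a consistent one-step propagator; in particular $\Delta_{\bt}^{\,j-i}=\Delta_{t_{j-1}}\Delta_{\bt}^{\,(j-1)-i}$.

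For the evaluation I would induct on $\ell$. The base case $\ell=1$ reads $v(\lambda_{1}-\mu_{1})$, which matches the right-hand side since $\lambda_{2}=0$ forces $\max(\mu_{1},\lambda_{2})=\mu_{1}$. For the inductive step I would peel off the $j=1$ factor $t_{1}^{\max(\mu_{1},\lambda_{2})-\mu_{1}}v(\lambda_{1}-\max(\mu_{1},\lambda_{2}))$ and reduce to the minor on rows and columns $2,\dots,\ell$. Crucially, the target product $\prod_{j=2}^{\ell}t_{j}^{\max(\mu_{j},\lambda_{j+1})-\mu_{j}}v(\lambda_{j}-\max(\mu_{j},\lambda_{j+1}))$ attached to that minor consists of exactly the remaining factors, so the boundary convention $\lambda_{\ell+1}=0$ is respected and the induction closes cleanly. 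The peeling itself would use the vanishing property together with column operations built from the relation $\Delta_{\bt}^{\,j-i}=\Delta_{t_{j-1}}\Delta_{\bt}^{\,(j-1)-i}$ to clear the first column below the first row while exposing the asserted factor.

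The main obstacle is precisely this clearing step: because consecutive columns are evaluated at the different points $\mu_{j-1}\neq\mu_{j}$, naive column operations do not align arguments, and because the coupling $\max(\mu_{j},\lambda_{j+1})$ mixes consecutive indices, the determinant is genuinely \emph{not} the product of its diagonal entries. The real content is therefore that the signed sum over permutations collapses to a single monomial. I expect the cleanest route to this is the Lindstr\"om--Gessel--Viennot (Karlin--McGregor) reading of the determinant as a non-intersecting-path count for the one-step process: all crossing families cancel by the reflection principle, and since there is only the single variable $x=x_{n}$ the surviving family is rigid, routing the $j$th particle from $\mu_{j}$ along $t_{j}$-weighted boundary steps up to the reflection level $\max(\mu_{j},\lambda_{j+1})$ set by its lower neighbour and then along $x$-weighted steps to $\lambda_{j}$, reproducing the claimed product. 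Once this cancellation is established, the remaining bookkeeping (matching the $t$-powers and the Heaviside factors) is routine.
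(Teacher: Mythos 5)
Your generating-function setup is correct in substance (it is equivalent to the expansions \eqref{refinedmultipleaction} and \eqref{tdeformedinversemultiple} that the paper itself derives), and your two structural observations — the vanishing $\Delta_\bt^{j-i}v(\nu)=0$ for $\nu<i-j$ and the semigroup law — are genuinely the right tools. But the heart of the proof is missing, and you say so yourself: everything hinges on why the signed permutation sum collapses to the single monomial on the right-hand side, and the mechanism you propose for this collapse does not work as stated. Any lattice-path model realizing the entries must carry \emph{signed} step weights, since for $j>i$ the entry equals $\sum_m e_m(-t_i,\dotsc,-t_{j-1})\,v(\lambda_i-\mu_j+j-i-m)$, which has genuinely negative coefficients. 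Consequently, after applying the (purely algebraic) LGV lemma you are left with a \emph{signed} sum over non-intersecting families, and these families are not rigid: already for $\ell=2$ the identity-permutation families form a whole collection indexed by where the second path leaves the $x$-row, and their weights $(-t_1)^{\epsilon}x^{c}t_1^{d}$ only collapse to the single term $t_1^{\max(\mu_1,\lambda_2)-\mu_1}v(\lambda_1-\max(\mu_1,\lambda_2))v(\lambda_2-\mu_2)$ after a telescoping sign cancellation. So the collapse is cancellation, not rigidity, and proving that cancellation for general $\ell$ is essentially the same amount of work as the induction you were trying to avoid. Moreover, the probabilistic reflection principle (Karlin--McGregor) is not available here: the one-step kernels encode geometric, i.e.\ long-range, jumps, under which particles can exchange order without ever occupying the same site — which is precisely why Sch\"utz/Johansson-type determinant formulas are proved by induction or the master equation rather than by reflection. (A minor slip: the coefficient extraction should read $[z^{\lambda_i-\mu_j-i+j}]$, not $[z^{\lambda_i-\mu_j+i-j}]$.)

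For comparison, the paper's proof is an induction on $\ell$ following Johansson: expand the determinant along the first column; kill the terms with $i\ge 3$ using the support of $\Delta_{t_1}^{-1}\cdots$ together with the observation that an expression carrying no $\mu_3$-dependence is annihilated by the difference operator $\Delta_{t_2,-\mu_3}$; then split into the cases $\lambda_2<\mu_1$ (only the diagonal term survives) and $\lambda_2\ge\mu_1$ (the two surviving terms are combined by explicit algebra into the product). If you want to salvage your route you would need to (i) construct the DAG and its weights explicitly, (ii) show that non-identity permutations admit no non-intersecting families (plausible, since every path must traverse the single $x$-row), and (iii) prove the telescoping cancellation among the signed non-intersecting identity families subject to the neighbour constraints — step (iii) is exactly the content of the lemma and is absent from your proposal.
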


Lemma~\ref{lemma:onevariablelemma} is essentially a refined version of~\cite[Lemma~3.1]{Johansson10}, and it can be shown using the same argument based on induction on $\ell$.
Let us show an example of proving the case $\ell=3$ of Lemma~\ref{lemma:onevariablelemma} by assuming that the identity holds for the case $\ell=2$.
We consider the $\ell=3$ case of the left hand side of~\eqref{determinantformonevariable}.
Expanding the determinant along the first column, we get
\begin{align}
\det\bigl[ \Delta_\bt^{j-i} v(\lambda_i-\mu_j) \bigr]_{i,j=1}^{3}
& = v(\lambda_1-\mu_1)
\begin{vmatrix}
v(\lambda_2-\mu_2) & \Delta_{t_2}v(\lambda_2-\mu_3) \\
\Delta_{t_2}^{-1}v(\lambda_3-\mu_2) & v(\lambda_3-\mu_3) \\
\end{vmatrix} \nonumber \\
& \hspace{20pt} - \Delta_{t_1}^{-1} v(\lambda_2-\mu_1)
\begin{vmatrix}
\Delta_{t_1} v(\lambda_1-\mu_2) & \Delta_{t_1} \Delta_{t_2}v(\lambda_1-\mu_3) \\
\Delta_{t_2}^{-1}v(\lambda_3-\mu_2) & v(\lambda_3-\mu_3) \\
\end{vmatrix} \nonumber \\
& \hspace{20pt} + \Delta_{t_1}^{-1} \Delta_{t_2}^{-1} v(\lambda_3-\mu_1)
\begin{vmatrix}
\Delta_{t_1} v(\lambda_1-\mu_2) & \Delta_{t_1} \Delta_{t_2}v(\lambda_1-\mu_3) \\
v(\lambda_2-\mu_2) & \Delta_{t_2} v(\lambda_2-\mu_3) \\
\end{vmatrix}. \label{columnexpansion}
\end{align}
We can show that the third term of the right hand side of~\eqref{columnexpansion} vanishes.
It is easy to see this when $\lambda_3 < \mu_1$ since $\Delta_{t_1}^{-1} \Delta_{t_2}^{-1} v(\lambda_3-\mu_1)=0$.
Now we consider the case $\lambda_3 \ge \mu_1$ and show the determinant vanishes.
Let $\Delta_{t,\eta}$ denote the $t$-weighted difference operator with respect to the variable $\eta$; that is, it acts on the functions (in the $x$ indeterminate) such that $\eta$ appears as an argument.
Therefore, we can rewrite the determinant as
\[
\begin{vmatrix}
\Delta_{t_1} v(\lambda_1-\mu_2) & \Delta_{t_1} \Delta_{t_2}v(\lambda_1-\mu_3) \\
v(\lambda_2-\mu_2) & \Delta_{t_2} v(\lambda_2-\mu_3) \\
\end{vmatrix}
=\Delta_{t_1,-\mu_2} \Delta_{t_2,-\mu_3}
\begin{vmatrix}
v(\lambda_1-\mu_2) & \Delta_{t_1} v(\lambda_1-\mu_3) \\
\Delta_{t_1}^{-1} v(\lambda_2-\mu_2) & v(\lambda_2-\mu_3) \\
\end{vmatrix}.
\]
We apply the assumption of the identity~\eqref{determinantformonevariable} for the case $\ell=2$ and get
\begin{align*}
&
\begin{vmatrix}
\Delta_{t_1} v(\lambda_1-\mu_2) & \Delta_{t_1} \Delta_{t_2}v(\lambda_1-\mu_3) \\
v(\lambda_2-\mu_2) & \Delta_{t_2} v(\lambda_2-\mu_3) \\
\end{vmatrix}
\\
& = \Delta_{t_1,-\mu_2} \Delta_{t_2,-\mu_3}
t_1^{-\mu_2+\max(\mu_2,\lambda_2)}v(\lambda_1-\max(\mu_2,\lambda_2))
t_2^{-\mu_3+\max(\mu_3,\lambda_3)}v(\lambda_2-\max(\mu_3,\lambda_3)).
\end{align*}
Using $\lambda_1 \ge \lambda_2 \ge \lambda_3 \ge \mu_1 \ge \mu_2 \ge \mu_3$,
we can simplify the right hand side to get
\[
\begin{vmatrix}
\Delta_{t_1} v(\lambda_1-\mu_2) & \Delta_{t_1} \Delta_{t_2}v(\lambda_1-\mu_3) \\
v(\lambda_2-\mu_2) & \Delta_{t_2} v(\lambda_2-\mu_3) \\
\end{vmatrix}
= t_1^{-\mu_2+\lambda_2}t_2^{-\mu_3+\lambda_3} \Delta_{t_1,-\mu_2} \Delta_{t_2,-\mu_3} x^{\lambda_1-\lambda_3}.
\]
Since $\Delta_{t_2,-\mu_3} x^{\lambda_1-\lambda_3} = 0$ as there is no $\mu_3$ dependence, we conclude that
\[
\begin{vmatrix}
\Delta_{t_1} v(\lambda_1-\mu_2) & \Delta_{t_1} \Delta_{t_2}v(\lambda_1-\mu_3) \\
v(\lambda_2-\mu_2) & \Delta_{t_2} v(\lambda_2-\mu_3) \\
\end{vmatrix} = 0.
\]

In any case, the third term of the right hand side of~\eqref{columnexpansion} vanishes.
Now let us consider the case $\lambda_2<\mu_1$.
In this case, the second term also vanishes since $\Delta_{t_1}^{-1}v(\lambda_2-\mu_1)=0$, and we have
\[
\det \bigl[\Delta_\bt^{j-i} v(\lambda_i-\mu_j) \bigr]_{i,j=1}^{3}
=v(\lambda_1-\mu_1)
\begin{vmatrix}
v(\lambda_2-\mu_2) & \Delta_{t_2}v(\lambda_2-\mu_3) \\
\Delta_{t_2}^{-1}v(\lambda_3-\mu_2) & v(\lambda_3-\mu_3) \\
\end{vmatrix}.
\]
Applying the induction hypothesis on the determinant
on the right hand side, we get
\[
\det \bigl[ \Delta_\bt^{j-i} v(\lambda_i-\mu_j) \bigr]_{i,j=1}^{3}
=v(\lambda_1-\mu_1)
\prod_{j=2}^3 t_j^{-\mu_j+\max(\mu_j,\lambda_{j+1})}
v(\lambda_j-\max(\mu_j,\lambda_{j+1})).
\]
Since $\lambda_2<\mu_1$, we can rewrite $v(\lambda_1-\mu_1)$ as
$
v(\lambda_1-\mu_1) = t_1^{-\mu_1+\max(\mu_1,\lambda_2)}v(\lambda_1-\max(\mu_1,\lambda_2)),
$
and we get
\[
\det \bigl[ \Delta_\bt^{j-i} v(\lambda_i-\mu_j) \bigr]_{i,j=1}^{3}
=\prod_{j=1}^3 t_j^{-\mu_j+\max(\mu_j,\lambda_{j+1})}
v(\lambda_j-\max(\mu_j,\lambda_{j+1})),
\]
which is nothing but~\eqref{determinantformonevariable} for the case $\ell=3$.

Finally, let us consider the case $\lambda_2 \ge \mu_1$.
Recall the column expansion
\begin{align}
&\det \bigl[ \Delta_\bt^{j-i} v(\lambda_i-\mu_j) \bigr]_{i,j=1}^{3}
\nonumber \\
&=v(\lambda_1-\mu_1)
\begin{vmatrix}
v(\lambda_2-\mu_2) & \Delta_{t_2}v(\lambda_2-\mu_3) \\
\Delta_{t_2}^{-1}v(\lambda_3-\mu_2) & v(\lambda_3-\mu_3) \\
\end{vmatrix} \nonumber \\
& \hspace{20pt} - \Delta_{t_1}^{-1} v(\lambda_2-\mu_1)
\begin{vmatrix}
\Delta_{t_1} v(\lambda_1-\mu_2) & \Delta_{t_1} \Delta_{t_2}v(\lambda_1-\mu_3) \\
\Delta_{t_2}^{-1}v(\lambda_3-\mu_2) & v(\lambda_3-\mu_3) \\
\end{vmatrix}. \label{columnexpansiontwoterms}
\end{align}
We again apply the induction assumption on the first term of the right hand side of~\eqref{columnexpansiontwoterms} to get
\begin{align}
&v(\lambda_1-\mu_1)
\begin{vmatrix}
v(\lambda_2-\mu_2) & \Delta_{t_2}v(\lambda_2-\mu_3) \\
\Delta_{t_2}^{-1}v(\lambda_3-\mu_2) & v(\lambda_3-\mu_3) \\
\end{vmatrix} \nonumber \\
& = v(\lambda_1-\mu_1)
\prod_{j=2}^3 t_j^{-\mu_j+\max(\mu_j,\lambda_{j+1})}
v(\lambda_j-\max(\mu_j,\lambda_{j+1})).
\label{useone}
\end{align}
Let us look at the second term.
First, it is easy to see
\begin{align}
\Delta_{t_1}^{-1}v(\lambda_2-\mu_1)
=t_1^{\lambda_2-\mu_1-1} \frac{1-t_1^{\mu_1-\lambda_2}x^{\lambda_2-\mu_1}}{1-t_1^{-1}x}. \label{usetwo}
\end{align}
We next rewrite the determinant as
\begin{align}
&\begin{vmatrix}
\Delta_{t_1} v(\lambda_1-\mu_2) & \Delta_{t_1} \Delta_{t_2}v(\lambda_1-\mu_3) \\
\Delta_{t_2}^{-1}v(\lambda_3-\mu_2) & v(\lambda_3-\mu_3) \\
\end{vmatrix}
=\Delta_{t_1,\lambda_1}
\begin{vmatrix}
v(\lambda_1-\mu_2) & \Delta_{t_2}v(\lambda_1-\mu_3) \\
\Delta_{t_2}^{-1}v(\lambda_3-\mu_2) & v(\lambda_3-\mu_3) \\
\end{vmatrix},
\end{align}
and apply the induction assumption to get
\begin{align*}
& \begin{vmatrix}
\Delta_{t_1} v(\lambda_1-\mu_2) & \Delta_{t_1} \Delta_{t_2}v(\lambda_1-\mu_3) \\
\Delta_{t_2}^{-1}v(\lambda_3-\mu_2) & v(\lambda_3-\mu_3) \\
\end{vmatrix} \\
& \hspace{20pt} = \Delta_{t_1,\lambda_1}
t_2^{-\mu_2+\max(\mu_2,\lambda_3)}v(\lambda_1-\max(\mu_2,\lambda_3))
t_3^{-\mu_3+\max(\mu_3,\lambda_4)}v(\lambda_3-\max(\mu_3,\lambda_4)).
\end{align*}
It is easy to see $\Delta_{t_1,\lambda_1}v(\lambda_1-\max(\mu_2,\lambda_3))
=(x-t_1)v(\lambda_1-\max(\mu_2,\lambda_3))$,
and we have
\begin{align}
& \begin{vmatrix}
\Delta_{t_1} v(\lambda_1-\mu_2) & \Delta_{t_1} \Delta_{t_2}v(\lambda_1-\mu_3) \\
\Delta_{t_2}^{-1}v(\lambda_3-\mu_2) & v(\lambda_3-\mu_3) \\
\end{vmatrix} \nonumber \\
& \hspace{20pt} = (x-t_1)
t_2^{-\mu_2+\max(\mu_2,\lambda_3)}v(\lambda_1-\max(\mu_2,\lambda_3))
t_3^{-\mu_3+\max(\mu_3,\lambda_4)}v(\lambda_3-\max(\mu_3,\lambda_4)).
\label{usethree}
\end{align}
Inserting~\eqref{useone},~\eqref{usetwo}, and~\eqref{usethree} into~\eqref{columnexpansiontwoterms} yields
\begin{align*}
\det \bigl[ \Delta_\bt^{j-i} v(\lambda_i-\mu_j) \bigr]_{i,j=1}^{3}
& = v(\lambda_1-\mu_1)
\prod_{j=2}^3 t_j^{-\mu_j+\max(\mu_j,\lambda_{j+1})}
v(\lambda_j-\max(\mu_j,\lambda_{j+1})) \\
& \hspace{20pt} - (x^{\lambda_2-\mu_1}-t_1^{\lambda_2-\mu_1})
t_2^{-\mu_2+\max(\mu_2,\lambda_3)}v(\lambda_1-\max(\mu_2,\lambda_3))
\\ & \hspace{40pt} \times t_3^{-\mu_3+\max(\mu_3,\lambda_4)}v(\lambda_3-\max(\mu_3,\lambda_4)).
\end{align*}
We can further rewrite this using
\begin{align*}
v(\lambda_1-\mu_1)v(\lambda_2-\max(\mu_2,\lambda_3) & = x^{\lambda_2-\mu_1}v(\lambda_1-\max(\mu_2,\lambda_3)), \\
v(\lambda_1-\max(\mu_2,\lambda_3)) & = v(\lambda_1-\lambda_2) v(\lambda_2-\max(\mu_2,\lambda_3)),
\end{align*}
to obtain
\[
\det \bigl[ \Delta_\bt^{j-i} v(\lambda_i-\mu_j) \bigr]_{i,j=1}^{3}
=t_1^{\lambda_2-\mu_1}v(\lambda_1-\lambda_2)
\prod_{j=2}^3 t_j^{-\mu_j+\max(\mu_j,\lambda_{j+1})}
v(\lambda_j-\max(\mu_j,\lambda_{j+1})).
\]
Finally, since we are dealing the case $\lambda_2 \ge \mu_1$,
we can rewrite
\[
t_1^{\lambda_2-\mu_1}v(\lambda_1-\lambda_2) = t_1^{-\mu_1+\max(\mu_1,\lambda_2)}v(\lambda_1-\max(\mu_1,\lambda_2)),
\]
and hence we obtain
\[
\det \bigl[ \Delta_\bt^{j-i} v(\lambda_i-\mu_j) \bigr]_{i,j=1}^{3}
=\prod_{j=1}^3 t_j^{-\mu_j+\max(\mu_j,\lambda_{j+1})}
v(\lambda_j-\max(\mu_j,\lambda_{j+1})).
\]
This completes the induction step from $\ell=2$ to $\ell=3$.

From~\eqref{eq:factorized_form_one_var} and
Lemma~\ref{lemma:onevariablelemma}, we get the following determinant form
for $g_{\lambda/\mu}(x;\bt)$.
\begin{align}
g_{\lambda/\mu}(x;\bt)
=\det \bigl[ \Delta_\bt^{j-i} v(\lambda_i-\mu_j) \bigr]_{i,j=1}^{\ell}.
\label{refineddeterminantformonevariable}
\end{align}
To derive a determinant form for $g_{\lambda/\mu}(\xx;\bt)$, we combine~\eqref{refineddeterminantformonevariable} with the following formula, which is a refined version of~\cite[Lemma~3.2]{Johansson10}.

\begin{prop}
Let $f$, $g$ be functions $f,g \colon \ZZ \to \CC$ such that for some $M$ and all $\nu < M$, we have $f(\nu)=g(\nu)=0$.
We have
\begin{align}
\sum_{\nu_\ell \le \cdots \le \nu_2 \le \nu_1} \det \bigl[ \Delta_\bt^{j-i}f(\nu_i-\mu_j) \bigr]_{i,j=1}^{\ell}
\det \bigl[ \Delta_\bt^{j-i}g(\lambda_i-\nu_j) \bigr]_{i,j=1}^{\ell}
= \det \bigl[ \bigl(\Delta_\bt^{j-i}( f*g) \bigr)(\lambda_i-\mu_j) \bigr]_{i,j=1}^{\ell}.
\label{eq:refined_convolution}
\end{align}
\end{prop}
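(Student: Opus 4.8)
The plan is to derive the identity from a Cauchy--Binet (Andr\'eief) expansion, using two structural facts about the weighted difference operators. First, each $\Delta_t$ acts as convolution with a fixed kernel and $\Delta_t^{-1}$ as convolution with its convolution inverse; hence all the operators $\Delta_{t_k}^{\pm 1}$ commute with one another and with $*$, and I may write $\Delta_\bt^{j-i}=\mathcal{D}_j\mathcal{D}_i^{-1}$ where $\mathcal{D}_k:=\Delta_{t_1}\cdots\Delta_{t_{k-1}}$ (so $\mathcal{D}_1=\id$). Second, the hypothesis that $f(\nu)=g(\nu)=0$ for $\nu<M$ keeps the support of every $\Delta_\bt^{j-i}f$ and $\Delta_\bt^{j-i}g$ bounded below (note $\Delta_t^{-1}$ preserves one-sided support), so all the sums below are finite and the weighted summation by parts produces no boundary contribution.

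First I would establish the reduction of the right-hand side. Since $\Delta_\bt^{j-i}$ commutes with convolution and $\sum_\nu g(\lambda_i-\nu)h(\nu-\mu_j)=(g*h)(\lambda_i-\mu_j)$, applying the operator to $f$ inside the convolution gives $\Delta_\bt^{j-i}(f*g)(\lambda_i-\mu_j)=\sum_\nu g(\lambda_i-\nu)\,\Delta_\bt^{j-i}f(\nu-\mu_j)$. Substituting this into the permutation expansion of the right-hand determinant, introducing one summation variable $\nu_i$ per row, interchanging the (finite) sums, and pulling out the $\sigma$-independent factor $\prod_i g(\lambda_i-\nu_i)$, the inner alternating sum over $\sigma$ collapses to the first determinant of the Proposition. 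This yields the intermediate identity
\[
\det\bigl[\Delta_\bt^{j-i}(f*g)(\lambda_i-\mu_j)\bigr]_{i,j=1}^{\ell}
=\sum_{\nu\in\ZZ^{\ell}}\Bigl(\prod_{i=1}^{\ell}g(\lambda_i-\nu_i)\Bigr)\det\bigl[\Delta_\bt^{j-i}f(\nu_i-\mu_j)\bigr]_{i,j=1}^{\ell},
\]
in which the intermediate points range freely over $\ZZ$ and the $g$-factor appears only as the bare product $\prod_i g(\lambda_i-\nu_i)$.

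The remaining, harder task is to upgrade this free sum with the bare product into the weakly ordered sum over $\nu_\ell\le\cdots\le\nu_1$ in which the product is replaced by $\det[\Delta_\bt^{j-i}g(\lambda_i-\nu_j)]$. I would split $\ZZ^\ell$ into the $\ell!$ regions cut out by the relative order of the coordinates and reindex each to the dominant region $\nu_1\ge\cdots\ge\nu_\ell$. Within the dominant region the reindexed summands must be shown to assemble into $\det[\Delta_\bt^{j-i}g(\lambda_i-\nu_j)]\cdot\det[\Delta_\bt^{j-i}f(\nu_i-\mu_j)]$; this is where the weighted summation by parts enters, using the telescoping $\Delta_\bt^{j-i}=\mathcal{D}_j\mathcal{D}_i^{-1}$ and the fact that the adjoint of the forward difference $\Delta_t$ under the pairing $\langle G,F\rangle=\sum_\nu G(\nu)F(\nu)$ is the corresponding backward difference, with no boundary term by the support hypothesis. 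The off-diagonal entries of the $g$-determinant, carrying the operators $\Delta_\bt^{j-i}$ with $i\ne j$, are generated precisely by these adjoint moves applied to the reindexed products, while the terms supported on the walls $\nu_k=\nu_{k+1}$ supply the diagonal permitted by the weak inequalities.

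I expect the entire difficulty to be concentrated in this last step. Because $\Delta_\bt^{j-i}$ depends on both the row and the column, neither determinant in the Proposition is alternating in $(\nu_1,\dotsc,\nu_\ell)$, so the reduction from the free sum over $\ZZ^\ell$ to the dominant region cannot be effected by antisymmetry alone; one genuinely has to track the weighted differences across every wall and verify that the boundary contributions reconstruct the missing determinant entries. This is the refined analogue of the proof of \cite[Lemma~3.2]{Johansson10}, the same summation-by-parts argument now carried out with the operators $\Delta_{t_k}^{\pm 1}$ in place of a single $\Delta$, and the commutation relations among the $\Delta_{t_k}^{\pm 1}$ together with the one-sided support of $f$ and $g$ are exactly what make the boundary terms cancel correctly.
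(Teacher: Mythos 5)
Your opening reduction is correct, and it amounts to the paper's Cauchy--Binet and entrywise-convolution steps run in reverse: expanding the right-hand determinant, using $\Delta_\bt^{j-i}(f*g)=(\Delta_\bt^{j-i}f)*g$, and extracting the factor $\prod_i g(\lambda_i-\nu_i)$ does give, with all sums finite by the support hypothesis,
\[
\det\bigl[\Delta_\bt^{j-i}(f*g)(\lambda_i-\mu_j)\bigr]_{i,j=1}^{\ell}
=\sum_{\nu\in\ZZ^{\ell}}\Bigl(\prod_{i=1}^{\ell}g(\lambda_i-\nu_i)\Bigr)\det\bigl[\Delta_\bt^{j-i}f(\nu_i-\mu_j)\bigr]_{i,j=1}^{\ell}.
\]

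The gap is everything after that, and it is the entire substance of the proposition. Passing from this free sum with the bare product $\prod_i g(\lambda_i-\nu_i)$ to the weakly ordered sum of products of the two given determinants is precisely the refined analogue of Johansson's Lemma 3.2; your proposal only narrates what ``should'' happen (``adjoint moves \dots generate the off-diagonal entries'', ``wall terms supply the diagonal'') and proves none of it --- indeed you concede that all the difficulty sits there. Moreover, the organization you propose, splitting $\ZZ^\ell$ into $\ell!$ chambers and reindexing to the dominant one, runs head-on into the obstruction you yourself point out: the summand is not symmetric in $(\nu_1,\dotsc,\nu_\ell)$, so the reindexed chambers produce genuinely different expressions, and you give no mechanism for reassembling them into $\det\bigl[\Delta_\bt^{j-i}g(\lambda_i-\nu_j)\bigr]$. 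The paper avoids exactly this by transferring operators \emph{before} symmetrizing: starting from the ordered sum, summation by parts --- whose boundary terms are what absorbs the wall contributions permitted by the weak inequalities --- replaces both determinants by ones whose operators depend only on the column index, $\det\bigl[\Delta_{t_j}^{-1}\dotsm\Delta_{t_{\ell-1}}^{-1}f(\nu_i-\mu_j)\bigr]$ and $\det\bigl[\Delta_{t_j}\dotsm\Delta_{t_{\ell-1}}g(\lambda_j-\nu_i)\bigr]$; only in that form are both determinants alternating in the $\nu_i$, so the product is symmetric, vanishes on walls, the ordered sum becomes $(1/\ell!)$ times the free sum, and Cauchy--Binet finishes. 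To complete your route you would have to carry out the same transfer in reverse: use multilinearity in the rows to write $\det\bigl[\Delta_\bt^{j-i}f(\nu_i-\mu_j)\bigr]$ as $\prod_i\Delta_{t_i}\dotsm\Delta_{t_{\ell-1}}$ (acting in the variables $\nu_i$) applied to $\det\bigl[\Delta_{t_j}^{-1}\dotsm\Delta_{t_{\ell-1}}^{-1}f(\nu_i-\mu_j)\bigr]$, move these operators onto the $g$ factors by parts over the free sum (where no boundary terms arise), symmetrize, restrict to the strict chamber, and then undo the transfer on the ordered region, verifying that the boundary terms which now do appear rebuild exactly the entries $\Delta_\bt^{j-i}g(\lambda_i-\nu_j)$ together with the weak inequalities. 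None of this is in your proposal; as written it is a plan, not a proof.
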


\begin{proof}
The proof given in~\cite[Lemma~3.2]{Johansson10} can be applied to
show~\eqref{eq:refined_convolution} as well.
We first rewrite the left hand side of~\eqref{eq:refined_convolution}
using the summation by parts formula
\[
\sum_{\mu=a}^b \Delta_t u(\mu-\lambda)v(\nu-\mu)
=\sum_{\mu=a}^b u(\mu-\lambda) \Delta_t v(\nu-\mu)
+u(b+1-\lambda) v(\nu-b)-u(a-\lambda)v(\nu+1-a)
\]
as
\begin{align}
\displaystyle
\sum_{\nu_\ell \le \cdots \le \nu_2 \le \nu_1}
\det\bigl[ \Delta_{t_j}^{-1} \cdots \Delta_{t_{\ell-1}}^{-1}f(\nu_i-\mu_j) \bigr]_{i,j=1}^{ \ell}
\det\bigl[ \Delta_{t_j} \cdots \Delta_{t_{\ell-1}}g(\lambda_j-\nu_i) \bigr]_{i,j=1}^{\ell}. \label{stepone}
\end{align}
Note that the sum in~\eqref{stepone} can be replaced by the sum with the order is removed and multiplying by the overall factor $(n!)^{-1}$.
Then we apply the generalized Cauchy--Binet formula
\begin{align}
\det \Bigg[
\int_X \phi_i(x)\psi_j(x)d \mu(x)
\Bigg]_{i,j=1}^n=\frac{1}{n!} \int_{X^n}
\det[\phi_i(x_j)]_{i,j=1}^n \det[\psi_i(x_j)]_{i,j=1}^n
\prod_{j=1}^n d \mu (x_j)
\label{generalizedCauchyBinet}
\end{align}
to~\eqref{stepone}, which yields
\begin{align}
\det \Bigg[ \sum_{\nu \in \mathbb{Z}}
\Delta_{t_j}^{-1} \cdots \Delta_{t_{\ell-1}}^{-1} f(\nu-\mu_j)
\Delta_{t_i} \cdots \Delta_{t_{\ell-1}} g(\lambda_i-\nu) \Bigg]_{i,j=1}^\ell.
\label{applyingCauchyBinet}
\end{align}
Finally, using $\Delta_{t}^{-1}f(\nu)=(j_t * f)(\nu)$ for $j_t(\nu)
=t^{\nu-1}H(\nu-1)$ one can show
\begin{align}
\sum_{\nu \in \mathbb{Z}}
\Delta_{t_j}^{-1} \cdots \Delta_{t_{\ell-1}}^{-1} f(\nu-\mu_j)
\Delta_{t_i} \cdots \Delta_{t_{\ell-1}} g(\lambda_i-\nu)
= \bigl( \Delta_{\bt}^{j-i}(f*g)\bigr)(\lambda_i-\mu_j). \label{equalitytoshowconvolution}
\end{align}
Combining~\eqref{applyingCauchyBinet} and~\eqref{equalitytoshowconvolution},
we get the right hand side of~\eqref{eq:refined_convolution}.
\end{proof}

From~\eqref{refineddeterminantformonevariable}, \eqref{refineddefmultskew}, and~\eqref{eq:refined_convolution}, we find that $g_{\lambda/\mu}(\xx;\bt)$ can be expressed as a single determinant
\begin{align}
g_{\lambda/\mu}(\xx;\bt)=\det \bigl[ \bigl(\Delta_\bt^{j-i} (f_1*f_2*\cdots*f_n) \bigr)(\lambda_i-\mu_j) \bigr]_{i,j=1}^{\ell},
\label{singledeterminantexpression}
\end{align}
where $f_j(\nu)=h_\nu(x_j)=x_j^\nu H(\nu)$.
It is easy to see that
\begin{align}
f_1*f_2*\cdots*f_n(\nu)=h_\nu(\xx), \label{fundamentalconvolution}
\end{align}
where $\xx=\{x_1,\dotsc,x_n \}$.
Using
\eqref{fundamentalconvolution}, we can rewrite
\eqref{singledeterminantexpression} as
\begin{align}
g_{\lambda/\mu}(\xx;\bt)=\det \bigl[ (\Delta_\bt^{j-i} h_\nu(\xx))|_{\nu=\lambda_i-\mu_j} \bigr]_{i,j=1}^{\ell}.
\label{refineddeterminantformula}
\end{align}
Note that the difference operators act on the subscript $\nu$
of $h_\nu(\xx)$.
To further rewrite the determinant form~\eqref{refineddeterminantformula},
we examine the action of the multiple weighted difference operators.

One can show
\begin{align}
\Delta_{t_k} \cdots \Delta_{t_2} \Delta_{t_1} f(\nu)
=\sum_{m=0}^{k} e_m(-t_1,-t_2,\dotsc,-t_k) f(\nu+k-m),
\label{refinedmultipleaction}
\end{align}
for any positive integer $k$, which can be shown by induction on $k$.
Note that~\eqref{refinedmultipleaction} can also be written as
\[
\Delta_{t_k} \cdots \Delta_{t_2} \Delta_{t_1} f(\nu)
=\sum_{m=0}^{\infty} e_m(-t_1,-t_2,\dotsc,-t_k) f(\nu+k-m),
\]
since $\displaystyle e_m(-t_1,-t_2,\dotsc,-t_k)=0$ for $m > k$.

One can also show
\begin{align}
\Delta_{t_k}^{-1} \cdots \Delta_{t_2}^{-1} \Delta_{t_1}^{-1} f(\nu)
=\sum_{m=0}^{\infty} h_m(t_1,t_2,\dotsc,t_k) f(\nu-k-m),
\label{tdeformedinversemultiple}
\end{align}
for any positive integer $k$, which can also be shown by
induction on $k$ in the following way.
The case $k=1$ is nothing but the definition of the action
of $\Delta_t^{-1}$ on $f(\nu)$.
Assume that~\eqref{tdeformedinversemultiple} holds for $k$.
Applying $\Delta_{t_{k+1}}^{-1}$ to~\eqref{tdeformedinversemultiple},
we get
\begin{align}
\Delta_{t_{k+1}}^{-1} \cdots \Delta_{t_2}^{-1} \Delta_{t_1}^{-1} f(\nu)&=\sum_{m=0}^\infty \sum_{\ell=-\infty}^{\nu-k-m-1}
h_m(t_1,t_2,\dotsc,t_{k}) t_{k+1}^{\nu-k-m-1-\ell} f(\ell) \nonumber \\
&=\sum_{m=0}^\infty \sum_{\ell=0}^m h_\ell(t_1,t_2,\dotsc,t_{k})t_{k+1}^{m-\ell}
f(\nu-k-1-m). \label{beforethefinalform}
\end{align}
It is easy to show
\begin{align}
\sum_{\ell=0}^m h_\ell(t_1,t_2,\dotsc,t_{k})t_{k+1}^{m-\ell}
=h_m(t_1,t_2,\dotsc,t_{k+1}), \label{relationcomplete}
\end{align}
by using the generating function for the complete symmetric polynomials
\[
\sum_{\ell \ge 0}h_\ell(t_1,t_2,\dotsc,t_{k})z^\ell = \frac{1}{\prod_{j=1}^k (1-t_j z)}.
\]
Inserting
\eqref{relationcomplete} into~\eqref{beforethefinalform}, we get
\begin{align}
\Delta_{t_{k+1}}^{-1} \cdots \Delta_{t_2}^{-1} \Delta_{t_1}^{-1} f(\nu)
=\sum_{m=0}^{\infty} h_m(t_1,t_2,\dotsc,t_{k+1}) f(\nu-k-1-m),
\end{align}
which is~\eqref{tdeformedinversemultiple} with $k$ replaced by $k+1$.

Using~\eqref{refinedmultipleaction} and~\eqref{tdeformedinversemultiple}, Equation~\eqref{refineddeterminantformula} can be rewritten as the following determinant formula.

\begin{thm}
\label{thm:refined_Jacobi_Trudi_h}
The skew refined dual Grothendieck polynomials have a determinant formula of
\begin{align}
\displaystyle
g_{\lambda/\mu}(\xx;\bt)=\det
\Bigg[ \sum_{m=0}^\infty \alpha_m^{ij}(\bt)
 h_{\lambda_i-\mu_j-i+j-m}(\xx)  \Bigg]_{i,j=1}^{\ell},
\label{refineddetformafterconvolution}
\end{align}
where $\alpha_m^{ij}(\bt)=h_m(t_j,\dotsc,t_{i-1})$ for $i \ge j$
and $\alpha_m^{ij}(\bt)=e_m(-t_i,\dotsc,-t_{j-1})$ for $i<j$.
\end{thm}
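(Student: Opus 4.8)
The plan is to obtain~\eqref{refineddetformafterconvolution} by substituting the explicit descriptions of the iterated difference operators, namely~\eqref{refinedmultipleaction} and~\eqref{tdeformedinversemultiple}, directly into the determinant formula~\eqref{refineddeterminantformula}, whose $(i,j)$ entry is $(\Delta_\bt^{j-i} h_\nu(\xx))\big|_{\nu=\lambda_i-\mu_j}$. Since the determinant is already in hand, the entire task reduces to computing the action of $\Delta_\bt^{j-i}$ on $h_\nu(\xx)$, regarded as a function of the index $\nu$, and reading off the coefficients.

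I would split according to the definition~\eqref{eq:defrefinedmultipledifference}. For $i < j$ the operator $\Delta_\bt^{j-i} = \Delta_{t_i}\cdots\Delta_{t_{j-1}}$ is a product of $j-i$ forward difference operators, so applying~\eqref{refinedmultipleaction} with parameters $t_i,\dotsc,t_{j-1}$ gives
\[
(\Delta_\bt^{j-i} h_\nu(\xx))\big|_{\nu=\lambda_i-\mu_j}
= \sum_{m=0}^\infty e_m(-t_i,\dotsc,-t_{j-1})\, h_{\lambda_i-\mu_j+(j-i)-m}(\xx),
\]
which is exactly $\sum_m \alpha_m^{ij}(\bt)\, h_{\lambda_i-\mu_j-i+j-m}(\xx)$ with $\alpha_m^{ij}(\bt)=e_m(-t_i,\dotsc,-t_{j-1})$. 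Dually, for $i>j$ the operator $\Delta_\bt^{j-i}=\Delta_{t_j}^{-1}\cdots\Delta_{t_{i-1}}^{-1}$ is a product of $i-j$ inverse difference operators, and~\eqref{tdeformedinversemultiple} with parameters $t_j,\dotsc,t_{i-1}$ yields the entry $\sum_m h_m(t_j,\dotsc,t_{i-1})\, h_{\lambda_i-\mu_j-(i-j)-m}(\xx)$, matching $\alpha_m^{ij}(\bt)=h_m(t_j,\dotsc,t_{i-1})$. The diagonal case $i=j$ is $\Delta_\bt^0=\id$, giving $h_{\lambda_i-\mu_i}(\xx)$, consistent with either convention since the empty symmetric functions satisfy $\alpha_0^{ii}=1$ and $\alpha_m^{ii}=0$ for $m>0$. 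Assembling these entries into the determinant produces~\eqref{refineddetformafterconvolution}.

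There is essentially no serious obstacle here: all of the analytic content lives in~\eqref{refinedmultipleaction} and~\eqref{tdeformedinversemultiple}, which are already established. The only point that deserves a word of care is that those two identities are stated for the parameter block $t_1,\dotsc,t_k$, whereas the application requires the shifted blocks $t_i,\dotsc,t_{j-1}$ and $t_j,\dotsc,t_{i-1}$; this causes no difficulty because the difference operators $\Delta_t$ and $\Delta_t^{-1}$ pairwise commute and because $e_m$ and $h_m$ are symmetric in their arguments, so the formulas are insensitive to relabeling the parameters. The remaining work is purely the bookkeeping of the two shift amounts ($+(j-i)$ versus $-(i-j)$) and confirming that the resulting index $\lambda_i-\mu_j-i+j-m$ agrees in all three cases.
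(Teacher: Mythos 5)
Your proposal is correct and follows essentially the same route as the paper: the paper's proof of this theorem is precisely the substitution of the operator-action formulas \eqref{refinedmultipleaction} and \eqref{tdeformedinversemultiple} into the determinant formula \eqref{refineddeterminantformula}, with the same case split on $i<j$, $i>j$, $i=j$ and the same index bookkeeping. The prerequisites you take as given are exactly the ones the paper establishes immediately before stating the theorem, so nothing further is needed.
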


\begin{remark}
\label{remark:proof_differences}
We note that Theorem~\ref{thm:refined_Jacobi_Trudi_h} is the refined version of the Jacobi--Trudi type determinant form recently derived by Iwao~\cite[Prop.~5.2]{Iwao20}\footnote{We need to take $\beta \mapsto -\beta$ in~\cite{Iwao20} to match our definitions.} using the boson-fermion correspondence and by Amanov and Yeliussizov~\cite[Thm.~14]{AY20} using a sign reversing involution.
Furthermore, our proof shows that at $\bt = 1$, this is equivalent to the derivation by using the transition probability formulas developed by Johansson (\textit{cf}.~\cite[Lemma 3.1]{Johansson10}).
Theorem~\ref{thm:refined_Jacobi_Trudi_h} was also recently proven independently by Kim~\cite{Kim20} using plethystic techniques.
\end{remark}

\begin{remark}
We cannot apply the same approach in~\cite[Thm.~14]{AY20} to prove the Jacobi--Trudi formula~\cite[Thm.~3]{AY20} is equivalent to Theorem~\ref{thm:refined_Jacobi_Trudi_h}.
Indeed, there is no analog of the the automorphism that sends $\dG_{\lambda/\mu}(\xx; \beta) \to \dG_{\lambda'/\mu'}(\xx; \beta)$ as in~\cite{Buch02,Yel19} for refined dual Grothendieck polynomials. Consider the following:
\begin{gather*}
\dG_1 \cdot \dG_2 = s_1 \cdot s_2 = s_3 + s_{21} = \dG_3 + \dG_{21} - t_1 \dG_2,
\\ \dG_1 \cdot \dG_{11} = s_1 \cdot (s_{11} + t_1 s_1) = s_{111} + s_{21} + t_1 (s_{11} + s_2) = \dG_{111} + \dG_{21} - t_1 \dG_{11}.
\end{gather*}
However, we note that
\begin{align*}
\dG_{111} + \dG_{21} - \gamma \dG_{11} & = (s_{111} + (t_1 + t_2) s_{11} + t_1 t_2 s_1) + (s_{21} + t_1 s_2) - \gamma (s_{11} + t_1 s_1)
\\ & = s_{111} + s_{21} + (t_1 + t_2 - \gamma) s_{11} + t_1 s_2 + (t_1 t_2 - \gamma t_1) s_1,
\end{align*}
and so we must have $\gamma = t_2$. However, we clearly cannot also interchange $t_1 \leftrightarrow t_2$.
Kim also independently proved~\cite[Sec.~3]{Kim20} that no such involution can exist using plethystic computations.
\end{remark}

From~\eqref{eq:refineddefbylpp}, we have
\begin{align}
\label{eq:probability_from_skew_dual}
P({\bf G}(n)=\lambda|{\bf G}(0)=\mu)
=\prod_{i=1}^\ell \prod_{j=1}^n (1-t_i x_j)
\bt^{\lambda-\mu}
g_{\lambda/\mu}(\xx;\bt^{-1}).
\end{align}
Combining~\eqref{eq:probability_from_skew_dual}, the determinant formula~\eqref{refineddeterminantformula}, and Theorem~\ref{thm:refined_Jacobi_Trudi_h}, we obtain the following refinement of~\cite[Thm.~2.1]{Johansson10}.

\begin{cor}
\label{cor:det_transition_prob_h}
We have the following determinant representations for the transition probability
\begin{align*}
P({\bf G}(n)=\lambda|{\bf G}(0)=\mu)
&=\prod_{i=1}^\ell \prod_{j=1}^n (1-t_i x_j)
\bt^{\lambda-\mu}
\det \bigl[ (\Delta_{\bt^{-1}}^{j-i}
h_\nu(\xx))|_{\nu=\lambda_i-\mu_j} \bigr]_{i,j=1}^{\ell}
\\
&=\prod_{i=1}^\ell \prod_{j=1}^n (1-t_i x_j)
\bt^{\lambda-\mu}
\det
\Bigg[ \sum_{m=0}^\infty \alpha_m^{ij}(\bt^{-1})
 h_{\lambda_i-\mu_j-i+j-m}(\xx)  \Bigg]_{i,j=1}^{\ell}.
\end{align*}
\end{cor}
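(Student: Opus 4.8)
The plan is to recognize that this corollary follows by pure substitution: Equation~\eqref{eq:probability_from_skew_dual} already expresses the transition probability in terms of $g_{\lambda/\mu}(\xx;\bt^{-1})$, so all that remains is to rewrite that single factor using the two determinant formulas established earlier.

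First I would recall~\eqref{eq:probability_from_skew_dual},
\[
P({\bf G}(n)=\lambda|{\bf G}(0)=\mu)
=\prod_{i=1}^\ell \prod_{j=1}^n (1-t_i x_j)\,
\bt^{\lambda-\mu}\,
g_{\lambda/\mu}(\xx;\bt^{-1}),
\]
which reduces the task to expanding $g_{\lambda/\mu}(\xx;\bt^{-1})$ as a determinant.

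Next I would apply the determinant formula~\eqref{refineddeterminantformula},
\[
g_{\lambda/\mu}(\xx;\bt)=\det \bigl[ (\Delta_\bt^{j-i} h_\nu(\xx))|_{\nu=\lambda_i-\mu_j} \bigr]_{i,j=1}^{\ell},
\]
and Theorem~\ref{thm:refined_Jacobi_Trudi_h},
\[
g_{\lambda/\mu}(\xx;\bt)=\det
\Bigl[ \textstyle\sum_{m\ge 0} \alpha_m^{ij}(\bt)\,
 h_{\lambda_i-\mu_j-i+j-m}(\xx)  \Bigr]_{i,j=1}^{\ell},
\]
each with $\bt$ replaced by $\bt^{-1}$. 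Here $\Delta_{\bt^{-1}}^{j-i}$ is the operator of~\eqref{eq:defrefinedmultipledifference} assembled from the factors $\Delta_{t_j^{-1}}$, while $\alpha_m^{ij}(\bt^{-1})$ is the corresponding complete (for $i\ge j$) or elementary (for $i<j$) symmetric polynomial evaluated in the variables $t_j^{-1}$. Substituting either expression into the display above yields, respectively, the two stated determinant representations of $P({\bf G}(n)=\lambda|{\bf G}(0)=\mu)$.

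The sole point requiring care—and hence the only obstacle—is justifying the substitution $\bt\mapsto\bt^{-1}$: I must confirm that~\eqref{refineddeterminantformula} and Theorem~\ref{thm:refined_Jacobi_Trudi_h} hold as identities in the formal (or rational) variables $t_1,\dotsc,t_{\ell-1}$ rather than merely on the probabilistic range $t_j\in(0,1)$. This is immediate once one observes that every step in their derivation—Lemma~\ref{lemma:onevariablelemma}, the convolution identity, and the action formulas~\eqref{refinedmultipleaction} and~\eqref{tdeformedinversemultiple}—is a purely algebraic manipulation valid for arbitrary nonzero $t_j$, so specializing to $t_j^{-1}$ is legitimate. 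Combining the two substituted formulas with~\eqref{eq:probability_from_skew_dual} then gives both claimed determinant expressions.
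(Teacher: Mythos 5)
Your proposal is correct and is essentially identical to the paper's own proof: the paper obtains this corollary precisely by combining~\eqref{eq:probability_from_skew_dual} with the determinant formula~\eqref{refineddeterminantformula} and Theorem~\ref{thm:refined_Jacobi_Trudi_h}, each specialized at $\bt \mapsto \bt^{-1}$. Your closing remark that these determinant identities are purely algebraic in the $t_j$ (so the substitution $\bt \mapsto \bt^{-1}$ is legitimate beyond the probabilistic range $t_j \in (0,1)$) is a point the paper leaves implicit, and it is a reasonable one.
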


Let us now remark on the difference between the branching rule and the skew refined dual Grothendieck polynomials.
This discrepancy can be seen in the single variable case by comparing Equation~\eqref{eq:factorized_form_one_var} and Corollary~\ref{cor:branching_rule} (by not considering the $\xx$ variables) and seeing the formulas are different.
So the approach taken in Section~\ref{sec:refined_models} is not amenable to the skew setting.

We can also prove a dual version of our Jacobi--Trudi type identity. We first consider a shorten version of the Heaviside step function defined by $E(\nu) = 1$ if $\nu = 0,1$ and otherwise $E(\nu) = 0$. So this is a discrete analog of the indicator function of $[0, 1]$. Define functions $\widetilde{f}_j(\nu) := x_j^{\nu} E(\nu)$ and
\[
\widetilde{\alpha}_m^{ij}(\bt) = \begin{cases}
e_m(t_{\mu_j+1}, \dotsc, t_{\lambda_i-1}) & \text{if } \mu_j \geq \lambda_i - 1, \\
h_m(-t_{\lambda_i}, \dotsc, -t_{\mu_j}) & \text{otherwise}.
\end{cases}
\]

\begin{thm}
\label{thm:refined_Jacobi_Trudi_e}
We have the following determinant representations for the skew refined dual Grothendieck polynomials:
\[
\dG_{\lambda/\mu}(\xx; \bt) = \det \Bigg[ \sum_{m=0}^\infty \widetilde{\alpha}_m^{ij}(\bt^{-1}) e_{\lambda_i-\mu_j-i+j-m}(\xx)  \Bigg]_{i,j=1}^{\ell}.
\]
\end{thm}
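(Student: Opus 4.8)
The plan is to mirror the proof of Theorem~\ref{thm:refined_Jacobi_Trudi_h} step for step, replacing the homogeneous building blocks by elementary ones. Where that argument used $v(\nu) = x^{\nu} H(\nu) = h_{\nu}(x)$ and the convolution $f_1 * \dotsm * f_n = h_{\nu}(\xx)$ coming from $\prod_j (1-x_j z)^{-1}$, I would instead use $\widetilde{f}_j(\nu) = x_j^{\nu} E(\nu) = e_{\nu}(x_j)$ together with $\widetilde{f}_1 * \dotsm * \widetilde{f}_n = e_{\nu}(\xx)$, which is exactly the expansion $\prod_j (1 + x_j z) = \sum_{\nu} e_{\nu}(\xx) z^{\nu}$. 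Note that $E(\nu)$ vanishes for $\nu < 0$, so the hypotheses of the convolution identity~\eqref{eq:refined_convolution} are still met.

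First I would establish the single-variable analog of Lemma~\ref{lemma:onevariablelemma}, namely a determinant evaluation $\det[\widetilde{\Delta}_{\bt}^{\,j-i} \widetilde{v}(\lambda_i - \mu_j)]_{i,j=1}^{\ell}$ with $\widetilde{v}(\nu) = x^{\nu} E(\nu)$, where the weighted difference operators are now indexed by the values $\mu_j, \dotsc, \lambda_i$ rather than by the row/column indices $i,j$. This reindexing is forced by the shape of the coefficients $\widetilde{\alpha}_m^{ij}$, whose $t$-subscripts run over $t_{\mu_j+1}, \dotsc, t_{\lambda_i-1}$ (resp.\ $t_{\lambda_i}, \dotsc, t_{\mu_j}$). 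Granting this single-variable identity, I would feed it into the branching relation~\eqref{refineddefmultskew} and collapse the resulting sum over interlacing chains into the single determinant $\det[(\widetilde{\Delta}_{\bt}^{\,j-i} e_{\nu}(\xx))|_{\nu = \lambda_i - \mu_j}]$ by applying the refined Cauchy--Binet identity~\eqref{eq:refined_convolution} exactly $n-1$ times, just as in the passage from~\eqref{refineddeterminantformonevariable} to~\eqref{refineddeterminantformula}. The final step computes the action of the value-indexed operators on $e_{\nu}(\xx)$: this is the elementary-function counterpart of~\eqref{refinedmultipleaction} and~\eqref{tdeformedinversemultiple}, a forward product of $\Delta_t$'s contributing an $e_m$ of the relevant $t$'s and an inverse product contributing an $h_m$, with the inversion $\bt \leftrightarrow \bt^{-1}$ arising because $\prod_j(1+x_jz)$ is the reciprocal of $\prod_j(1-x_jz)^{-1}$ evaluated at $-z$; this matches the $\widetilde{\alpha}_m^{ij}(\bt^{-1})$ in the statement against the argument $\bt$ on the left.

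The main obstacle is the single-variable lemma and the correct value-indexing. Unlike the $h$-case, where the one-variable determinant literally equals the reverse-plane-partition generating function $g_{\lambda/\mu}(x;\bt)$ of~\eqref{eq:factorized_form_one_var}, here the diagonal entries $\widetilde{v}(\lambda_i - \mu_i)$ are supported only on $\{0,1\}$, so the evaluation is nontrivial and relies essentially on the spreading effect of the inverse operators $\Delta_t^{-1}$; getting the $t$-indices to land on the value ranges $[\mu_j, \lambda_i]$ rather than the index ranges $[j,i]$ is where the genuine bookkeeping lies. I would prove this identity by the same induction on $\ell$ used for Lemma~\ref{lemma:onevariablelemma}, carefully tracking the $\max(\mu_j, \lambda_{j+1})$ thresholds.

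A conceivable shortcut---converting the $h$-determinant of Theorem~\ref{thm:refined_Jacobi_Trudi_h} directly into an $e$-determinant via the duality $\sum_k (-1)^k e_k h_{n-k} = \delta_{n,0}$---appears not to work here, since that manipulation produces conjugate indices $\lambda'_i - \mu'_j$, whereas the statement keeps the original indices $\lambda_i - \mu_j$. This is consistent with the absence of a conjugation symmetry noted in the preceding remark, and it is precisely why I expect the proof to proceed through the value-indexed difference-operator machinery rather than through a partition transpose.
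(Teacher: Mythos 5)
Your skeleton is exactly the paper's: a single-variable determinant lemma for $\widetilde{f}_j(\nu) = x_j^{\nu}E(\nu)$, then the branching rule~\eqref{refineddefmultskew}, the Cauchy--Binet convolution~\eqref{eq:refined_convolution} with $\widetilde{f}_1 \ast \cdots \ast \widetilde{f}_n(\nu) = e_{\nu}(\xx)$, and finally the action formulas~\eqref{refinedmultipleaction} and~\eqref{tdeformedinversemultiple}; you also correctly insist that the $t$-indices must run over value ranges, and you correctly discard the $h$/$e$ duality shortcut. The genuine gap is in the object you propose to prove at step one. The swap relative to the $h$-case must be applied to \emph{both} slots of each entry: in the paper's lemma~\eqref{eq:one_variable_e} the determinant is
\[
\det\bigl[\Delta_{-\bt}^{\lambda_i-\mu_j-1}\,\widetilde{f}_n(j-i+1)\bigr]_{i,j=1}^{\ell},
\]
so the operator superscript becomes the \emph{value} difference while the function argument becomes the \emph{index} difference. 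Your determinant $\det[\widetilde{\Delta}_{\bt}^{\,j-i}\widetilde{v}(\lambda_i-\mu_j)]$ keeps the argument at the value difference, and with operators running over $t_{\mu_j+1},\dotsc,t_{\lambda_i-1}$ this is false, not merely nonstandard. Degree count: applying $\lambda_i-\mu_j-1$ forward operators to $e_{\nu}$ and evaluating at $\nu=\lambda_i-\mu_j$ produces subscripts $2(\lambda_i-\mu_j)-1-m$ by~\eqref{refinedmultipleaction}, which can only equal the required $\lambda_i-\mu_j-i+j-m$ when $\lambda_i-\mu_j-1=j-i$ for all $i,j$; so your entries do not even match the entries of the theorem you are proving. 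Concretely, for $\ell=1$, $\lambda=(2)$, $\mu=\emptyset$ and one variable, your entry is $\Delta_{t_1}\widetilde{v}(2)=\widetilde{v}(3)-t_1\widetilde{v}(2)=0$, while the left-hand side is nonzero. In the paper's pairing the diagonal arguments are always $1$, i.e.\ inside the support of $E$, and this tiny support is exactly what drives the paper's induction (which is \emph{not} the $\max(\mu_j,\lambda_{j+1})$-threshold induction of Lemma~\ref{lemma:onevariablelemma} you plan to mirror, but a shorter vanishing argument: expand along the first row when $\lambda_1>\mu_2$ and along the first column when $\lambda_1=\mu_2$).

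The mispairing also undermines your step two. Identity~\eqref{eq:refined_convolution} convolves in the \emph{arguments} of the entries while the operators stay fixed; in the $e$-setting the intermediate partition of the branching chain enters the operator superscripts. With the paper's convention the arguments $j-i+1$ are where the $\xx$-convolution acts and the chain $\mu\subseteq\nu\subseteq\lambda$ only moves the superscripts, so the Cauchy--Binet step can be adapted (the paper is admittedly terse about exactly this point); with your convention the intermediate partition would have to appear simultaneously in the operators and in the arguments, and no version of~\eqref{eq:refined_convolution} applies. So: right strategy and right key idea about value-indexing, but the single-variable lemma as written is false and the argument stalls at its first step; the repair is to move the function argument to the index difference $j-i+1$, after which you are reproducing the paper's proof.
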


\begin{proof}
The proof is largely the same as for Theorem~\ref{thm:refined_Jacobi_Trudi_h}.
We first consider the single variable case and prove the following analog of Lemma~\ref{lemma:onevariablelemma}:
\begin{equation}
\label{eq:one_variable_e}
\det \bigl[ \Delta_{-\bt}^{\lambda_i-\mu_j-1} \widetilde{f}_n(j-i+1) \bigr]_{i,j=1}^{\ell}
= \prod_{i=1}^{\ell} \left( \prod_{j=\mu_i}^{\lambda_i - 1} t_j \right) \widetilde{f}_n\bigl( \min(1, \lambda_i - \mu_i) \bigr).
\end{equation}
We prove Equation~\eqref{eq:one_variable_e} by induction. We first note that
\[
\Delta_{-\bt}^{\lambda_1-\mu_1-1} \widetilde{f}_n(1) = \prod_{j=\mu_i}^{\lambda_i-1} t_j \widetilde{f}_n\bigl( \min(1, \lambda_i - \mu_i) \bigr)
\]
by splitting this into the case when $\lambda_1 = \mu_1$ and when $\lambda_1 > \mu_1$ (since $\mu \subseteq \lambda$, we cannot have $\lambda_1 < \mu_i$ for all $i$).
Next we split the induction step into two cases depending on if $\lambda_1 > \mu_2$ or $\lambda_1 = \mu_2$.
We first consider $\lambda_1 > \mu_2$. Then we expand the determinant along the first row (so $i = 1$), and we have
\[
\Delta_{-\bt}^{\lambda_1 - \mu_j - 1} \widetilde{f}_n(j) = 0
\]
for all $j > 1$ since $\lambda_1 - \mu_j - 1 \geq 0$ and Equation~\eqref{refinedmultipleaction}. So Equation~\eqref{eq:one_variable_e} holds in this case. Now we assume $\lambda_1 = \mu_2$, and so $\lambda_i \leq \mu_1$ for all $i$. We expand the determinant along the first column (so $j = 1$), and we have
\[
\Delta_{-\bt}^{\lambda_i-\mu_1-1} \widetilde{f}_n(2-i) = 0
\] 
for all $i > 2$ since $\lambda_i-\mu_1-1 < 0$ and Equation~\eqref{tdeformedinversemultiple} with $2-i \leq 0$.

Now we apply Proposition~\ref{prop:refined_branching} and~\eqref{eq:refined_convolution}, using~\eqref{refinedmultipleaction} and~\eqref{tdeformedinversemultiple} with
\[
\widetilde{f}_1 \ast \widetilde{f}_2 \ast \cdots \ast \widetilde{f}_n(\nu) = e_{\nu}(\xx)
\]
to obtain the desired analog of Theorem~\ref{thm:refined_Jacobi_Trudi_h}.
\end{proof}

We remark that Theorem~\ref{thm:refined_Jacobi_Trudi_e} was also proven in~\cite{AY20,Kim20} using different techniques, as well as for $\bt = \beta$ in~\cite{Iwao20}, in parallel to Remark~\ref{remark:proof_differences}.

Similar to Corollary~\ref{cor:det_transition_prob_h}, we have the following corollary of Theorem~\ref{thm:refined_Jacobi_Trudi_e}.

\begin{cor}
We have the following determinant representations for the transition probability:
\begin{align*}
P({\bf G}(n)=\lambda'|{\bf G}(0)=\mu')
&=\prod_{i=1}^\ell \prod_{j=1}^n (1-t_i x_j) \bt^{\lambda'-\mu'}
\det \bigl[ (\Delta_{-\bt^{-1}}^{\lambda_i-\mu_j-1} e_\nu(\xx))|_{\nu=j-i+1} \bigr]_{i,j=1}^{\ell}
\\
&=\prod_{i=1}^\ell \prod_{j=1}^n (1-t_i x_j)
\bt^{\lambda-\mu}
\det
\Bigg[ \sum_{m=0}^\infty \widetilde{\alpha}_m^{ij}(\bt^{-1})
e_{\lambda_i-\mu_j-i+j-m}(\xx)  \Bigg]_{i,j=1}^{\ell}.
\end{align*}
\end{cor}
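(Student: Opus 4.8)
The plan is to follow the proof of Corollary~\ref{cor:det_transition_prob_h} line for line, replacing every $h$-version ingredient by its $e$-version counterpart established while proving Theorem~\ref{thm:refined_Jacobi_Trudi_e}. First I would invoke the probabilistic relation~\eqref{eq:probability_from_skew_dual} (equivalently~\eqref{eq:refineddefbylpp}) for the conjugate shapes, so that $P(\bG(n)=\lambda'\mid\bG(0)=\mu')$ is written as the leading factor $\prod_{i=1}^{\ell}\prod_{j=1}^{n}(1-t_i x_j)$ times a monomial prefactor times a skew refined dual Grothendieck polynomial evaluated at $\bt^{-1}$. The appearance of the conjugates $\lambda',\mu'$ on the left is exactly what one expects: the $e$-Jacobi--Trudi determinant is the column (conjugate) version of the row determinant used in Corollary~\ref{cor:det_transition_prob_h}, just as $s_{\lambda'/\mu'}=\det[e_{\lambda_i-\mu_j-i+j}]$ holds for ordinary Schur functions with the \emph{unconjugated} parts indexing the determinant.

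I would then substitute two different determinant representations of this skew polynomial. For the first displayed line I use the unexpanded single determinant, namely the $e$-analog of~\eqref{refineddeterminantformula}; this is assembled inside the proof of Theorem~\ref{thm:refined_Jacobi_Trudi_e} by starting from the single-variable identity~\eqref{eq:one_variable_e}, applying the branching of Proposition~\ref{prop:refined_branching} together with the convolution formula~\eqref{eq:refined_convolution}, and using $\widetilde f_1\ast\cdots\ast\widetilde f_n(\nu)=e_\nu(\xx)$. This produces the entries $(\Delta_{-\bt}^{\lambda_i-\mu_j-1}e_\nu(\xx))|_{\nu=j-i+1}$. For the second line I use the fully expanded form, which is Theorem~\ref{thm:refined_Jacobi_Trudi_e} itself, obtained by carrying out the operator expansions~\eqref{refinedmultipleaction} and~\eqref{tdeformedinversemultiple} so that each entry becomes $\sum_{m\ge 0}\widetilde\alpha_m^{ij}\,e_{\lambda_i-\mu_j-i+j-m}(\xx)$.

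Finally I would track the substitution $\bt\mapsto\bt^{-1}$. Since $g$ is evaluated at $\bt^{-1}$ in the probability relation, every $\Delta_{-\bt}$ becomes $\Delta_{-\bt^{-1}}$ and every $\widetilde\alpha_m^{ij}(\bt)$ becomes $\widetilde\alpha_m^{ij}(\bt^{-1})$, which matches the two displayed determinants. No new analytic input is needed: the single-variable base case, the Cauchy--Binet/convolution step, and the operator expansions have all already been carried out for Theorem~\ref{thm:refined_Jacobi_Trudi_e}, so the corollary is a direct transcription.

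The only genuine work is the bookkeeping of conjugation. I must check that passing to $\lambda',\mu'$ leaves the prefactor $\prod_{i=1}^{\ell}\prod_{j=1}^{n}(1-t_i x_j)$ and the monomial prefactor in the stated form, and that the index range $1\le i,j\le\ell$ of the $e$-determinant is the correct one after conjugation. I expect this to be the main (and only mildly delicate) obstacle; everything else follows verbatim from the argument for Corollary~\ref{cor:det_transition_prob_h}.
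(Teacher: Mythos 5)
Your proposal matches the paper's own argument: the paper derives this corollary exactly as you do, by combining the probabilistic relation~\eqref{eq:probability_from_skew_dual} (applied to the conjugate shapes) with the unexpanded single-determinant $e$-analog of~\eqref{refineddeterminantformula} established inside the proof of Theorem~\ref{thm:refined_Jacobi_Trudi_e}, and then with Theorem~\ref{thm:refined_Jacobi_Trudi_e} itself for the expanded form, tracking $\bt \mapsto \bt^{-1}$ throughout. The paper offers no further detail beyond ``similar to Corollary~\ref{cor:det_transition_prob_h},'' so your more explicit account of the conjugation bookkeeping is a faithful (indeed fuller) transcription of the intended proof.
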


We expect our techniques can easily be modified to impose the flagging condition to provide an alternative proof of the results of~\cite{Kim20}.

\subsection{The Schur measure and further results}

In this subsection, we focus on the probability $P(G(\ell,n) \le m)$ and present more results and alternative proofs of some of our previous results.
We note that $\lambda \subseteq m^{\ell}$ is equivalent to $0 \leq \lambda_{\ell} \leq \cdots \leq \lambda_2 \leq \lambda_1 \leq m$.

The following probability measure called the Schur measure came out of this probability.

\begin{dfn}[{\cite{Okounkov00,Okounkov01}}]
Let $t_i, x_i \in (0,1)$ such that $\displaystyle 0 < \prod_{i,j}(1-t_i x_j) < \infty $.
The \defn{Schur measure} is the measure on the set of partitions defined by
\begin{align}
P_{\mathrm{Schur}}(\lambda) := \prod_{i,j} (1 - t_i x_j) s_\lambda(\bt) s_\lambda(\xx).
\end{align}
\end{dfn}

The following is well known.

\begin{thm}[{\cite{Johansson00,BR01}}]
\label{thm:Schur_measure_LPP}
We have
\begin{equation}
\label{eq:Schur_measure_expr}
P(G(\ell,n) \le m) = \sum_{\lambda:\lambda_1 \le m} P_{\mathrm{Schur}}(\lambda) =\prod_{i=1}^\ell \prod_{j=1}^n (1-t_i x_j) \sum_{\lambda:\lambda_1 \le m} s_{\lambda}(\xx) s_{\lambda}(\bt).
\end{equation}
\end{thm}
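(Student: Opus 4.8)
The plan is to give the classical Robinson--Schensted--Knuth (RSK) argument of Johansson, assembling the pieces already set up in this section. The second equality in~\eqref{eq:Schur_measure_expr} is immediate from the definition $P_{\mathrm{Schur}}(\lambda) = \prod_{i,j}(1-t_ix_j)\,s_\lambda(\bt)s_\lambda(\xx)$, so the entire content lies in the first equality $P(G(\ell,n)\le m) = \sum_{\lambda:\lambda_1\le m} P_{\mathrm{Schur}}(\lambda)$.

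First I would restrict attention to the $\ell \times n$ submatrix $M = (w_{ij})_{1\le i\le \ell,\,1\le j\le n}$, since (with the lower-left corner indexed $(1,1)$) the last passage time $G(\ell,n)$ depends only on these entries. Applying RSK to $M$ yields a pair $(P,Q)$ of semistandard tableaux of a common shape $\lambda$ with $P \in \ssyt^n(\lambda)$ and $Q \in \ssyt^\ell(\lambda)$. The key input, recalled above, is that $\lambda_1$ equals the length of a longest increasing subsequence of the associated two-line array, which in turn equals the maximal weight of an up-right lattice path, i.e.\ $\lambda_1 = G(\ell,n)$. Consequently the event $\{G(\ell,n)\le m\}$ is exactly the disjoint union over those matrices $M$ whose RSK shape satisfies $\lambda_1 \le m$.

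Next I would compute the weight. By independence and the geometric law $P(w_{ij}=k) = (1-t_ix_j)(t_ix_j)^k$, we have $P(M) = \prod_{i,j}(1-t_ix_j)\prod_{i,j}(t_ix_j)^{w_{ij}}$. Writing $\prod_{i,j}(t_ix_j)^{w_{ij}} = \prod_i t_i^{\sum_j w_{ij}}\prod_j x_j^{\sum_i w_{ij}}$ and noting that under RSK the $i$-th row sum $\sum_j w_{ij}$ records the multiplicity of $i$ in $Q$ while the $j$-th column sum $\sum_i w_{ij}$ records the multiplicity of $j$ in $P$, this monomial factors as $\bt^{Q}\xx^{P}$. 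Summing $P(M)$ over all $M$ with RSK shape obeying $\lambda_1 \le m$ and using that RSK is a bijection decouples the two tableaux:
\[
P(G(\ell,n)\le m) = \prod_{i=1}^\ell\prod_{j=1}^n (1-t_ix_j) \sum_{\lambda:\lambda_1\le m}\Bigl(\sum_{P\in\ssyt^n(\lambda)}\xx^{P}\Bigr)\Bigl(\sum_{Q\in\ssyt^\ell(\lambda)}\bt^{Q}\Bigr),
\]
and since $\sum_{P\in\ssyt^n(\lambda)}\xx^P = s_\lambda(\xx)$ and $\sum_{Q\in\ssyt^\ell(\lambda)}\bt^Q = s_\lambda(\bt)$, this is precisely $\sum_{\lambda:\lambda_1\le m}P_{\mathrm{Schur}}(\lambda)$.

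The routine verifications are the monomial bookkeeping (content equals marginal sums) and the geometric normalization. The one genuinely nontrivial ingredient, which I would cite rather than reprove, is the identification $\lambda_1 = G(\ell,n)$ relating the longest up-right path weight to the first part of the RSK shape; this is the Greene-type / Johansson observation already recorded in the discussion preceding this subsection, and care is needed only to align it with the convention that $(1,1)$ is the lower-left corner.
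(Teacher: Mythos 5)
Your proof is correct: the geometric-weight bookkeeping, the identification of row/column sums of the matrix with the contents of $Q$ and $P$, and the Greene--Schensted fact $\lambda_1 = G(\ell,n)$ (recorded in the paper right before Equation~\eqref{eq:prob_schur_expansion}) are all used properly, and the bijectivity of RSK then decouples the sum into $s_\lambda(\xx)s_\lambda(\bt)$. However, this is \emph{not} the route the paper takes; it is exactly what the paper calls ``the standard way'' and deliberately sets aside. The paper's own proof is determinantal rather than bijective: it starts from Theorem~\ref{thm:refined_last_passage} together with the determinant formula~\eqref{refineddeterminantformula} for $\dG_\lambda$, sums the resulting determinants over $0 \le \lambda_\ell \le \cdots \le \lambda_1 \le m$ row by row to collapse everything into the single determinant~\eqref{refineddeterminantforlpp}, then applies the generalized Cauchy--Binet formula, identifies the two factor determinants with Schur functions via Jacobi--Trudi (Equations~\eqref{eq:diff_ops_h_det_schur} and~\eqref{eq:h_tvars_det_schur}), arrives at the single Schur polynomial expression~\eqref{singleschurexpression}, and finally converts it to the Schur-measure form using the complement relation~\eqref{eq:schur_complement} and the finite Cauchy formula~\eqref{eq:finite_Cauchy_formula}. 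What your approach buys is brevity and transparency: it is the shortest self-contained argument, essentially the original one of Johansson and Baik--Rains being cited in the theorem statement. What the paper's approach buys is that it exercises and validates the refined dual Grothendieck machinery of Section~\ref{sec:transition_JT_formula} (a refinement of Johansson's difference-operator argument), and it produces reusable intermediate identities as byproducts --- the determinant representation~\eqref{refineddeterminantforlpp}, the summation identity~\eqref{asummationidentity}, and new probabilistic proofs of the Littlewood and Cauchy identities (Corollaries~\ref{cor:littlewood_identity} and~\ref{cor:refined_Cauchy}) --- none of which fall out of the RSK argument.
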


The standard way to show~\eqref{eq:Schur_measure_expr} is to use the RSK correspondence.
Additionally, we can obtain~\eqref{eq:Schur_measure_expr} as a consequence of Theorem~\ref{thm:refined_last_passage}, Corollary~\ref{cor:refined_coincidence_lemma}, and the finite Cauchy formula for Schur functions~\eqref{eq:finite_Cauchy_formula}.
We will give another proof of~\eqref{eq:Schur_measure_expr} by using the results for the refined dual Grothendieck polynomials from Section~\ref{sec:transition_JT_formula}, well-known formulas for Schur functions, and refining the argument in~\cite{Johansson10}.

We begin by deriving a single determinant expression for $P(G(\ell,n) \le m)$.
Using Theorem~\ref{thm:refined_last_passage} and~\eqref{refineddeterminantformula}, we have
\begin{align}
\displaystyle P(G(\ell,n) \le m)
&=\sum_{0 \le \lambda_\ell \le \cdots \le \lambda_2 \le \lambda_1 \le m}
P(G(\ell,n)=\lambda_1,G(\ell-1,n)=\lambda_2,\dotsc,G(1,n)=\lambda_\ell)
\nonumber \\
&=\sum_{0 \le \lambda_\ell \le \cdots \le \lambda_2 \le m}
\sum_{\lambda_1=\lambda_2}^m
P(G(\ell,n)=\lambda_1,G(\ell-1,n)=\lambda_2,\dotsc,G(1,n)=\lambda_\ell)
\nonumber \\
&=\sum_{0 \le \lambda_\ell \le \cdots \le \lambda_2 \le m}
\sum_{\lambda_1=\lambda_2}^m
\prod_{i=1}^\ell \prod_{j=1}^n (1-t_i x_j) \bt^\lambda
g_\lambda(\xx;\bt^{-1}) \nonumber \\
&=\sum_{0 \le \lambda_\ell \le \cdots \le \lambda_2 \le m}
\sum_{\lambda_1=\lambda_2}^m
\prod_{j=1}^n (1-t_i x_j)
\det \bigl[ t_i^{\lambda_i} (\Delta_{\bt^{-1}}^{j-i}
h_\nu(\xx))|_{\nu=\lambda_i} \bigr]_{i,j=1}^{\ell}.
\label{beforesummingup}
\end{align}
We move $\displaystyle \sum_{\lambda_1=\lambda_2}^m$ into the first row of the determinant in the right hand side of~\eqref{beforesummingup}.
Then the first row becomes
\begin{align}
\displaystyle &\sum_{\lambda_1=\lambda_2}^m
t_1^{\lambda_1} (\Delta_{\bt^{-1}}^{j-1}
h_\nu(\xx))|_{\nu=\lambda_1}
=\Delta_{\bt^{-1}}^{j-2}
\sum_{\lambda_1=\lambda_2}^m
t_1^{\lambda_1} \Delta_{t_1^{-1}} h_\nu(\xx)|_{\nu=\lambda_1} \nonumber \\
& = \Delta_{\bt^{-1}}^{j-2}
(t_1^m h_\nu(\xx)|_{\nu=m+1}-t_1^{\lambda_2-1}
h_\nu(\xx)|_{\nu=\lambda_2})
=t_1^m \Delta_{\bt^{-1}}^{j-2} h_\nu(\xx)|_{\nu=m+1}
-t_1^{\lambda_2-1} \Delta_{\bt^{-1}}^{j-2} h_\nu(\xx)|_{\nu=\lambda_2}.
\label{firstrowsumup}
\end{align}
Using~\eqref{firstrowsumup}, we find that
the right hand side of~\eqref{beforesummingup} can be rewritten as
\begin{align}
\displaystyle
&P(G(\ell,n) \le m) \nonumber \\
& = \prod_{i=1}^\ell \prod_{j=1}^n (1-t_i x_j)
\sum_{0 \le \lambda_\ell \le \cdots \le \lambda_2 \le m}
\det {}^{t}
\bigl[
t_1^m \Delta_{\bt^{-1}}^{j-2} h_\nu(\xx)|_{\nu=m+1}
-t_1^{\lambda_2-1} \Delta_{\bt^{-1}}^{j-2} h_\nu(\xx)|_{\nu=\lambda_2},
\nonumber \\
& \hspace{185pt} t_2^{\lambda_2} \Delta_{\bt^{-1}}^{j-2} h_\nu(\xx)|_{\nu=\lambda_2}, \dotsc,
t_\ell^{\lambda_\ell} \Delta_{\bt^{-1}}^{j-\ell}
h_\nu(\xx)|_{\nu=\lambda_\ell}
\bigr]_{j=1}^\ell \nonumber \\
& = \prod_{i=1}^\ell \prod_{j=1}^n (1-t_i x_j) \nonumber \\
& \times \sum_{0 \le \lambda_\ell \le \cdots \le \lambda_2 \le m}
\det {}^{t}
\bigl[
t_1^m \Delta_{\bt^{-1}}^{j-2} h_\nu(\xx)|_{\nu=m+1},
t_2^{\lambda_2} \Delta_{\bt^{-1}}^{j-2} h_\nu(\xx)|_{\nu=\lambda_2}, \dotsc,
t_\ell^{\lambda_\ell} \Delta_{\bt^{-1}}^{j-\ell}
h_\nu(\xx)|_{\nu=\lambda_\ell}
\bigr]_{j=1}^\ell.
\end{align}
Iterating this procedure, one gets the following determinant representation
for $P(G(\ell,n) \le m)$, which is a refinement of~\cite[Thm.~2.2]{Johansson10}.

\begin{lemma}
We have the following determinant form
\begin{align}
\displaystyle
P(G(\ell,n) \le m)
=\prod_{i=1}^\ell t_i^m \prod_{i=1}^\ell \prod_{j=1}^n (1-t_i x_j)
\det\bigl[
\Delta_{\bt^{-1}}^{j-i-1} h_\nu(\xx)|_{\nu=m+1}
\bigr]_{i,j=1}^{\ell}, \label{refineddeterminantforlpp}
\end{align}
where $\Delta_{\bt^{-1}}^{j-i-1} := \Delta_{{t_i}^{-1}}^{-1} \Delta_{\bt^{-1}}^{j-i}$.
\end{lemma}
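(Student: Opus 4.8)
The plan is to start from the multi-sum expression for $P(G(\ell,n) \le m)$ already assembled in Equation~\eqref{beforesummingup}, which rewrites the probability as $\prod_{i=1}^\ell\prod_{j=1}^n(1-t_i x_j)$ times a sum over all weakly decreasing sequences $0 \le \lambda_\ell \le \cdots \le \lambda_2 \le \lambda_1 \le m$ of the determinant $\det\bigl[ t_i^{\lambda_i}(\Delta_{\bt^{-1}}^{j-i} h_\nu(\xx))|_{\nu=\lambda_i} \bigr]_{i,j=1}^\ell$, using Theorem~\ref{thm:refined_last_passage} together with the single-variable determinant formula~\eqref{refineddeterminantformula}. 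The goal is to collapse the entire nested sum into a single $\lambda$-independent determinant evaluated at $\nu=m+1$.

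First I would carry out the innermost summation, over the largest part $\lambda_1$ running from $\lambda_2$ to $m$. Since only the first row of the determinant carries $\lambda_1$, I can push $\sum_{\lambda_1=\lambda_2}^m$ inside and apply the summation-by-parts identity (exactly the telescoping behind the weighted difference operator $\Delta_{t_1^{-1}}$) to obtain the two-term expression~\eqref{firstrowsumup}: a boundary term $t_1^m\,\Delta_{\bt^{-1}}^{j-2} h_\nu(\xx)|_{\nu=m+1}$ at the upper limit and a term $t_1^{\lambda_2-1}\,\Delta_{\bt^{-1}}^{j-2} h_\nu(\xx)|_{\nu=\lambda_2}$ at the lower limit.

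The crucial observation is that the lower-limit contribution produces a first row proportional to the second row of the determinant (both are $\Delta_{\bt^{-1}}^{j-2} h_\nu(\xx)|_{\nu=\lambda_2}$ up to a scalar depending on $t_1,t_2,\lambda_2$), so by multilinearity that determinant vanishes. Only the upper-limit term survives, and it no longer depends on any $\lambda_i$. I would then iterate this step: with the first row frozen, summing over $\lambda_2$ from $\lambda_3$ to $m$ telescopes in the same way, its lower-limit term makes the second row proportional to the third and hence vanishes, and the surviving upper-limit term freezes the second row at $\nu=m+1$. Repeating $\ell$ times clears every row in turn, pulling out a factor $t_i^m$ from row $i$ and shifting the operator index by one (the extra $\Delta_{t_i^{-1}}^{-1}$ coming from summation by parts is precisely the passage from $\Delta_{\bt^{-1}}^{j-i}$ to $\Delta_{\bt^{-1}}^{j-i-1}$), yielding~\eqref{refineddeterminantforlpp}.

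The main obstacle is bookkeeping rather than conceptual: one must verify that the proportional-row vanishing and the telescoping boundary terms behave uniformly at every stage, not merely the first, and that the operator indices $\Delta_{\bt^{-1}}^{j-i-1} = \Delta_{t_i^{-1}}^{-1}\Delta_{\bt^{-1}}^{j-i}$ track correctly as rows are successively frozen; care is also needed at the lower summation limit of each stage so that the surviving boundary term lands exactly on the next row and the determinant collapses as claimed.
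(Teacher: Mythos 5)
Your proposal is correct and follows essentially the same route as the paper's own proof: starting from~\eqref{beforesummingup}, pushing each sum into the corresponding row, telescoping via summation by parts as in~\eqref{firstrowsumup}, discarding the lower-boundary term because it is proportional to the next row, and iterating to freeze each row at $\nu=m+1$ with the factor $t_i^m$ and the index shift $\Delta_{\bt^{-1}}^{j-i}\to\Delta_{\bt^{-1}}^{j-i-1}$. The one point your "next row" phrasing does not cover (and which the paper also leaves implicit) is the final stage $i=\ell$, where the lower limit is $\lambda_\ell=0$ and there is no next row; there the boundary term vanishes instead because $\Delta_{\bt^{-1}}^{j-\ell-1} h_\nu(\xx)|_{\nu=0}=0$ for all $1\le j\le \ell$, since this operator contains at least one inverse difference operator and hence only values $h_\nu(\xx)$ with $\nu<0$ appear.
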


Next, we transform~\eqref{refineddeterminantforlpp} to a refined version of~\cite[Prop.~2.4]{Johansson10}.

\begin{lemma}
We have
\begin{align}
\displaystyle
P(G(\ell,n) \le m)
& =\frac{1}{\ell !} \prod_{i=1}^\ell t_i^m \prod_{i=1}^\ell \prod_{j=1}^n (1-t_i x_j)
\nonumber \\
& \hspace{20pt} \times \sum_{\nu_1,\dotsc,\nu_\ell=-\ell+1}^{m}
\det \bigl[
h_{m+1-\nu_j-i}(t_1^{-1},\dotsc,t_i^{-1})
\bigr]_{i,j=1}^{\ell}
\det \bigl[
\Delta_{\bt^{-1}}^{j-1}h_{\nu_i}(\xx)
\bigr]_{i,j=1}^{\ell}
\nonumber \\
& =\frac{1}{\ell !} \prod_{i=1}^\ell t_i^m \prod_{i=1}^\ell \prod_{j=1}^n (1-t_i x_j)
\nonumber \\
& \hspace{20pt} \times \sum_{\nu_1,\dotsc,\nu_\ell=0}^{m+\ell-1}
\det \bigl[
h_{m+\ell-\nu_j-i}(t_1^{-1},\dotsc,t_i^{-1})
\bigr]_{i,j=1}^{\ell}
\det \bigl[
\Delta_{\bt^{-1}}^{j-1}h_{\nu_i-\ell+1}(\xx)
\bigr]_{i,j=1}^{\ell}.
\label{multiplesummationone}
\end{align}
\end{lemma}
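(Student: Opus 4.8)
The plan is to recognize the single determinant in \eqref{refineddeterminantforlpp} as the determinant of a matrix of bilinear forms and then invoke the generalized Cauchy--Binet formula \eqref{generalizedCauchyBinet}, exactly paralleling the derivation of \cite[Prop.~2.4]{Johansson10}. The first step is to disentangle the row index $i$ from the column index $j$ inside the operator $\Delta_{\bt^{-1}}^{j-i-1} = \Delta_{t_i^{-1}}^{-1}\Delta_{\bt^{-1}}^{j-i}$. Writing $D_k := \Delta_{t_k^{-1}}$ and using that all the $D_k$ and their inverses commute, a short manipulation of \eqref{eq:defrefinedmultipledifference} gives $\Delta_{\bt^{-1}}^{j-i} = P_j P_i^{-1}$ and hence $\Delta_{\bt^{-1}}^{j-i-1} = P_j P_{i+1}^{-1}$, where $P_k := \Delta_{t_1^{-1}}\cdots\Delta_{t_{k-1}^{-1}} = \Delta_{\bt^{-1}}^{k-1}$ depends on a single index at a time. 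Thus each matrix entry factors as $(P_j P_{i+1}^{-1} h)(m+1)$ with $h(\nu) := h_\nu(\xx)$ (so $h(\nu)=0$ for $\nu<0$), the operator $P_{i+1}^{-1}$ carrying all the $i$-dependence and $P_j$ all the $j$-dependence.

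Next I would expand the inverse operator into a summation kernel. Since $P_{i+1}^{-1} = \Delta_{t_i^{-1}}^{-1}\cdots\Delta_{t_1^{-1}}^{-1}$ commutes with $P_j$, applying \eqref{tdeformedinversemultiple} (with each $t_k$ replaced by $t_k^{-1}$) to $P_{i+1}^{-1}(P_j h)$, evaluating at $\nu=m+1$, and renaming the summation index $r\mapsto\nu'=m+1-i-r$ yields
\[
(P_j P_{i+1}^{-1} h)(m+1) = \sum_{\nu'} h_{m+1-\nu'-i}(t_1^{-1},\dotsc,t_i^{-1})\,\bigl(\Delta_{\bt^{-1}}^{j-1} h_{\nu'}(\xx)\bigr).
\]
This is already a bilinear form $\sum_{\nu'}\phi_i(\nu')\psi_j(\nu')$ with $\phi_i(\nu') = h_{m+1-\nu'-i}(t_1^{-1},\dotsc,t_i^{-1})$ and $\psi_j(\nu') = \Delta_{\bt^{-1}}^{j-1}h_{\nu'}(\xx)$. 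The decisive point is that the summation range can be replaced by the fixed set $\{-\ell+1,\dotsc,m\}$ uniformly in $i$ and $j$: the factor $\phi_i(\nu')$ vanishes for $\nu'>m+1-i$, so no term with $\nu'>m$ ever contributes, while \eqref{refinedmultipleaction} shows $\psi_j(\nu')=0$ whenever $\nu'+j-1<0$, and since $j\le\ell$ this forces $\psi_j(\nu')=0$ for all $\nu'\le-\ell$. Hence every term outside $\{-\ell+1,\dotsc,m\}$ is zero and the common range is exactly the union of the supports.

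With the bilinear form established over a single index set, I would apply \eqref{generalizedCauchyBinet} with counting measure on $\{-\ell+1,\dotsc,m\}$, with $n=\ell$ and $\phi_i,\psi_j$ as above, to obtain
\[
\det\bigl[\Delta_{\bt^{-1}}^{j-i-1}h_\nu(\xx)|_{\nu=m+1}\bigr]_{i,j=1}^{\ell} = \frac{1}{\ell!}\sum_{\nu_1,\dotsc,\nu_\ell=-\ell+1}^{m}\det\bigl[\phi_i(\nu_j)\bigr]\det\bigl[\psi_i(\nu_j)\bigr].
\]
Here $\det[\phi_i(\nu_j)] = \det[h_{m+1-\nu_j-i}(t_1^{-1},\dotsc,t_i^{-1})]$ is precisely the first target determinant, and $\det[\psi_i(\nu_j)] = \det[\Delta_{\bt^{-1}}^{i-1}h_{\nu_j}(\xx)]$ equals the second target determinant $\det[\Delta_{\bt^{-1}}^{j-1}h_{\nu_i}(\xx)]$ after a transposition. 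Reinstating the prefactor $\prod_i t_i^m\prod_{i,j}(1-t_ix_j)$ from \eqref{refineddeterminantforlpp} produces the first line of \eqref{multiplesummationone}, and the second line follows from the reindexing $\nu_i\mapsto\nu_i-\ell+1$, which shifts the range to $\{0,\dotsc,m+\ell-1\}$ and the subscripts to $m+\ell-\nu_j-i$ and $\nu_i-\ell+1$. The main obstacle is the careful bookkeeping in the second step: confirming that the two vanishing thresholds (from $\phi_i$ above $m$ and from $\psi_j$ below $-\ell+1$) match the stated range so that Cauchy--Binet applies with one index set, and correctly tracking the transposition that relates $\psi$ to the stated second determinant.
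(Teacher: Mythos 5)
Your proposal is correct and follows essentially the same route as the paper: factor $\Delta_{\bt^{-1}}^{j-i-1}$ into an $i$-dependent inverse part $\Delta_{t_1^{-1}}^{-1}\cdots\Delta_{t_i^{-1}}^{-1}$ and a $j$-dependent part $\Delta_{\bt^{-1}}^{j-1}$, expand the inverse into the bilinear sum $\sum_{\nu} h_{m+1-\nu-i}(t_1^{-1},\dotsc,t_i^{-1})\,\Delta_{\bt^{-1}}^{j-1}h_\nu(\xx)$, truncate the range to $\{-\ell+1,\dotsc,m\}$ by the same two vanishing conditions, and finish with the generalized Cauchy--Binet formula \eqref{generalizedCauchyBinet}. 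The only cosmetic difference is that you invoke \eqref{tdeformedinversemultiple} directly where the paper routes through the convolution identities $\Delta_t^{-1}f = j_t * f$ and $(j_{t_1}*\cdots*j_{t_k})(\nu) = h_{\nu-k}(t_1,\dotsc,t_k)$, which produce the identical expansion.
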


\begin{proof}
Recall that $j_t(\nu) = t^{\nu-1}H(\nu-1) = h_{\nu-1}(t)$ satisfies
$\Delta_{t}^{-1}f(\nu) = j_t*f(\nu)$,
and so we have
\begin{align}
\Delta_{t_1}^{-1} \cdots \Delta_{t_k}^{-1}f(\nu) = (j_{t_1}*\cdots*j_{t_k}*f)(\nu).
\label{useforconvolutiontwo}
\end{align}
Next, we have
\begin{align}
(j_{t_1}*j_{t_2}*\cdots*j_{t_k})(\nu)=h_{\nu-k}(t_1,t_2,\dotsc,t_k),
\label{useforconvolutionone}
\end{align}
by induction on $k$. When $t_j = 1$ for $j = 1,\dotsc,k$, Equation~\eqref{useforconvolutionone} can be simplified as
\[
j_{t=1}^{*k}(\nu) = \frac{\prod_{j=1}^{k-1}(\nu-j)}{(k-1)!} H(\nu-k).
\]
Let $\Delta_{\bt^{-1}}^{-i}:=\Delta_{t_1^{-1}}^{-1} \cdots \Delta_{t_i^{-1}}^{-1}$ and $\Delta_{\bt^{-1}}^{j-1}:=\Delta_{t_1^{-1}} \cdots \Delta_{t_{j-1}^{-1}}$.
Using~\eqref{useforconvolutiontwo} and~\eqref{useforconvolutionone}, we get
\begin{align}
&\Delta_{\bt^{-1}}^{j-i-1}h_\nu(\xx)|_{\nu=m+1}
=\Delta_{\bt^{-1}}^{-i}
\Delta_{\bt^{-1}}^{j-1}h_\nu(\xx)|_{\nu=m+1}
=\sum_{\nu \in \mathbb{Z}} h_{m+1-\nu-i}(t_1^{-1},\dotsc,t_i^{-1})
\Delta_{\bt^{-1}}^{j-1}h_\nu(\xx).
\label{beforerestrictingsum}
\end{align}
Since
  $h_{m+1-\nu-i}(t_1^{-1},\dotsc,t_i^{-1}) = 0$ if $\nu>m$ for $1 \le i \le \ell$ and
  $\Delta_{\bt^{-1}}^{j-1}h_\nu(\xx) = 0$ if $\nu \le -\ell$ for $1 \le j \le \ell$,
the sum in~\eqref{beforerestrictingsum} can be restricted to yield
\begin{align}
\Delta_{\bt^{-1}}^{j-i-1}h_\nu(\xx)|_{\nu=m+1}
=\sum_{\nu=-\ell+1}^m h_{m+1-\nu-i}(t_1^{-1},\dotsc,t_i^{-1})
\Delta_{\bt^{-1}}^{j-1}h_\nu(\xx).
\label{afterrestrictingsum}
\end{align}
Inserting~\eqref{afterrestrictingsum} into the right hand side of~\eqref{refineddeterminantforlpp}, we obtain
\begin{align}
\displaystyle
&P(G(\ell,n) \le m) \nonumber \\
& \hspace{40pt} = \prod_{i=1}^\ell t_i^m \prod_{i=1}^\ell \prod_{j=1}^n (1-t_i x_j)
\det
\Bigg[
\sum_{\nu=-\ell+1}^m h_{m+1-\nu-i}(t_1^{-1},\dotsc,t_i^{-1})
\Delta_{\bt^{-1}}^{j-1}h_\nu(\xx)
\Bigg]_{i,j=1}^{\ell}.
\end{align}
Applying the generalized Cauchy--Binet formula~\eqref{generalizedCauchyBinet}, the result follows.
\end{proof}

We will now transform~\eqref{multiplesummationone} to a single Schur function.
To do this, for $\nu = (\nu_1, \dotsc, \nu_{\ell})$, we first let $\inv(\nu)$ denote the number of inversions of $\nu$, the pairs $(i < j)$ such that $\nu_i \geq \nu_j$.
We claim that
\begin{equation}
\label{eq:diff_ops_h_det_schur}
\det \bigl[
\Delta_{\bt^{-1}}^{j-1} h_{\nu_i-\ell+1}(\xx)
\bigr]_{i,j=1}^{\ell}
= \begin{cases}
0 & \text{if $\nu_i = \nu_j$ for some $i \neq j$}, \\
(-1)^{\inv(\nu)} s_{\overline{\nu} - \rho_{\ell}}(\xx) & \text{otherwise},
\end{cases}
\end{equation}
where $\overline{\nu}$ is the partition corresponding to sorting $\nu$ in weakly decreasing order.
To show~\eqref{eq:diff_ops_h_det_schur}, first notice that from the definition of the difference operators, by induction on $j$, we can subtract $e_{j-k}(-t_1^{-1}, \dotsc -t_{j-1}^{-1})$ times the $k$-th column from the $j$-th column for all $k=1,\dotsc,j-1$ to obtain the column $\bigl( h_{\nu_i-\ell+i}(\xx) \bigr)_{i=1}^{\ell}$. Thus the resulting determinant is the Jacobi--Trudi formula for the Schur function $s_{\overline{\nu} - \rho_{\ell}}(\xx)$ up to swapping rows, which corresponds to the inversions.

\begin{ex}
Let $\nu = (10,2,7)$, then we have
\begin{align*}
\det \begin{bmatrix}
h_{8} & h_{9} - t_{1}h_{8} & h_{10} - (t_{1} + t_{2})h_{9} + t_{1} t_{2}h_{8}\\
h_{0} & h_{1} - t_{1}h_0 & h_2 - (t_{1} + t_{2})h_{1} + t_{1} t_{2}h_0 \\
h_{5} & h_6 - t_{1}h_{5} &  h_{7} - (t_{1} + t_{2})h_{6} + t_{1} t_{2}h_{5}
\end{bmatrix}
& =
\det \begin{bmatrix}
h_{8} & h_{9} & h_{10} - (t_{1} + t_{2})h_{9} + t_{1} t_{2}h_{8}\\
h_{0} & h_{1} & h_2 - (t_{1} + t_{2})h_{1} + t_{1} t_{2}h_0 \\
h_{5} & h_6 &  h_{7} - (t_{1} + t_{2})h_{6} + t_{1} t_{2}h_{5}
\end{bmatrix}
\\ & =
\det \begin{bmatrix}
h_{8} & h_{9} & h_{10} \\
h_{0} & h_{1} & h_2 \\
h_{5} & h_6 &  h_{7}
\end{bmatrix}
\\ & = -s_{8,6,2}(\xx),
\end{align*}
where $h_i = h_i(\xx)$ and $h_0(\xx) = 1$.
\end{ex}

Similarly using the fact
$
h_k(t_1^{-1}, \dotsc, t_i^{-1}) = t_i^{-1} h_{k-1}(t_1^{-1}, \dotsc, t_i^{-1}) + h_k(t_1^{-1}, \dotsc, t_{i-1}^{-1}),
$
elementary row operations, reindexing, and the Jacobi--Trudi formula for skew Schur functions, we have for $\nu$ such that $\nu_i \neq \nu_j$ for all $i \neq j$
\begin{equation}
\label{eq:h_tvars_det_schur}
\det \bigl[
h_{m+\ell-\nu_j-i}(t_1^{-1},\dotsc,t_i^{-1})
\bigr]_{i,j=1}^{\ell}
= \det \bigl[
h_{m+\ell-\nu_j-i}(\bt^{-1})
\bigr]_{i,j=1}^{\ell}
= (-1)^{\inv(\nu)} s_{m^{\ell} / (\overline{\nu}-\rho)}(\bt^{-1}).
\end{equation}

By combining~\eqref{eq:diff_ops_h_det_schur} and~\eqref{eq:h_tvars_det_schur}, we can rewrite the right hand side of~\eqref{multiplesummationone} as
\[
\frac{1}{\ell !} \prod_{i=1}^\ell t_i^m \prod_{i=1}^\ell \prod_{j=1}^n (1-t_i x_j)
 \ell ! \sum_{\mu \subseteq m^{\ell}}
(-1)^{2\inv(\nu)} s_{\mu}(\xx)
s_{m^{\ell} / \mu}(\bt^{-1}).
\]
Then using the combinatorial description of the Schur functions (or the finite Cauchy formula for Schur functions; see~\eqref{eq:finite_Cauchy_formula} below), we obtain the following.

\begin{thm}
We have
\begin{align}
P(G(\ell,n) \le m)
=\prod_{i=1}^\ell t_i^m \prod_{i=1}^\ell \prod_{j=1}^n (1-t_i x_j)
s_{m^\ell}(x_1,\dotsc,x_n,t_1^{-1},\dotsc,t_\ell^{-1}).
\label{singleschurexpression}
\end{align}
\end{thm}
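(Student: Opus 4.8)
The plan is to begin from the expression obtained immediately before the theorem statement, namely the rewriting of the right-hand side of~\eqref{multiplesummationone} produced by combining the determinant evaluations~\eqref{eq:diff_ops_h_det_schur} and~\eqref{eq:h_tvars_det_schur}, and then to finish with two elementary simplifications followed by a single recognition step. First I would observe that the combinatorial constants collapse: the prefactor $\frac{1}{\ell!}$ is cancelled exactly by the $\ell!$ generated when the generalized Cauchy--Binet formula is applied, and the sign $(-1)^{2\inv(\nu)} = 1$ since it is an even power of $-1$. This reduces the quantity to
\[
\prod_{i=1}^{\ell} t_i^m \prod_{i=1}^{\ell}\prod_{j=1}^{n}(1 - t_i x_j) \sum_{\mu \subseteq m^{\ell}} s_{\mu}(\xx)\, s_{m^{\ell}/\mu}(\bt^{-1}).
\]

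The only remaining task is then a purely symmetric-function identity: to recognize $\sum_{\mu \subseteq m^{\ell}} s_{\mu}(\xx)\, s_{m^{\ell}/\mu}(\bt^{-1})$ as $s_{m^{\ell}}(x_1, \dotsc, x_n, t_1^{-1}, \dotsc, t_{\ell}^{-1})$. I would establish this through the coproduct (branching) rule for Schur functions, which is immediate from their combinatorial description: a semistandard tableau of shape $m^{\ell}$ on the totally ordered alphabet $\xx \sqcup \bt^{-1}$ restricts to a straight-shape tableau of some $\mu \subseteq m^{\ell}$ using the letters of $\xx$ together with a skew tableau of shape $m^{\ell}/\mu$ using the letters of $\bt^{-1}$, and this splitting is weight-preserving and bijective. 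Summing over $\mu$ gives $s_{m^{\ell}}(\xx, \bt^{-1})$, which is exactly the claimed Schur function, and substituting back yields~\eqref{singleschurexpression}.

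Alternatively, I would route the last step through the already-quoted finite Cauchy formula~\eqref{eq:finite_Cauchy_formula}. Since the skew shape $m^{\ell}/\mu$ becomes the straight shape $\mu^{\comp}$ after a rotation by $\pi$, and skew Schur functions are invariant under this rotation, we have $s_{m^{\ell}/\mu}(\bt^{-1}) = s_{\mu^{\comp}}(\bt^{-1})$; after relabeling $\mu$ the sum then matches the left-hand side of~\eqref{eq:finite_Cauchy_formula} verbatim, giving the same conclusion.

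I do not expect a genuine obstacle in this theorem itself: the substantive work has already been carried out in the preceding lemmas, in the determinant evaluations~\eqref{eq:diff_ops_h_det_schur} and~\eqref{eq:h_tvars_det_schur}, and in the refined Cauchy--Binet step. The only point demanding a moment's care is verifying the cancellation of the $\ell!$ factors and that the sign is trivial; once that bookkeeping is settled, the result is a textbook application of the branching rule. Finally, I would record that, combined with Theorem~\ref{thm:Schur_measure_LPP} and the complementation identity~\eqref{eq:schur_complement}, this computation furnishes the promised independent, integrability-flavored derivation of the Baik--Rains and Johansson relation between last-passage percolation and the Schur measure.
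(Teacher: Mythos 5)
Your proposal is correct and follows essentially the same route as the paper: starting from the rewritten form of~\eqref{multiplesummationone}, cancelling $\frac{1}{\ell!}$ against $\ell!$ with the sign $(-1)^{2\inv(\nu)}=1$, and identifying $\sum_{\mu \subseteq m^{\ell}} s_{\mu}(\xx)\, s_{m^{\ell}/\mu}(\bt^{-1})$ as $s_{m^{\ell}}(\xx,\bt^{-1})$ either by the branching rule (the paper's ``combinatorial description of the Schur functions'') or by the finite Cauchy formula~\eqref{eq:finite_Cauchy_formula}. One minor attribution slip, which does not affect the argument: the compensating $\ell!$ is not generated by the generalized Cauchy--Binet formula (that step produced the $\frac{1}{\ell!}$); it arises from collecting the $\ell!$ orderings of each distinct-entry tuple $\nu$ that yield the same partition $\mu = \overline{\nu} - \rho_{\ell}$.
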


Finally, using~\eqref{eq:schur_complement} and the finite Cauchy formula for Schur functions~\eqref{eq:finite_Cauchy_formula}, one can rewrite~\eqref{singleschurexpression} to the expression~\eqref{eq:Schur_measure_expr} using the Schur measure:
\begin{align*}
P(G(\ell,n) \le m)
&=\prod_{i=1}^\ell t_i^m \prod_{i=1}^\ell \prod_{j=1}^n (1-t_i x_j)
s_{m^\ell}(x_1,\dotsc,x_n,t_1^{-1},\dotsc,t_\ell^{-1}) \\
&=\prod_{i=1}^\ell \prod_{j=1}^n (1-t_i x_j)
\sum_{\lambda \subseteq m^\ell}
s_\lambda(x_1,\dotsc,x_n)
\prod_{i=1}^\ell t_i^m s_{\lambda^\dagger}(t_1^{-1},\dotsc,t_\ell^{-1})
\nonumber \\
&=\prod_{i=1}^\ell \prod_{j=1}^n (1-t_i x_j)
\sum_{\lambda \subseteq m^\ell}
s_\lambda(x_1,\dotsc,x_n) s_\lambda(t_1,\dotsc,t_\ell) \\
&=\sum_{\lambda \subseteq m^\ell}
P_{\mathrm{Schur}}(\lambda).
\end{align*}
Therefore, we have completed our new proof of Theorem~\ref{thm:Schur_measure_LPP}.

The computations above additionally give probabilistic proofs of identities for Schur functions and the refined dual Grothendieck polynomials.
For example, combining~\eqref{refineddeterminantforlpp} and~\eqref{eq:Schur_measure_expr}, we get the following identity.

\begin{cor}
We have
\begin{align}
\sum_{\lambda:\lambda_1 \le m} s_\lambda(x_1,\dotsc,x_n)
s_\lambda(t_1,\dotsc,t_\ell)
=\prod_{i=1}^\ell t_i^m
\det\bigl[
\Delta_{\bt^{-1}}^{j-i-1} h_\nu(\xx)|_{\nu=m+1}
\bigr]_{i,j=1}^{\ell}. \label{asummationidentity}
\end{align}
\end{cor}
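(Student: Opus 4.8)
The plan is to read off the corollary by equating the two expressions for the last-passage probability $P(G(\ell,n) \le m)$ that have already been established. On one hand, the determinant form~\eqref{refineddeterminantforlpp} gives
\[
P(G(\ell,n) \le m) = \prod_{i=1}^\ell t_i^m \prod_{i=1}^\ell \prod_{j=1}^n (1-t_i x_j) \det\bigl[ \Delta_{\bt^{-1}}^{j-i-1} h_\nu(\xx)|_{\nu=m+1} \bigr]_{i,j=1}^{\ell},
\]
while Theorem~\ref{thm:Schur_measure_LPP}, in the form~\eqref{eq:Schur_measure_expr}, gives
\[
P(G(\ell,n) \le m) = \prod_{i=1}^\ell \prod_{j=1}^n (1-t_i x_j) \sum_{\lambda:\lambda_1 \le m} s_\lambda(\xx) s_\lambda(\bt).
\]

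First I would set these two right-hand sides equal. Since we are working with $t_i, x_j \in (0,1)$, the scalar $\prod_{i=1}^\ell \prod_{j=1}^n (1-t_i x_j)$ is strictly positive, hence nonzero, so it may be cancelled from both sides. What remains is precisely the asserted identity~\eqref{asummationidentity}, namely $\sum_{\lambda:\lambda_1 \le m} s_\lambda(\xx) s_\lambda(\bt) = \prod_{i=1}^\ell t_i^m \det\bigl[ \Delta_{\bt^{-1}}^{j-i-1} h_\nu(\xx)|_{\nu=m+1} \bigr]_{i,j=1}^{\ell}$.

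The one point requiring a word of care is the passage from the numerical (probabilistic) equality, valid for real parameters in $(0,1)$, to the algebraic identity~\eqref{asummationidentity} in the indeterminates. I would resolve this by noting that both sides are rational functions of the $x_j$ and $t_i$: the left side is in fact a polynomial, being a finite sum over the partitions $\lambda \subseteq m^\ell$, and the right side is a determinant whose entries are rational in $\bt^{-1}$ and polynomial in the $h_\nu(\xx)$. Two rational functions that agree on the nonempty open set $\{x_j, t_i \in (0,1)\}$ agree identically, so~\eqref{asummationidentity} holds as a formal identity.

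The genuine work for this corollary has all been carried out upstream, in deriving the difference-operator determinant form~\eqref{refineddeterminantforlpp} and the Schur-measure expression~\eqref{eq:Schur_measure_expr}; given those two, there is no real obstacle here, and the statement follows by matching the two formulas for the same probability and cancelling the common factor.
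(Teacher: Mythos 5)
Your proof is correct and is essentially the paper's own argument: the corollary is obtained exactly by combining the determinant formula~\eqref{refineddeterminantforlpp} with the Schur-measure expression~\eqref{eq:Schur_measure_expr} for $P(G(\ell,n) \le m)$ and cancelling the common factor $\prod_{i=1}^\ell \prod_{j=1}^n (1-t_i x_j)$. Your extra remark about upgrading the numerical equality for parameters in $(0,1)$ to a formal identity of rational functions is a harmless refinement that the paper leaves implicit.
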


For example, $\ell=1$ of~\eqref{asummationidentity} is
\begin{align*}
\sum_{\lambda:\lambda_1 \le m} s_\lambda(x_1,\dotsc,x_n)
s_\lambda(t_1)
&=t_1^m \Delta_{t_1^{-1}}^{-1} h_\nu(\xx) \biggr|_{\nu=m+1}
=t_1^m \sum_{k=0}^\infty h_k(t_1^{-1})h_{m-k}(\xx)  \\
&=\sum_{k=0}^\infty t_1^{m-k} h_{m-k}(\xx)
=\sum_{k=0}^\infty h_{m-k}(t_1) h_{m-k}(\xx),
\end{align*}
which is usually proved using the Jacobi--Trudi identity for Schur polynomials.

Another example is the finite weighted Littlewood formula for the refined dual Grothendieck polynomials in Corollary~\ref{cor:littlewood_identity}, which we first proved using nonintersecting lattice paths.
Combining the expression for $P(G(\ell,n) \le m)$ using the refined dual Grothendieck polynomials
\[
P(G(\ell,n) \le m)
= \sum_{\lambda \subseteq m^{\ell}} P(\bG(n)=\lambda)
=\sum_{\lambda \subseteq m^\ell} \prod_{i=1}^\ell \prod_{j=1}^n (1-t_i x_j) \bt^\lambda g_\lambda(\xx; \bt^{-1})
\]
with the single Schur function expression~\eqref{singleschurexpression}, we get
\[
\sum_{\lambda \subseteq m^\ell}
\bt^\lambda g_\lambda(\xx;\bt^{-1})
= \prod_{i=1}^\ell t_i^m s_{m^\ell}(\xx, \bt^{-1}),
\]
which becomes nothing but Corollary~\ref{cor:littlewood_identity} after substituting $\bt^{-1} \mapsto \bt$.

We also give a probabilistic proof of the Cauchy identity of Corollary~\ref{cor:refined_Cauchy} by using two expressions for $P(G(\ell,2n) \le m)$.
First, we take $\xx = (x_1, \dotsc, x_{2n})$ and define the $\yy$-variables as $y_j := x_{n+j}$ for $j = 1, \dotsc, n$.
From~\eqref{singleschurexpression}, we have a single Schur function representation
\begin{equation}
P(G(\ell,2n) \le m)
=\prod_{i=1}^\ell t_i^m
\prod_{i=1}^\ell \prod_{j=1}^n (1-t_i x_j)
\prod_{i=1}^\ell \prod_{j=1}^n (1-t_i y_j)
s_{m^\ell}(\xx, \yy, \bt^{-1})
\label{firstexpressionforcauchy}
\end{equation}
on one hand.
On the other hand, let us expand $P(G(\ell,2n) \le m)$ as
\begin{align}
P(G(\ell,2n) \le m)=\sum_{\mu \subseteq m^\ell}
P({\bf G}(n)=\mu)P(G(\ell,2n) \le m|{\bf G}(n)=\mu),
\label{expansionlpp}
\end{align}
and derive another expression.
Let us examine $P(G(\ell,2n) \le m|{\bf G}(n)=\mu)$.
Using~\eqref{refineddeterminantformula}, we note that~\eqref{expansionlpp} can be written as
\begin{align}
P(G(\ell,2n) \le m|{\bf G}(n)=\mu)
& =\sum_{\lambda \subseteq m^{\ell}}
P({\bf G}(2n)=\lambda|{\bf G}(n)=\mu) \nonumber \\
& =\sum_{\lambda \subseteq m^{\ell}}
\prod_{i=1}^\ell \prod_{j=1}^n (1-t_i y_j) \bt^{\lambda-\mu}
\det \bigl[ \Delta_{\bt^{-1}}^{j-i} h_\nu(\yy)|_{\nu=\lambda_i-\mu_j}  \bigr]_{i,j=1}^\ell. \label{applythesamestep}
\end{align}
We can perform the same computation to~\eqref{applythesamestep} as when we transformed~\eqref{beforesummingup} to a single determinant~\eqref{refineddeterminantforlpp}.
Therefore, we obtain
\begin{align}
P(G(\ell,2n) \le m|{\bf G}(n)=\mu)
=\bt^\mu \prod_{i=1}^\ell t_i^m \prod_{i=1}^\ell \prod_{j=1}^n (1-t_i y_j)
\det \bigl[ \Delta_{\bt^{-1}}^{j-i-1} h_\nu(\yy)|_{\nu=m+1-\mu_j} \bigr]_{i,j=1}^\ell. \label{singledeterminantforcauchy}
\end{align}
Inserting Theorem~\ref{thm:refined_last_passage} and~\eqref{singledeterminantforcauchy} into~\eqref{expansionlpp} results in
\begin{align}
P(G(\ell,2n) \le m)
&=\prod_{i=1}^\ell t_i^m
\prod_{i=1}^\ell \prod_{j=1}^n (1-t_i x_j)
\prod_{i=1}^\ell \prod_{j=1}^n (1-t_i y_j) \nonumber \\
& \hspace{20pt} \times \sum_{\mu \subseteq m^\ell}
g_\mu(\xx;\bt^{-1})
\det \bigl[ \Delta_{\bt^{-1}}^{j-i-1} h_\nu(\yy)|_{\nu=m+1-\mu_j}  \bigr]_{i,j=1}^\ell.
\label{secondexpressionforcauchy}
\end{align}
Combining the two expressions~\eqref{firstexpressionforcauchy} and~\eqref{secondexpressionforcauchy} for $P(G(\ell,2n) \le m)$, we get
\begin{align}
s_{m^\ell}(\xx, \yy, t_1^{-1}, \dotsc, t_\ell^{-1})
= \sum_{\mu \subseteq m^\ell} g_\mu(\xx;\bt^{-1})
\det [ \Delta_{\bt^{-1}}^{j-i-1} h_\nu(\yy)|_{\nu=m+1-\mu_j}  ]_{i,j=1}^\ell.
\label{generalizedcauchy}
\end{align}

Now we take the $t_\ell \to \infty$ limit of~\eqref{generalizedcauchy}, where the left hand side becomes
\[
s_{m^\ell}(x_1,\dotsc,x_n,y_1,\dotsc,y_n,t_1^{-1},\dotsc,t_{\ell-1}^{-1}).
\]
Let us examine the $t_\ell \to \infty$ limit of the determinant
$\det \bigl[ \Delta_{\bt^{-1}}^{j-i-1} h_\nu(\yy)|_{\nu=m+1-\mu_j}  \bigr]_{i,j=1}^\ell$ in the right hand side.
Using
\begin{align*}
\Delta_{\bt^{-1}}^{j-i} h_\nu(\yy)|_{\nu=m-\mu_j}
&=\Delta_{\bt^{-1}}^{j-i-1}(
h_\nu(\yy)|_{\nu=m+1-\mu_j}-t_i^{-1} h_\nu(\yy)|_{\nu=m-\mu_j}), \\
\lim_{t_\ell \to \infty}
\Delta_{\bt^{-1}}^{j-\ell-1} h_\nu(\yy)|_{\nu=m+1-\mu_j}
&=\Delta_{\bt^{-1}}^{j-\ell} h_\nu(\yy)|_{\nu=m-\mu_j},
\end{align*}
and row operations, we get
\begin{equation}
\lim_{t_\ell \to \infty}
\det \bigl [ \Delta_{\bt^{-1}}^{j-i-1} h_\nu(\yy)|_{\nu=m+1-\mu_j}  \bigr]_{i,j=1}^\ell
=
\det \bigl [ \Delta_{\bt^{-1}}^{j-i} h_\nu(\yy)|_{\nu=m-\mu_j} \bigr]_{i,j=1}^\ell.
\label{fromrowoperations}
\end{equation}
Next we claim that
\begin{align}
g_{\mu^{\dagger}}(\yy; (\bt^{\dagger})^{-1})
=\det \bigl[ \Delta_{\bt^{-1}}^{j-i} h_\nu(\yy)|_{\nu=m-\mu_j} \bigr]_{i,j=1}^\ell. \label{toshowtogetcauchy}
\end{align}
To show~\eqref{toshowtogetcauchy}, we first apply the determinant formula~\eqref{refineddeterminantformula} to $g_{\mu^{\dagger}}(\yy; (\bt^{\dagger})^{-1})$ to get
\begin{align}
g_{\mu^{\dagger}}(\yy; (\bt^{\dagger})^{-1})
=\det [ \Delta_{{\bt^\dagger}^{-1}}^{j-i} h_\nu(\yy)|_{\nu=\mu_i^\dagger}
]_{i,j=1}^\ell. \label{toshowtogetcauchytwo}
\end{align}
We consider the elements
$\Delta_{{\bt^\dagger}^{-1}}^{j-i} h_\nu(\yy)|_{\nu=\mu_i^\dagger}$
for the case $j \ge i$.
If one change the indices from $i,j$ to $i'=\ell+1-j'$,
$j' = \ell+1-i$, we can write it as
\begin{align*}
\Delta_{{\bt^\dagger}^{-1}}^{j-i} h_\nu(\yy)|_{\nu=\mu_i^\dagger}
&=\Delta_{(t_i^\dagger)^{-1}} \cdots \Delta_{(t_{j-1}^\dagger)^{-1}}
h_\nu(\yy)|_{\nu=\mu_i^\dagger}  \\
&=\Delta_{t_{\ell-i}^{-1}} \cdots \Delta_{t_{\ell+1-j}^{-1}}
h_\nu(\yy)|_{\nu=\mu_i^\dagger}  \\
&=\Delta_{t_{j'-1}^{-1}} \cdots \Delta_{t_{i'}^{-1}}
h_\nu(\yy)|_{\nu=\mu_{\ell+1-j'}^\dagger}  \\
&=\Delta_{\bt^{-1}}^{j'-i'} h_\nu(\yy)|_{\nu=\mu_{\ell+1-j'}^\dagger},
\end{align*}
and $j \ge i$ becomes $j' \ge i'$.

For the case $j < i$ which corresponds to $j' < i'$, we get
\begin{align*}
\Delta_{{\bt^\dagger}^{-1}}^{j-i} h_\nu(\yy)|_{\nu=\mu_i^\dagger}
&=\Delta_{(t_j^\dagger)^{-1}}^{-1} \cdots \Delta_{(t_{i-1}^\dagger)^{-1}}^{-1}
h_\nu(\yy)|_{\nu=\mu_i^\dagger}  \\
&=\Delta_{t_{\ell-j}^{-1}}^{-1} \cdots \Delta_{t_{\ell+1-i}^{-1}}^{-1}
h_\nu(\yy)|_{\nu=\mu_i^\dagger}  \\
&=\Delta_{t_{i'-1}^{-1}}^{-1} \cdots \Delta_{t_{j'}^{-1}}^{-1}
h_\nu(\yy)|_{\nu=\mu_{\ell+1-j'}^\dagger}  \\
&=\Delta_{\bt^{-1}}^{j'-i'} h_\nu(\yy)|_{\nu=\mu_{\ell+1-j'}^\dagger}.
\end{align*}
Hence we find~\eqref{toshowtogetcauchytwo} can be rewritten as
\[
g_{\mu^{\dagger}}(\yy; (\bt^{\dagger})^{-1})
=\det \left[ \Delta_{\bt^{-1}}^{j-i} h_\nu(\yy)
|_{\nu=\mu_{\ell+1-j}^\dagger} \right]_{i,j=1}^\ell,
\]
which is nothing but~\eqref{toshowtogetcauchy} by noting that $\mu_{\ell+1-j}^\dagger = m - \mu_j$ for $j=1, \dotsc, \ell$.

From~\eqref{fromrowoperations} and~\eqref{toshowtogetcauchy} we have
\begin{equation}
\label{eq:determinant_g_dual}
\lim_{t_\ell \to \infty}
\det \left[ \Delta_{\bt^{-1}}^{j-i-1} h_\nu(\yy)|_{\nu=m+1-\mu_j}  \right]_{i,j=1}^\ell
=g_{\mu^{\dagger}}(\yy; (\bt^{\dagger})^{-1}).
\end{equation}
Therefore, taking the $t_\ell \to \infty$ limit of~\eqref{generalizedcauchy} and using~\eqref{eq:determinant_g_dual}, we have
\[
s_{m^\ell}(\xx,\yy,\bt^{-1}) =\sum_{\mu \subseteq m^\ell} g_\mu(\xx; \bt^{-1}) g_{\mu^{\dagger}}(\yy; (\bt^{\dagger})^{-1}),
\]
which is nothing but the Cauchy identity in Corollary~\ref{cor:refined_Cauchy}.

\subsection{Integral representation}

One can also derive an integral representation of the refined skew dual Grothendieck polynomials from~\eqref{singledeterminantexpression} by refining the argument in~\cite{Johansson10}.
Let $z$ and $\zz = (z_1, z_2, \dotsc, z_{\ell})$ be commuting indeterminates.

First, by using
\[
\sum_{\nu \ge 0} f_j(\nu)z^\nu=\frac{1}{1-x_j z}
\]
and a straightforward induction on $n$, one can show
\begin{align}
\displaystyle (f_1*f_2*\cdots*f_n)(\nu)=
\frac{1}{2 \pi i} \int_{\gamma_r} \frac{dz}{\prod_{m=1}^n (1-x_m z)z^{\nu+1}},
\label{convolutionintegration}
\end{align}
where $\gamma_r$ is a circle centered at the origin
with radius $r$ satisfying $0 < r < x_m^{-1}$ for all $m=1,\dotsc,n$.

Next, one can show by acting the weighted difference operators on the integral representation~\eqref{convolutionintegration} that
\begin{subequations}
\label{eq:difference_integration}
\begin{align}
\Delta_{t_k} \cdots \Delta_{t_2} \Delta_{t_1}
(f_1*f_2*\cdots*f_n)(\nu)
=\frac{1}{2 \pi i} \int_{\gamma_r}
\frac{\prod_{m=1}^k (1-t_m z)}{\prod_{m=1}^n (1-x_m z)z^{\nu+k+1}} dz,
\label{eq:difference_integration_one}
\\
\Delta_{t_k}^{-1} \cdots \Delta_{t_2}^{-1} \Delta_{t_1}^{-1}
(f_1*f_2*\cdots*f_n)(\nu)
=\frac{1}{2 \pi i} \int_{\gamma_r}
\frac{dz}{\prod_{m=1}^n (1-x_m z)\prod_{m=1}^k (1-t_m z)z^{\nu-k+1}},
\label{eq:difference_integration_two}
\end{align}
\end{subequations}
for $k \ge 0$. Equations~\eqref{eq:difference_integration} can be shown by induction on $k$.
The integration circle $\gamma_r$ in~\eqref{eq:difference_integration_two} has to further satisfy $0<r<t_m^{-1}$ for $m=1,\dotsc,k$.

Applying~\eqref{eq:difference_integration} to the determinant~\eqref{singledeterminantexpression}, we get the following.

\begin{prop}
The refined skew dual Grothendieck polynomials
have the following determinant representation:
\begin{equation}
\dG_{\lambda/\mu}(\xx;\bt) = \det [F_{ij}]_{i,j=1}^{\ell},
\label{eq:integral_repr_dG}
\end{equation}
where
\[
F_{ij} = \begin{cases}
\displaystyle \frac{1}{2 \pi i} \int_{\gamma_r} \frac{\prod_{m=i}^{j-1} (1-t_m z)}{\prod_{m=1}^n (1-x_m z) z^{\lambda_i-\mu_j+j-i+1}} \, dz
 & \text{if } j \geq i, \\[12pt]
\displaystyle \frac{1}{2 \pi i} \int_{\gamma_r} \frac{1}{\prod_{m=1}^n (1-x_m z) \prod_{m=j}^{i-1} (1-t_m z) z^{\lambda_i-\mu_j+j-i+1}} \, dz
 & \text{if } j < i.
 \end{cases}
\]
\end{prop}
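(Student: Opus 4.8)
The plan is to obtain the claimed integral formula by inserting the contour-integral representations~\eqref{eq:difference_integration} directly into the single-determinant expression~\eqref{singledeterminantexpression}. That identity already gives
\[
\dG_{\lambda/\mu}(\xx;\bt)=\det \bigl[ \bigl(\Delta_\bt^{j-i} (f_1*\cdots*f_n) \bigr)(\lambda_i-\mu_j) \bigr]_{i,j=1}^{\ell},
\]
so it suffices to evaluate each entry $\bigl(\Delta_\bt^{j-i}(f_1*\cdots*f_n)\bigr)(\lambda_i-\mu_j)$ as a contour integral. By the definition~\eqref{eq:defrefinedmultipledifference} of $\Delta_\bt^{j-i}$, this splits into two cases according to the sign of $j-i$, and in each case the relevant integral has already been computed—up to a relabeling of the $t$-indices—in~\eqref{eq:difference_integration_one} and~\eqref{eq:difference_integration_two}.

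For $j \ge i$ we have $\Delta_\bt^{j-i}=\Delta_{t_i}\cdots\Delta_{t_{j-1}}$, a block of $k:=j-i$ forward difference operators indexed by $t_i,\dotsc,t_{j-1}$. The formula~\eqref{eq:difference_integration_one} was stated for the block $t_1,\dotsc,t_k$, but the same induction on the number of operators applies verbatim to any consecutive block, so I would record
\[
\bigl(\Delta_{t_i}\cdots\Delta_{t_{j-1}}(f_1*\cdots*f_n)\bigr)(\nu)
= \frac{1}{2\pi i}\int_{\gamma_r}\frac{\prod_{m=i}^{j-1}(1-t_m z)}{\prod_{m=1}^n(1-x_m z)\,z^{\nu+(j-i)+1}}\,dz.
\]
Substituting $\nu=\lambda_i-\mu_j$ turns the exponent into $\lambda_i-\mu_j+j-i+1$, which is exactly the $z$-power in $F_{ij}$ for $j\ge i$. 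The case $j<i$ I would handle symmetrically via~\eqref{eq:difference_integration_two}: here $\Delta_\bt^{j-i}=\Delta_{t_j}^{-1}\cdots\Delta_{t_{i-1}}^{-1}$ is a block of $k:=i-j$ inverse difference operators, so the same reasoning produces the integrand $\bigl[\prod_{m=1}^n(1-x_m z)\prod_{m=j}^{i-1}(1-t_m z)\,z^{\nu-(i-j)+1}\bigr]^{-1}$, and $\nu=\lambda_i-\mu_j$ again gives the exponent $\lambda_i-\mu_j+j-i+1$, matching $F_{ij}$ for $j<i$.

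Assembling these entrywise evaluations into the determinant yields~\eqref{eq:integral_repr_dG}. The only point requiring care is the choice of a common contour radius $r$: it must satisfy $0<r<x_m^{-1}$ for all $m$ (from~\eqref{convolutionintegration}) and, for the inverse-operator entries, also $0<r<t_m^{-1}$ for the relevant $t$-indices (as noted after~\eqref{eq:difference_integration_two}). Since only finitely many such constraints appear across all $\ell^2$ entries, a single admissible $r$ can be fixed for the whole determinant. I do not expect a genuine obstacle here—once~\eqref{singledeterminantexpression} and~\eqref{eq:difference_integration} are in hand, the argument is essentially bookkeeping of the $t$-index blocks and the $z$-exponents, together with the observation that the established inductions are insensitive to shifting the starting index of the $t$-variables.
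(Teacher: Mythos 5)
Your proposal is correct and follows essentially the same route as the paper: the paper also derives the result by inserting the contour-integral formulas~\eqref{eq:difference_integration} (with the $t$-indices shifted to the relevant blocks) entrywise into the single-determinant expression~\eqref{singledeterminantexpression}. Your bookkeeping of the $z$-exponents and the contour-radius constraints matches the paper's argument.
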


Note that if one sets $\mu = \emptyset$ and $t_j=0$, $j=1,\dotsc,\ell-1$, then $g_{\lambda/\mu}(\xx;\bt)$ is nothing but the Schur polynomials $s_\lambda(\xx)$.
In this case, we see that~\eqref{eq:integral_repr_dG} becomes the famous integral representation of the Schur polynomials (see, \textit{e.g.},~\cite{AOS96,DKJM83}):
\begin{align}
s_\lambda(\xx)&=\det
\Bigg[
\frac{1}{2 \pi i} \int_{\gamma_r} \frac{dz}
{\prod_{m=1}^n (1-x_m z) z^{\lambda_i-i+j+1}}
\Bigg]_{i,j=1}^{\ell} \nonumber \\
&=\det
\Bigg[
\frac{1}{2 \pi i} \oint \frac{z^{\lambda_i-i+j+n-1}}
{\prod_{m=1}^n (z-x_m)} dz \Bigg]_{i,j=1}^{\ell} \nonumber \\
&=\det
\Bigg[
\frac{1}{2 \pi i} \oint \frac{z_i^{\lambda_i-i+j+n-1}}
{\prod_{m=1}^n (z_i-x_m)} dz_i \Bigg]_{i,j=1}^{\ell} \nonumber \\
&=\frac{1}{(2 \pi i)^R\ell} \oint \cdots \oint
\frac{\prod_{i=1}^{\ell} z_i^{{\lambda_i}+n-i} \det [z_i^{j-1} ]_{i,j=1}^{\ell}}{\prod_{i=1}^\ell \prod_{m=1}^n (z_i-x_m)}
dz_1 \cdots dz_\ell \nonumber \\
&=
\frac{1}{(2 \pi i)^\ell} \oint \cdots \oint
\frac{\prod_{i=1}^{\ell} z_i^{{\lambda_i}+n-i}
\prod_{1 \le i < j \le \ell} (z_j-z_i)
}{\prod_{i=1}^\ell \prod_{m=1}^n (z_i-x_m)}
dz_1 \cdots dz_\ell. \label{Schurintegral}
\end{align}
Here, the integration contour in~\eqref{Schurintegral} surrounds $x_m$ for all $m=1,\dotsc,n$ and the origin.
Note also that~\eqref{Schurintegral} can be regarded as the $x_{n+1} = \cdots = x_\ell=0$ case of the following integral representation
\[
s_\lambda(\xx) =
\frac{1}{(2 \pi i)^\ell} \oint \cdots \oint
\frac{\prod_{i=1}^{\ell} z_i^{{\lambda_i}+\ell-i} \prod_{1 \le i < j \le \ell} (z_j-z_i)}{\prod_{i=1}^\ell \prod_{m=1}^\ell (z_i-x_m)} \, dz_1 \cdots dz_\ell.
\]

We can perform the same analysis in~\eqref{Schurintegral} for general $\bt$ to obtain an integral representation for the refined dual Grothendieck polynomials. The key step is determining a closed form for the corresponding determinant.

\begin{prop}
\label{prop:refined_expansion_det}
Define
\[
G_{ij} = \begin{cases}
\prod_{m=i}^{j-1} (1 - t_m z_i^{-1}) & \text{if } j \geq i, \\
\prod_{m=j}^{i-1} (1 - t_m z_i^{-1})^{-1} & \text{if } j < i.
\end{cases}
\]
We have
\[
\det \bigl[ z_i^{j-1} G_{ij} \bigr]_{i,j=1}^{\ell} = \prod_{i < j} \frac{z_j - z_i}{1- t_i z_j^{-1}} = \prod_{i < j} \frac{(z_j - z_i) z_j}{z_j - t_i}.
\]
Moreover, we have
\[
\dG_{\lambda}(\xx; \bt) =
\frac{1}{(2 \pi i)^\ell} \oint \cdots \oint
\frac{\prod_{i=1}^{\ell} z_i^{{\lambda_i}+\ell-i} \prod_{1\leq i < j \leq \ell} (z_j - z_i) z_j}{\prod_{i=1}^{\ell} \prod_{m=1}^{\ell} (z_i-x_m) \prod_{1\leq i < j \leq \ell} (z_j - t_i)} \, dz_1 \cdots dz_{\ell},
\]
where the contour surrounds $x_m$ for all $m=1,\dotsc,n$; $t_m$  for all $m = 1, \dotsc, \ell$; and the origin.
\end{prop}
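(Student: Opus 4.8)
The plan is to prove the two assertions in turn: first the closed form for $\det[z_i^{j-1}G_{ij}]$, and then deduce the contour-integral formula for $\dG_{\lambda}(\xx;\bt)$ by specializing the integral representation~\eqref{eq:integral_repr_dG} (the skew formula for $\dG_{\lambda/\mu}$) to $\mu=\emptyset$ and running the same change of variables that takes the first line of the Schur computation~\eqref{Schurintegral} to the second. The determinant identity is where the genuine content lies; the passage to the integral is then bookkeeping that parallels the Schur case exactly.

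For the determinant identity, the key observation is that the piecewise definition of $G_{ij}$ is the single formula
\[
G_{ij} = \frac{\prod_{m=1}^{j-1}(1 - t_m z_i^{-1})}{\prod_{m=1}^{i-1}(1 - t_m z_i^{-1})},
\]
valid for all $i,j$ (a ratio of initial products reproduces both the $j\ge i$ and $j<i$ branches). The denominator depends only on the row index $i$, so it pulls out of the determinant. After extracting $\prod_{i=1}^{\ell}\prod_{m=1}^{i-1}(1-t_m z_i^{-1})^{-1}$, the remaining matrix has entries $z_i^{j-1}\prod_{m=1}^{j-1}(1-t_m z_i^{-1}) = \prod_{m=1}^{j-1}(z_i - t_m)$, whose $j$-th column is a monic polynomial of degree $j-1$ in $z_i$. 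Column reduction (adding multiples of lower-index columns) turns this into the Vandermonde matrix $[z_i^{j-1}]_{i,j}$ without changing the determinant, so its value is $\prod_{i<j}(z_j-z_i)$. Reassembling the extracted factor and reindexing the pairs $(m,i)$ with $m<i$ as $(i,j)$ with $i<j$ gives $\prod_{i<j}(z_j-z_i)\prod_{i<j}(1-t_i z_j^{-1})^{-1}$, which is precisely the claimed product.

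For the integral representation, I would start from~\eqref{eq:integral_repr_dG} at $\mu=\emptyset$, so that $\dG_{\lambda}(\xx;\bt)=\det[F_{ij}]_{i,j=1}^{\ell}$, assign the integration variable $z_i$ to the $i$-th row, and use multilinearity in the rows to write this as one $\ell$-fold contour integral. From each row I extract the common factor $\bigl(\prod_{m}(1-x_m z_i)\,z_i^{\lambda_i-i+1}\bigr)^{-1}$, leaving a matrix whose entries are $z_i^{-j}\prod_{m=i}^{j-1}(1-t_m z_i)$ for $j\ge i$ and $z_i^{-j}\prod_{m=j}^{i-1}(1-t_m z_i)^{-1}$ for $j<i$. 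Performing $z_i\mapsto z_i^{-1}$ in each variable, exactly as in the step from the first to the second line of~\eqref{Schurintegral}, carries $\gamma_r$ to a contour enclosing the $x_m$, the $t_m$, and the origin, and under it the reduced entry becomes $z_i^{\,j}G_{ij}=z_i\cdot z_i^{\,j-1}G_{ij}$. Hence the reduced determinant equals $\bigl(\prod_i z_i\bigr)\prod_{i<j}\frac{(z_j-z_i)z_j}{z_j-t_i}$ by the identity just proven. Collecting the powers of $z_i$ — the $\prod_i z_i$ from the determinant, the inverted extracted factor $z_i^{\lambda_i-i+1}$, the $z_i^{-2}$ from the Jacobian, and the conversion $\prod_{m}(1-x_m z_i)=z_i^{-\ell}\prod_{m=1}^{\ell}(z_i-x_m)$ under the convention $n=\ell$ (padding $x_{n+1}=\cdots=x_\ell=0$, as in the Schur remark) — produces the numerator weight $z_i^{\lambda_i+\ell-i}$ and denominator $\prod_{m=1}^{\ell}(z_i-x_m)$, giving the stated integrand.

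The substantive obstacle is the determinant identity, but once the uniform ratio formula for $G_{ij}$ is noticed, the reduction to a polynomial Vandermonde is immediate. The only delicate remaining point is the exponent and sign bookkeeping in the inversion $z_i\mapsto z_i^{-1}$: the $(-1)^{\ell}$ from the $\ell$ Jacobian factors $-dz_i/z_i^{2}$ is absorbed by reversing the orientation of the $\ell$ contours, and the power counting $(\lambda_i-i+1)+\ell+1-2=\lambda_i+\ell-i$ must be tracked carefully. Since this is identical to the already-recorded Schur case~\eqref{Schurintegral}, no new difficulty arises, and the two pieces combine to give the displayed formula.
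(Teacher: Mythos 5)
Your proposal is correct and follows essentially the same route as the paper: your uniform ratio formula for $G_{ij}$ is exactly the paper's step of multiplying row $i$ by $\prod_{m=1}^{i-1}(1-t_m z_i^{-1})$, reducing the determinant to $\det\bigl[\prod_{m=1}^{j-1}(z_i - t_m)\bigr]_{i,j=1}^{\ell}$, which is then identified with the Vandermonde determinant, and the integral formula is obtained by rerunning the computation of~\eqref{Schurintegral} on~\eqref{eq:integral_repr_dG} at $\mu=\emptyset$, just as the paper indicates. The only cosmetic difference is that you evaluate the polynomial Vandermonde by column reduction while the paper uses factor exhaustion and degree counting; both are standard and equivalent.
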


\begin{proof}
We first multiply the $i$-th row by $\prod_{m=1}^{i-1} (1 - t_m z_i^{-1})$. Therefore, we have
\begin{align*}
\det \bigl[ z_i^{j-1} G_{ij} \bigr]_{i,j=1}^{\ell} & = \frac{\det \bigl[ z_i^{j-1} \prod_{m=1}^{j-1} (1 - t_m z_i^{-1}) \bigr]_{i,j=1}^{\ell}}{\prod_{i < j} (1- t_i z_j^{-1})}.
\end{align*}
To compute that the determinant on the right hand side is $\prod_{i<j} (z_j - z_i)$, we can follow many of the classical proofs of the Vandermonde determinant to show the claim.
Indeed, we apply factor exhaustion by noting that if $z_i = z_j$ for any $i \neq j$, then the determinant is $0$. Moreover, we note that both the determinant and claim have degree $\binom{n}{2}$ if we consider $\bt$ to be scalars, and the top-degree components correspond to the usual Vandermonde determinant.
\end{proof}


\section{Refined Grothendieck polynomials}
\label{sec:refined_grothendiecks}

We will construct a vertex model for refined Grothendieck functions by using a refined version of the Schur expansion of symmetric Grothendieck functions due to Lenart~\cite[Thm.~2.2]{Lenart00}.
We then relate our vertex model to nonintersecting lattice paths and explore some consequences.
For this section, we consider the indeterminates $\bt = (t_1, t_2, \dotsc, t_n)$.

\subsection{Algebraic expressions}

Here we compute a Schur function expansion for refined Grothendieck functions $\G_{\lambda}(\xx; \bt)$ and then describing $\G_{\lambda}(\xx; \bt)$ as a multi-Schur function.
The Schur function expansion is just a refined version of Lenart~\cite[Thm.~2.2]{Lenart00} using the uncrowding operation of Buch~\cite[Sec.~6]{Buch02} (see also~\cite{RTY18}).
The uncrowding map is given by applying RSK to the reading word of each row considered as a hook shape and recording the distance the number of rows a letter has moved.

\begin{prop}
\label{prop:refined_G_schur}
We have
\[
\G_{\lambda}(\xx; \bt) = \sum_{\mu \supseteq \lambda} E_{\lambda}^{\mu}(-\bt) s_{\mu}(\xx),
\]
where
\[
E_{\lambda}^{\mu}(\bt) := \sum_{\mcT} \prod_{\mcT_{ij}} t_{i-\mcT_{ij}},
\]
where the sum is over all increasing elegant tableaux $\mcT$ of shape $\mu / \lambda$ (\textit{i.e.}, rows are also strictly increasing) and the product is over all values $\mcT_{ij}$ in row $i$ and column $j$.
\end{prop}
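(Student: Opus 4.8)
The plan is to refine Lenart's Schur expansion of the symmetric Grothendieck polynomial~\cite[Thm.~2.2]{Lenart00} by carrying the row-dependent weight $\bt$ through Buch's uncrowding bijection~\cite[Sec.~6]{Buch02}. Recall that uncrowding produces, for each set-valued tableau $T \in \svt^n(\lambda)$, an ordinary semistandard tableau $P$ of some shape $\mu \supseteq \lambda$ together with a recording tableau on the skew shape $\mu/\lambda$; packaged together, this is the content-preserving bijection
\[
\svt^n(\lambda) \;\xrightarrow{\;\sim\;}\; \bigsqcup_{\mu \supseteq \lambda} \ssyt^n(\mu) \times \mcI(\mu/\lambda),
\]
where $\mcI(\mu/\lambda)$ denotes the set of recording tableaux. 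First I would verify that $\mcI(\mu/\lambda)$ is precisely the set of increasing elegant tableaux of shape $\mu/\lambda$: the column-strictness together with the flagging $\mcT_{ij} \leq i-1$ is exactly the elegance condition, while the strict increase along rows is the extra ``increasing'' condition. These are the standard structural properties of the bumping used in uncrowding, and they recover Lenart's count $g_{\lambda\mu} = \absval{\mcI(\mu/\lambda)}$ upon forgetting weights.

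Since uncrowding only rearranges the entries of $T$, it preserves content, so $\xx^T = \xx^P$ and it remains only to track the $\bt$-weight. The construction records in box $(i,j)$ of $\mcT$ the number $v = \mcT_{ij}$ of rows that the corresponding letter has been displaced during insertion. Because each insertion bumps a letter strictly downward, a box created in row $i$ of $\mu$ with recorded displacement $v$ arises from an extra entry sitting in row $r = i - v$ of $T$; conversely, each extra entry of $T$ gives rise to exactly one box of $\mu/\lambda$ in this way. Consequently the extra entries in row $r$ of $T$ are in bijection with the boxes of $\mcT$ satisfying $i - \mcT_{ij} = r$. (Note $\mcT_{ij} \geq 1$ here, since a new box in row $i$ receives a letter bumped from a strictly higher row, forcing the first row of the elegant tableau to be empty.)

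By definition each extra entry in row $r$ of $T$ carries weight $(-1)\,t_r$, coming from the sign $(-1)^{\absval{\mathsf{e}(T)}}$ and the factor $\bt^{\mathsf{e}(T)}$. Translating through the previous paragraph, the total weight of $T$ equals $\prod_{\mcT_{ij}} (-t_{i-\mcT_{ij}})$, which depends only on the recording tableau $\mcT$ and not on $P$. Summing over the fiber above a fixed $\mu$ (equivalently, over all $P \in \ssyt^n(\mu)$ and all $\mcT \in \mcI(\mu/\lambda)$) and using $\sum_{P} \xx^P = s_\mu(\xx)$ yields
\[
\G_\lambda(\xx; \bt) = \sum_{\mu \supseteq \lambda} \Biggl( \sum_{\mcT \in \mcI(\mu/\lambda)} \prod_{\mcT_{ij}} (-t_{i-\mcT_{ij}}) \Biggr) s_\mu(\xx) = \sum_{\mu \supseteq \lambda} E_\lambda^\mu(-\bt)\, s_\mu(\xx),
\]
as claimed; specializing $t_i = -\beta$ collapses this to Lenart's formula and serves as a consistency check.

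The main obstacle is the middle step: one must confirm that Buch's uncrowding insertion sends the statistic ``row index of an extra entry'' precisely to ``final row minus recorded displacement,'' and that the resulting recording tableau is exactly an increasing elegant tableau. This amounts to a careful reading of the insertion algorithm (tracking which letter creates which new box and how far it travels), but once the displacement bookkeeping is pinned down the weight identity $-t_r = -t_{i-\mcT_{ij}}$ is immediate and the remainder of the argument is formal.
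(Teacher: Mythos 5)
Your proposal is correct and takes essentially the same approach as the paper: the paper's proof (a single sentence) likewise refines Lenart's expansion through Buch's uncrowding map, observing that extra entries in row $i$ of the set-valued tableau become entries $k-i$ in row $k$ of the recording tableau, which is exactly your identity $r = i - \mcT_{ij}$. Your write-up simply makes the displacement bookkeeping, the identification of recording tableaux with increasing elegant tableaux, and the fiber-wise summation explicit.
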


\begin{proof}
The extra entries in the $i$-th row of a set-valued tableau add entries $k - i$ in the $k$-th row of the recording tableau under uncrowding.
\end{proof}

\begin{ex}
We have
\[
\G_{21}(\xx; \bt) = s_{21}(\xx) - t_1 s_{22}(\xx) - (t_1 + t_2) s_{211}(\xx) + (t_1^2 + t_1 t_2) s_{221}(\xx) - t_1^2 t_2 s_{222}(\xx).
\]
\end{ex}

Next, we will express $\G_{\lambda}(\xx; \bt)$ as a multi-Schur definition, giving a refined analog of~\cite[Thm.~2.1]{Lenart00} (see also~\cite[Eq.~(2)]{LN14}). In order to state it, we need to define the (Newton) divided difference operators
\[
\partial_i f := \frac{f - s_i f}{x_i - x_{i+1}},
\qquad\qquad
\partial_w f := \partial_{i_1} \dotsm \partial_{i_m} f,
\]
for any reduced expression $w = s_{i_1} \dotsm s_{i_m}$.
This is well-defined as the divided difference operators satisfy the braid relations.

\begin{thm}
\label{thm:refined_Grothendieck_multiSchur}
We have
\begin{align*}
\partial_{w_0} \! \left( x^{\lambda + \rho_n} \prod_{1 \leq i < j \leq n} (1 - t_i x_j) \right) \!
& = (-1)^{\frac{n(n-1)}{2}} \bt^{\rho_n} s_{\widetilde{\lambda}}\bigl(\xx, \xx - (t_1^{-1}), \dotsc, \xx - (t_1^{-1}, \dotsc, t_{n-1}^{-1}) \bigr)
= \G_{\lambda}(\xx; \bt).
\end{align*}
where $\widetilde{\lambda} = (\lambda_1, \lambda_2 + 1, \dotsc, \lambda_n + n - 1)$ and $w_0$ is the longest element of $S_n$.
\end{thm}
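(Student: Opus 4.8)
The plan is to prove the two equalities separately, using a bialternant as the bridge. For the first equality I would start from the antisymmetrizer form of the top divided difference, $\partial_{w_0}f=\bigl(\sum_{w\in S_n}\operatorname{sgn}(w)\,w(f)\bigr)/\prod_{i<j}(x_i-x_j)$, where the $w$ act only on the $x$-variables and the $t_m$ are inert. The decisive point is that, grouping the factors $(1-t_ix_j)$ by their second index, the argument factors as a product over columns,
\[
x^{\lambda+\rho_n}\prod_{1\le i<j\le n}(1-t_ix_j)=\prod_{k=1}^n\phi_k(x_k),\qquad \phi_k(x):=x^{\lambda_k+n-k}\prod_{m=1}^{k-1}(1-t_mx).
\]
Since the antisymmetrizer of a product $\prod_k\phi_k(x_k)$ is the determinant $\det[\phi_k(x_i)]_{i,k}$ and $\prod_{i<j}(x_i-x_j)=\det[x_i^{n-k}]_{i,k}$, I get $\partial_{w_0}(\cdots)=\det[\phi_k(x_i)]/\prod_{i<j}(x_i-x_j)$. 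Writing $\prod_{m<k}(1-t_mx)=(-1)^{k-1}\bigl(\prod_{m<k}t_m\bigr)\prod_{m<k}(x-t_m^{-1})$ and pulling these column scalars out produces exactly the global factor $(-1)^{n(n-1)/2}\bt^{\rho_n}$, leaving the bialternant $\det\bigl[x_i^{\lambda_k+n-k}\prod_{m<k}(x_i-t_m^{-1})\bigr]/\prod_{i<j}(x_i-x_j)$.

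Next I would identify this bialternant with the multi-Schur function $s_{\widetilde\lambda}(\xx,\xx-(t_1^{-1}),\dots,\xx-(t_1^{-1},\dots,t_{n-1}^{-1}))$, whose Jacobi--Trudi entries are $S_{\lambda_k+h-1}(\xx-\bt^{(k)})$ with $\bt^{(k)}=(t_1^{-1},\dots,t_{k-1}^{-1})$. This is the nested-alphabet instance of the standard equivalence between the Jacobi--Trudi and bialternant presentations of a multi-Schur function. I would prove it by the same residue evaluation that yields~\eqref{Schurintegral}: the contour representation of $S_p(\xx-\bt^{(k)})$ turns the Jacobi--Trudi determinant into a multiple contour integral with integrand proportional to $\prod_k z_k^{\lambda_k+n-k}\prod_{m<k}(z_k-t_m^{-1})\prod_{i<j}(z_j-z_i)\big/\prod_{k,j}(z_k-x_j)$, and evaluating the residues at $z_k=x_j$ collapses, by the Vandermonde antisymmetry, to precisely the bialternant above; equivalently one invokes Lascoux~\cite{Lascoux03} or the computation of~\cite{CLL02,LN14}. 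Combining with the previous paragraph gives $\partial_{w_0}(\cdots)=(-1)^{n(n-1)/2}\bt^{\rho_n}s_{\widetilde\lambda}(\cdots)$, the first equality.

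For the second equality I would expand the multi-Schur function in the Schur basis and compare with Proposition~\ref{prop:refined_G_schur}. Substituting $S_i(\xx-\bt^{(k)})=\sum_{r\ge0}(-1)^re_r(t_1^{-1},\dots,t_{k-1}^{-1})\,h_{i-r}(\xx)$ into the Jacobi--Trudi determinant and using multilinearity in the columns, the coefficient of each $s_\mu(\xx)$ is a signed $\bt$-weighted sum that the Lindstr\"om--Gessel--Viennot lemma (exactly as in the flagged computation of~\cite{CLL02}, and in parallel with Corollary~\ref{cor:refined_multiSchur} for the dual case) reorganizes as a sum over increasing elegant tableaux of shape $\mu/\lambda$. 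After absorbing the prefactor $(-1)^{n(n-1)/2}\bt^{\rho_n}$ this is $E_{\lambda}^{\mu}(-\bt)$, so Proposition~\ref{prop:refined_G_schur} identifies the whole expression with $\G_\lambda(\xx;\bt)$. One could instead mirror Lenart's proof of~\cite[Thm.~2.1]{Lenart00}, of which this is the $\bt$-refinement, with the uncrowding map of Buch~\cite{Buch02} playing the same role.

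The main obstacle is this last step: matching the sign and weight bookkeeping. The multi-Schur expansion naturally produces the inverse variables $t_m^{-1}$ together with the global $(-1)^{n(n-1)/2}\bt^{\rho_n}$, whereas Proposition~\ref{prop:refined_G_schur} records the per-box statistic $\prod t_{i-\mathcal{T}_{ij}}$ on increasing elegant tableaux; verifying that these two weightings coincide term by term, and that the shapes and flaggings correspond correctly, is where the genuine care lies. By contrast, the antisymmetrizer factorization and the residue identification are formal and closely follow~\eqref{Schurintegral} and Corollary~\ref{cor:refined_multiSchur}.
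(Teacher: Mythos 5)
Your proposal is correct and follows essentially the same route as the paper's proof: the paper's first equality is precisely the ``modified proof of \cite[Eq.~(2)]{LN14} / \cite[Thm.~2.1]{Lenart00}'' that your antisymmetrizer computation makes explicit, and the paper's second equality is, just as in your plan, obtained by extracting the Schur coefficients of the multi-Schur determinant as a determinant of elementary symmetric polynomials and converting that determinant into the increasing-elegant-tableau sum of Proposition~\ref{prop:refined_G_schur} via the LGV lemma.

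One remark on the obstacle you flag at the end, which is real in your formulation but avoidable. The inverse variables $t_m^{-1}$ and the prefactor $(-1)^{n(n-1)/2}\bt^{\rho_n}$ enter only because you pass through the Jacobi--Trudi form of the multi-Schur function before expanding. If instead you fold the prefactor back into the bialternant column by column, using $\prod_{m<k}(-t_m)(x_i-t_m^{-1})=\prod_{m<k}(1-t_mx_i)$, the quantity to be expanded is
\[
\det\Bigl[x_i^{\lambda_k+n-k}\prod_{m<k}(1-t_mx_i)\Bigr]_{i,k=1}^{n}\Big/\prod_{1\leq i<j\leq n}(x_i-x_j),
\]
and expanding each column via $\prod_{m<k}(1-t_mx_i)=\sum_{r\geq 0}e_r(-t_1,\dotsc,-t_{k-1})x_i^r$ together with the bialternant formula for Schur functions gives the coefficient of $s_\mu(\xx)$ directly as
\[
\det\bigl[e_{\mu_j-j-\lambda_k+k}(-t_1,\dotsc,-t_{k-1})\bigr]_{j,k=1}^{n},
\]
which is (up to transposition and the sign convention $\bt\mapsto-\bt$) exactly the determinant appearing in the paper's proof. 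The LGV lemma with column-dependent alphabets, as in \cite{CLL02,Wachs85}, then identifies this determinant with $E_{\lambda}^{\mu}(-\bt)$ with the box weights read off the steps of the paths, and no further bookkeeping is needed. By contrast, if you insist on expanding the inverse-variable Jacobi--Trudi entries $S_i(\xx-\bt^{(k)})$ first, the LGV path families match increasing elegant tableaux only after complementing the steps of the $k$-th path inside its $k-1$ available levels---this complementation is precisely where the factor $\bt^{\rho_n}=\prod_m t_m^{n-m}$ gets spent---which works (your $n=2$ checks would confirm it) but is the genuinely delicate term-by-term matching you were worried about. The positive-variable expansion turns it into a one-line verification.
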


\begin{proof}
The first equality is just a modified proof of~\cite[Eq.~(2)]{LN14} (or~\cite[Thm.~2.1]{Lenart00}), and the second equality is just a modified proof of~\cite[Thm.~2.2]{Lenart00} with Proposition~\ref{prop:refined_G_schur} to obtain the final combinatorial interpretation. Indeed, the coefficient of $s_{\lambda}$ in the Schur expansion of $\dG_{\mu}$ is the determinant
\[
\det \left[ e_{\mu_j-j-\lambda_i+i}(t_1, \dotsc, t_{i-1}) \right]_{i,j=1}^n,
\]
and from there, the result follows by applying the LGV lemma.
\end{proof}

\begin{ex}
We verify Theorem~\ref{thm:refined_Grothendieck_multiSchur} for $\xx = (x_1, x_2, x_3)$ and $\lambda = 2$:
\[
\partial_{s_1 s_2 s_1} \bigl( x_1^4 x_2 (1 - t_1 x_2) (1 - t_1 x_3) (1 - t_2 x_3) \bigr)
= t_1^2 s_{211}(\xx) - t_1 s_{21}(\xx) + s_2(\xx)
= G_2(\xx; \bt).
\]
\end{ex}

By applying the well-known identity
$
\pi_{w_0} f = \partial_{w_0} (x^{\rho_n} f)
$
to Theorem~\ref{thm:refined_Grothendieck_multiSchur}, we can also write
\[
\G_{\lambda}(\xx; \bt) = \pi_{w_0} \! \left( x^{\lambda} \prod_{1 \leq i < j \leq n} (1 - t_i x_j) \right).
\]

\subsection{The vertex model}

From Proposition~\ref{prop:refined_G_schur}, the coefficient $E_{\lambda}^{\mu}(\bt)$ is the sum over \emph{reverse conjugate} semistandard tableaux of shape $\mu / \lambda$ with the entry in row $i$ strictly less than $i$ with the natural weight function.
To construct a reverse conjugate semistandard tableau using lattice paths, we need to allow for different paths to touch at corners (the weakly decreasing column condition) and not allow for repeated flat steps (the strictly decreasing row condition).
While this doesn't naturally work for NILPs, it does fit nicely into the concept of the five-vertex model, where we can associate a flat step to either the vertex on the right or left side. We choose the left side.

\begin{table}
\[
\begin{array}{ccccc}
\toprule
\ta_1^{\vee} & \ta_2^{\vee} & \tb_2^{\vee} & \tc_1^{\vee} & \tc_2^{\vee}
\\\midrule
\vertexF{0}{0}{0}{0}
&
\vertexF{1}{1}{1}{1}
&
\vertexF{1}{0}{1}{0}
&
\vertexF{0}{0}{1}{1}
&
\vertexF{1}{1}{0}{0}
\\\midrule
1 & z & 1 & 1 & z
\\\bottomrule
\end{array}
\]
\caption{The $L$-matrix for the Grothendieck jagged part NILP vertex model.}
\label{table:L_matrix_nilp_dual}
\end{table}

Next, consider the starting points $\uu = (u_i)_{i=1}^n$ and ending points $\vv = (v_i)_{i=1}^n$ for the NILP interpretation for a skew Schur function of shape $\mu / \lambda$, which are $u_i = (n - i + \lambda_i, 0)$ and $v_i = (n - i + \mu_i, \ell)$.
To account for the flagging condition, we take as our starting points $u_i = (n - i + \lambda_i, \ell+1-i)$.
However, the vertex model we want will actually look like this reflected over the horizontal axis, so we obtain the $L$-matrix given by Table~\ref{table:L_matrix_nilp_dual}.
Thus the jagged part of the vertex model we obtain has the top boundary given by $\lambda$, left and right boundaries are $0$, and the bottom part is $\mu$.
The $i$-th row from the bottom of the jagged part has $\lambda_i + \ell - i - 1$ vertices left-aligned and a spectral parameter of $t_i$.
We note the resulting lattice model by $\fG_{\lambda}$.
From the natural correspondence between the lattice paths (we can trivially colorize this model), we obtain the following.

\begin{thm}
\label{thm:partition_function_Grothendieck}
The partition function $Z(\fG_{\lambda}; \xx, \bt)$ is the refined Grothendieck function $\G_{\lambda}(\xx; \bt)$.
\end{thm}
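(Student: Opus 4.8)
The plan is to run the proof of Theorem~\ref{thm:refined_dual_model} in reverse, decomposing $\fG_{\lambda}$ along the horizontal cut separating the $\xx$-rows (the rectangular part) from the $\bt$-rows (the jagged part). For a state $S$, read off the shape $\mu$ from the $01$-pattern along this interface. Because every $1$-labeled vertical edge entering the jagged part from below must exit through the top boundary, which is fixed to $\lambda$, and horizontal steps can only enlarge the shape as one moves downward, the interface shape satisfies $\lambda \subseteq \mu$. Fixing $\mu$ factors the partition function as $Z(\fG_{\lambda}; \xx, \bt) = \sum_{\mu \supseteq \lambda} Z^{\mu}_{\mathrm{rect}}(\xx)\, Z^{\mu}_{\mathrm{jag}}(\bt)$, where the two factors are the partition functions of the two parts with the interface frozen to $\mu$.

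First I would dispatch the rectangular part. Trivially coloring its paths as in Remark~\ref{rem:trivial_coloring} identifies its states with the nonintersecting lattice paths computing a Schur function of shape $\mu$ in the variables $\xx$, this being the $\beta = 0$ specialization of the model of Proposition~\ref{prop:integrability} together with the classical LGV description recalled in Section~\ref{sec:bg_tableaux}. Hence $Z^{\mu}_{\mathrm{rect}}(\xx) = s_{\mu}(\xx)$.

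The content lies in the jagged part, where I would build a weight-preserving bijection between its states and the increasing elegant (equivalently, reverse conjugate semistandard) tableaux $\mcT$ of shape $\mu/\lambda$ summed over in $E_{\lambda}^{\mu}$. The five-vertex $L$-matrix of Table~\ref{table:L_matrix_nilp_dual} is tailored to this: the horizontal pass-through vertex is assigned Boltzmann weight $0$, forcing the rows of $\mcT$ to be strictly increasing, while permitting paths to touch at corners encodes the weakly decreasing columns; and reflecting the flagged start points $u_i = (n-i+\lambda_i, \ell+1-i)$ over the horizontal axis turns the flagging into the bound that every entry of row $i$ is strictly less than $i$. The $z$-weighted vertices $\ta_2^{\vee}$ and $\tc_2^{\vee}$ mark the horizontal steps, one per box of $\mcT$, and tracking the physical lattice row in which each such step occurs against the tableau entry shows that a box of $\mcT$ in row $i$ with value $\mcT_{ij}$ contributes the factor $t_{i - \mcT_{ij}}$. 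Summed over all states, this yields $Z^{\mu}_{\mathrm{jag}}(\bt) = E_{\lambda}^{\mu}(-\bt)$, and combining the factors over $\mu \supseteq \lambda$ gives $Z(\fG_{\lambda}; \xx, \bt) = \sum_{\mu \supseteq \lambda} E_{\lambda}^{\mu}(-\bt)\, s_{\mu}(\xx)$, which is exactly the Schur expansion of $\G_{\lambda}(\xx; \bt)$ from Proposition~\ref{prop:refined_G_schur}.

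The main obstacle is the jagged-part bijection: unlike the genuine NILPs of Theorem~\ref{thm:refined_dual_model}, the paths here are allowed to touch, so one must check carefully that the local five-vertex weights reproduce the exact statistic $\prod_{\mcT_{ij}} t_{i - \mcT_{ij}}$ rather than a naive $\bt^{\lvert \mu/\lambda \rvert}$. Concretely, both the shifted index $i - \mcT_{ij}$ and the overall sign converting $\bt$ into $-\bt$ (carried by the $\lvert \mu/\lambda \rvert$ many $z$-weighted vertices) must emerge from the assignment of the spectral parameter $t_i$ to the $i$-th jagged row together with the reflection; verifying this height bookkeeping, and that it is compatible with the factorization across the interface $\mu$, is where the real work sits.
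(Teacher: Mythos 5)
Your proposal is correct and takes essentially the same route as the paper: the paper's (largely implicit) proof also cuts $\fG_{\lambda}$ at the interface shape $\mu \supseteq \lambda$, identifies the rectangular part's partition function with $s_{\mu}(\xx)$ and the jagged part's states with the increasing elegant (reverse conjugate semistandard) tableaux weighted by $E_{\lambda}^{\mu}(-\bt)$, and concludes by the refined Lenart expansion of Proposition~\ref{prop:refined_G_schur}, in parallel with the proof of Theorem~\ref{thm:refined_dual_model}. The ``height bookkeeping'' you flag as the main obstacle is precisely what the construction of $\fG_{\lambda}$ is engineered to produce (flagged starting points reflected over the horizontal axis, spectral parameter $-t_i$ on the $i$-th jagged row), so your plan contains no gap beyond what the paper itself leaves to the reader.
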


For a set-valued tableau $T \in \svt^n(\lambda)$, we consider its image $(T, \mcE)$ under the uncrowding map.
Recall that $T$ is a semistandard tableau of shape $\lambda$ and $\mcE$ is a strict elegant tableau of shape $\mu / \lambda$.
An \defn{inelegant marking} of $T$ is a subscript on the letters in $\mu / \lambda$ whose value is $i - \mcE_{ij}$.

\begin{ex}
\label{ex:inelegant_bijection}
Suppose $n = 6$. Let $\lambda = 64211$.
Then a particular state in the model and the corresponding semistandard tableau with inelegant markings are
\[
\begin{tikzpicture}[>=latex,baseline=3.7cm,scale=0.7]
\draw[very thin, black!20] (-1.5, 0.5) grid (10.5, 11.5);
\fill[white] (0.5,10.5) rectangle (10.6,11.6);
\fill[white] (1.5,9.5) rectangle (10.6,10.6);
\fill[white] (3.5,8.5) rectangle (10.6,9.6);
\fill[white] (6.5,7.5) rectangle (10.6,8.6);
\fill[white] (9.5,6.5) rectangle (10.6,7.6);
\foreach \y in {1,2,3,4,5,6} {
  \draw node[anchor=east] at (-1.5,\y)  {\small $x_{\y}$};
}
\foreach \y in {1,2,3,4,5} {
  \draw node[anchor=east] at (-1.5,\y+6)  {\small $-t_{\y}$};
}
\draw[densely dotted] (-1.3,6.5) -- (10.3,6.5);
\draw[-, very thick, UMNmaroon] (-1,0.5) -- (-1,6) -- (0,6) -- (0,8) -- (-1,8) -- (-1,11.5);
\draw[-, very thick, UQgold] (0,0.5) -- (0,5) -- (1,5) -- (1,6) -- (3,6) -- (3,7) -- (2,7) -- (2,9) -- (1,9) -- (1,10.5);
\draw[-, very thick, UQpurple] (1,0.5) -- (1,4) -- (2,4) -- (2,5) -- (4,5) -- (4,8) -- (3,8) -- (3,9) -- (2,9) -- (2,9.5);
\draw[-, very thick, dbluecolor] (2,0.5) -- (2,3) -- (4,3) -- (4,4) -- (6,4) -- (6,7) -- (5,7) -- (5,8) -- (4,8) -- (4,8.5);
\draw[-, very thick, dgreencolor] (3,0.5) -- (3,2) -- (6,2) -- (6,3) -- (7,3) -- (7,4) -- (8,4) -- (8,7) -- (7,7) -- (7,7.5);
\draw[-, very thick, darkred] (4,0.5) -- (4,1) -- (9,1) -- (9,5) -- (10,5) -- (10,6.5);
\fill[UMNmaroon] (-1,1) circle (0.15);
\fill[UQgold] (0,1) circle (0.15);
\fill[UQpurple] (1,1) circle (0.15);
\fill[dbluecolor] (2,1) circle (0.15);
\fill[dgreencolor] (3,1) circle (0.15);
\fill[darkred] (4,1) circle (0.15);
\fill[UMNmaroon] (-1,11) circle (0.15);
\fill[UQgold] (1,10) circle (0.15);
\fill[UQpurple] (2,9) circle (0.15);
\fill[dbluecolor] (4,8) circle (0.15);
\fill[dgreencolor] (7,7) circle (0.15);
\fill[darkred] (10,6) circle (0.15);
\end{tikzpicture}
\quad \longmapsto \quad
\ytableaushort{111115,2223{4_1},33{4_2}{4_1},4{5_3}{5_2},5{6_3}{6_1},{6_2}}  *[*(white)]{6,4,2,1,1} *[*(darkred!40)]{6,5,4,3,3,1}\,.
\]
\end{ex}

\begin{prop}
The jagged part is integrable with the $R$-matrix
\[
\begin{array}{ccccc}
\toprule
\rmatrixF{0}{0}{0}{0}
&
\rmatrixF{1}{1}{1}{1}
&
\rmatrixF{0}{1}{0}{1}
&
\rmatrixF{1}{1}{0}{0}
&
\rmatrixF{0}{0}{1}{1}
\\ \midrule
z_j & z_i & z_j - z_i & z_j & z_i
\\ \bottomrule
\end{array}
\]
\end{prop}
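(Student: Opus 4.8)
The claim is that the $L$-matrix of Table~\ref{table:L_matrix_nilp_dual} admits the displayed five-vertex $R$-matrix as a solution of the $RLL$-relation~\eqref{eq:RLL_relation}. The plan is to establish this by a direct finite computation, exactly as was done for the NILP model in Proposition~\ref{prop:refined_dual_integrable}, but organized so that the bookkeeping stays small. First I would record that both matrices obey an arrow-conservation (ice) rule: every admissible $L$-vertex in Table~\ref{table:L_matrix_nilp_dual} has as many incoming $1$-edges as outgoing $1$-edges (here bottom plus right equals top plus left), and the same holds for the $R$-vertices. Consequently, in the two pictures of~\eqref{eq:RLL_relation} the total number of $1$'s among the six fixed boundary half-edges $\{a,b,c,d,e,f\}$ is conserved, so any boundary assignment violating this balance contributes $0$ to both partition functions.

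This observation discards the overwhelming majority of the $2^6$ boundary choices and leaves only a handful of sectors, graded by the number of $1$'s among $\{a,b,c,d,e,f\}$, to be checked. Within each surviving sector I would enumerate the admissible states on the left- and right-hand sides of~\eqref{eq:RLL_relation} by summing over the internal edges, multiply the corresponding Boltzmann weights from Table~\ref{table:L_matrix_nilp_dual} and the displayed $R$-matrix, and compare the two resulting sums. The sectors with zero or three $1$'s, and most of those with one or two, force a unique state on each side and match trivially. The substantive identities arise only in the mixed sectors, where the off-diagonal $R$-weight $z_j - z_i$ must combine with the diagonal weights and with the corner-touching vertex $\ta_2^\vee$ of weight $z$ so that the two partition functions coincide; these reduce to a small number of polynomial identities in $z_i, z_j$ that I would verify by hand and cross-check in {\sc SageMath}.

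The main obstacle is precisely the corner-touching vertex $\ta_2^\vee$, of configuration $(1,1,1,1)$: because the present model allows two $1$-paths to meet at a single vertex, it is genuinely different from the non-crossing model of Table~\ref{table:L_matrix_nilp}, and so one cannot simply cite Proposition~\ref{prop:refined_dual_integrable} or transport its $R$-matrix of Table~\ref{table:R_matrix_nilp} by a reflection. I would therefore concentrate on the sectors in which $\ta_2^\vee$ appears, and on confirming that the two forbidden transitions are mutually compatible, namely the absent $L$-vertex $(0,1,0,1)$ and the unique conservative $R$-vertex $(1,0,1,0)$ carrying weight $0$. The key compatibility to verify is that no admissible state on one side of~\eqref{eq:RLL_relation} is ever forced to pair with one of these vanishing configurations on the other side; once this is checked together with the mixed-sector polynomial identities, the $RLL$-relation holds and the jagged part is integrable with the stated $R$-matrix.
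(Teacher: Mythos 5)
Your proposal is correct and takes essentially the same route as the paper: the paper offers no written argument for this proposition, relying (as with Proposition~\ref{prop:refined_dual_integrable}) on a direct finite verification of the $RLL$-equation~\eqref{eq:RLL_relation}, which is precisely what you outline. Your organization of that check via the arrow-conservation rule, and your identification of the two vanishing configurations (the absent $L$-vertex $(0,1,0,1)$ and the weight-zero conservative $R$-vertex $(1,0,1,0)$), is accurate and would carry the computation through.
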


We can then perform the same extension as Corollary~\ref{cor:refined_dual_symmetries} and show some partial symmetry in the $\bt$ variables by enlarging the model slightly with fixed vertices and applying the standard train argument (Equation~\eqref{eq:standard_train}).

\begin{cor}
Suppose $\lambda_i = \lambda_{i+1}$, then $\G_{\lambda}(\xx; \bt)$ is symmetric in $t_i$ and $t_{i+1}$.
\end{cor}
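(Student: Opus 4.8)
The plan is to deduce the symmetry directly from the integrable structure of the jagged part of the model $\fG_{\lambda}$, exactly paralleling the argument behind Corollary~\ref{cor:refined_dual_symmetries}. By Theorem~\ref{thm:partition_function_Grothendieck} we have $Z(\fG_{\lambda}; \xx, \bt) = \G_{\lambda}(\xx; \bt)$, and the preceding proposition furnishes an $R$-matrix satisfying the $RLL$-equation for the $L$-matrix of Table~\ref{table:L_matrix_nilp_dual}. Since the $i$-th row of the jagged part (spectral parameter $t_i$) corresponds to the part $\lambda_i$, interchanging $t_i$ and $t_{i+1}$ amounts to swapping the spectral parameters of the two adjacent jagged rows carrying $t_i$ and $t_{i+1}$, and the goal is to realize this swap by the standard train argument of Equation~\eqref{eq:standard_train} without altering the partition function.

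First I would isolate the horizontal strip formed by the rows with spectral parameters $t_i$ and $t_{i+1}$. These rows have left-aligned lengths $\lambda_i + \ell - i - 1$ and $\lambda_{i+1} + \ell - i - 2$, so their difference is $\lambda_i - \lambda_{i+1} + 1$; the hypothesis $\lambda_i = \lambda_{i+1}$ makes this difference equal to $1$, the minimal staircase offset, with the lower row longer by one column. In this case the shorter (upper) row can be padded by a single column of vertices on the right so that the two-row strip becomes a genuine $2 \times L$ rectangle. The essential local check is that, when $\lambda_i = \lambda_{i+1}$, the two corresponding steps of the paths sit so that no path passes through the padded column; hence the configuration there is frozen, each added vertex is the empty vertex $\ta_1^{\vee}$ of weight $1$, and enlarging the model changes neither the set of admissible states nor their weights.

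With the strip now rectangular, I would attach the jagged-part $R$-matrix on the left of these two rows. Because both rows have left boundary $0$, the incoming pair is $(0,0)$, on which the $R$-matrix acts as the scalar attached to its all-zero vertex (weight $z_j$). Pushing the $R$-matrix across the strip by repeated application of the $RLL$-equation as in Equation~\eqref{eq:standard_train}, it emerges on the right against the $0$ right boundary, where it again acts by the same scalar. Cancelling this common factor from the two sides of the train identity yields $\langle 0|\, T_i T_j\, |0\rangle = \langle 0|\, T_j T_i\, |0\rangle$ for the two row transfer operators, which is precisely the statement that $Z(\fG_{\lambda}; \xx, \bt)$ is invariant under $t_i \leftrightarrow t_{i+1}$; the symmetry of $\G_{\lambda}(\xx; \bt)$ then follows by Theorem~\ref{thm:partition_function_Grothendieck}.

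The main obstacle is the frozen-padding step: one must verify that the extra column inserted to rectangularize the strip really is frozen and weight-neutral, and that this happens exactly when $\lambda_i = \lambda_{i+1}$. When $\lambda_i > \lambda_{i+1}$ the two rows differ in length by more than the staircase offset, the intervening region carries genuine path structure that is not invariant under the swap, and the train argument breaks down, consistent with the fact that $\G_{\lambda}(\xx; \bt)$ is not symmetric in $t_i, t_{i+1}$ for general $\lambda$. Establishing the frozen-padding claim is a finite boundary-local computation on the $L$-matrix of Table~\ref{table:L_matrix_nilp_dual}, entirely analogous to the fixed-region analysis used for Corollary~\ref{cor:refined_dual_symmetries}.
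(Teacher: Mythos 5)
Your overall strategy is exactly the paper's: the paper proves this corollary by performing the same extension as Corollary~\ref{cor:refined_dual_symmetries}, i.e., enlarging the model slightly with fixed vertices and applying the standard train argument of Equation~\eqref{eq:standard_train} with the jagged-part $R$-matrix. So the architecture of your proof is sound. The problem is in what you yourself flag as the essential local check. You claim that when $\lambda_i = \lambda_{i+1}$ no path passes through the padded column, so that each added vertex is the empty vertex $\ta_1^{\vee}$. This is false, and not just in some states but in every state: the top boundary of the jagged part is fixed by $\lambda$, and when $\lambda_i = \lambda_{i+1}$ the single exposed column at the top of the lower row (precisely the column your padding covers) is the boundary exit of the $(i+1)$-st path. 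Consequently, in every admissible state a path enters the added vertex from below, and since the right edge of the added vertex is the $0$ boundary while its top edge carries the relocated exit, path conservation forces the vertex to be the vertical pass-through $\tb_2^{\vee}$, not $\ta_1^{\vee}$.

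Your conclusion survives only because $\tb_2^{\vee}$ also happens to have Boltzmann weight $1$ in Table~\ref{table:L_matrix_nilp_dual}, so the padding is still frozen and weight-neutral; this is exactly what the paper's example depicts with the dashed vertical segments running through the extended region. But the verification you propose (``check that no path enters the padded column'') would fail if actually carried out, so it should be replaced by: the added vertex has bottom edge forced to $1$ (it is the old exit edge), top edge $1$ (the new boundary), right edge $0$, hence left edge $0$, so it is forced to be $\tb_2^{\vee}$ of weight $1$; in particular the states of the extended model are in weight-preserving bijection with those of $\fG_{\lambda}$. With that correction, the rest of your argument --- threading the jagged-part $R$-matrix through the now-rectangular two-row strip, with the scalar $R$-weight on the $(0,0)$ boundary pairs cancelling --- goes through and reproduces the paper's proof.
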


\begin{ex}
Let $n = 5$, then we have
\begin{align*}
\G_{211}(\xx; \bt) & = s_{211}(\xx) - s_1(t_1, t_2, t_3) s_{2111}(\xx) + s_2(t_1, t_2, t_3) s_{21111}(\xx)
\\ & \hspace{20pt} - t_1 \bigl( s_{221}(\xx) - s_1(t_1, t_2, t_3) s_{2211}(\xx) + s_2(t_1, t_2, t_3) s_{22111}(\xx) \bigr)
\\ & \hspace{20pt} + t_1^2 \bigl( s_{222}(\xx) - s_1(t_1, t_2, t_3) s_{2221}(\xx) + s_2(t_1, t_2, t_3) s_{22211}(\xx) \bigr)
\\ & \hspace{20pt} - t_1^3 \bigl( -s_1(t_2, t_3) s_{2222}(\xx) + s_2(t_2, t_3) s_{22221}(\xx) \bigr)
+ t_1^4 s_2(t_2, t_3) s_{22222}(\xx).
\end{align*}
Note that this is symmetric in $t_2 \leftrightarrow t_3$ and $t_4 \leftrightarrow t_5$.

Let us look at how we can see this using the lattice model. Here we have drawn a particular state in the extended lattice model:
\[
\begin{tikzpicture}[>=latex,baseline=3.7cm,scale=0.7]
\foreach \y in {1,2,3,4,5}
  \draw node[anchor=east] at (-1.5,\y)  {\small $x_{\y}$};
\foreach \y in {1,2,3,4,5}
  \draw node[anchor=east] at (-1.5,\y+5)  {\small $-t_{\y}$};
\draw[very thin, black!20] (-1.5, 1-0.5) grid (5.5, 10.5);
\fill[white] (0.5,8.5) rectangle (5.6,10.6);
\fill[white] (3.5,6.5) rectangle (5.6,10.6);
\fill[white] (4.5,5.5) rectangle (5.6,7.6);
\draw[densely dotted] (-1.2,5.5) -- (5.2,5.5);
\draw[-, very thick, darkred] (3,0.5) -- (3,1) -- (5,1) -- (5,5.5);
\fill[darkred] (3,1) circle (0.15);
\draw[-, very thick, dgreencolor] (2,0.5) -- (2,3) -- (4,3) -- (4,6) -- (3,6) -- (3,6.5);
\draw[dashed, very thick, dgreencolor] (3,6.5) -- (3,8.5);
\fill[dgreencolor] (2,1) circle (0.15);
\draw[-, very thick, dbluecolor] (1,0.5) -- (1,4) -- (2,4) -- (2,5) -- (3,5) -- (3,6) -- (2,6) -- (2,7.5);
\draw[dashed, very thick, dbluecolor] (2,7.5) -- (2,8.5);
\fill[dbluecolor] (1,1) circle (0.15);
\draw[-, very thick, UQpurple] (0,0.5) -- (0,5) -- (1,5) -- (1,6) -- (0,6) -- (0,8.5);
\draw[dashed, very thick, UQpurple] (0,8.5) -- (0,10.5);
\fill[UQpurple] (0,1) circle (0.15);
\draw[-, very thick, UQgold] (-1,0.5) -- (-1,9.5);
\draw[dashed, very thick, UQgold] (-1,9.5) -- (-1,10.5);
\fill[UQgold] (-1,1) circle (0.15);
\fill[darkred] (5,5) circle (0.15);
\fill[dgreencolor] (3,6) circle (0.15);
\fill[dbluecolor] (2,7) circle (0.15);
\fill[UQpurple] (0,8) circle (0.15);
\fill[UQgold] (-1,9) circle (0.15);
\end{tikzpicture}
\quad \longmapsto \quad
\ytableaushort{11,2{2_1},4{5_1},{5_1}} *[*(white)]{2,1,1} *[*(darkred!40)]{2,2,2,1}\,.
\]
We have added an extra row with spectral parameter $-t_5$ for clarify, and the dashed lines correspond to the extended portion that are completely fixed by the vertex model. Notice that we can now apply the $R$-matrix to the rows corresponding to $(-t_2, -t_3)$ and $(-t_4, -t_5)$.
\end{ex}

\begin{prop}
\label{prop:jagged_rect_R_mat}
A pair of jagged and NILP vertices are integrable with the $R$-matrix
\[
\begin{array}{cccccc}
\toprule
\rmatrixD{0}{0}{0}{0}
&
\rmatrixD{1}{1}{1}{1}
&
\rmatrixD{1}{0}{1}{0}
&
\rmatrixD{0}{1}{0}{1}
&
\rmatrixD{1}{1}{0}{0}
&
\rmatrixD{0}{0}{1}{1}
\\ \midrule
1 & -1 & 1 & z_i z_j + 1 & z_j & z_i
\\ \bottomrule
\end{array}
\]
\end{prop}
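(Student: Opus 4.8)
The statement that a jagged vertex and an NILP vertex are integrable means, following the definition accompanying Equation~\eqref{eq:RLL_relation}, that the displayed diagonal $R$-matrix satisfies the mixed $RLL$-equation in which the NILP $L$-matrix of Table~\ref{table:L_matrix_nilp} carries a spectral parameter $z_i$ and the jagged $L$-matrix of Table~\ref{table:L_matrix_nilp_dual} carries a spectral parameter $z_j$. Concretely, the plan is to prove that the two partition functions obtained by sliding the diagonal $R$-vertex from one side of the pair $L(z_i)$, $\check{L}(z_j)$ to the other agree for every choice of the six fixed external half-edges. As with Proposition~\ref{prop:refined_dual_integrable}, this is ultimately a direct finite computation, and the bulk of the effort lies in organizing the cases so that only a handful require genuine arithmetic.

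First I would exploit the conservation law shared by both $L$-matrices and by the $R$-matrix: at each vertex the number of incoming edges labeled $1$ equals the number of outgoing ones, so a configuration has nonzero weight only when the total number of $1$'s is preserved. Together with the fact that each $L$-matrix is of five-vertex type---the NILP matrix assigns weight zero to the all-$1$ configuration, while the jagged matrix assigns weight zero to the complementary configuration $\tb_1^{\vee} = \vertexF{0}{1}{0}{1}$---this collapses the $2^{6}$ possible boundary conditions down to a short list of admissible ones. For most of these boundaries there is a unique internal labeling on each side of the equation, and equality of the single resulting Boltzmann weights is verified by inspection; the conservation bookkeeping does essentially all the work.

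The substantive cases are those in which the two auxiliary lines, which carry \emph{opposite} horizontal orientations ($\vertex{\,}{\,}{\,}{\,}$ rightward versus $\vertexF{\,}{\,}{\,}{\,}$ leftward), interact, since these are precisely where the nonstandard $R$-weights $z_i z_j + 1$ and $-1$ enter. Here each side of the $RLL$-equation typically admits two admissible internal states, and the identity to be checked is that the two weighted sums coincide once the $R$-vertex contributions are factored in. I expect this to be the main obstacle: because of the mixed orientation convention one must track very carefully which half-edge is an input and which an output, and then confirm that the quadratic term $z_i z_j$ and the sign $-1$ combine across the admissible states to reproduce the weight $z_i z_j + 1$ on the configuration $\rmatrixD{0}{1}{0}{1}$ (respectively, to force the cancellation that produces the $-1$ on the all-$1$ vertex). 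Once these finitely many mixed configurations are confirmed, the mixed $RLL$-equation holds for all boundary conditions, which establishes the claimed integrability.
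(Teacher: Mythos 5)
Your proposal is correct and takes essentially the same approach as the paper: the paper states this proposition without a written proof, relying (as it does explicitly for Proposition~\ref{prop:refined_dual_integrable}) on a direct finite verification of the mixed $RLL$-equation, which is exactly what you outline. Your conservation-law bookkeeping, with the observation that each $L$-matrix is five-vertex type so almost all boundary conditions admit a unique internal state, is the standard way to organize that finite check.
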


We can also extend the model to a rectangle, which we denote by $\overline{\fG}$. However, in order to describe the partition function, we need to develop some additional tools as using the standard train argument introduces additional states with $1$'s on the left and right boundaries.

\subsection{Nonintersecting lattice paths}

We deform our lattice model to obtain a description of the partition functions as a determinant using the LGV lemma. How we do this is noting that in the rectangular part we already have NILPs, but we are allowing paths to touch in the jagged part as previously noted. Since each path can only take one horizontal step at a time, we can instead consider the jagged part as a graph with steps moving vertical or diagonally to the upper-left. This transforms any touch point into a pair of diagonally nonintersecting paths. Furthermore, we can put the weight $-t_i$ on the diagonal paths.
We note that this is a standard way to use NILPs to write a single column (fermionic behavior).

\begin{ex}
Consider the state of the vertex model from Example~\ref{ex:inelegant_bijection}. If we just examine the jagged part, we have
\[
\begin{tikzpicture}[>=latex,baseline=3.7cm,scale=0.7]
\foreach \i in {1,2,...,12}
  \draw[very thin, black!20] (\i,0) -- (\i,5);
\foreach \i in {1,2,...,5} {
  \draw[very thin, black!20] (\i+1,0) -- (1,\i);
  \draw[very thin, black!20] (6+\i,5) -- (12,\i-1);
  \draw[very thin, black!20] (6+\i,0) -- (\i+1,5);
}
\draw[-, very thick, UMNmaroon] (2,0) -- (2,1) -- (1,2) -- (1,5);
\draw[-, very thick, UQgold] (5,0) -- (4,1) -- (4,2) -- (3,3) -- (3,4);
\draw[-, very thick, UQpurple] (6,0) -- (6,1) -- (4,3);
\draw[-, very thick, dbluecolor] (8,0) -- (6,2);
\draw[-, very thick, dgreencolor] (10,0) -- (9,1);
\fill[UMNmaroon] (1,5) circle (0.15);
\fill[UQgold] (3,4) circle (0.15);
\fill[UQpurple] (4,3) circle (0.15);
\fill[dbluecolor] (6,2) circle (0.15);
\fill[dgreencolor] (9,1) circle (0.15);
\fill[darkred] (12,0) circle (0.15);
\end{tikzpicture}
\]
\end{ex}

By the LGV lemma going from vertex $u_j$ on the bottom to $v_i$ in the jagged part, we have the following Jacobi--Trudi formulas for refined Grothendieck polynomials.
The second formula is a refined version of~\cite[Thm.~1.10]{Kirillov16}.

\begin{thm}
\label{thm:refined_JT}
We have
\begin{align*}
\G_{\lambda}(\xx; \bt) & = \det \left[
\sum_{k=0}^{\infty} e_{k-\lambda_i+i-j}(-t_1, \dotsc, -t_{i-1}) h_k(\xx)
\right]_{i,j=1}^n
\\ & = \det \left[
\sum_{k=0}^{\infty} e_{k}(-t_1, \dotsc, -t_{i-1}) h_{k+\lambda_i-i+j}(\xx)
\right]_{i,j=1}^n.
\end{align*}
\end{thm}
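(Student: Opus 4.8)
The plan is to apply the Lindström--Gessel--Viennot (LGV) lemma to the deformed lattice model just constructed, whose partition function is $\G_{\lambda}(\xx;\bt)$ by Theorem~\ref{thm:partition_function_Grothendieck}. First I would fix coordinates for the NILP. The sources $\uu = (u_1, \dotsc, u_n)$ sit along the bottom of the rectangular part, with $u_j$ at horizontal position $n-j$, and the sinks $\vv = (v_1, \dotsc, v_n)$ are the $n$ ones of the fixed top boundary given by the $01$-sequence of $\lambda$, so $v_i$ has horizontal coordinate $\lambda_i + n - i$. Each state of $\fG_{\lambda}$ becomes a tuple of paths: in the rectangular part a path takes north-steps of weight $1$ and east-steps weighted by $x_{\text{height}}$, while in the jagged part, after the deformation that replaces each touching configuration by genuinely nonintersecting vertical and upper-left diagonal steps carrying weight $-t_r$ in row $r$, a path moves up and to the left. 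The flagging coming from Proposition~\ref{prop:refined_G_schur} (an entry in row $i$ of the reverse conjugate elegant tableau is strictly less than $i$) means the path terminating at $v_i$ only meets the jagged rows indexed by $1, \dotsc, i-1$, so its diagonal steps use only $-t_1, \dotsc, -t_{i-1}$.

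Next I would compute the single-path generating function from $u_j$ to $v_i$, which is the $(i,j)$ entry of the LGV determinant. Splitting a path at the free interface shape $\mu$ between the two parts (at height $n$, horizontal position $p$), the rectangular portion with horizontal displacement $k = p - n + j$ contributes $h_k(\xx)$ by the standard complete-homogeneous path sum, and the jagged portion contributes the elementary symmetric function recording its $m$ leftward diagonal steps, namely $e_m(-t_1, \dotsc, -t_{i-1})$ by the fermionic single-column behavior. Tracking coordinates gives $m = p - (\lambda_i + n - i) = k - \lambda_i + i - j$, so summing over the interface position yields the entry $\sum_{k} e_{k - \lambda_i + i - j}(-t_1, \dotsc, -t_{i-1}) h_k(\xx)$, which is exactly the first displayed matrix. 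Invoking the LGV lemma together with Theorem~\ref{thm:partition_function_Grothendieck} then identifies this determinant with $\G_{\lambda}(\xx;\bt)$. The second formula follows immediately from the reindexing $k = m + \lambda_i - i + j$, a pure change of summation variable in the same entry.

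The main obstacle I expect is bookkeeping rather than anything conceptual. One must justify that the deformation of the jagged five-vertex part into a diagonal NILP is weight-preserving \emph{including} the signs carried by the $-t_r$, and that the fixed boundary data force the signed LGV sum over $N(\uu,\vv)$ to coincide with the (sign-respecting) partition function of the model, i.e.\ that the realized source-to-sink matching is the intended one. The truncation of the $t$-alphabet to $t_1, \dotsc, t_{i-1}$ must be extracted carefully from the flagging, and the precise shift $k - \lambda_i + i - j$ depends on the exact placement of $\uu$ and $\vv$; I would therefore verify these against the running Example~\ref{ex:inelegant_bijection} before asserting the determinant. Finally, specializing to $\bt = \beta$ and recovering the formula of Kirillov~\cite{Kirillov16} serves as a consistency check on the coordinate conventions.
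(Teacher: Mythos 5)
Your proposal is correct and is essentially the paper's own proof: the paper likewise deforms the jagged five-vertex part into genuinely nonintersecting vertical/upper-left diagonal steps carrying weight $-t_r$ (the ``fermionic single-column'' trick), and then applies the LGV lemma to paths from $u_j$ on the bottom to $v_i$ in the jagged part, with the flagging restricting path $i$ to the alphabet $-t_1,\dotsc,-t_{i-1}$ and the second determinant obtained by the same reindexing of the summation variable. The paper states this argument very tersely, so your write-up simply supplies the coordinate bookkeeping ($m = k - \lambda_i + i - j$, interface splitting into $h_k(\xx)$ times $e_m(-t_1,\dotsc,-t_{i-1})$) that the paper leaves implicit.
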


We note that each of the sums in Theorem~\ref{thm:refined_JT} are finite as there are only finitely many variables~$\bt$.

Now we return to the rectangular model $\overline{\fG}$ and use that to compute a new identity for refined Grothendieck polynomials.
We take a cut using the same idea utilized in the proof of Corollary~\ref{cor:refined_Cauchy} to obtain the following.

\begin{thm}
\label{thm:extended_height_Grothendieck_model}
Let $\widetilde{\bt} = (t_1, \dotsc, t_{\ell})$.
We have
\begin{align*}
Z(\overline{\fG}; \xx, \widetilde{\bt}) & =
\det \left[
\sum_{k=0}^{\infty} e_{k+i-j}(-\widetilde{\bt}) h_k(\xx)
\right]_{i,j=1}^n
\\ & = \det \left[
\sum_{k=0}^{\infty} e_{k}(-\widetilde{\bt}) h_{k-i+j}(\xx)
\right]_{i,j=1}^n
\\ & = \sum_{\lambda \subseteq \ell^n} p_{\lambda}(-\widetilde{\bt}) \G_{\lambda}(\xx; \bt),
\end{align*}
where $p_{\lambda}(\widetilde{\bt}) = \det \bigl[ e_{\lambda_i-i+j}(t_{\ell}, \dotsc, t_i) \bigr]_{i,j=1}^n$.
\end{thm}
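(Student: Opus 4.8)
The plan is to establish the three expressions as equal in the order they are listed: first that the two determinants coincide by a trivial reindexing, then that $Z(\overline{\fG}; \xx, \widetilde{\bt})$ equals the determinant via the Lindstr\"om--Gessel--Viennot (LGV) lemma applied to the deformed NILP picture of the previous subsection, and finally that the determinant expands as $\sum_{\lambda \subseteq \ell^n} p_{\lambda}(-\widetilde{\bt}) \G_{\lambda}(\xx; \bt)$ through the cut argument of Corollary~\ref{cor:refined_Cauchy}. The equality of the two determinant forms is immediate: in each matrix entry one extends both sums over all of $\ZZ$ (legitimately, since $e_m(-\widetilde{\bt}) = 0$ for $m < 0$ and $h_k(\xx) = 0$ for $k < 0$) and performs the index substitution sending the summand $e_{k+i-j}(-\widetilde{\bt}) h_k(\xx)$ to $e_{k'}(-\widetilde{\bt}) h_{k'-i+j}(\xx)$ with $k' = k + i - j$. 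Thus the two entries agree term by term, and the two determinants are literally the same.

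For the identification $Z(\overline{\fG}; \xx, \widetilde{\bt}) = \det[\cdots]$ I would invoke the NILP deformation just described. In the rectangular ($\xx$) part the paths are already nonintersecting and each horizontal step contributes to a complete homogeneous function $h_k(\xx)$; in the jagged ($\bt$) part, replacing touch points by diagonal steps (each weighted $-t_m$) produces genuinely nonintersecting paths whose diagonal steps within a single column occupy distinct heights, so their generating function is an elementary symmetric function in $-\widetilde{\bt}$. Consequently the single-path generating function from the source $u_j$ to the sink $v_i$ is exactly $\sum_{k} e_{k+i-j}(-\widetilde{\bt}) h_k(\xx)$, with the net shift $i - j$ recording the horizontal displacement between $u_j$ and $v_i$. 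The LGV lemma then assembles these single-path generating functions into the claimed determinant.

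For the final equality I would adapt the cut argument used for Corollary~\ref{cor:refined_Cauchy}. Fix $\lambda \subseteq \ell^n$ and cut the rectangle through the $\bt$-rows along the first vertical step of each path, exactly as in that proof. The portion below the cut is by construction the model $\fG_{\lambda}$, whose partition function is $\G_{\lambda}(\xx; \bt)$ by Theorem~\ref{thm:partition_function_Grothendieck}; the portion above the cut is a complementarily flagged jagged region in the variables $\widetilde{\bt}$ alone, and applying the LGV lemma there yields $p_{\lambda}(-\widetilde{\bt}) = \det[e_{\lambda_i-i+j}(-t_{\ell}, \dotsc, -t_i)]_{i,j=1}^n$, where row $i$ sees precisely the variables $t_i, \dotsc, t_{\ell}$ because of the lower flagging imposed by the cut. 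Summing over all $\lambda \subseteq \ell^n$ recombines the two pieces into the full rectangle.

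The main obstacle I expect is the bookkeeping in this last step, which is exactly the reason the text warns that the naive standard train argument ``introduces additional states with $1$'s on the left and right boundaries.'' One must verify that the cut cleanly separates the two sets of spectral parameters and that the top region contributes no spurious boundary configurations, so that its partition function is the reverse-flagged determinant $p_{\lambda}(-\widetilde{\bt})$ rather than a larger sum. Here the diagonal-step NILP deformation of the second step is essential: it makes the jagged region genuinely nonintersecting, pins down which variable set $t_i, \dotsc, t_{\ell}$ each row of the top determinant may use, and thereby controls the boundary behavior that the bare vertex-model description alone does not.
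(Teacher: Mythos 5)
Your proposal matches the paper's proof in both steps: the determinant formula is obtained from the LGV lemma applied to the deformed (diagonal-step) NILP picture, and the expansion $\sum_{\lambda \subseteq \ell^n} p_{\lambda}(-\widetilde{\bt}) \G_{\lambda}(\xx; \bt)$ comes from a staircase cut through the $\bt$-rows whose lower piece is $\fG_{\lambda}$ (giving $\G_{\lambda}(\xx;\bt)$ by Theorem~\ref{thm:partition_function_Grothendieck}) and whose upper piece yields $p_{\lambda}(-\widetilde{\bt})$ by a flagged LGV computation. One small correction: the cut is not ``along the first vertical step of each path'' but at fixed spectral lines --- the paper cuts the $i$-th path where it crosses the $-t_{n-i}$ line --- and it is this choice that both pins down $\lambda$ from the cut positions and forces row $i$ of the upper region to use exactly the variables $t_i, \dotsc, t_{\ell}$, as required by the definition of $p_{\lambda}$.
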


\begin{proof}
The determinant formula for the partition function comes from the interpretation as NILPs with the LGV lemma.

To see the right hand side, we take the model and make a cut for the $i$-th path in the $-t_{n-i}$ spectral weight line.
The number of ways to go from the top to the cut points is given by $p_{\lambda}(-\widetilde{\bt})$ via the LGV lemma.
The portion below the cut is $\G_{\lambda}(\xx; \bt)$.
\end{proof}

We can extend Theorem~\ref{thm:extended_height_Grothendieck_model} so that the ending points are not all the way on the left but at generic positions similar to how we generalized Corollary~\ref{cor:refined_coincidence_lemma} to Corollary~\ref{cor:generalized_coincidence}.
We leave the details for the interested reader.

\begin{ex}
Let $n = 5$ and $\ell = 6$. Consider the NILP corresponding to a state in the model $\overline{\fG}$
\[
\begin{tikzpicture}[>=latex,baseline=3.7cm,scale=0.7]
\draw[very thin, black!20] (-0.5, 0.5) grid (10.5, 5.5);
\foreach \i in {0,1,...,10}
  \draw[very thin, black!20] (\i,5.5) -- (\i,11.5);
\foreach \i in {1,2,...,6} {
  \draw[very thin, black!20] (\i,5) -- (0,\i+5);
  \draw[very thin, black!20] (4+\i,11) -- (10,\i+5);
}
\foreach \i in {1,2,3,4} {
  \draw[very thin, black!20] (6+\i,5) -- (\i,11);
}
\foreach \y in {1,2,3,4,5} {
  \draw node[anchor=east] at (-0.5,\y)  {\small $x_{\y}$};
}
\foreach \y in {1,2,3,4,5,6} {
  \draw node[anchor=east] at (-0.5,\y+5)  {\small $-t_{\y}$};
}
\draw[-, very thick, UQgold] (0,0.5) -- (0,5) -- (1,5) -- (2,5) -- (2,6) -- (0,8) -- (0,11.5);
\draw[-, very thick, UQpurple] (1,0.5) -- (1,4) -- (3,4) -- (3,5) -- (4,5) -- (4,6) -- (2,8) -- (2,9) -- (1,10) -- (1,11.5);
\draw[-, very thick, dbluecolor] (2,0.5) -- (2,3) -- (4,3) -- (4,4) -- (6,4) -- (6,6) -- (4,8) -- (4,9) -- (2,11) -- (2,11.5);
\draw[-, very thick, dgreencolor] (3,0.5) -- (3,2) -- (7,2) -- (7,4) -- (8,4) -- (8,5) -- (6,7) -- (6,8) -- (3,11) -- (3,11.5);
\draw[-, very thick, darkred] (4,0.5) -- (4,1) -- (8,1) -- (8,2) -- (9,2) -- (9,6) -- (4,11) -- (4,11.5);
\fill[UQgold] (0,1) circle (0.15);
\fill[UQpurple] (1,1) circle (0.15);
\fill[dbluecolor] (2,1) circle (0.15);
\fill[dgreencolor] (3,1) circle (0.15);
\fill[darkred] (4,1) circle (0.15);
\fill[UQgold] (0,11) circle (0.15);
\fill[UQpurple] (1,11) circle (0.15);
\fill[dbluecolor] (2,11) circle (0.15);
\fill[dgreencolor] (3,11) circle (0.15);
\fill[darkred] (4,11) circle (0.15);
\draw[UQgold] (0,9) circle (0.15);
\draw[UQpurple] (2,8) circle (0.15);
\draw[dbluecolor] (5,7) circle (0.15);
\draw[dgreencolor] (7,6) circle (0.15);
\draw[darkred] (9,5) circle (0.15);
\end{tikzpicture}
\]
The open circles drawn above are the cut points, and thus, we compute $\lambda = 54310$. The term in $p_{\lambda}(-\widetilde{\bt})$ corresponding to this NILP is $-t_2^2 t_3^2 t_4^2 t_5^4 t_6^3$.
\end{ex}

\subsection{Feh\'er--N\'emethi--Rim\'anyi-type identity}

In parallel to Theorem~\ref{thm:fnrgstype}, we derive a Feh\'er--N\'emethi--Rim\'anyi identity~\cite{FNR12} for refined Grothendieck functions.
We give both a simple proof using the Feh\'er--N\'emethi--Rim\'anyi identity and an alternative proof using the lattice model and the Yang--Baxter algebra similar to~\cite{Motegi20} (although with a different Yang--Baxter algebra).
We will show by giving an explicit example showing that our identity is a different than the one by Guo and Sun for factorial Grothendieck polynomials~\cite{GS19}.

Recall that for $S \in \binom{[n]}{k}$, we have $\overline{S} = \{1,2,\dotsc,n \} \setminus S$.
Recall that for sequences $\lambda, \mu$, we denote the concatenation by $\lambda \sqcup \mu$.

\begin{prop}
\label{prop:fnrgs_grothendieck}
Let $\lambda = (\lambda_1,\dots,\lambda_k)$ be a partition satisfying $\lambda_1 \leq m - k$,
and let $\mu = (m-k)^{n-k} \sqcup \lambda$.
Then we have
\begin{align}
\displaystyle
\G_{\mu}(\xx;\bt)
= \sum_{S \in \binom{[n]}{k}}
\prod_{i \in \overline{S}} x_i^m \prod_{i \in \overline{S}} \prod_{j \in S} \frac{1}{x_i-x_j} W_{\lambda}(\xx_S;\bt),
\label{refinedfnridentity}
\end{align}
where $\xx_S = \{x_{j_1},\dots,x_{j_k} \}$ for $\displaystyle S = \{j_1, \dotsc, j_k \}$ and
\begin{align}
W_{\lambda}(\xx_{S};\bt) = \sum_{\nu \subseteq (m-k)^k} E_{(m-k)^{n-k} \sqcup \lambda}^{(m-k)^{n-k} \sqcup \nu}(-\bt) s_\nu(\xx_{S}).
\label{smallerpartitionfunction}
\end{align}
\end{prop}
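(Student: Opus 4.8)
The plan is to present the ``simple proof'', which combines the Schur expansion of Proposition~\ref{prop:refined_G_schur} with the Feh\'er--N\'emethi--Rim\'anyi identity~\cite{FNR12} for Schur functions. In the form we need, the latter reads
\[
s_{(m-k)^{n-k}\sqcup\nu}(x_1,\dotsc,x_n) = \sum_{S\in\binom{[n]}{k}} \prod_{i\in\overline{S}} x_i^m \prod_{i\in\overline{S}}\prod_{j\in S}\frac{1}{x_i-x_j}\, s_{\nu}(\xx_S)
\]
for every $\nu\subseteq(m-k)^k$. A quick sanity check supports this normalization: a degree count gives $(n-k)(m-k)+\absval{\nu}$ on both sides, and the $n=2$, $k=1$ instance recovers the two-variable bialternant, confirming that the exponent $m$ (rather than $m-k$) is correct.

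First I would expand $\G_{\mu}(\xx;\bt) = \sum_{\rho\supseteq\mu} E_{\mu}^{\rho}(-\bt)\, s_{\rho}(\xx)$ via Proposition~\ref{prop:refined_G_schur} and then pin down which $\rho$ actually contribute. The crucial observation is that any increasing elegant tableau of shape $\rho/\mu$ has an empty first row, since its row-$1$ entries would be positive integers strictly smaller than $1$; hence $\rho_1 = \mu_1 = m-k$. Combined with $\rho\supseteq\mu$ and $\mu_i = m-k$ for $1\le i\le n-k$, this forces $\rho_i = m-k$ for all $i\le n-k$; and because $s_{\rho}(\xx)=0$ once $\ell(\rho)>n$, the surviving $\rho$ are exactly the ``thick hooks'' $\rho = (m-k)^{n-k}\sqcup\nu$ with $\lambda\subseteq\nu\subseteq(m-k)^k$. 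Therefore
\[
\G_{\mu}(\xx;\bt) = \sum_{\lambda\subseteq\nu\subseteq(m-k)^k} E_{(m-k)^{n-k}\sqcup\lambda}^{(m-k)^{n-k}\sqcup\nu}(-\bt)\, s_{(m-k)^{n-k}\sqcup\nu}(\xx).
\]

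Next I would substitute the Feh\'er--N\'emethi--Rim\'anyi identity for each $s_{(m-k)^{n-k}\sqcup\nu}(\xx)$ and interchange the order of summation, pulling the $S$-sum outside. The localization factor $\prod_{i\in\overline{S}}x_i^m\prod_{i\in\overline{S}}\prod_{j\in S}(x_i-x_j)^{-1}$ does not depend on $\nu$, so the inner sum becomes $\sum_{\nu\subseteq(m-k)^k} E_{(m-k)^{n-k}\sqcup\lambda}^{(m-k)^{n-k}\sqcup\nu}(-\bt)\, s_{\nu}(\xx_S)$, which is precisely $W_{\lambda}(\xx_S;\bt)$ from~\eqref{smallerpartitionfunction} (the terms with $\nu\not\supseteq\lambda$ vanish, so extending the range to all $\nu\subseteq(m-k)^k$ is harmless). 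This yields~\eqref{refinedfnridentity}. The main obstacle is really the second step: one must verify carefully that no other $\rho$ contribute, and that the FNR identity is invoked in exactly the normalization (exponent $x_i^m$ and shape $(m-k)^{n-k}\sqcup\nu$) that lets the inner sum collapse to the \emph{defined} quantity $W_{\lambda}$; the rest is bookkeeping.

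An alternative route, paralleling Theorem~\ref{thm:fnrgstype} and~\cite{Motegi20}, would represent the rectangular Grothendieck partition function as a product of $A$- and $B$-operators, derive a multiple commutation relation from the $RLL$-relation (using the $R$-matrix of Proposition~\ref{prop:jagged_rect_R_mat}), localize the particle positions onto the subset $S$, and identify the reduced partition function with $W_{\lambda}(\xx_S;\bt)$. The technical heart there would be establishing the correct multiple commutation relation and tracking the rational localization coefficients, which is why I would present the Schur-expansion argument as the primary proof and only sketch this integrable-systems version afterward.
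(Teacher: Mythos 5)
Your proposal is correct and takes essentially the same route as the paper's primary proof: expand $\G_{\mu}(\xx;\bt)$ via Proposition~\ref{prop:refined_G_schur}, observe that only shapes $(m-k)^{n-k}\sqcup\nu$ with $\lambda \subseteq \nu \subseteq (m-k)^k$ survive, apply the Feh\'er--N\'emethi--Rim\'anyi identity~\eqref{eq:FNR} to each such Schur function, and interchange the order of summation to recognize the inner sum as $W_{\lambda}(\xx_S;\bt)$. Your explicit justifications (the empty first row of increasing elegant tableaux forcing $\rho_1=\mu_1$, the vanishing of $s_{\rho}(\xx)$ for $\ell(\rho)>n$, and the harmlessness of extending the $\nu$-sum since $E$-coefficients vanish for $\nu\not\supseteq\lambda$) merely fill in steps the paper leaves implicit, and your sketched integrability alternative corresponds to the paper's second proof.
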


\begin{proof}
Combining~\eqref{eq:FNR} and the Schur expansion of refined Grothendieck polynomials~\eqref{eq:Grothendieck_defn}, we obtain
\begin{align*}
\G_{\mu}(\xx;\bt)
& = \sum_{\mu \subseteq \nu \subseteq (m-k)^n } E_\mu^\nu(-\bt) s_\nu
 = \sum_{\lambda \subseteq \nu \subseteq (m-k)^{k} }
 E_{(m-k)^{n-k} \sqcup \lambda}^{(m-k)^{n-k} \sqcup \nu}(-\bt) s_{((m-k)^{n-k},\nu)} \\
& = \sum_{\lambda \subseteq \nu \subseteq (m-k)^{k} } E_{(m-k)^{n-k} \sqcup \lambda}^{(m-k)^{n-k} \sqcup \nu}(-\bt)
\sum_{S \in \binom{[n]}{k}}
\prod_{i \in \overline{S}} x_i^m
\prod_{i \in \overline{S}}
\prod_{j \in S} \frac{1}{x_i-x_j}
s_{\nu}(\xx_{S}) \\
& = \sum_{S \in \binom{[n]}{k}}
\prod_{i \in \overline{S}} x_i^m
\prod_{i \in \overline{S}}
\prod_{j \in S} \frac{1}{x_i-x_j}
\sum_{\lambda \subseteq \nu \subseteq (m-k)^{k} } E_{(m-k)^{n-k} \sqcup \lambda}^{(m-k)^{n-k} \sqcup \nu}(-\bt) s_{\nu}(\xx_{S}) \\
& = \sum_{S \in \binom{[n]}{k}}
\prod_{i \in \overline{S}} x_i^m
\prod_{i \in \overline{S}}
\prod_{j \in S} \frac{1}{x_i-x_j}
W_{\lambda}(\xx_{S};\bt)
\end{align*}
as desired.
\end{proof}

We present another proof using integrability, which uses partition function descriptions and Yang--Baxter algebra, and which does not assume the identity~\eqref{eq:FNR}.
In addition to the $A$- and $B$-operators, we require the $D$-operator $D(z)$ and a list of commutation relations that follow from the $RLL$ relation in parallel to Corollary~\ref{cor:AB_ops_commute}.

\begin{cor}
\label{cor:BD_ops_commute}
We have
\begin{subequations}
\label{eq:BDcommrels}
\begin{align}
D(z_i)B(z_j)&=\frac{z_j}{z_i-z_j}B(z_j)D(z_i)-
\frac{z_j}{z_i-z_j}B(z_i)D(z_j), \label{BDcommrelone} \\
D(z_j)B(z_i)&=\frac{z_i}{z_j}D(z_i)B(z_j), \label{BDcommreltwo} \\
B(z_j)B(z_i)&=\frac{z_i}{z_j}B(z_i)B(z_j), \label{BDcommrelthree} \\
D(z_j)D(z_i)&=D(z_i)D(z_j). \label{BDcommrelfour}
\end{align}
\end{subequations}
\end{cor}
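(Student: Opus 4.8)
The plan is to mirror, almost verbatim, the derivation of Corollary~\ref{cor:AB_ops_commute}, extracting a different family of scalar components from the very same $RLL$-relation. First I would assemble the four operators into the monodromy matrix
\[
T(z) = \begin{pmatrix} A(z) & B(z) \\ C(z) & D(z) \end{pmatrix},
\]
viewed as a $2 \times 2$ matrix over the auxiliary space whose entries are operators on the quantum space $V_1 \otimes \cdots \otimes V_m$. By Proposition~\ref{prop:refined_dual_integrable} together with the standard train argument of Equation~\eqref{eq:standard_train}, this monodromy matrix satisfies the RTT relation
\[
R(z_i,z_j)\,\bigl(T(z_i)\otimes T(z_j)\bigr) = \bigl(T(z_j)\otimes T(z_i)\bigr)\,R(z_i,z_j)
\]
on $A_{z_i} \otimes \check{A}_{z_j} \otimes (V_1 \otimes \cdots \otimes V_m)$, where the scalar coefficients are exactly the Boltzmann weights listed in Table~\ref{table:R_matrix_nilp}.

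Next I would read off scalar relations by fixing the two auxiliary inputs and the two auxiliary outputs and comparing the two sides of the RTT relation. The point is that the $A$-$B$ relations of Corollary~\ref{cor:AB_ops_commute} come from the components in which the relevant auxiliary legs carry the label $0$; to obtain \eqref{BDcommrelone}--\eqref{BDcommrelfour} I would instead select the components in which those legs carry the label $1$, so that the entry $T_{11}=D$ plays the role previously played by $T_{00}=A$, while $T_{01}=B$ is retained. One checks that this choice of components never involves $C$, exactly as the $A$-sector components never involve $D$. Comparing the two evaluations then produces a single linear relation among the products $D(z_i)B(z_j)$, $B(z_j)D(z_i)$, and $B(z_i)D(z_j)$; solving it (dividing through by the crossing weight $z_j-z_i$) gives \eqref{BDcommrelone}, while the components in which both auxiliary legs carry the same label yield the diagonal relations \eqref{BDcommreltwo} and \eqref{BDcommrelfour}. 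Relation \eqref{BDcommrelthree} is identical to \eqref{commrelthree}, so it requires no new computation at all.

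The one delicate point, and the main obstacle, is tracking the $R$-matrix conventions carefully enough that the denominators emerge as $z_i-z_j$ rather than $z_j-z_i$. This sign flip relative to the $A$-sector is forced by which weights of Table~\ref{table:R_matrix_nilp} enter: in the $A$-sector the crossing contribution of weight $z_j-z_i$ is combined with the all-zeros weight $z_j$, whereas in the $D$-sector it is combined with the all-ones weight $z_i$, and the interchange of which strand carries which spectral parameter reverses the effective sign of the crossing term. After fixing an explicit orientation for $A_{z_i}$ and $\check{A}_{z_j}$ and checking it against the Yang--Baxter picture of Equation~\eqref{eq:RLL_relation}, the remaining work is the identical finite linear-algebra bookkeeping already performed for Corollary~\ref{cor:AB_ops_commute}, so I would simply note the parallel and record the resulting coefficients.
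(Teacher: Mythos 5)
Your proposal is correct and is essentially the paper's own argument: the paper disposes of this corollary with the single remark that the relations ``follow from the $RLL$ relation in parallel to Corollary~\ref{cor:AB_ops_commute}'', i.e., by fixing auxiliary boundary labels in the train-argument identity~\eqref{eq:standard_train} built from the $R$-matrix of Table~\ref{table:R_matrix_nilp} and reading off scalar components, which is exactly your RTT extraction with the label-$1$ ($D$-sector) components replacing the label-$0$ ($A$-sector) ones, and indeed no $C$-operator ever enters. Your flagged ``delicate point'' is also the right one: in the $D$-sector the crossing weight $z_j-z_i$ pairs with the all-ones weight $z_i$ rather than with $z_j$, and solving the resulting three-term relation produces the denominator $z_i-z_j$ of~\eqref{BDcommrelone}, consistent with the direct computation.
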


Now we give our integrability proof of Proposition~\ref{prop:fnrgs_grothendieck}.

\begin{proof}[Integrability proof]
Define
\[
(e)_n :=
\bigotimes_{i=1}^n (e_1)_i
\otimes
\bigotimes_{i=n+1}^{m+n-k} (e_0)_i,
\qquad\qquad
(e)_0 := \bigotimes_{i=1}^{m+n-k} (e_0)_i.
\]
We also denote the $A$-operator constructed from the $L$-operator for the jagged part and acts on $V_1 \otimes V_2 \otimes \cdots \otimes V_m$ as $\overline{A}_m(z)$.
We start from the partition function description of the refined Grothendieck polynomials
\begin{align*}
Z(\fG_{\mu}; \xx, \bt)
& := (e_\mu)_{m+n-k}^*
\overline{A}_{\lambda_k+1}(-t_{n-1}) \cdots
\overline{A}_{\lambda_1+k}(-t_{n-k})
\overline{A}_{m+1}(-t_{n-k-1}) \cdots 
\overline{A}_{m+n-k-1}(-t_1) \\
& \hspace{20pt} \times A_{m+n-k}(x_n) \cdots A_{m+n-k}(x_1)
(e)_n,
\end{align*}
which is equivalent to the nonintersecting lattice path description which the starting and ending points of the paths are on the bottom and top side.
One can see from its graphical description that we can push down the ending points in the northeast part and we get 
\begin{align*}
Z(\fG_{\mu}; \xx, \bt) & = (e_\mu)_{m+n-k}^*
\overline{A}_{\lambda_k+1}(-t_{n-1}) \cdots
\overline{A}_{\lambda_1+k}(-t_{n-k})
\overline{A}_{m}(-t_{n-k-1}) \cdots 
\overline{A}_{m}(-t_1) \\
& \hspace{20pt} \times A_{m+n-k}(x_n) \cdots A_{m+n-k}(x_1)
(e)_n.
\end{align*}
We next change the starting points to the left side and the ending points to the right side, and let $\widetilde{\fG}_{\mu}$ denote the corresponding lattice model.
Namely, we consider the following partition function
\begin{align}
Z(\widetilde{\fG}_{\mu}; \xx, \bt)
& := (e_\mu)_{m+n-k}^*
\overline{A}_{\lambda_k+1}(-t_{n-1}) \cdots
\overline{A}_{\lambda_1+k}(-t_{n-k})
\overline{A}_{m}(-t_{n-k-1}) \cdots 
\overline{A}_{m}(-t_1) \nonumber \\
& \hspace{20pt} \times D_{m+n-k}(x_n) \cdots D_{m+n-k}(x_{k+1})
B_{m+n-k}(x_k) \cdots B_{m+n-k}(x_{1}) (e)_0.
\label{changedpartitionfunction}
\end{align}
By comparing the Boltzmann weights of the partition functions in the southwest part and northeast part, we find the following relation
\begin{align}
Z(\widetilde{\fG}_{\mu}; \xx, \bt) = \prod_{i=k+1}^n x_i^{i-k-1} \prod_{i=1}^n x_i^{n+1-i} Z(\fG_{\mu}; \xx, \bt)
= \prod_{i=k+1}^n x_i^{i-k-1} \prod_{i=1}^n x_i^{n+1-i} G_{\mu}(\xx;\bt),
\label{relationwithrefinedgrothendieck}
\end{align}
where the last equality is using $Z(\fG_{\mu}; \xx, \bt) = \G_{\mu}(\xx;\bt)$ (Theorem~\ref{thm:partition_function_Grothendieck}).
Now we investigate~\eqref{changedpartitionfunction} using the Yang--Baxter algebra.
Using the commutation relations~\eqref{eq:BDcommrels} and applying a similar argument given in the section for refined dual Grothendieck polynomials,
we get the following multiple commutation relation between the $B$- and $D$-operators:
\begin{align}
&D(x_n) \cdots D(x_{k+1})B(x_k) \cdots B(x_1) \nonumber \\
& = \sum_{S=\{j_1 < j_2 < \cdots < j_k \}
\in \binom{[n]}{k}}
\frac{\displaystyle \prod_{i \in \overline{S}} x_i}
{\displaystyle \prod_{i=k+1}^{n} x_i}
\frac{\displaystyle \prod_{i=1}^{k-1} x_i^{k-i}}
{\displaystyle \prod_{i=1}^{k-1} x_{j_i}^{k-i}}
\prod_{i \in \overline{S}}
\prod_{j \in S} \frac{x_j}{x_i-x_j}  B(x_{j_k}) \cdots B(x_{j_2}) B(x_{j_1})
\prod_{i \in \overline{S}} D(x_i).
\label{BDmultiplecommutation}
\end{align}
Using~\eqref{BDmultiplecommutation}  and the action of the $D$-operators
\[
\prod_{i \in \overline{S}} D(x_i) (e)_0 = \prod_{i \in \overline{S}} x_i^{m+n-k} (e)_0,
\]
the partition function~\eqref{changedpartitionfunction} can be expressed as
\begin{align}
Z(\widetilde{\fG}_{\mu}; \xx, \bt)
& = \sum_{S=\{j_1 < j_2 < \cdots < j_k \}
\in \binom{[n]}{k}}
\frac{\displaystyle \prod_{i \in \overline{S}} x_i^{m+n+1-k}}
{\displaystyle \prod_{i=k+1}^{n} x_i}
\frac{\displaystyle \prod_{i=1}^{k-1} x_i^{k-i}}
{\displaystyle \prod_{i=1}^{k-1} x_{j_i}^{k-i}}
\prod_{i \in \overline{S}}
\prod_{j \in S} \frac{x_j}{x_i-x_j} \nonumber \\
& \hspace{20pt}  \times
(e_\mu)_{m+n-k}^*
\overline{A}_{\lambda_k+1}(-t_{n-1}) \cdots
\overline{A}_{\lambda_1+k}(-t_{n-k})
\overline{A}_{m}(-t_{n-k-1}) \cdots 
\overline{A}_{m}(-t_1) \nonumber \\
& \hspace{20pt} \times B_{m+n-k}(x_{j_k}) \cdots B_{m+n-k}(x_{j_1}) (e)_0.
\label{changedpartitionfunctiontwo}
\end{align}
Now let us examine the partition functions appearing in the summands in~\eqref{changedpartitionfunctiontwo}.
From its graphical description, we can see that the partition functions can be rewritten as follows.

\begin{align}
& (e_\mu)_{m+n-k}^*
\overline{A}_{\lambda_k+1}(-t_{n-1}) \cdots
\overline{A}_{\lambda_1+k}(-t_{n-k})
\overline{A}_{m}(-t_{n-k-1}) \cdots 
\overline{A}_{m}(-t_1) \nonumber \\
& \hspace{20pt} \times B_{m+n-k}(x_{j_k}) \cdots B_{m+n-k}(x_{j_1})
(e)_0 \nonumber \\
& = (e_\lambda)_{m}^*
\overline{A}_{\lambda_k+1}(-t_{n-1}) \cdots
\overline{A}_{\lambda_1+k}(-t_{n-k})
\overline{A}_{m}(-t_{n-k-1}) \cdots 
\overline{A}_{m}(-t_1) \nonumber \\
& \hspace{20pt} \times B_{m}(x_{j_k}) \cdots B_{m}(x_{j_1})
(e)_0 \nonumber \\
& = \prod_{i=1}^k x_{j_i}^{k+1-i} W_{\lambda}(\xx_{S};\bt),
\label{relatewithpartitionw}
\end{align}
where
\begin{align*}
W_{\lambda}(\xx_{S};\bt)
& = (e_\lambda)_{m}^*
\overline{A}_{\lambda_k+1}(-t_{n-1}) \cdots
\overline{A}_{\lambda_1+k}(-t_{n-k})
\overline{A}_{m}(-t_{n-k-1}) \cdots 
\overline{A}_m(-t_1) \\
& \hspace{20pt} \times \prod_{j \in S} A_m(x_j)
\bigotimes_{i=1}^{k} (e_1)_i \otimes
\bigotimes_{i=k+1}^{m} (e_0)_i,
\end{align*}
Decomposing the partition function as
\begin{align*}
W_{\lambda}(\xx_{S};\bt)
=&\sum_{\nu \subseteq (m-k)^k}
(e_\lambda)_{m}^*
\overline{A}_{\lambda_k+1}(-t_{n-1}) \cdots
\overline{A}_{\lambda_1+k}(-t_{n-k})
\overline{A}_{m}(-t_{n-k-1}) \cdots 
\overline{A}_m(-t_1)
(e_\nu)_{m}
 \\
& \hspace{20pt} \times (e_\nu)_m^*
\prod_{j \in S} A_m(x_j)
\bigotimes_{i=1}^{k} (e_1)_i \otimes
\bigotimes_{i=k+1}^{m} (e_0)_i,
\end{align*}
and using
$\displaystyle (e_\nu)_m^*
\prod_{j \in S} A_m(x_j)
\otimes_{i=1}^{k} (e_1)_i
\otimes_{i=k+1}^{m} (e_0)_i=s_\nu(\xx_{S})$
and
\begin{align*}
&
(e_\lambda)_{m}^*
\overline{A}_{\lambda_k+1}(-t_{n-1}) \cdots
\overline{A}_{\lambda_1+k}(-t_{n-k})
\overline{A}_{m}(-t_{n-k-1}) \cdots 
\overline{A}_m(-t_1)
(e_\nu)_{m} \\
& =
(e_{((m-k)^{n-k},\lambda)})_{m+n-k}^*
\overline{A}_{\lambda_k+1}(-t_{n-1}) \cdots
\overline{A}_{\lambda_1+k}(-t_{n-k}) \\
& \hspace{20pt} \times \overline{A}_{m+1}(-t_{n-k-1}) \cdots 
\overline{A}_{m+n-k-1}(-t_1)
(e_{((m-k)^{n-k},\nu)})_{m+n-k} \\
& = E_{(m-k)^{n-k} \sqcup \lambda}^{(m-k)^{n-k} \sqcup \nu}(-\bt),
\end{align*}
we note that $W_{\lambda}(\xx_{S};\bt)$ can be explicitly written as
\[
W_{\lambda}(\xx_{S};\bt) = \sum_{\nu \subseteq (m-k)^k} E_{(m-k)^{n-k} \sqcup \lambda}^{(m-k)^{n-k} \sqcup \nu}(-\bt) s_\nu(\xx_{S}).
\]
Inserting~\eqref{relatewithpartitionw} into~\eqref{changedpartitionfunctiontwo}, we get
\begin{equation}
Z(\widetilde{\fG}_{\mu}; \xx, \bt)
= \sum_{S \in \binom{[n]}{k}}
\frac{\displaystyle \prod_{i \in S} x_i \prod_{i \in \overline{S}} x_i^{m+n+1-k} \prod_{i=1}^{k-1} x_i^{k-i}}{\displaystyle \prod_{i=k+1}^{n} x_i}
\prod_{i \in \overline{S}} \prod_{j \in S} \frac{x_j}{x_i-x_j} W_{\lambda}(\xx_{S};\bt).
\label{changedpartitionfunctionthree}
\end{equation}
Comparing~\eqref{relationwithrefinedgrothendieck} and~\eqref{changedpartitionfunctionthree} and simplifying the expression for each summand, we obtain~\eqref{refinedfnridentity}.
\end{proof}

Let us make some remarks on Proposition~\ref{prop:fnrgs_grothendieck}.
We can see that when $\bt = 0$ in~\eqref{smallerpartitionfunction}, we have $W_{\lambda}(\xx_{S}; 0) = s_{\lambda}(\xx_{S})$ being the Schur polynomial since
$E_{(m-k)^{n-k} \sqcup \lambda}^{(m-k)^{n-k} \sqcup \nu}(0) = \delta_{\lambda,\nu}$
and~\eqref{refinedfnridentity} becomes
\begin{align}
\displaystyle
s_{\mu}(\xx)
=&\sum_{S
\in \binom{[n]}{k}}
\prod_{i \in \overline{S}} x_i^m
\prod_{i \in \overline{S}}
\prod_{j \in S} \frac{1}{x_i-x_j}
s_{\lambda}(\xx_{S}),
\label{eq:FNR}
\end{align}
which is the identity for Schur polynomials by Feh\'er, N\'emethi, and Rim\'anyi~\cite{FNR12}.
A generalization to the factorial Grothendieck polynomials is derived by Guo and Sun~\cite{GS19}, in which $s_{\lambda}(\xx_{S})$ in the summand in~\eqref{eq:FNR} is replaced by $\G_{\lambda}(\xx_{S};\beta)$ with some overall factor when expanding $\G_{\mu}(\xx;\beta)$.
However, Equation~\eqref{smallerpartitionfunction} cannot be written using a single Grothendieck polynomial in general (even at $\bt = \beta$), so the identity is different from the Guo--Sun identity.

\begin{ex}
For $n = 4$, Proposition~\ref{prop:fnrgs_grothendieck} states that
\begin{align*}
\G_{2210}(x_1,x_2,x_3,x_4; t_1,t_2,t_3)
& = \frac{x_3^4 x_4^4}{(x_3-x_1)(x_3-x_2)(x_4-x_1)(x_4-x_2)}W_{(1,0)}(x_1,x_2;t_1,t_2,t_3) \\
& + \frac{x_2^4 x_4^4}{(x_2-x_1)(x_2-x_3)(x_4-x_1)(x_4-x_3)}W_{(1,0)}(x_1,x_3;t_1,t_2,t_3) \\
& + \frac{x_2^4 x_3^4}{(x_2-x_1)(x_2-x_4)(x_3-x_1)(x_3-x_4)}W_{(1,0)}(x_1,x_4;t_1,t_2,t_3) \\
& + \frac{x_1^4 x_4^4}{(x_1-x_2)(x_1-x_3)(x_4-x_2)(x_4-x_3)}W_{(1,0)}(x_2,x_3;t_1,t_2,t_3) \\
& + \frac{x_1^4 x_3^4}{(x_1-x_2)(x_1-x_4)(x_3-x_2)(x_3-x_4)}W_{(1,0)}(x_2,x_4;t_1,t_2,t_3) \\
& + \frac{x_1^4 x_2^4}{(x_1-x_3)(x_1-x_4)(x_2-x_3)(x_2-x_4)}W_{(1,0)}(x_3,x_4;t_1,t_2,t_3),
\end{align*}
where
\begin{align*}
& \G_{2210}(x_1,x_2,x_3,x_4; t_1,t_2,t_3) \\
& = x_1^2 x_2^2 x_3+x_1^2 x_2 x_3^2+x_1 x_2^2 x_3^2
+x_1^2 x_2^2 x_4+x_1^2 x_2 x_4^2+x_1 x_2^2 x_4^2
+x_1^2 x_3^2 x_4+x_1^2 x_3 x_4^2+x_1 x_3^2 x_4^2 \\
&\hspace{20pt} 
+x_2^2 x_3^2 x_4+x_2^2 x_3 x_4^2+x_2 x_3^2 x_4^2
+2(x_1^2 x_2 x_3 x_4+x_1 x_2^2 x_3 x_4+x_1 x_2 x_3^2 x_4+x_1 x_2 x_3 x_4^2)
\\
&\hspace{20pt} 
-t_1(x_1^2 x_2^2 x_3^2 + x_1^2 x_2^2 x_4^2 + x_1^2 x_3^2 x_4^2 + x_2^2 x_3^2 x_4^2
+x_1^2 x_2^2 x_3 x_4+x_1^2 x_2 x_3^2 x_4+x_1^2 x_2 x_3 x_4^2
+x_1 x_2^2 x_3^2 x_4  \\
&\hspace{40pt}  
+ x_1 x_2^2 x_3 x_4^2+x_1 x_2 x_3^2 x_4^2) \\
& \hspace{20pt} 
- (t_1+t_2)(x_1^2 x_2^2 x_3 x_4 + x_1^2 x_2 x_3^2 x_4+x_1^2 x_2 x_3 x_4^2 + x_1 x_2^2 x_3^2 x_4 + x_1 x_2^2 x_3 x_4^2+x_1 x_2 x_3^2 x_4^2) \\
& \hspace{20pt} 
+(t_1^2+t_1 t_2)(x_1^2 x_2^2 x_3^2 x_4+x_1^2 x_2^2 x_3 x_4^2 + x_1^2 x_2 x_3^2 x_4^2+x_1 x_2^2 x_3^2 x_4^2)
 - t_1^2 t_2 x_1^2 x_2^2 x_3^2 x_4^2,
\end{align*}
and
\begin{align*}
W_{10}(x_1,x_2; t_1,t_2,t_3)
& = x_1 + x_2 - (t_1+t_2)x_1 x_2 - t_1 (x_1^2+x_1 x_2+x_2^2)
\\ & \hspace{20pt}
+ (t_1^2+t_1 t_2)(x_1^2 x_2+x_1 x_2^2) -t_1^2 t_2 x_1^2 x_2^2 \\
& = \G_{10}(x_1,x_2; t_1,t_2,t_3) - t_2 \G_{11}(x_1,x_2; t_1,t_2,t_3)
\\ & \hspace{20pt} - t_1 \G_{20}(x_1,x_2; t_1,t_2,t_3) + t_1 t_2 \G_{21}(x_1,x_2; t_1,t_2,t_3).
\end{align*}
\end{ex}

\appendix
\section{Combinatorial bijection between TASEP and LPP}

We give another combinatorial proof of Theorem~\ref{thm:last_passage} using the last passage times for the TASEP particle model from~\cite{Johansson00}. Our proof here is essentially equivalent to the proof from~\cite{Yel20}, just explained using a different stochastic model. Indeed, consider particles in the step initial condition on $\ZZ$, that is particle $p_i$ starts in position $-(i-1)$, with particle $p_i$ moving to the right with probability $(1-x_i)$ if there is no particle to its right. Let $G^*(\ell,n) = G(\ell, n) + \ell + n - 1 = t$, which measures the probability that the particle $p_n$ has moved $\ell$ steps at time $t$.
We note $G^*(\ell,n)$ is equivalent to computing the \emph{first time} the particle $p_n$ moved $\ell$ steps occurs at time $t$.

Let us first roughly describe how the proof works, which is based on particles $p_i$ for $i < n$ blocking the motion of $p_n$.
Consider a reverse plane partition $T$ and consider the $i$-th row $r_1, \dotsc, r_k$.
In this, we consider the conditional probability with the particles $p_{r_j}$ blocking the movement of $p_n$, which contributes $x_{r_j}$, and otherwise all particles move to the right when possible.
However, if $r_j$ equals its entry in the row below (in column $j$), then particle $p_{r_j}$ would already have blocked $p_n$ on an earlier step. Hence, we do not want to double-count it (hence, it contributes a $1$ to the weight).

Now we will be more precise.
We say a particle \defn{blocks} if it can move a step to the right, but it remains in place (which contributes $x_i$ to the conditional probability).
Particle $p_i$ blocks after moving $j$ steps (\textit{i.e.}, $p_i$ is at position $j + 1 - i$) at time $t$ if and only if there is an $i$ in row $j+1$ from the bottom and column $t-j-i+2$.
We claim there is a unique way to fill in the rest of the partition to obtain a reverse plane partition.
In the other direction, if there is an $i$ in row $r$ from the bottom and column $c$ with no $i$ immediately below it, then $p_i$ blocks at time $t = r + c + i - 3$.
To see this is a bijection, consider the reverse plane partition with all entries $1$, thus particle $1$ is blocking maximally.
Then add $1$ to entries repeatedly such that the result is again a reverse plane partition, and under the above map this moves the blocking particle one step to the right. It is easy to see that this is confluent (\textit{i.e.}, it satisfies the diamond lemma).
The weight of the reverse plane partition times $\prod_{i=1}^n (1 - x_i)^{\ell}$ is exactly the probability for this sequence of states.
We note that this bijection is just using the values $G^*(m,n)$, which equal $G(m, n)$ for the matrix $\Phi(T)$ but with all entries increased by $1$.
We leave the details to the interested reader.

\begin{ex}
Let us consider the reverse plane partition and matrix from Example~\ref{ex:RPP_Matrix_path} in order to compute one part of
\[
P(G(3, 4) = 3, G(2, 4) = 3, G(1, 4) = 2) = P(G^*(3, 4) = 9, G^*(2, 4) = 8, G^*(1, 4) = 6).
\]
Under the bijection described above, the evolution of the first four particles is given by
\[
\begin{tikzpicture}[scale=0.75]
\foreach \t in {0,...,4} {
  \draw (-4,-\t) node {$t = \t$};
  \draw[-] (-3.3,-\t) -- (4.3,-\t);
  \foreach \x in {-3,...,4} \draw [-] (\x,-\t-0.3) -- (\x,-\t+0.3);
}
\foreach \x in {-3,...,4} \draw (\x,0.7) node {$\x$};
\foreach \t/\x in {0/0,1/1,2/1,3/2,4/2}
    \fill[UQpurple] (\x,-\t) circle (0.15);
\foreach \t/\x in {0/-1,1/-1,2/0,3/0,4/1}
    \fill[dbluecolor] (\x,-\t) circle (0.15);
\foreach \t/\x in {0/-2,1/-2,2/-2,3/-2,4/-2}
    \fill[dgreencolor] (\x,-\t) circle (0.15);
\foreach \t/\x in {0/-3,1/-3,2/-3,3/-3,4/-3}
    \fill[darkred] (\x,-\t) circle (0.15);
\begin{scope}[xshift=10cm]
\foreach \t in {5,...,9} {
  \draw (-4,-\t+5) node {$t = \t$};
  \draw[-] (-3.3,-\t+5) -- (4.3,-\t+5);
  \foreach \x in {-3,...,4} \draw [-] (\x,-\t+5-0.3) -- (\x,-\t+5+0.3);
}
\foreach \x in {-3,...,4} \draw (\x,0.7) node {$\x$};
\fill[UQpurple] (3,0) circle (0.15);
\foreach \t in {1,2,3,4}
    \draw[UQpurple] (3,-\t) circle (0.15);
\foreach \t/\x in {0/1,1/2}
    \fill[dbluecolor] (\x,-\t) circle (0.15);
\foreach \t in {2,3,4}
    \draw[dbluecolor] (2,-\t) circle (0.15);
\foreach \t/\x in {0/-1,1/0,2/0,3/1}
    \fill[dgreencolor] (\x,-\t) circle (0.15);
\draw[dgreencolor] (1,-4) circle (0.15);
\foreach \t/\x in {0/-3,1/-2,2/-2,3/-1,4/0}
    \fill[darkred] (\x,-\t) circle (0.15);
\end{scope}
\end{tikzpicture}
\]
In the picture above, an open circle indicates that we do not impose any condition on its position.
\end{ex}

\bibliographystyle{alpha}
\bibliography{grothendiecks}{}
\end{document}